\numberwithin{equation}{section}
\renewcommand\part{%
   \if@noskipsec \leavevmode \fi
   \par
   \addvspace{4ex}%
   \@afterindentfalse
   \secdef\@part\@spart}
\def\@part[#1]#2{%
    \ifnum \c@secnumdepth >\m@ne
      \refstepcounter{part}%
      \addcontentsline{toc}{part}{\thepart\hspace{1em}#1}%
    \else
      \addcontentsline{toc}{part}{#1} 
    \fi
    {\parindent \z@ \raggedright
     \interlinepenalty \@M
     \normalfont
     \ifnum \c@secnumdepth >\m@ne
       \normalsize\scshape \partname\nobreakspace\thepart
     \fi
     \normalsize \scshape \centering \hspace{3pt} $-$ \hspace{1pt}#2%
     \par}%
    \nobreak
    \vskip 3ex
    \@afterheading}
\def\@spart#1{%
    {\parindent \z@ \raggedright
     \interlinepenalty \@M
     \normalfont
     \normalsize \bfseries \scshape #1\par}%
     \nobreak
     \vskip 3ex
     \@afterheading}
\renewcommand\thepart{\Roman{part}}
\def\@tocline#1#2#3#4#5#6#7{\relax
  \ifnum #1>\c@tocdepth 
  \else
    \par \addpenalty\@secpenalty\addvspace{#2}%
    \begingroup \hyphenpenalty\@M
    \@ifempty{#4}{%
      \@tempdima\csname r@tocindent\number#1\endcsname\relax
    }{%
      \@tempdima#4\relax
    }%
    \parindent\z@ \leftskip#3\relax \advance\leftskip\@tempdima\relax
    \rightskip\@pnumwidth plus4em \parfillskip-\@pnumwidth
    #5\leavevmode\hskip-\@tempdima
      \ifcase #1
       \or\or \hskip 1em \or \hskip 2em \else \hskip 1em \fi%
      #6\nobreak\relax
    \hfill\hbox to\@pnumwidth{\@tocpagenum{#7}}\par
    \nobreak
    \endgroup
  \fi}
\newtheorem{thm}{Theorem}[section]
\newtheorem{lem}[thm]{Lemma}
\newtheorem{cor}[thm]{Corollary}
\newtheorem{prop}[thm]{Proposition}
\newtheorem{defin}[thm]{Definition}
\newtheorem{exmp}[thm]{Example}
\newtheorem{rem}[thm]{Remark}
\newcommand\cB{{\mathcal B}}
\newcommand\cE{{\mathcal E}}
\newcommand\cL{{\mathcal L}}
\newcommand\cO{{\mathcal O}}
\newcommand\cM{{\mathcal M}}
\newcommand\bbN{{\mathbb N}}
\newcommand\bbT{{\mathbb T}}
\newcommand\ve{\varepsilon}
\newcommand\eps{\epsilon}
\newcommand{\dd}{\text{d}}
\newcommand{\x}{\vec{x}}
\newcommand{\wh}[1]{\widehat{#1}}
\newcommand{\wt}[1]{\widetilde{#1}}
\def\eps{{\varepsilon}}
\def\Prob{{\mathbb{P}}}
\def\EXP{{\mathbb{E}}}
\def\complex{\mathbb{C}}
\def\bbH{\mathbb{H}}
\def\bbT{\mathbb{T}}
\def\bbX{\mathbb{X}}
\def\naturals{\mathbb{N}}
\def\reals{\mathbb{R}}
\def\integers{\mathbb{Z}}
\def\cA{\mathcal{A}}
\def\cB{\mathcal{B}}
\def\cE{\mathcal{E}}
\def\cK{\mathcal{K}}
\def\cL{\mathcal{L}}
\def\cM{\mathcal{M}}
\def\cO{\mathcal{O}}
\def\cP{\mathcal{P}}
\def\cV{\mathcal{V}}
\def\cX{\mathcal{X}}
\def\cZ{\mathcal{Z}}
\def\fF{\mathfrak{F}}
\def\fN{\mathfrak{N}}
\def\fn{\mathfrak{n}}
\def\beq{\begin{equation}}
\def\eeq{\end{equation}}
\title[Expansions in the CLT and the MLCLT]{
Expansions in the local and the central limit theorems
for dynamical systems}
\author{Kasun Fernando and Fran\c{c}oise P\`ene}
\address{}
\address{}
\date{\today}
\begin{document}

\begin{abstract}
We study higher order expansions both in the Berry-Ess\'een estimate (Edgeworth expansions) and in the local limit theorems for 
Birkhoff sums of chaotic probability preserving dynamical systems. We establish general results under technical assumptions, discuss the verification of these assumptions and illustrate our results by different examples (subshifts of finite type, Young towers, Sinai billiards, random matrix products), including situations of 
unbounded observables with integrability order arbitrarily close to the optimal moment condition required in the i.i.d.~setting. 
\end{abstract}

\keywords{Edgeworth expansion, central limit theorem, hyperbolicity, exact limit theorems, statistical properties}

\subjclass[2010]{37A50, 60F05, 37D25}

\maketitle
\tableofcontents

\section*{Introduction}\label{Intro}
Given a chaotic probability preserving dynamical system (PPDS), $(f,\mathcal M,\mu)$, and a centered observable $\phi:\mathcal M\rightarrow\mathbb R$, we are interested in the 
asymptotic behaviour of the sequence of centered random variables $(S_n:=\sum_{k=0}^{n-1}\phi\circ f^k)_{n\ge 1}$ as $n\to \infty$. More precisely, we are interested in establishing expansions in the central limit theorem (CLT) and in the mixing local central limit theorem (MLCLT) for $(S_n)_{n\ge 1}$ in the context of hyperbolic dynamical systems.

Let us recall that $(S_n)_{n\ge 1}$ is said to satisfy a nondegenerate CLT if $(S_n/\sqrt{n})_{n\ge 1}$ converges in distribution to a centered Gaussian random variable $Z$ of variance $\sigma^2>0$ with distribution function $\fN$, that is if
\[
\forall x\in\mathbb R,\quad \lim_{n\rightarrow +\infty}\mu\left(\frac{S_n}{\sqrt{n}}\le x\right)=\fN( x)\, .
\]
The MLCLT is a generalization of the local central limit theorem (LCLT) and has been used in \cite{PS2010} and \cite{DN} to prove limit theorems for flows. 
The MLCLT has the following form
\[
\EXP_\mu\left(\psi\, g(S_n)\, \xi\circ f^n\right)= \frac{\fN'(0)}{\sqrt{n}} I(g)\, \EXP_\mu(\psi)\EXP_\mu(\xi)+o(n^{-\frac 12})\, ,\quad\mbox{as} \ n\rightarrow +\infty\,  ,
\]
where $I(g):=\int_{\mathbb R }g(x)\, dx$ if $\phi$ is nonlattice and $I(g):=\sum_{k\in\mathbb Z}g(k)$ if $\phi$ is $\mathbb Z$-valued. When $ 
\xi\equiv 1$ and $\psi$ is the density of a probability measure $\mathbb P$ with respect to $\mu $, the above estimate corresponds to the LCLT with respect to $\mathbb P$.

Our goal is to investigate the rate of convergence in the two preceding results, via expansions of arbitrary order. 
We focus on expansions of the form 
\begin{equation}\label{EdgeExp}
\mathbb P \left(\frac{S_n}{\sqrt{n}}\leq x\right) =  \fN(x) \mathbb + \sum_{k=1}^{
r} \frac{\mathcal R_k(x)}{n^{k/2}}+o(n^{-r/2})\, ,\quad\mbox{as} \ n\rightarrow +\infty\, ,
\end{equation}
(corresponding to expansions in the CLT beyond Berry-Ess\'een estimates, such expansions are called Edgeworth expansions) and expansions of the form
\begin{equation}\label{expLLT}
\EXP_\mu\left(\psi\, g(S_n)\, \xi\circ f^n\right) =\sum_{k=0}^{\lfloor r/2\rfloor} \frac{\mathfrak a_j(g,\psi,\xi)}{n^{\frac 12+k}}+o(n^{-\frac {r+1}2})\, ,
\end{equation}
with $\mathfrak a_0(g,\psi,\xi)=\fN'(0)I(g)\, \mathbb E_\mu(\psi)\mathbb E_\mu(\xi)$ (corresponding to expansions in the MLCLT), under assumptions analogous to those of the classical case of sums of independent identically distributed random variables.

We recall that in the case when $(S_n)_{n\geq 1}$ is a sum of independent identically distributed random variables (the so-called i.i.d.~setting), \eqref{EdgeExp} and \eqref{expLLT} hold true as soon as these random variables admit a moment of order $r+2$ (together with another assumption for \eqref{EdgeExp} implying the fact that $S_1$ is far from being lattice). Here we obtain such results in a general dynamical context under assumptions close to the optimal condition in the i.i.d.~setting. In particular, we exhibit a family of examples of $\phi$ on expanding Young towers such that for every $\eta>0$, one can find $\phi\in L^{r+2}(\mu)\setminus L^{r+2+\eta}(\mu)$ for which \eqref{expLLT} holds true (see Theorem~\ref{EdgeExpforExpTowers} and the remark afterwards). The construction of such examples is based on operators acting on a chain of Banach spaces.

Estimates of the form \eqref{EdgeExp} have been established in \cite{CP} for one-sided subshifts of finite type. More recently, both \eqref{EdgeExp} and \eqref{expLLT} with $\psi\equiv \xi\equiv 1$ have been proved in \cite{FL} for general expanding dynamical systems, and independently, motivated by expansions in mixing for $\mathbb Z^d$-extensions of chaotic dynamical systems, analogues of \eqref{expLLT} have been shown for particular class of observables in \cite{PN,DNP}.
In all of these, expansions have been obtained for chaotic dynamical systems and bounded observables. Our goal here is to extend the results of \cite{FL} to hyperbolic systems modeled by Young towers with exponential tails and to the case of unbounded observables.

We point out the fact that, in the context of dynamical systems, the study of expansions for MLCLT and CLT is not just a curiosity from probability theory. There are important applications of these expansions to dynamical systems. For example, if $f$ is a map of a compact manifold $\cM$ preserving a measure $\mu$, $g_t$ is a flow on a compact manifold $Y$ preserving a measure $\mu_0$, and $\phi:X\to \mathbb{R}$ is a bounded zero mean observable, then the skew product $F(x,y)=(f(x),g_{\phi(x)}(y))$ preserving $\mu \times \mu_0$ exhibits decay of correlations provided that 
\begin{itemize}
\item the base map $f$ has decay of correlations 
\item $f$ admits a higher order expansion in the MLCLT for $\phi$ 
\end{itemize}
and a few mild assumptions on tail probabilities \cite[Section 6]{ACDN}. Even though we will not state the precise formulation here, this shows that there is a new mechanism to establish decay of correlations for dynamical systems via the expansions we study. 

Moreover, these expansions imply moderate deviation principles and local limit theorems for $S_n$. We refer to \cite[Section 5]{FL} for a detailed discussion of these applications. Edgeworth expansions are also used in statistics to improve the accuracy of bootstrap in sampling when the underlying process is Markov. See, for example, \cite{DM}. So proving the existence of these expansions may be considered as the first step of extending the bootstrap from the Markovian situation to deterministic dynamical systems. This is part of an on-going project with Nan Zou, and has also been independently considered in the recent preprint \cite{JWZ} where, in addition, a criterion to characterize the existence of the first order Edgeworth expansion is presented. 

Therefore, we not only introduce new classes  of weakly dependent random variables for which these expansions hold but also pave the way to establish interesting results about dynamical systems. Other interesting results can be obtained by considering the asymptotics for the large deviation principle as in \cite{FH}. However, to keep the exposition as concise as possible, we focus only on the CLT regime. 

This article is divided in two parts. In Part~\ref{part1}, we state expansions in a general context 
adapted (but not restricted) to a class of dynamical systems characterized by having an extension  which has an appropriate factor whose twisted transfer operators enjoy nice spectral properties. This is implemented thanks to the Nagaev-Guivarc'h perturbation method \cite{NG1,GH,HH} via the Keller-Liverani approach \cite{KL} combined with recent developments from \cite{D1, D2,D3, BV, AGY, IM, IM18}. In Part~\ref{part2}, we start by a detailed discussion of  the verification of our assumptions (in Section~\ref{verif}) and illustrate our general results by several examples: mixing subshifts of finite type (SFTs) with Lipschitz observables $\phi$ (in Section~\ref{SFT}), systems modeled by Young towers  including Sinai billiard with unbounded observables $\phi$ (in Section~\ref{Towers} completed with Appendix~\ref{appendYoung}),
and random matrix products (in Section~\ref{RWalks}).

\part{Edgeworth Expansions under general assumptions}\label{part1}

In this part of the paper, we state asymptotic expansions in a general context prove their existence of asymptotic expansions in that setting. \Cref{se:assum} is dedicated to the statement of the general assumptions about random variables, the resulting theorems, and our choice of the broad class of dynamical systems. In Section~\ref{CharFun}, we state
a key result about the asymptotic expansions of the characteristic functions of $S_n$. These expansions are of independent interest in probability theory (see, for example, \cite[Chapter 2]{BR1}). We end this part with \Cref{proofs} where we show how expansions of characteristic functions lead to expansions in the CLT and in the MLCLT for $S_n$, and hence, prove our general theorems. 

\section{General setting and results}\label{se:assum}

Let $(S_n)_{n\ge 1}$
be a sequence of $\mathbb X$-valued random variables with $\mathbb X=\mathbb R$ or $\mathbb Z$ defined on a probability space $(\cM,\mu)$.
We consider 
a double sequence of
real valued
random variables $(\psi_n,\xi_n)_{n\geq 1}$ on  $(\cM,\mu)$. We are interested in asymptotic expansions for $\mathbb E_\mu\left(\psi_n g(S_n)\xi_n\right)$ and $(\psi_n\mu)(S_n\le x\sqrt{n})$ (for the latter, assuming that $\psi_n$ is a probability density and that $\xi_n\equiv 1$).
Our proofs are based on Fourier transforms, and thus, will involve the quantity $\mathbb E_{\mu}
\left(\psi_n e^{isS_n} \xi_n\right)$. We set $\mathbb X^*=\mathbb R$ if $\mathbb X=\mathbb R$ 
and $\mathbb X^*=[-\pi,\pi]$ if $\mathbb X=\mathbb Z$.

We write $\mathcal{SP}$ (resp.~$\mathcal{LP}$) for the set of sequences of real numbers $(a_n)_{n\ge 1}$ (resp.~$(b_n)_{n\ge 1}$) converging to 0 super-polynomially fast (resp.~dominated by any positive power) such that, for all $p>0$, $a_n=o( n^{-p})$ (resp.~$b_n=o( n^{p})$).

\noindent 
{\bf Assumption $(\alpha)[r]$:}
Let $\delta>0$ and $n_0\ge 1$.
The function $s\mapsto \mathbb E_\mu\left(\psi_n e^{isS_n} \xi_n\right)$
 is $C^{r+2}$ on $[-\delta,\delta]$ and there exist $(b_n)_{n\ge 1}\in \mathcal{LP}$, $(a_n)_{n\ge 1}\in \mathcal{SP}$, a $C^{r+2}$-smooth complex valued function $s\mapsto \lambda(is)$ on $[-\delta,\delta]$ and constants $B_j, j=0,1,\dots, r+2$, such that for all $n\ge n_0$ and all $|s|<\delta$,
\[
\sup_{|s|<\delta}\left|H_n^{(j)}(0)
- B_j\right|=\cO(a_{n}
)\quad\mbox{and}\quad |\lambda(is)^n H_n^{(j)}(s)|\le b_ne^{-\frac {\sigma^2 s^2}8}+a_n\, ,
\]
where $H_n(s):=\lambda(is)^{-n}\mathbb E_{\mu}
\left(\psi_n e^{isS_n} \xi_n
\right)$ and with $\lambda(is)=1-\frac{\sigma
^2s^2}{2}+o(s^2)$, with $\sigma
^2>0$.

\noindent{\bf Assumption $(\beta)$:}
For any compact $K$ of $\mathbb X^*\setminus \{0\}$,  there exists $(a_n)_{n\ge 1}\in \mathcal{SP}$ such that
\[
\quad \sup_{s\in K}\left|\mathbb E_{\mu}
\left(\psi_n e^{isS_n} \xi_n
\right)\right|\le a_n
\, ,
\]

\noindent {\bf Assumption $(\gamma)$:}
Either $\mathbb X=\mathbb Z$, or there exists $K>0$ such that  there exist $(a_n)_{n\ge 1}\in \mathcal{SP}$,
there exist three positive constants $K_1,\alpha,\alpha_1,\hat\delta$ 
such that
\[
\forall |s|>K,\quad \left|
\mathbb E_{\mu}
\left(\psi_n e^{isS_n} \xi_n
\right)\right|\le K_1\left(
a_n
+|s|^{1+\alpha} e^{-n^{\alpha_1}\hat\delta|s|^{-\alpha}}\right) \, .
\]

\noindent {\bf Assumption $(\delta)[r]$:}
$\mathbb X=\mathbb R$ and for any $B>0$, there exists $K>0$ such that
\[
\int_{K<|s|<B n^{\frac {r-1}2}}\frac{
\left|
\mathbb E_{\mu}
\left(\psi_n e^{isS_n}
\right)\right|
}{|s|}\, ds=o(n^{-r/2})\, .
\]

Assumption $(\alpha)[r]$ is related to the existence of moments of $S_n$ up to and including the order $r+2$. In particular, in the i.i.d.~setting, if $S_n=\sum_{k=1}^{n}X_k$ with $(X_k)_{k\ge 1}$ a sequence of i.i.d.~random variables and if $\psi_n=\xi_n=1$, since $\mathbb E_\mu(e^{isS_n})=\lambda_{is}^n$ where $\lambda_{is}=\mathbb E_\mu(e^{isX_1})$ is the characteristic function of $X_1$, Assumption $(\alpha)[r]$ corresponds to the existence of the moment of order $r+2$ of $X_1$.


Assumption $(\beta)$ is an non-arithmeticity condition which translates in the i.i.d. to the fact that $X_1$ is not supported by a strict sublattice of $\mathbb X$.

While Assumptions $(\alpha)$ and $(\beta)$ deal with the behaviour of $\mathbb E_\mu(\psi_n e^{isS_n}\xi_n)$ for respectively small and intermediate values of $s$,
Assumptions $(\gamma)$ and $(\delta)$ deal with the behaviour of this quantity for large values, and should be compared (when $\mathbb X=\mathbb R$) with $0-$Diophantine property or equivalently, Cram\'er's continuity assumption  (see \cite[Chapter XVI]{Feller2})  in the i.i.d. setting as:
$$\limsup_{n \to \infty} |\EXP(e^{isX})| <1$$
which gives us that $|\EXP(e^{isS_n})|=|\EXP(e^{isX})|^n< \gamma^n$ for some $\gamma \in (0,1)$, and more generally, the $\alpha-$Diophantine property of supp $X$:
$$|\EXP(e^{isX})| < 1- \frac{\widehat C}{|s|^\alpha} \implies |\EXP(e^{isS_n})|<e^{-n\wh{C}|s|^{-\alpha}}$$
which guarantee the existence of Edgeworth expansions of all orders $r < \alpha^{-1}+1/2$ provided $X$ has $r+2$ moments (see \cite{DF}). 

Now, let us introduce 
the space $\fF_k^m$ of functions for which 
we prove expansions in the MLCLT. Set
\begin{equation}\label{hat g}
\widehat g(s):=\int_{\mathbb X}e^{-isx}g(x)\, d\lambda(x)\, ,\quad s\in \mathbb X^*\, ,
\end{equation}
where $\lambda$ is the Lebesgue measure 
 if $\mathbb X=\mathbb R$
 and where $\lambda$
is the counting measure 
 if $\mathbb X=\mathbb Z$.
We also set
$$C^{m}(g):=\sup_{s\in\mathbb X^*} \frac{|\widehat g(s)|}{\min(1,|s|^{-m})}\quad\mbox{and}\quad C_k(g):=\Vert \widehat g^{(k)}\Vert_\infty\, .$$
Observe that $C_k(g)\le \max_{0\leq j \leq k} 
\int_{\mathbb X}|x|^j|g(x)|\, d\lambda(x)$ if this last quantity is finite. When $\mathbb X=\mathbb R$ and $g$ is  $m$ times continuously differentiable, $C^m(g)\le \max_{0\leq j \leq m} \|g^{(j)}\|_{\text{L}^1 (\mathbb R)}$. When $\mathbb X=\mathbb Z$, $C^m(g)\le \pi^m\sum_{n\in\mathbb Z}|g(n)|$. Define $$C^m_k(g):=C^m(g)+C_k(g).$$ If $\mathbb X=\mathbb R$, we say $g \in \fF_k^m$ if $g:\mathbb R\rightarrow\mathbb R$ is continuous, $\lambda$-integrable
and if $\hat g:\mathbb X^*\rightarrow\mathbb C$ is $k$ times continuously differentiable with $C^m_k(g) < \infty$. In particular, if $\mathbb X=\mathbb R$, 
compactly supported smooth functions
are in $\fF_k^m$ for all $k,m$. If $\mathbb X=\mathbb Z$, $\fF_k^m=\fF_k^0$ is the set of functions $g:\mathbb Z\rightarrow \mathbb C$ satisfying the following summability condition
\[
\sum_{n\in\mathbb Z}|n|^k|g(n)|<\infty\, .
\]

Under our assumptions, we set
$\mathfrak N$ for the distribution function of a centered Gaussian random variable with variance $\sigma
^2$ and  $\mathfrak n$ for the corresponding probability density function (that is $\mathfrak n$ is the derivative of $\mathfrak N$).

\subsection{Main abstract results}

Here we state the three main abstract results of this paper.



\begin{thm}[Global expansion of order $r$ in the MLCLT]\label{WeakGlobalExp}
Let $(S_n)_{n\ge 1}$,$(\psi_n)_{n\ge 1}$ and $(\xi_n)_{n\ge 1}$ be three sequences of real valued random variables defined on the same probability space $(\cM,\mu)$,
with $S_n$ taking values in $\mathbb X$. 
Let $r$ be a nonnegative integer. 

Suppose the Assumptions 
$(\alpha)[r]$, $(\beta)$ and $(\gamma)$ hold. Then there exist polynomials $R_j$ such that
$$\EXP_\mu\left(\psi_n\, g(S_n)\, \xi_n\right) =\sum_{j=0}^{r}
\frac{1}{ n^{(j-1)/2}}\int_{\mathbb X} (R_j\cdot\fn)(x/\sqrt{n}) g(x) \, d\lambda(x) + C^{q+2}(g) \cdot o(n^{-r/2})\, ,$$ for all $g \in \fF^{q+2}_{0}$ where $q>\alpha\big(1+\frac{r}{2 \alpha_1}\big)$.
\end{thm}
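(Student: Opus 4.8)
The plan is to run the classical Fourier-analytic derivation of Edgeworth-type expansions (cf.~\cite[Chapter~2]{BR1}), splitting the frequency domain into three ranges controlled respectively by $(\alpha)[r]$, $(\beta)$ and $(\gamma)$, and feeding the characteristic-function expansion of \Cref{CharFun} into the central range to produce the polynomials $R_j$. Since $g\in\fF^{q+2}_{0}$ gives $|\widehat g(s)|\le C^{q+2}(g)\min(1,|s|^{-(q+2)})$ and $q+2>1$, the function $\widehat g$ is integrable on $\mathbb X^*$; Fourier inversion and Fubini then yield
$$\EXP_\mu\!\left(\psi_n\, g(S_n)\, \xi_n\right)=\frac1{2\pi}\int_{\mathbb X^*}\widehat g(s)\,\EXP_\mu\!\left(\psi_n e^{isS_n}\xi_n\right)\,ds\, .$$
I would decompose $\mathbb X^*=\{|s|\le\delta\}\cup\{\delta<|s|\le K_0\}\cup\{|s|>K_0\}$, with $\delta$ as in $(\alpha)[r]$, with $K_0\ge\max(1,K)$ large enough that the middle band is an admissible compact for $(\beta)$, and with the third band empty when $\mathbb X=\mathbb Z$.

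The middle and high bands are disposed of by soft bounds. On $\{\delta<|s|\le K_0\}$, Assumption $(\beta)$ bounds the integrand by $a_n\Vert\widehat g\Vert_\infty$ with $(a_n)\in\mathcal{SP}$, so that piece is $\le\tfrac{K_0-\delta}{\pi}\,a_n\Vert\widehat g\Vert_\infty=C^{q+2}(g)\,o(n^{-r/2})$, using $\Vert\widehat g\Vert_\infty\le C^{q+2}(g)$. On $\{|s|>K_0\}$ (present only for $\mathbb X=\mathbb R$) I would insert $(\gamma)$ together with $|\widehat g(s)|\le C^{q+2}(g)|s|^{-(q+2)}$: the $a_n$-part is super-polynomially small because $\int_{|s|>K_0}|s|^{-(q+2)}\,ds<\infty$, while the stretched-exponential part is at most a constant times
$$C^{q+2}(g)\int_{|s|>K_0}|s|^{1+\alpha-(q+2)}\,e^{-n^{\alpha_1}\hat\delta|s|^{-\alpha}}\,ds\, ,$$
and the substitution $w=n^{\alpha_1}\hat\delta|s|^{-\alpha}$ turns this integral into $O\!\big(n^{-\alpha_1(q/\alpha-1)}\big)$. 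The hypothesis $q>\alpha\big(1+\tfrac{r}{2\alpha_1}\big)$ is precisely what makes $\alpha_1(q/\alpha-1)>r/2$, so this piece is again $C^{q+2}(g)\,o(n^{-r/2})$. I expect this high-frequency estimate to be the main obstacle: the bound in $(\gamma)$ is not uniformly small in $s$ (indeed the displayed integral is finite only thanks to the polynomial decay of $\widehat g$), so one must balance that decay against the stretched exponential and keep track of the resulting exponent---this is exactly where the precise form of the hypothesis on $q$ is forced.

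On the central band $\{|s|\le\delta\}$ I would invoke the characteristic-function expansion of \Cref{CharFun}: under $(\alpha)[r]$ there are polynomials $P_0,\dots,P_r$, obtained from $\sigma^2$ and $B_0,\dots,B_{r+2}$ by Taylor-expanding $\log\lambda(is)$ and $H_n$ at $0$ and regrouping by half-powers of $n$, such that
$$\EXP_\mu\!\left(\psi_n e^{isS_n}\xi_n\right)=e^{-n\sigma^2 s^2/2}\sum_{j=0}^{r}\frac{P_j(s\sqrt n)}{n^{j/2}}+\mathcal E_n(s)\qquad(|s|\le\delta),$$
with $\int_{|s|\le\delta}|\mathcal E_n(s)|\,ds=o(n^{-r/2})$, so the $\mathcal E_n$-term contributes $\le C^{q+2}(g)\int_{|s|\le\delta}|\mathcal E_n|=C^{q+2}(g)\,o(n^{-r/2})$. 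For each term of the main sum I would first replace $\int_{\{|s|\le\delta\}}$ by $\int_{\mathbb R}$ at the cost of a super-polynomially small multiple of $C^{q+2}(g)$ (Gaussian tail after the change $s=t/\sqrt n$), and then use Plancherel together with the transform identity $\widehat{(R_j\cdot\fn)}(s)=\wt R_j(s)\,e^{-\sigma^2 s^2/2}$ for a polynomial $\wt R_j$ (a Hermite-type computation; this identity is how $R_j$ is \emph{defined}, via $P_j(\,\cdot\,)=\wt R_j(-\,\cdot\,)$) to identify this term with the corresponding summand $\frac{1}{n^{(j-1)/2}}\int_{\mathbb X}(R_j\cdot\fn)(x/\sqrt n)\,g(x)\,d\lambda(x)$ of the asserted expansion. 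When $\mathbb X=\mathbb Z$ one argues similarly after writing $\widehat g(s)=\sum_m g(m)e^{-ism}$ and applying Poisson summation, the nonzero lattice translates contributing a super-polynomially small multiple of $\Vert\widehat g\Vert_\infty\lesssim C^{q+2}(g)$. Summing the central, middle and (when present) high-band contributions then gives the stated expansion, with every remainder of the uniform form $C^{q+2}(g)\,o(n^{-r/2})$; in particular $R_0$ is the constant $B_0$, so for $\psi_n\equiv\xi_n\equiv 1$ one recovers the MLCLT leading term $\fn(0)I(g)/\sqrt n$.
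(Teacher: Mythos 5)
Your proposal is correct and follows essentially the same Fourier-analytic route as the paper: inversion formula, three-band frequency decomposition governed by $(\alpha)$/$(\beta)$/$(\gamma)$ respectively, the characteristic-function expansion of \Cref{lem0} on the central band, and the Hermite-type identity $A_j(s)e^{-\sigma^2 s^2/2}=\widehat{R_j\cdot\fn}(s)$ (with Poisson summation for $\mathbb X=\mathbb Z$). The only cosmetic difference is in the treatment of the small-$s$ band: the paper further cuts it at $|s|<\sqrt{c\log n/n}$ and uses the uniform bound from $(\alpha)[r]$ on the annulus $\sqrt{c\log n/n}<|s|<\delta$, while you apply the full expansion with remainder $\mathcal E_n$ directly on all of $\{|s|\le\delta\}$ and verify $\int_{|s|\le\delta}|\mathcal E_n|=o(n^{-r/2})$ from the explicit form of $\mathfrak r_n$ — both lead to the same conclusion with the same hypothesis on $q$.
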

\begin{thm}[Local expansion of order $r+1$ in the MLCLT]\label{WeakLocalExp}
Let $(S_n)_{n\ge 1}$,$(\psi_n)_{n\ge 1}$ and $(\xi_n)_{n\ge 1}$ be three sequences of real valued random variables defined on the same probability space $(\cM,\mu)$ with $S_n$ taking values in $\mathbb X$
Let $r$ be a nonnegative integer. 

Suppose the Assumptions 
$(\alpha)[r]$, $(\beta)$ and $(\gamma)$ hold. Then there exist polynomials $Q_j$ such that
$$\sqrt{n}\EXP_\mu\left(\psi_n\, g(S_n)\, \xi_n\right) =\sum_{j=
0}^{\lfloor r/2 \rfloor} \frac{1}{n^{j}}\int_{\mathbb X}g(x )Q_j(x) \, d\lambda(x)+ C^{q+2}_{r+1}(g) \cdot o(n^{-r/2})\, ,$$
for all $g \in \fF^{q+2}_{r+1}$ where $q>\alpha\big(1+\frac{r+1}{2 \alpha_1}\big) $.
\end{thm}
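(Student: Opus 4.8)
The plan is to run a Fourier-inversion argument on $\EXP_\mu(\psi_n g(S_n)\xi_n)$, splitting the frequency according to the three hypotheses. Since $g\in\fF^{q+2}_{r+1}$ forces $|\widehat g(s)|\le C^{q+2}(g)\min(1,|s|^{-(q+2)})$ with $q+2>1$, the transform $\widehat g$ is integrable on $\mathbb X^*$, and Fourier inversion together with Fubini gives
\[
\EXP_\mu\bigl(\psi_n g(S_n)\xi_n\bigr)=\frac1{2\pi}\int_{\mathbb X^*}\widehat g(s)\,\EXP_\mu\bigl(\psi_n e^{isS_n}\xi_n\bigr)\,ds .
\]
I would split $\int_{\mathbb X^*}=\int_{|s|\le\delta}+\int_{\delta\le|s|\le K}+\int_{|s|>K}$, with $\delta$ as in $(\alpha)[r]$ and $K$ a large cutoff; when $\mathbb X=\mathbb Z$ the last range is empty, which is why $q$ is then immaterial (indeed $\fF^{q+2}_{r+1}=\fF^0_{r+1}$ in that case).

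On the intermediate annulus $\delta\le|s|\le K$, Assumption $(\beta)$ bounds $\sup_s|\EXP_\mu(\psi_n e^{isS_n}\xi_n)|$ by some $(a_n)\in\mathcal{SP}$, so (using $\|\widehat g\|_\infty\le C^{q+2}(g)$) this range contributes $C^{q+2}(g)\cdot\cO(a_n)$, which is negligible even after multiplication by $\sqrt n$. On the far range $|s|>K$ (hence $\mathbb X=\mathbb R$), Assumption $(\gamma)$ bounds $|\EXP_\mu(\psi_n e^{isS_n}\xi_n)|$ by $K_1\bigl(a_n+|s|^{1+\alpha}e^{-n^{\alpha_1}\hat\delta|s|^{-\alpha}}\bigr)$; after multiplying by $|\widehat g(s)|\le C^{q+2}(g)|s|^{-(q+2)}$, the $a_n$-term is again harmless, and in the main part the substitution $u=|s|^{-\alpha}$ turns $\int_{|s|>K}|s|^{\alpha-q-1}e^{-n^{\alpha_1}\hat\delta|s|^{-\alpha}}\,ds$ into an incomplete Gamma integral of size $\cO\bigl(n^{-\alpha_1(q/\alpha-1)}\bigr)$; multiplied by $\sqrt n$ this is $o(n^{-r/2})$ precisely because $q>\alpha\bigl(1+\tfrac{r+1}{2\alpha_1}\bigr)$, and this inequality enters nowhere else.

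The heart of the matter is the window $|s|\le\delta$, where $\EXP_\mu(\psi_n e^{isS_n}\xi_n)=\lambda(is)^nH_n(s)$. Here I would feed in the characteristic function expansion of Section~\ref{CharFun} (the working form of $(\alpha)[r]$): after rescaling $s=t/\sqrt n$ it writes $\EXP_\mu(\psi_n e^{itS_n/\sqrt n}\xi_n)=e^{-\sigma^2 t^2/2}\sum_{l=0}^{r+1}n^{-l/2}P_l(t)+(\text{error})$ for $|t|\le\eps_n$ with $\eps_n\to\infty$, the sub-Gaussian bound of $(\alpha)[r]$ taking over on $\eps_n\le|t|\le\delta\sqrt n$; crucially each $P_l$ is a polynomial of parity $l$ (a consequence of the reality of $\psi_n,\xi_n,S_n$), which is what makes the half-integer powers cancel below. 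Since
\[
\sqrt n\int_{|s|\le\delta}\widehat g(s)\,\EXP_\mu\bigl(\psi_n e^{isS_n}\xi_n\bigr)\,ds=\int_{|t|\le\delta\sqrt n}\widehat g\!\left(\tfrac t{\sqrt n}\right)\EXP_\mu\bigl(\psi_n e^{itS_n/\sqrt n}\xi_n\bigr)\,dt ,
\]
I would Taylor-expand $\widehat g$ at $0$ to order $r$, $\widehat g(t/\sqrt n)=\sum_{k=0}^{r}\tfrac{\widehat g^{(k)}(0)}{k!}(t/\sqrt n)^k+R(t/\sqrt n)$ with $|R(u)|\le\tfrac{C_{r+1}(g)}{(r+1)!}|u|^{r+1}$ (this is where $\widehat g\in C^{r+1}$, i.e.\ $g\in\fF^{q+2}_{r+1}$, is used): bounding the $R$-term by the sub-Gaussian estimate (for the $b_n$-part) and the crude bound over $[-\delta\sqrt n,\delta\sqrt n]$ (for the $a_n$-part) yields a contribution $C_{r+1}(g)\cdot o(n^{-r/2})$, the extra factor $(t/\sqrt n)^{r+1}$ supplying the $n^{-(r+1)/2}$ needed to overcome the subpolynomial growth of $(b_n)\in\mathcal{LP}$. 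For the remaining finite double sum I would integrate term by term, completing $\int_{|t|\le\delta\sqrt n}$ to $\int_{\mathbb R}$ at cost $\cO(e^{-cn})$: a term $\widehat g^{(k)}(0)\,n^{-(k+l)/2}\!\int_{\mathbb R}t^k P_l(t)e^{-\sigma^2 t^2/2}\,dt$ vanishes unless $k+l$ is even by symmetry of the Gaussian, in which case the prefactor $\sqrt n$ places it exactly on the integer power $n^{-(k+l)/2}$; those with $(k+l)/2>\lfloor r/2\rfloor$ are $o(n^{-r/2})$ and absorbed, while for the rest, since $\widehat g^{(k)}(0)=\int_{\mathbb X}(-ix)^k g(x)\,d\lambda(x)$, collecting the coefficient of $n^{-j}$ gives $\int_{\mathbb X}g(x)Q_j(x)\,d\lambda(x)$ for a fixed polynomial $Q_j$ (real-valued, since the left-hand side is real for real $g$ and an asymptotic expansion is unique). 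Taking the truncation order in Section~\ref{CharFun} ($=r+1$) large enough makes every leftover term and error $o(n^{-r/2})$ after the factor $\sqrt n$, with linear dependence on $C^{q+2}_{r+1}(g)$ at each step.

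I expect the main obstacle to be the error bookkeeping in the window $|s|\le\delta$: one must verify that each of the three error sources — the truncation error of the expansion of Section~\ref{CharFun}, the Taylor remainder of $\widehat g$, and the completion of the Gaussian integrals to $\mathbb R$ — is genuinely $o(n^{-r/2})$ after multiplication by $\sqrt n$. The subtle point is that $(b_n)\in\mathcal{LP}$ is not bounded but only grows slower than every positive power, so every estimate must retain a strictly better than $n^{-r/2}$ margin; this is exactly what the extra order in the Taylor expansion of $\widehat g$ (i.e.\ working in $\fF^{q+2}_{r+1}$ rather than $\fF^{q+2}_r$) and the extra $+1$ in the condition $q>\alpha\bigl(1+\tfrac{r+1}{2\alpha_1}\bigr)$ are designed to furnish, and arranging for all constants to come out linear in $C^{q+2}_{r+1}(g)$ takes some care.
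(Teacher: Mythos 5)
Your proposal follows the paper's proof faithfully: the same Fourier-inversion split into three frequency ranges handled by $(\alpha)$, $(\beta)$ and $(\gamma)$ respectively, the same use of the characteristic-function expansion of Lemma~\ref{lem0} (with its even/odd polynomials $A_j$) in the central window, the same order-$r$ Taylor expansion of $\widehat g$ whose remainder is controlled by $C_{r+1}(g)$ and supplies the crucial extra factor $n^{-(r+1)/2}$, the parity argument that kills the $k+l$ odd terms, and the re-expression of $\widehat g^{(k)}(0)$ as moments of $g$ to produce the polynomials $Q_j$. The only cosmetic difference is that the paper imports the intermediate/large-frequency estimates and the $\sqrt{c'\log n}$-cutoff directly from Step~1 of the proof of Theorem~\ref{WeakGlobalExp}, whereas you rederive them from scratch; the content is identical.
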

\vspace{5pt}

\begin{rem} When $\bbX=\integers$, the two theorems above are true for $g\in \fF^{0}_0$ and $g\in \fF^{0}_{r+1}$, respectively. Later when we discuss these results in the setting of a specific example $($see \Cref{StongExpOrd1SFT}, \Cref{EdgeExpforExpTowers}, \Cref{EdgeExpforTowers} and \Cref{Ord1RMP}$)$, we only mention the condition on $q$ corresponding to $\bbX=\reals$. For $\bbX=\integers$, it is understood that $q+2=0$. 
\end{rem}

\begin{rem}
Note that the second result is {\it local} because we consider rapidly decaying $g$ and hence, the contribution away from the origin is negligible whereas in the first for large $n$ even values of $g$ further away from the origin contributes significantly (and hence, {\it global}). Moreover, in both the cases we have precise control over the error in terms of $g$. 
\end{rem}

\begin{rem}
Observe that assumptions of the our two first above results are closed to the optimal
moment assumptions in the i.i.d.~setting. Indeed, the $C^{r+2}$ smoothness coming from Assumption $(\alpha)[r]$ is the spectral equivalent of the existence of a moment of order $r+2$ in the i.i.d.~setting.
\end{rem}

The third and the last main theorem is on Edgeworth expansions which provide a uniform control over the error term in the CLT for $S_n$ when it is non-lattice.

\begin{thm}\label{StongExp}
Let $(S_n)_{n\ge 1}$,$(\psi_n)_{n\ge 1}$ and $(\xi_n\equiv 1)_{n\ge 1}$ be three sequences of real valued random variables defined on a same probability space $(\cM,\mu)$. 
Let $r'$ be a positive integer and $r\ge 1$ be a real number. Let $\mathbb P_n$ be the probability measure on $\mathcal M$ admitting the density $\psi_n$ with respect to $\mu$.

Suppose the Assumptions 
$(\alpha)[r']$,
$(\beta)$ and $(\delta)[r]$ hold. Then there exist polynomials $P_k$ such that
$$\Prob_n\left(\frac{S_n}{\sqrt{n}}\leq x\right) = \fN(x) + \fn(x)\sum_{k=1}^{
\min(r',\lfloor r \rfloor)} \frac{P_k(x)}{n^{k/2}}+o(n^{-\min(r,r')/2})\, ,$$
uniformly in $x$.
\end{thm}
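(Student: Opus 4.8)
The plan is to run the classical route to Edgeworth expansions: deduce the expansion of the distribution function of $S_n/\sqrt n$ from the asymptotic expansion of its characteristic function (the key result of \Cref{CharFun}) by means of the Esseen smoothing inequality, splitting the Fourier integral into three frequency ranges controlled, respectively, by Assumptions $(\alpha)[r']$, $(\beta)$ and $(\delta)[r]$. Since $\psi_n$ is a probability density and $\xi_n\equiv 1$ one has $\mathbb{E}_\mu(\psi_n)=1$; write $F_n(y):=\Prob_n(S_n\le y)$ and $\phi_n(t):=\mathbb{E}_\mu(\psi_n e^{itS_n/\sqrt n})$, so that $\Prob_n(S_n/\sqrt n\le x)=F_n(x\sqrt n)$ is the distribution function with Fourier--Stieltjes transform $\phi_n$. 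Set $m:=\min(r,r')$ and $\ell:=\min(r',\lfloor r\rfloor)$; the goal is reduced to $\sup_x|F_n(x\sqrt n)-G_n(x)|=o(n^{-m/2})$ for a suitable comparison function $G_n$.

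First I would build $G_n$ from the characteristic-function expansion. Invoking \Cref{CharFun} (which uses only $(\alpha)[r']$), writing $s=t/\sqrt n$ and expanding $\lambda(it/\sqrt n)^n=e^{-\sigma^2 t^2/2}\bigl(1+\sum_{k\ge1}c_k(it)\,n^{-k/2}\bigr)$, one obtains polynomials $p_1,\dots,p_{r'}$, \emph{independent of $n$} — here the super-polynomial estimate $H_n^{(j)}(0)=B_j+\cO(a_n)$ in $(\alpha)[r']$ is what allows one to replace the $n$-dependent Taylor coefficients of $H_n$ by their limits at the cost of a remainder still $o(n^{-r'/2})$ — with $p_k(0)=0$ (forced by $\phi_n(0)=1$), such that uniformly for $|t|\le\delta\sqrt n$
\[
\phi_n(t)=e^{-\sigma^2 t^2/2}\Bigl(1+\sum_{k=1}^{r'}\frac{p_k(it)}{n^{k/2}}\Bigr)+R_n(t),\qquad |R_n(t)|\le\varepsilon_n\,\theta(t),
\]
where $\varepsilon_n=o(n^{-r'/2})$, $R_n(0)=0$ and $\theta(t)=C(1+|t|)^{N}e^{-ct^2}$ for suitable $C,N,c>0$. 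By Fourier inversion there are polynomials $P_k$ (built from Hermite polynomials) with $\int_{\mathbb R}e^{itx}\,d[\fn(x)P_k(x)]=e^{-\sigma^2 t^2/2}p_k(it)$, and I set
\[
G_n(x):=\fN(x)+\fn(x)\sum_{k=1}^{\ell}\frac{P_k(x)}{n^{k/2}},
\]
so that $G_n(-\infty)=0$, $G_n(+\infty)=1$, $\sup_x|G_n'(x)|$ is bounded uniformly in $n$, and $G_n$ has Fourier--Stieltjes transform $\gamma_n(t)=e^{-\sigma^2 t^2/2}\bigl(1+\sum_{k=1}^{\ell}p_k(it)\,n^{-k/2}\bigr)$.

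Next I would apply the Esseen smoothing inequality for functions of bounded variation (\cite[Ch.~XVI]{Feller2}, \cite[Ch.~2]{BR1}): for every $T>0$,
\[
\sup_x\bigl|F_n(x\sqrt n)-G_n(x)\bigr|\le\frac1\pi\int_{-T}^{T}\Bigl|\frac{\phi_n(t)-\gamma_n(t)}{t}\Bigr|\,dt+\frac{c_0}{T}\sup_x|G_n'(x)|,
\]
and split the integral at $|t|=\delta\sqrt n$ and $|t|=K\sqrt n$, with $K$ the constant from $(\delta)[r]$ and $T=T_n\le Bn^{r/2}$ to be chosen. On $|t|\le\delta\sqrt n$, the expansion above gives $\phi_n-\gamma_n=e^{-\sigma^2 t^2/2}\sum_{k=\ell+1}^{r'}p_k(it)\,n^{-k/2}+R_n$; every surviving index satisfies $k\ge\lfloor r\rfloor+1>r\ge m$ and $\varepsilon_n=o(n^{-r'/2})$ with $r'\ge m$, while $p_k(0)=R_n(0)=0$ keeps the integrand continuous at $0$, so integration against $dt/|t|$ over $\mathbb R$ gives $o(n^{-m/2})$. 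On $\delta\sqrt n<|t|\le K\sqrt n$, i.e. $\delta\le|s|\le K$, Assumption $(\beta)$ applied to the compact $\{\delta\le|s|\le K\}$ bounds $|\phi_n|$ by some $a_n\in\mathcal{SP}$ and $\gamma_n$ is super-polynomially small, so dividing by $|t|\ge\delta\sqrt n$ and integrating over a set of length $O(\sqrt n)$ gives $O(a_n)=o(n^{-p})$ for all $p$. On $K\sqrt n<|t|\le T_n$ the substitution $s=t/\sqrt n$ turns the $\phi_n$-part into $\int_{K<|s|<Bn^{(r-1)/2}}|\mathbb{E}_\mu(\psi_n e^{isS_n})|\,|s|^{-1}\,ds=o(n^{-r/2})$ by $(\delta)[r]$, plus the super-polynomially small tail of $\gamma_n$, hence $o(n^{-m/2})$ since $r\ge m$. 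Finally, if $m<r$ take $T_n=n^{m/2}\omega_n$ with $\omega_n\to\infty$ slowly enough that $T_n\le Bn^{r/2}$, so $c_0 T_n^{-1}\sup|G_n'|=o(n^{-m/2})$ and all contributions combine to $o(n^{-m/2})$; if $m=r$ (so $r'\ge r$) take $T_n=Bn^{r/2}$, whence $\limsup_n n^{r/2}\sup_x|F_n(x\sqrt n)-G_n(x)|\le C'/B$ for every $B>0$, again giving $o(n^{-r/2})$. This yields the stated expansion with the $n$-independent polynomials $P_k$ of the previous step.

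The substantive difficulty is the first step: producing the characteristic-function expansion with a remainder that is simultaneously of the correct order $o(n^{-r'/2})$ in $n$ and dominated by a \emph{fixed} integrable Gaussian-type envelope uniformly on the growing interval $|t|\le\delta\sqrt n$. This is precisely the content of \Cref{CharFun} and rests on the $C^{r'+2}$ regularity together with the super-polynomial estimates in $(\alpha)[r']$, exploited through the factorization $H_n(s)=\lambda(is)^{-n}\mathbb{E}_\mu(\psi_n e^{isS_n})$. Within the present argument the only delicate point is the exponent bookkeeping: the frequency cutoff $T_n$ must stay inside the range $|s|<Bn^{(r-1)/2}$ where $(\delta)[r]$ is available while still making the smoothing remainder $c_0/T_n$ negligible — a tension resolved, in the boundary case $m=r$, by using the freedom in the constant $B$ in $(\delta)[r]$.
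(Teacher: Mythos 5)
Your proof follows essentially the same route as the paper's: invoke the characteristic-function expansion of Lemma~\ref{lem0} (using $(\alpha)[r']$, and the fact that $\psi_n$ is a density so $e_0(n)=0$, giving \eqref{CharExpErr2}), then apply the Esseen smoothing inequality with the comparison function built from the polynomials $P_k$ of \eqref{EdgePolyRel}, and split the Fourier integral at $|s|=\delta$, $|s|=K$ and $|s|=Bn^{(r-1)/2}$, handling the three regimes with \eqref{CharExpErr2}, $(\beta)$, and $(\delta)[r]$ respectively, with the free constant $B$ absorbing the $c_0/T$ smoothing term. The only cosmetic difference is that you truncate the comparison function $G_n$ at $\ell=\min(r',\lfloor r\rfloor)$ terms and account for the discarded tail $k>\ell$ in the small-frequency estimate, whereas the paper works with the full $r'$-term sum $\cE_{r',n}$ throughout and drops the superfluous terms at the end; both bookkeeping choices are correct.
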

For completeness, let us indicate that the $R_j$'s, $Q_j$'s and $P_k$'s appearing in Theorems~\ref{WeakGlobalExp}, \ref{WeakLocalExp} and~\ref{StongExp} are given respectively by~formulas \eqref{StronExpPolyDer}, \eqref{Qm} and \eqref{EdgePolyRel}.

Finally, we state two Corollaries about the first order Edgeworth expansions. We note that for the first order expansion with an error of $o(n^{-1/2})$, only the assumptions $(\alpha)[1]$ and $(\beta)$ are required. 

\begin{cor}[Order 1]\label{StongExpOrd1}
Let $(S_n)_{n\ge 1}$,$(\psi_n)_{n\ge 1}$ and $(\xi_n=1)_{n\ge 1}$ be three sequences of real valued random variables defined on a same probability space $(\cM,\mu)$. 
Let $\mathbb P_n$ be the probability measure on $\mathcal M$ that is absolutely continuous with respect to $\mu$ with probability density function $\psi_n$.

Suppose the Assumptions 
$(\alpha)[1]$ 
and $(\beta)$ hold with $\bbX = \reals$. Then 
$$\mathbb P_n \left(\frac{S_n}{\sqrt{n}}\leq x\right) = \fN(x) +  \frac{P_1(x)}{n^{1/2}}\fn(x)+o(n^{-1/2})\, ,$$
uniformly in $x$. 
\end{cor}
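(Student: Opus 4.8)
The plan is to read off Corollary~\ref{StongExpOrd1} from Theorem~\ref{StongExp} as the special case $r=r'=1$, the only work being to check that, in that regime, Assumption $(\delta)[1]$ is a free consequence of Assumption $(\beta)$. Indeed, with $r'=1$ and $r=1$ one has $\min(r',\lfloor r\rfloor)=1$ and $\min(r,r')/2=1/2$, so the conclusion of Theorem~\ref{StongExp} reads precisely
\[
\Prob_n\!\left(\tfrac{S_n}{\sqrt n}\le x\right)=\fN(x)+\fn(x)\,\frac{P_1(x)}{\sqrt n}+o(n^{-1/2})\qquad\text{uniformly in }x,
\]
which is the assertion of the corollary (with $P_1$ the polynomial given by~\eqref{EdgePolyRel} for $k=1$). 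Since Assumptions $(\alpha)[1]$ and $(\beta)$ are granted and $\bbX=\reals$, it therefore suffices to verify that $(\delta)[1]$ holds automatically.

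The point is that at $r=1$ the window of integration $K<|s|<Bn^{(r-1)/2}$ in Assumption $(\delta)[r]$ collapses to the bounded annulus $K<|s|<B$, which does not depend on $n$. So, fixing $B>0$ and any $K\in(0,B)$, the set $\{s:K\le|s|\le B\}$ is a compact subset of $\bbX^{*}\setminus\{0\}=\reals\setminus\{0\}$, and Assumption $(\beta)$ (applied with $\xi_n\equiv1$) supplies a sequence $(a_n)_{n\ge1}\in\mathcal{SP}$ with $\sup_{K\le|s|\le B}\big|\EXP_\mu(\psi_n e^{isS_n})\big|\le a_n$. Hence
\[
\int_{K<|s|<B}\frac{\big|\EXP_\mu(\psi_n e^{isS_n})\big|}{|s|}\,ds\le 2\log\!\big(\tfrac{B}{K}\big)\,a_n=o(n^{-p})\quad\text{for every }p>0,
\]
in particular $o(n^{-1/2})$. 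This is exactly Assumption $(\delta)[1]$, so Theorem~\ref{StongExp} applies with $r=r'=1$ and yields the corollary.

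I do not expect a genuine obstacle here: the entire content of the corollary is already contained in Theorem~\ref{StongExp}, and the only thing worth isolating — indeed the reason the statement is worth recording — is that Assumption $(\delta)[r]$ only carries real information when $r>1$, because it is precisely the large-frequency control of the characteristic function over the \emph{growing} window $|s|\lesssim n^{(r-1)/2}$ that genuinely goes beyond non-arithmeticity; at $r=1$ that window is bounded and Assumption $(\beta)$ already does the job. If one preferred a proof not routed through Theorem~\ref{StongExp}, the same mechanism can be run directly through the Esseen smoothing inequality: truncate the Fourier integral at height $T_\eps\sqrt n$ with $T_\eps$ a fixed large constant (so the remainder term of the smoothing inequality is $\le\eps\,n^{-1/2}$), rescale $s\mapsto s/\sqrt n$ so that the characteristic-function integral runs over a \emph{fixed} compact set, estimate the small-frequency part from the order-$1$ characteristic-function expansion of Section~\ref{CharFun} provided by $(\alpha)[1]$ and the remaining compact part from $(\beta)$, and let $\eps\to0$; but invoking Theorem~\ref{StongExp} is cleaner.
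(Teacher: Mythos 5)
Your proof is correct and takes essentially the same route as the paper: apply Theorem~\ref{StongExp} with $r'=r=1$ after checking that Assumption $(\delta)[1]$ is automatically satisfied. The only cosmetic difference is in verifying $(\delta)[1]$: you pick $K\in(0,B)$ and invoke $(\beta)$ on the resulting compact annulus, whereas the paper simply chooses $K=B$, making the range of integration empty and the estimate trivial without invoking $(\beta)$ at all.
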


If we have slightly better control, that is $(\alpha)[2]$, $(\beta)$ and $(\delta)[r]$ with $r\in (1,2)$,  then the error in the expansion improves to $o(n^{-r/2})$ but could not be better in general because the second term in the expansion is $\cO(n^{-1})$. 

\begin{cor}\label{BetterExpOrd1}
Let $(S_n)_{n\ge 1}$,$(\psi_n)_{n\ge 1}$ and $(\xi_n=1)_{n\ge 1}$ be three sequences of real valued random variables defined on a same probability space $(\cM,\mu)$ 
Let $\mathbb P_n$ be the probability measure on $\mathcal M$ that is absolutely continuous with respect to $\mu$ with probability density function $\psi_n$.

Suppose the Assumptions 
$(\alpha)[2]$,
$(\beta)$ and $(\delta)[r]$ hold for some real number $r \in( 1,2)$, 
Then
$$ \Prob_n \left(\frac{S_n}{\sqrt{n}}\leq x\right) =\fN(x)+\frac{P_1(x)}{\sqrt{n}} \fn(x)+
o\left(n^{- r/2 }\right),$$
uniformly in $x$. 
\end{cor}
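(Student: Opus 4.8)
\textbf{Proof plan for Corollary~\ref{BetterExpOrd1}.}
The strategy is to derive this as a special case of Theorem~\ref{StongExp}, exploiting the fact that when $r\in(1,2)$ the only relevant term in the Edgeworth sum is the first one. First I would apply Theorem~\ref{StongExp} with $r'=2$ (which is the order of smoothness furnished by $(\alpha)[2]$) and with the given $r\in(1,2)$. Since $\xi_n\equiv 1$, the hypotheses of Theorem~\ref{StongExp} are exactly $(\alpha)[2]$, $(\beta)$ and $(\delta)[r]$, which are assumed. The theorem then yields polynomials $P_k$ and the expansion
\[
\Prob_n\left(\frac{S_n}{\sqrt{n}}\le x\right)=\fN(x)+\fn(x)\sum_{k=1}^{\min(2,\lfloor r\rfloor)}\frac{P_k(x)}{n^{k/2}}+o\!\left(n^{-\min(r,2)/2}\right),
\]
uniformly in $x$. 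Because $r\in(1,2)$ we have $\lfloor r\rfloor=1$, so $\min(2,\lfloor r\rfloor)=1$ and the sum collapses to the single term $P_1(x)/\sqrt{n}$; moreover $\min(r,2)=r$, so the error is $o(n^{-r/2})$. This already gives the displayed formula.

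The remaining point to address is the claim, made in the sentence preceding the corollary, that the error "could not be better in general" because the second term is $\cO(n^{-1})$. This is a remark about optimality rather than part of the statement to be proved, but for completeness I would note the following: the polynomials $P_k$ are given explicitly by formula \eqref{EdgePolyRel}, and in the genuinely non-lattice i.i.d.\ model (to which the abstract setting specializes via $\mathbb E_\mu(e^{isS_n})=\lambda_{is}^n$, $\psi_n=\xi_n=1$) the coefficient $P_2$ is a nonzero polynomial whenever the fourth cumulant of $X_1$ is nonzero, so that $\fn(x)P_2(x)/n$ is a genuine $\cO(n^{-1})$ contribution that is not absorbed into an $o(n^{-r/2})$ error for any $r<2$. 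Thus the exponent $r/2$ in the error term is the best one obtainable from $(\alpha)[2]$ alone, and improving it would require the smoothness hypothesis $(\alpha)[r'']$ for some $r''>2$.

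\textbf{Main obstacle.} There is essentially no obstacle here: the corollary is a bookkeeping consequence of Theorem~\ref{StongExp}, and the only care needed is in tracking the floor functions and minima to see that the second-order term drops out precisely when $r<2$. The substantive work—the asymptotic expansion of the characteristic function and the Fourier-inversion argument that converts it into the uniform distributional expansion—is carried out once and for all in the proof of Theorem~\ref{StongExp} (and the characteristic-function expansion of Section~\ref{CharFun}), so nothing new is required at this stage.
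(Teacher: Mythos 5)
Your proposal is correct and follows exactly the same route as the paper: invoke Theorem~\ref{StongExp} with $r'=2$ and the given $r\in(1,2)$, observe that $\min(r',\lfloor r\rfloor)=1$ so only the $P_1$ term survives, and that $\min(r,r')=r$ gives the error $o(n^{-r/2})$. The extra paragraph on optimality of the exponent is not part of the paper's proof but is a reasonable side remark.
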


Finally, we recall from \cite[Appendix A]{FL} that there is a hierarchy of expansions. Suppose $r$ and $q$ are positive integers. Then the Figure~\ref{hierarchy} shows implications among different expansions. 

\begin{figure}[h!]
\centering
\begin{tikzcd}[/tikz/column 1/.append style={anchor=base east}]
\text{Edgeworth expansions:} & \text{order}\,\, r \arrow[d, Leftrightarrow] & \\
\text{global expansion in LCLT:} & \text{order}\,\, r\,\, \text{for}\,\, g \in \fF^{1}_0 \arrow[d, Rightarrow]  \rar[shorten <= 0.5em, shorten >= 0.5em] & \text{order}\,\, r\,\, \text{for}\,\, g \in \fF^{q}_{r} \arrow[d, Rightarrow] \\
\text{local expansion in LCLT:} & 
\text{order}\,\, r\,\, \text{for}\,\, g \in \fF^{1}_{r} \ar[r, rightarrow, shorten <= 0.5em, shorten >= 0.5em]  &
 \text{order}\,\, r\,\, \text{for}\,\, g \in \fF^{q}_{r} 
\end{tikzcd}
\caption{Hierarchy of expansions.}
\label{hierarchy}
\end{figure}
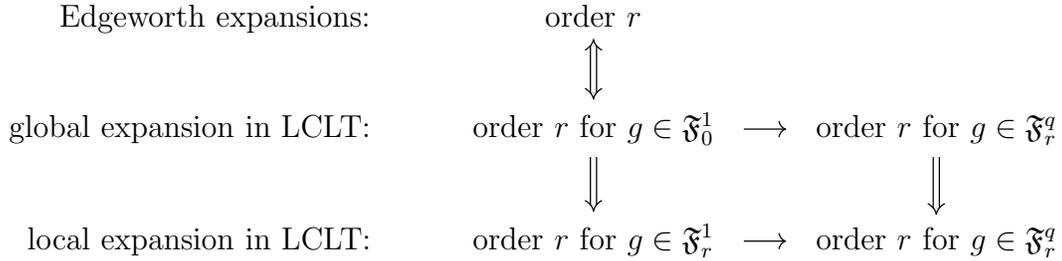

Here $\rightarrow$ indicates that the implication is obvious due to the inclusion of spaces $\fF^q_{r} \subseteq \fF^{q'}_{r'}$ if $r \geq r'$ and $q \geq q'$. One cannot expect expansions in the MLCLT to come from Edgeworth expansions because the former keeps track of both $S_n$ and $S_{n}-S_{n-1}$ whereas the latter keeps track of $S_n$ only. However, expansions in the MLCLT imply expansion in the LCLT in the obvious way. 
Even in the LCLT case, there are elementary examples where expansions in the LCLT of all orders exist but Edgeworth expansions of higher order fail to exist. We refer the reader to \cite{FL} for more details about this. 

As expected stronger control over the decay of $\EXP_\mu(\psi_n e^{isS_n})$
than the one provided by Assumption $(\delta)$ leads to stronger results. In fact, if we know that $\EXP_\mu(\psi_n e^{isS_n}) \leq Cs^{-\beta}$ for $|s|>n^{\ell}$ with $\beta > \frac{r+1}{2\ell}$ -- for example, when $S_n$ is close to a Gaussian and $\psi_n$ bounded -- then \Cref{WeakGlobalExp} and \Cref{WeakLocalExp} hold for $g \in \fF^1_{0}$ and $g \in \fF^1_{r+1}$, respectively. We refer the reader to \cite{FL} for a proof of this fact in the case $\psi_n \equiv \xi_n \equiv 1$.

\subsection{Technical assumptions in a dynamical context}\label{sec:hypo}

In this section, we state assumptions tailored for hyperbolic/dispersive dynamical systems and implying our previous assumptions $(\alpha)$, $(\beta)$, $(\gamma)$ and $(\delta)$
(see Propositions~\ref{lemmaABCgreek} and \ref{lemmaABCgreek1}). Recall that our goal is to study the case of 
Birkhoff sums $S_n=\sum_{k=0}^{n-1}\phi\circ f^k$ for some $\phi:\cM\rightarrow\mathbb R$ and for a PPDS $(f,\cM,\mu)$. So here we take $\psi_n=\psi$ and $\xi_n=\xi\circ f^n$.

In order to prove our general Assumptions $(\alpha)$, $(\beta)$, $(\gamma)$ and $(\delta)$, we use a natural and efficient strategy based on transfer operators \cite{NG1,GH}.
The key idea is to approximate $$\mathbb E_\mu(\psi e^{isS_n}\xi\circ f^n)$$ by
$$\mathbb E_{\bar\nu}(\bar\psi_{n,s} e^{is\bar S_{m}}\bar\xi_{n,s}\circ \bar F^{m})$$
where $m=m(n)\sim n$ and $\bar S_m=\sum_{k=0}^{m-1}\bar\phi\circ \bar F^k$
is a Birkhoff sum for a PPDS $(\bar F,\bar\Delta,\bar\nu)$ (which may be different from the initial on $(f,\cM,\mu)$) of which the transfer operator $\cL$ enjoys nice spectral properties. Recall that $\cL$ satisfies $\mathbb E_{\bar\nu}(g.h\circ \bar F)=\mathbb E_{\bar\nu}(h\cL(g))$. This implies the following key formula
\[
\mathbb E_{\bar\nu}(\bar\psi_{n,s} e^{is\bar S_{m}}\bar\xi_{n,s}\circ \bar F^{m})
=\mathbb E_{\bar\nu}(\bar\xi_{n,s} \cL_{is}^m(\bar\psi_{n,s}n))\, ,
\]
with $\cL_{is}(h)=\cL(e^{is\bar\phi}h)$.
Thus our strategy to prove $(\alpha)$, $(\beta)$, $(\gamma)$ and $(\delta)$
will be to prove Assumptions involving $\cL_{is}$.

Note that this classical approach has its equivalent in the case of additive functional of Markov processes (see \cite{HH,HP} and our application to random matrix products in Section~\ref{RWalks}). Indeed, if $S_n=\sum_{k=1}^{n}h_0(X_k)$ where $X_k$ is a Markov chain on $(\cM,\mu)$, then
\[
\mathbb E_{\mu}(h_1(X_0) e^{is S_{n}}h_2(X_n))
=\mathbb E_{\bar\nu}(h_1 \cL_{is}^n(h_2))\, ,\,\,
\mbox{with}\ \cL_{is}(h)=\mathbb E_\mu (e^{ish_0(X_1)}h(X_1)|X_0)\, .
\]

This approach has already been used for expansions in the CLT and in the LLT in \cite{FL} in the case when 
$(f,\cM,\mu)=(\bar F,\bar\Delta,\bar\nu)$ is an expanding PPDS. We generalize it here in two directions: first, our assumptions below are tailored to study hyperbolic systems, and second, we weaken the regularity assumptions on $\cL_{is}$
in order to treat also the case of functions $\phi$ not admitting moments of all orders.

In our series of assumptions below (and more precisely in assumptions $(A)$ and $(C)$), if $(f,\cM,\mu)=(\bar F,\bar\Delta,\bar\nu)$, we assume that $k=\vartheta^k=0$. Otherwise the assumptions have to hold for any $k\in\mathbb N$.

The first assumption describes the abstract model we work on. Those who are familiar with towers in \cite{Y}, while reading, may keep in mind the tower construction $(F,\Delta)$ associated to
a mostly hyperbolic map $(f,\cM)$ and the subsequent quotienting along stable directions to obtain an expanding tower $(\bar{F},\bar{\Delta})$.
\vspace{10pt}

\noindent 
{\bf Assumption $(0)$:}

Let $(f,\mathcal M,\mu)$, $(F, \Delta, \nu)$ and $(\bar{F}, \bar{\Delta}, \bar{\nu})$ be three PPDS such that $(F, \Delta, \nu)$ is an extension of the two others by $\mathfrak p:\Delta\rightarrow\mathcal M$ and $\bar{\mathfrak p} : \Delta \to \bar\Delta$, respectively (see the Figure~\ref{setup}). 

\begin{figure}[h!]
\centering
\begin{tikzcd}
             & (F, \Delta, \nu) \arrow[ld, "\bar{\mathfrak p}" ']  \arrow[dr, "{\mathfrak p}"] & \\
(\bar{F}, \bar{\Delta}, \bar{\nu}) &                         & (f, \mathcal M, \mu)
\end{tikzcd}
\caption{Associated dynamical systems.}
\label{setup}
\end{figure}
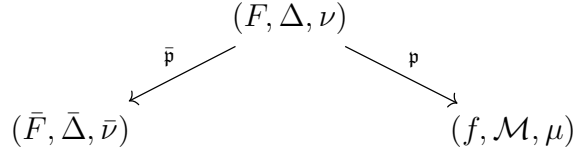

Let $\phi:\cM\rightarrow\mathbb X$ with $\mathbb X=\mathbb R$ or $\mathbb X=\mathbb Z$ be a centered observable, i.e., $\mathbb E_\mu(\phi)=0$. We further assume that $\phi$ is not a coboundary  in $L^2(\cM,\mu)$, i.e. $\phi\ne h-h\circ f$ for all $h\in L^2(\mu)$. Define $S_n:=\sum_{k=0}^{n-1}\phi\circ f^k$ and let $\psi,\xi:\mathcal M\rightarrow\mathbb R$ be two observables. Take $\cL$ to be the transfer operator of $\bar{F}$ with respect to $\bar{\nu}$.  For any complex Banach space $\cB \hookrightarrow L^{1}(\bar{\nu})$, we define $\|\cdot\|_{\cB'}$ by
$\|g\|_{\cB'}:=\sup_{\|h\|_{\cB} \leq 1}\big|\EXP_{\bar \nu}(g  h)\big|$. Here $\hookrightarrow$ denotes continuous embedding of spaces, i.e., $\cB \subset L^1(\bar\nu)$ and there exists $\mathfrak c >0$ such that $\|\cdot\|_{L^1(\bar\nu)} \leq \mathfrak c \|\cdot\|_{\cB}$.
\vspace{10pt}

The next assumption states conditions to ensure that $(\bar{F},\bar{\Delta})$ and $\cL$ retain sufficient information about $S_n$ upto a controlled error. Combined with the favourable properties of $(\bar{F},\bar{\Delta})$ and $\cL$, this assumption would lead to the expansions we seek. The introduction of a ``double chain of spaces" is crucial for our study of unbounded functions. For the study of bounded functions, we can work with a single Banach space $\cB$ and assume that $\cX_a=\cX_a^{(+)}=\cB$.
\vspace{10pt}


\noindent 
{\bf Assumption $(A)[r]$:} 

There exist $\delta>0$, $p_0\ge 1$ and a double chain $(\mathcal X_a,\mathcal X^{(+)}_{a})_{a=0,...,r+2}$ of complex Banach spaces containing ${\mathbf 1}_{\bar\Delta}$ and satisfying
\[
\forall a=0,...,r+1,\quad
\mathcal X_0\hookrightarrow\cX_a\hookrightarrow\cX^{(+)}_a\hookrightarrow \cX_{a+1}\hookrightarrow\mathcal X_{r+2}\hookrightarrow \cX_{r+2}^{(+)}
\hookrightarrow L^{p_0}(\bar\nu)
\]
and three non negative real numbers $r_0,q(\psi),q(\xi)$ with $r_0\ge r$,  $q(\xi)+q(\psi)\le r_0-r$, $\xi\in L^{\frac{r_0+2}{q(\xi)}}(\bar\nu)$ and $\psi\in L^{\frac{r_0+2}{q(\psi)}}(\bar\nu)$ such that the following holds true.

\begin{enumerate}\setlength\itemsep{12pt}
\item 
There exist  $\mathbb X$-valued functions $\chi\in  L^{r_0+2}(\Delta,\nu)$ and $\bar\phi \in L^{r_0+2}(\bar\Delta,\bar\nu)$ such that $\phi \circ \mathfrak{p} = \bar{\phi} \circ \bar{\mathfrak{p}} + \chi - \chi \circ F$.
\item Set $$\bar S_n:=\sum_{l=0}^{n-1}\bar\phi\circ\bar F^l,\,\,\, \cL_{is}(\cdot):=\cL(e^{is\bar\phi} \cdot )\,\,\,\text{for}\,\,\, s \in \reals,$$ and $$h_{k,s,H}:=(H\circ{\mathfrak p}\, e^{is\chi})\circ F^k e^{-is\bar S_k\circ{\bar{\mathfrak{p}}}}\,\,\,\text{for}\,\,\,H\in\{\psi,\xi\}.$$
Denote the $j$th derivative of a function with respect to $s$ by the superscript $(j)$.\\

\noindent
 There exist $\vartheta \in (0,1)$ and
$\bar h_{k,s,H}
:\bar\Delta\rightarrow\mathbb C$ where $H\in\{\psi,\xi\}$
 that are $C^{r+2}$ in $s$ such that 
for all $j=0, 1,...,r+2$ and for  $H\in\{\psi,\xi\}$,

\begin{align}
\Vert \bar h
_{k,s,H}\Vert_{L^{\frac {r_0+2}{j+q(H)}}(\bar\nu)}&\le
C_0
\, ,\label{hk0}
 \\
\Vert h^{(j)}_{k,s,H} -\bar h^{(j)}_{k,s,H}\circ\bar{\mathfrak p}\Vert_{L^{\frac{r_0+2}{j+q(H)}}(\nu)} &\le C_0 \vartheta^k (1+|s|)(1+k)^j,\, \label{hk1}\\
\left\Vert (\mathcal L_{is}^{2k}{\bar h_{k,s,\psi}})^{(j)}\right\Vert_{\cX_j}+\Vert \bar h^{(j)}_{k,s,\xi}\Vert_{(\cX_{r+2-j}^{(+)})'} &\le C_0(1+|s|)(1+ k)^j\, ,\label{hk2}
\end{align}

\item For any $a=0,...,r+2$:
\begin{itemize}
\item The operators  $\cL_{is}(\cdot)=\cL(e^{is\bar\phi} \cdot ),\ s \in \reals$ are bounded operators on $\cX_a$ and $\cX_a^{(+)}$,
\item The map $s \mapsto \cL_{is}\in\mathcal L(\cX_a,\cX_{a}^{(+)})$ is continuous,
\item For any integer $j=1,...,r+2-a$, the map $s \mapsto \cL_{is}\in\mathcal L(\cX_a^{(+)},\cX_{a+j})$ is $C^{j}$ on $(-\delta,\delta)$ 
with $j$-th derivative 
$(\cL^n_{is})^{(j)}:=\cL^n_{is}((i\bar S_n)^j\,\cdot\, )\in\mathcal L(\cX^{(+)}_a,\cX_{a+j}).$
\end{itemize}
\item Either all the sets $\cX_a,\cX_a^{(+)}$ are equal or 
there exist $\widetilde C>0$ and $\widetilde\kappa_1\in (0,1)$ such that, for every $\cX=\cX_a$ or $\cX=\cX_a^{(+)}$
\[
\forall h\in\cX,\quad \sup_{|s|<\delta}\Vert \cL_{is}^nh\Vert_{\cX}\le
\widetilde C\left(\widetilde\kappa_1^n\Vert h\Vert_{\cX}+\Vert h\Vert_{L^{p_0}(\bar\nu)}\right)\, .
\] 
\end{enumerate}
\vspace{10pt}

In addition to allowing $\phi$ unbounded, the Assumption $(A)[r]$ with $r_0> r$ allows us to consider unbounded test functions $\psi$ and $\xi$. Also, to make a link with the notations $\bar H_{n,s}=\bar\xi_{n,s},\bar\psi_{n,s}$ introduced at the beginning of Section~\ref{sec:hypo}, let us indicate that $\bar H_{n,s}=\bar h_{0,s,H}$ if $(f,\cM,\mu)=(\bar F,\bar\Delta,\bar\nu)$ and $\bar H_{n,s}=\bar h_{\lfloor (\log n)^2\rfloor,s,H}$ otherwise.

In the case of SFTs, Assumption $(A)(1)$ above is reminiscent of the well-known theorem due to Sinai that any H\"older function on the two-sided shift space is a function on the one-sided shift space upto a coboundary and upto some loss of regularity. It is, in fact, what allows us to compare $S_n$ with $\bar{S}_n$ and hence, make use of $\cL_{is}$. Assumption $(A)(2)$ states that the error made in this comparison is under control. When $(f,\cM,\mu)=(\bar{F},\bar\Delta,\bar\nu)$,
Assumptions $(A)(1)$ and $(A)(2)$ (except \eqref{hk2} for $j=0$) are vacuous because $\mathfrak p = \bar {\mathfrak p} = \text{Id}$, $\chi\equiv 0$, $\vartheta^k=0$ and we take $k=0$. 


Assumption $(A)(3)$ when the $\cX_a$ and $\cX_a^{(+)}$ are all equal is the standard assumption to implement the classical Nagaev-Guivarc'h perturbation method of bounded linear operators as in \cite{HH, Kato}. The uniform Doeblin-Fortet estimate contained in Assumption $(A)(4)$ will allow us to apply this perturbation method via the Keller Liverani theorem when the $\cX_a$ and the $\cX_a^{(+)}$ are not all equal. This approach has been used in \cite{HP}, in a Markovian context, to establish various limit theorems under moment assumptions very close to the optimal assumptions in the i.i.d.~setting.


Moreover, some favourable spectral properties of twisted transfer operators $\cL_{is}$ are assumed in order to use the Nagaev-Guivarc'h approach, \cite{NG1, GH}. However, we would require more control over the spectra because we seek higher order terms in the central limit theorem. This is our next assumption. 
\vspace{10pt}

\noindent 
{\bf Assumption $(B)$:}
\begin{enumerate}\setlength\itemsep{12pt}
\item The operator $\cL$ acting on  each $\cX_a$ and $\cX_a^{(+)}$ has an isolated and simple eigenvalue $1$, the rest of its spectrum is contained inside the disk of radius smaller than $1$ (spectral gap).
\item  For all $s\in \mathbb X^*\setminus\{0\}$, the spectrum of the operator $\cL_{is}$ acting on either $\cX_0$ or acting on $\cX_0^{(+)}$ is contained in $\{z\in \complex\ |\ |z|<1\}.$
\item $\sum_{n\ge 0}\Vert \cL^n\bar\phi\Vert_{L^2(\bar\nu)}<\infty$.
\end{enumerate} 
\vspace{10pt}

Observe that Assumption $(B)(3)$ is automatic as soon as $\cX_1\hookrightarrow L^2(\bar\nu)$ and Assumptions $(B)(1)$ and $(A)(3)$ are satisfied. Note that $(A)(3)$ implies that $\cL(\bar\phi)\in\cX_1$. This will be the case in most of our examples. Moreover, Assumptions $(A)[r]$ and $(B)$ will imply Assumptions $(\alpha)[r]$ and $(\beta)$ for $\psi_n=\psi$ and $\xi_n=\xi\circ f^n$.

Next two assumptions state how much control over $\cL_{is}$ is required for large values of $s$ in order to guarantee the existence of expansions.
\vspace{10pt}

\noindent 
{\bf Assumption $(C)$:} 

Either $\mathbb X=\mathbb Z$ or there exists $K>0$ such that:\vspace{-2pt}
\begin{enumerate}\setlength\itemsep{12pt}
\item There exist two complex Banach spaces $\cB_1\hookrightarrow\cB_2\hookrightarrow L^1(\bar\nu)$ both containing $\mathbf 1_{\bar\Delta}$, and real numbers
 $\alpha\ge 0$, $\alpha_1 \in (0, 1]$ and $\hat{\delta}>0$ 
and $n_1$ such that for every $\delta>0$, every
$|s|>K$ and every $n\ge n_1$,
$$\left\Vert \mathcal L_{is}^n\right\Vert_{\mathcal B_1 \rightarrow \mathcal B_2}\le C |s|^\alpha e^{-n^{\alpha_1}\hat\delta |s|^{-\alpha}}\, .$$
\item There exists $C'_0>0$ such that, for every $|s|>K$,
\begin{equation}
\label{hk2var}
\left\Vert \mathcal L_{is}^{2k}{\bar h_{k,s,\psi}}\right\Vert_{\cB_1} \le C'_0|s|\quad\mbox{and}\quad
\Vert {\bar h_{k,s,\xi}}\Vert_{\cB_2'}\le C'_0\, .
\end{equation}
\end{enumerate}
\vspace{15pt}


\noindent 
{\bf Assumption $(D)[r]$:} 

$\mathbb X=\mathbb R$ and there exist two complex Banach spaces $\cB_1\hookrightarrow\cB_2\hookrightarrow L^1(\bar\nu)$ and $d_1, d_2 \geq 0$ with $d_1+d_2=1$ such that for all $B>0$, there exists $K>0$ such that
\begin{enumerate}
\item $$
\int_{K<|s|<Bn^{(r-1)/2}}\frac{ \left\Vert \mathcal L_{is}^n\right\Vert_{\mathcal B_1 \rightarrow \cB_2} }{|s|^{d_1}}\, ds=  o(n^{-\frac r2})\,, 
$$
\item $$\sup_{K<|s|<Bn^{\frac{r-1}2},\ k\ge 1} |s|^{-d_2} \Vert \bar h_{k,-s,1}\Vert_{\cB_2^{\prime}}\, \left\Vert \mathcal L_{is}^{2k}(\bar h_{k,s,\psi})\right\Vert_{\mathcal B_1}<\infty.$$
\end{enumerate}
\vspace{10pt}

Assumption $(C)$ combined with Assumption $(A)[r]$ imply Assumption $(\gamma)$. In addition, Assumption $(D)[r]$ along with Assumption $(A)[r]$ imply $(\delta)[r]$ with $\psi_n=\psi$. Even though the Assumptions $(C)$ and Assumption $(D)$ seem technical, Assumption $(C)(1)$ and sufficient conditions for Assumption $(D)(1)$ appear naturally in the study of decay of correlation for hyperbolic flows and are implied from Diophantine conditions on periodic orbits of these flows (see \cite{D1, D2, D3, IM}). These ideas are discussed in greater detail in \Cref{C_D}. 

When $d_1=0$, $(C)(2)$ is sufficient for $(D)(2)$, and $(C)(2)$ readily follows from the way $\bar{h}_{k,s,H}$ is defined in our examples. In the case of $(f,M,\mu)=(\bar{F},\bar{\Delta},\bar{\nu})$, Assumption $(D)(2)$ is vacuous. In this case, $(D)(1)$ with $d_1=1$ and $\cB_1=\cB_2(=\cB)$ is already used in \cite{FL} to establish expansions for bounded observables. In particular, if there are some constants $\epsilon>0$ and $\ell>0$ such that 
\begin{equation}\label{StrngD2}
\Vert \psi\Vert_{\cB'}<\infty\quad\mbox{and}\quad
\sup_{|s|\in(K,\,Bn^{(r-1+\epsilon)/2})}\left\Vert \mathcal L_{is}^n\right\Vert_{\cB} = \cO( n^{-\ell})\,
\end{equation}
$(D)(1)$ holds for all $r\geq 1$ with $d_1=1$. 

Now we prove how our assumptions $(A)-(D)$ imply our previous assumptions $(\alpha)-(\delta)$. In what follows, $\cL(\cB_1,\cB_2)$ denotes the space of bounded linear operators from a Banach space $\cB_1$ to a Banach space $\cB_2$. When $\cB_1=\cB_2$, we write $\cL(\cB_1,\cB_1)$ as $\cL(\cB_1)$. We will also continue to use $\cL$ to denote the transfer operator of $(\bar F, \bar \Delta)$. The implied meaning of $\cL$ will be clear from the context.


\begin{prop}[follows from \cite{HP}]\label{propHP}
Suppose Assumptions $(0)$,
$(A)[r](1,3, 4)$ and $(B)(1,3)$ hold. Then
there exist $\kappa_1\in (0,1)$ and a family $(\lambda(is),\Pi_{is},\Lambda_{is})_{s\in(-\delta,\delta)}$ which is $C^{m}$-smooth as functions from $(-\delta,\delta)$ to $\mathbb C\times \mathcal L(\cX_j,\cX_{j+m}^{(+)})\times \mathcal L(\cX_j,\cX_{j+m}^{(+)})$
for any $0\le j\le j+m\le r+2$ such that
\begin{equation}\label{DecompOp3'}
\mathcal L_{is}^n=\lambda(is)^n\Pi_{is}+\Lambda_{is}^n\,\,\,\text{in}\,\,\,\bigcap_{a=0}^{r+2}\left(\mathcal L(\cX_a)\cap\cL(\cX_a^{(+)})\right)\, ,
\end{equation}
where 
\begin{equation}\label{DecompOp3'Compl}
\max_{a=0,...,r+2}\max_{j=0,...,r+2-a}\sup_{s\in[-\delta,\delta]} \Vert [\Lambda(is)^n]^{(j)}\Vert_{\cX_a^{(+)}\rightarrow\cX_{a+j}}=\mathcal O(\kappa_1^n),\,\,\, \Pi_0=\mathbb E_{\bar\nu}[\,\cdot\,] \mathbf 1_{\bar{\Delta}}
\end{equation}
and 
\begin{equation}\label{devlambdamodif}
\lambda(is)=1-\frac{\sigma^2_\phi}2s^2+o(s^2),\,\,\, \text{with}\,\,\,\sigma^2_\phi=\lim_{n\rightarrow +\infty}\mathbb E_\mu(S_n^2/n)>0.
\end{equation}
\end{prop}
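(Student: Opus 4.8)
The plan is to run the Nagaev--Guivarc'h perturbation method on the chain of Banach spaces, following the strategy of \cite{HP} (which refines \cite{NG1,GH,HH}); Assumptions $(0)$, $(A)[r](1,3,4)$ and $(B)(1,3)$ are precisely what is needed for it. I would proceed in three stages: (i) produce, for $|s|<\delta$ (after possibly shrinking $\delta$), the dominant-eigenvalue decomposition $\cL_{is}^n=\lambda(is)^n\Pi_{is}+\Lambda_{is}^n$ with a uniformly contracting remainder, valid compatibly on every space of the chain; (ii) bootstrap the regularity of $s\mapsto(\lambda(is),\Pi_{is},\Lambda_{is})$ along the chain to obtain $C^m$ dependence into $\cL(\cX_j,\cX_{j+m}^{(+)})$; (iii) Taylor-expand $\lambda$ at $0$ and identify the coefficients.

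For (i): when all the $\cX_a,\cX_a^{(+)}$ coincide with a single space $\cB$, $(A)[r](4)$ is a uniform Lasota--Yorke inequality, so $\cL_{is}$ is quasi-compact on $\cB$ with essential spectral radius below $1$ uniformly in $|s|<\delta$; since $s\mapsto\cL_{is}\in\cL(\cB)$ is norm-continuous by $(A)[r](3)$, classical analytic perturbation theory \cite{Kato} together with the spectral gap $(B)(1)$ gives a simple eigenvalue $\lambda(is)$ near $1$, a rank-one spectral projection $\Pi_{is}$, and $\Lambda_{is}:=\cL_{is}(\mathrm{Id}-\Pi_{is})$ with spectral radius $\le\kappa_1<1$ uniformly. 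In general I would invoke the Keller--Liverani theorem \cite{KL}: on each space $\cX$ of the chain the uniform Doeblin--Fortet bound $(A)[r](4)$ is the required inequality with weak norm $\|\cdot\|_{L^{p_0}(\bar\nu)}$, and, writing $\cX'$ for the next space of the chain, $(A)[r](3)$ makes $s\mapsto\cL_{is}\in\cL(\cX,\cX')$ continuous, so $\|\cL_{is}-\cL\|_{\cX\to L^{p_0}(\bar\nu)}=\mathcal O(\|\cL_{is}-\cL\|_{\cX\to\cX'})\to0$ as $s\to0$ (using $\cX'\hookrightarrow L^{p_0}(\bar\nu)$); stability of the peripheral spectrum from \cite{KL} and $(B)(1)$ then yield the same decomposition, with the eigendata independent of the chosen space by uniqueness of the eigenvalue near $1$, and $\sup_{|s|<\delta}\|\Lambda_{is}^n\|_{\cX}=\mathcal O(\kappa_1^n)$. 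This is \eqref{DecompOp3'}.

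For (ii) --- which I expect to be the main obstacle --- I would use the Riesz representation $\Pi_{is}=\frac1{2\pi i}\oint_{\Gamma}(z\,\mathrm{Id}-\cL_{is})^{-1}\,dz$ over a fixed small circle $\Gamma$ around $1$ separating $\lambda(is)$ from the rest of the spectrum. Differentiating in $s$ and expanding via the resolvent identity produces sums in which each $s$-derivative inserts a factor $\cL_{is}^{(1)}=\cL_{is}(i\bar\phi\,\cdot)$, which by $(A)[r](3)$ only maps $\cX_a^{(+)}$ into $\cX_{a+1}$ --- a loss of one step up the chain per derivative --- while the resolvents $(z\,\mathrm{Id}-\cL_{is})^{-1}$, $z\in\Gamma$, remain bounded on each fixed space. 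A finite induction up the tower of embeddings $\cX_j\hookrightarrow\cX_j^{(+)}\hookrightarrow\cX_{j+1}\hookrightarrow\cdots\hookrightarrow\cX_{r+2}^{(+)}$, tracking which space each summand lives in, then shows that $s\mapsto\Pi_{is}$ --- hence $\lambda(is)$, determined by $\lambda(is)\Pi_{is}=\cL_{is}\Pi_{is}$, and hence $\Lambda_{is}^n=\cL_{is}^n-\lambda(is)^n\Pi_{is}$ --- is $C^m$ from $(-\delta,\delta)$ into $\cL(\cX_j,\cX_{j+m}^{(+)})$ for all $0\le j\le j+m\le r+2$, with the $\mathcal O(\kappa_1^n)$ control on the derivatives of $\Lambda_{is}^n$ recorded in \eqref{DecompOp3'Compl}. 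Here the integrability $\bar\phi\in L^{r_0+2}$ from $(A)[r](1)$, which underlies the asserted $C^{r+2}$-smoothness of $s\mapsto\cL_{is}$ in $(A)[r](3)$ (with derivatives of $\cL^n_{is}$ equal to $\cL^n_{is}((i\bar S_n)^j\cdot)$), is exactly what makes $r+2$ derivatives available, and the double chain is designed so that this bookkeeping closes.

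For (iii), I would apply \eqref{DecompOp3'} to $\mathbf 1_{\bar\Delta}$ and integrate against $\bar\nu$: using $\cL_{is}^n\mathbf 1_{\bar\Delta}=\cL^n(e^{is\bar S_n})$ and the identities $\cL\mathbf 1_{\bar\Delta}=\mathbf 1_{\bar\Delta}$ and $\mathbb E_{\bar\nu}\circ\cL=\mathbb E_{\bar\nu}$ (which also give $\Pi_0=\mathbb E_{\bar\nu}[\,\cdot\,]\mathbf 1_{\bar\Delta}$ and $\mathbb E_{\bar\nu}(\Pi_{is}\mathbf 1_{\bar\Delta})\to1$), one gets $\mathbb E_{\bar\nu}(e^{is\bar S_n})=c(s)\,\lambda(is)^n+\mathcal O(\kappa_1^n)$ with $c$ smooth and $c(0)=1$. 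Dividing the logarithm by $n$ and letting $n\to\infty$ removes $c$ and shows $\lambda(0)=1$, $\frac{d}{ds}\lambda(is)|_{0}=i\,\mathbb E_{\bar\nu}(\bar\phi)=0$ (by $(A)[r](1)$, $\bar\phi$ and $\phi\circ\mathfrak p$ differ by a $\nu$-coboundary and $\mathbb E_\mu(\phi)=0$), and $\frac{d^2}{ds^2}\lambda(is)|_{0}=-\lim_n\Var_{\bar\nu}(\bar S_n)/n=:-\sigma^2_\phi$, where Assumption $(B)(3)$ makes the Green--Kubo series $\mathbb E_{\bar\nu}(\bar\phi^2)+2\sum_{k\ge1}\mathbb E_{\bar\nu}(\bar\phi\,\cL^k\bar\phi)$ converge so the limit exists, the coboundary relation of $(A)[r](1)$ identifies it with $\lim_n\mathbb E_\mu(S_n^2/n)$, and the non-coboundary hypothesis in $(0)$ forces $\sigma^2_\phi>0$. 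This yields \eqref{devlambdamodif} and completes the proof; the genuinely delicate point is the loss-of-derivatives bootstrap of stage (ii), which is the technical core of \cite{HP}.
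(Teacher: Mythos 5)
Your proof is correct and follows essentially the same route as the paper: when the spaces coincide one uses Kato's perturbation theory, and in general one verifies Hypothesis $\mathcal D(r{+}2)$ of \cite{HP} (Keller--Liverani applied along the double chain with weak norm $L^{p_0}(\bar\nu)$), then Taylor-expands $\lambda$ at $0$ using $(B)(3)$ for convergence of the Green--Kubo series and the non-coboundary hypothesis in $(0)$ for positivity. The only difference is presentational: the paper delegates your stages (ii)--(iii) directly to \cite[Proposition A, Corollary 7.2]{HP} and \cite[Lemmas 8.3, 8.4]{HP}, whose internal arguments are precisely the Riesz-projector resolvent bootstrap and the cumulant identification that you partially unfold.
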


\begin{rem}
This allows us to write the $j^{\text{th}}$ derivative of
$\bar h_{k,s,\xi}\mathcal L_{is}^{n}(\bar h_{k,s,\psi})\in L^1(\bar\nu)$ in $s$ as follows 
\[
\sum_{j_1+j_2+j_3=j}  \bar h^{(j_1)}_{k,s,\xi}(\mathcal L_{is}^{n-2k})^{(j_2)} (\cL_{is}^{2k}(\bar h_{k,s,\psi}))^{(j_3)} \in \cX_j^{(+)}\subset\cX_{r+2}^{(+)},\,\,\, j=0,...,r+2,
\]
even if the spaces to which  $\bar h^{(j_1)}_{k,s,\xi},(\mathcal L_{is}^{n-2k})^{(j_2)}, (\cL_{is}^{2k}(\bar h_{k,s,\psi}))^{(j_3)}$ belong vary with $j_1,j_2,j_3$.
\end{rem}

\begin{proof}
Assume first that for all $a$ the spaces $\cX_a$ and $\cX_a^{(+)}$ are equal to $\cB$. Then,
due to the perturbation theory of bounded linear operators (see \cite{HH}, \cite[Chapter 7]{Kato}), for all $|s| \leq \delta$ (if required, after shrinking $\delta$), $\cL_{is}$ can be expressed in $\cL(\cB)$ as  
\begin{equation}\label{OpDecom}
 \cL_{is} =\lambda(is)\Pi_{is}+\Lambda_{is},
\end{equation}
where $\Pi_{is}$ is the eigenprojection to the top eigenspace of $\cL_{is}$, the essential spectral radius of $\Lambda_{is}$ is strictly less than $|\lambda(is)|$, and $\Lambda_{is}\Pi_{is} = \Pi_{is}\Lambda_{is} = 0$.
In particular, $\lambda_0=1$ and $\Pi_0=\mathbb E_{\bar\nu}[\cdot]\mathbf 1_{\bar{\Delta}}$.
Also, $s \mapsto (\lambda(is),\Pi_{is}, \Lambda_{is})$ is $C^{r+2}$ from
$[-\delta,\delta]$ to $\mathbb C\times(\cL(\cB))^2$. 
Iterating \eqref{OpDecom}, it follows that
\begin{equation}\label{IterOpDecom}
 \cL^n_{is} =\lambda(is)^n\Pi_{is}+\Lambda^n_{is} \, ,
\end{equation}
with $\sup_{s\in[-\delta,\delta]} \sup_{m=0,...,r+2}\Vert [\Lambda_{is}^n]^{(m)}\Vert_{\mathcal B}=\mathcal O(\vartheta_0^n)$ as $n\rightarrow +\infty$.

In the general case, we apply  \cite[Proposition A, Corollary 7.2]{HP} with $$I:=\{\cX_k,\ k=0,...,r+2\}\cup \{\cX^{(+)}_k,\ k=0,...,r+2\}$$ and the family of operators $\{\cL_{is}, s\in (-\delta,\delta)\}$. There is a slight deviation from the original notation in \cite{HP} where $I$ is the set of indices of the chain of Banach spaces. But the purpose of $T_0$ and $T_1$ defined below remain the same.

We first have to check Hypothesis $\mathcal D(r+2)$ of \cite[Appendix A]{HP}. Consider two maps $T_0, T_1:I\rightarrow I$ such that $$T_0(\cX_k)=\cX^{(+)}_k,\,\,\, T_1(\cX^{(+)}_k)=\cX_{k+1}.$$  Definition of $T_0$ and $T_1$ for other values is immaterial for us. This implies Condition {\bf (4)} of Hypothesis $\mathcal D(r+2)$ of \cite{HP}. Our assumptions on the Banach spaces imply Condition {\bf (0)} and our Assumption $(A)(3)$ implies Conditions {\bf (1)} and {\bf (2)}. It remains to prove Condition {\bf (3$^\prime$)} of  Hypothesis $\mathcal D(r+2)$ of \cite{HP}. Due to the Keller-Liverani perturbation theorem \cite{KL} recalled in \cite[Section 4]{HP}, this Condition {\bf (3$^\prime$)} comes from our Assumptions $(A)(3-4)$. 

Thus the conclusions of Proposition A and Corollary 7.2 of \cite{HP}, 
ensure the existence of $\delta>0$ (if necessary reducing the original $\delta$) such that 
and of a family $(\lambda(is),\Pi_{is},\Lambda_{is})_{s\in(-\delta,\delta)}$ which is $C^{j'}$-smooth from $(-\delta,\delta)$
to $\mathbb C\times \cL(\cV_{a_j},\cV_{b_{j+j'}})\times \cL(\cV_{a_j},\cV_{b_{j+j'}})$ satisfying \eqref{DecompOp3'}, \eqref{DecompOp3'Compl} with $\lambda(0)=1$, and the characterization of $\Pi_0$ and $\lambda(0)$ comes from our Assumption $(B)(1)$.

It remains to prove the expansion of $\lambda_{is}$.
Since $\int_{\bar\Delta}\bar\phi\, d\bar\nu = \int_{\mathcal M} \phi \, d\mu = 0$, 
it follows from \cite[Chapter 4]{HH} and from \cite[Lemmas 8.3, 8.4]{HP} that $\lambda'(0)=0$ and $$ \sigma_\phi^2:= \lambda''(0)=\underset{n \to \infty}{\lim} \EXP_{\bar{\nu}}\big(\bar{S}^2_n/n\big) =\underset{n \to \infty}{\lim} \EXP_{\mu}\big({S}^2_n/n\big)
\geq 0.$$ 
Since $\sum_{k\ge 0}\Vert \mathcal L^k(\bar\phi)\Vert_2<\infty$
(from Assumption $(B)(3)$) and since $\bar\phi$ is not a coboundary in $L^2(\bar\nu)$ (from Assumption $(0)$ and $(A)[r](1)$), $\sigma^2_\phi=\sum_{k\ge 0}\mathbb E_{\bar\nu}(\bar\phi\cL^{k}\bar\phi)>0$. Therefore,
\begin{equation}\label{devlambda}
\lambda(is)=1-\frac{\sigma^2_\phi}2s^2+o(s^2)\,\,\,\text{with}\,\,\, \sigma^2_\phi>0.
\end{equation} 
\end{proof}

\begin{prop}\label{lemmaABCgreek}
Suppose Assumptions $(0)$, $(A)[r]$ and $(B)(1,3)$ hold. Set $\psi_n=\psi$ and $\xi_n:=\xi\circ f^n$. Then
\begin{equation}\label{approxHn0}
\mathbb E_\mu(\psi e^{isS_n}\xi\circ f^n)=\EXP_{\bar\nu}\left(
\bar h_{k,-s,\xi}\mathcal L_{is}^{n-2k}(\mathcal L_{is}^{2k}\bar h_{k,s,\psi})\right) +
\mathcal O((1+|s|)\vartheta^k)\,,
\end{equation}
uniformly in $s\in\mathbb X^*$. 
Moreover,
\begin{itemize}
\item $(\alpha)[r]$ holds with $\sigma^2=\sigma^2_\phi$ and $B_0=\mathbb E_\mu(\psi)\mathbb E_\mu(\xi)$, and
\item Assumption $(C)$ implies $(\gamma)$.
\item Assumption $(D)[r]$ implies $(\delta)[r]$. 
\end{itemize}
\end{prop}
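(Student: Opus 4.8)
\textit{Overview and the approximation~\eqref{approxHn0}.} The plan is to prove~\eqref{approxHn0} first and then read off $(\alpha)[r]$, $(\gamma)$ and $(\delta)[r]$ by feeding it into the spectral decomposition of Proposition~\ref{propHP}. To obtain~\eqref{approxHn0}, lift everything to the common extension via $\mathfrak p_\ast\nu=\mu$, $\bar{\mathfrak p}_\ast\nu=\bar\nu$, $f\circ\mathfrak p=\mathfrak p\circ F$ and $\bar F\circ\bar{\mathfrak p}=\bar{\mathfrak p}\circ F$. Iterating the coboundary identity $(A)[r](1)$ telescopes to $S_n\circ\mathfrak p=\bar S_n\circ\bar{\mathfrak p}+\chi-\chi\circ F^n$, so $e^{isS_n\circ\mathfrak p}=e^{is\chi}e^{is\bar S_n\circ\bar{\mathfrak p}}e^{-is\chi\circ F^n}$. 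Using $\nu$-invariance to pre-compose with $F^k$ and splitting the twisted Birkhoff sum into a first block of length $k$, a middle block of length $n-2k$ and a last block of length $k$, one rewrites $\EXP_\mu(\psi\,e^{isS_n}\,\xi\circ f^n)$ as the $\nu$-integral of $h_{k,s,\psi}$ against a twisted Birkhoff sum of $\bar\phi$ of length $n-2k$ and against $h_{k,-s,\xi}$ evaluated along the orbit. Replacing $h_{k,s,\psi}$ by $\bar h_{k,s,\psi}\circ\bar{\mathfrak p}$ and $h_{k,-s,\xi}$ by $\bar h_{k,-s,\xi}\circ\bar{\mathfrak p}$, the error is controlled in $L^1(\nu)$ by H\"older's inequality --- using $\psi\in L^{(r_0+2)/q(\psi)}(\bar\nu)$, $\xi\in L^{(r_0+2)/q(\xi)}(\bar\nu)$, $q(\psi)+q(\xi)\le r_0-r$ and~\eqref{hk1} with $j=0$ --- and is $\mathcal O((1+|s|)\vartheta^k)$ uniformly in $s$. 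After this replacement the integrand factors through $\bar{\mathfrak p}$, hence descends to $\bar\Delta$, and iterating the transfer-operator duality $\EXP_{\bar\nu}(g\cdot h\circ\bar F^m)=\EXP_{\bar\nu}((\cL^m g)h)$ with $\cL_{is}=\cL(e^{is\bar\phi}\,\cdot\,)$ turns the middle block into $\cL_{is}^{n-2k}$, giving~\eqref{approxHn0}. The double chain $(\cX_a,\cX_a^{(+)})$ makes every product and pairing legitimate: $\cL_{is}^{2k}\bar h_{k,s,\psi}\in\cX_0$ and $\bar h_{k,-s,\xi}\in(\cX_{r+2}^{(+)})'$ by~\eqref{hk2}, while $(A)[r](3)$ keeps $\cL_{is}^{n-2k}$ acting on $\cX_0\hookrightarrow\cX_{r+2}^{(+)}$.

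\textit{$(\alpha)[r]$.} Put $k=k(n):=\lfloor(\log n)^2\rfloor$ (and $k\equiv0$ when the three systems coincide, in which case $\bar h_{0,s,H}\equiv H$), so $\vartheta^k\in\mathcal{SP}$, $\kappa_1^{n-2k}=\kappa_1^{-2k}\kappa_1^n\in\mathcal{SP}$, $(1+k)^j\in\mathcal{LP}$ and $n-2k\ge n/2$ for $n\ge n_0$. Plug $\cL_{is}^{n-2k}=\lambda(is)^{n-2k}\Pi_{is}+\Lambda_{is}^{n-2k}$ (Proposition~\ref{propHP}) into~\eqref{approxHn0}; using $\Pi_{is}\cL_{is}^{2k}=\lambda(is)^{2k}\Pi_{is}$ to cancel the factors $\lambda(is)^{\pm2k}$ yields
\[
\EXP_\mu(\psi\,e^{isS_n}\,\xi\circ f^n)=\lambda(is)^n\,M_n(s)+E_n(s),
\]
with $M_n(s)=\EXP_{\bar\nu}(\bar h_{k,-s,\xi}\,\Pi_{is}\bar h_{k,s,\psi})$ and $E_n(s)=\EXP_{\bar\nu}(\bar h_{k,-s,\xi}\,\Lambda_{is}^{n-2k}(\cL_{is}^{2k}\bar h_{k,s,\psi}))+\mathcal O((1+|s|)\vartheta^k)$, so $H_n=M_n+\lambda(is)^{-n}E_n$. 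Differentiating $j\le r+2$ times, the coefficients $\lambda(is)^n[\lambda(is)^{-n}]^{(m)}$ are polynomials in $n$ (of degree $\le m$) with bounded coefficients on $[-\delta,\delta]$, so $\lambda(is)^nH_n^{(j)}(s)$ is a finite sum of $\lambda(is)^nM_n^{(j)}(s)$ and of terms $P_m(n,s)E_n^{(j-m)}(s)$. By Leibniz, \eqref{hk0}, \eqref{hk2} and the uniform operator bounds on the $s$-derivatives of $\Pi_{is}$ from Proposition~\ref{propHP} (after shrinking $\delta$), $|M_n^{(j)}(s)|\le C(1+|s|)^2(1+k)^{r+2}$ on $[-\delta,\delta]$; shrinking $\delta$ further so that $|\lambda(is)|\le e^{-\sigma_\phi^2s^2/4}$ there, one gets $|\lambda(is)^nM_n^{(j)}(s)|\le b_n\,e^{-\sigma_\phi^2s^2/8}$ with $b_n=C(1+(\log n)^2)^{r+2}\in\mathcal{LP}$. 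Likewise $|E_n^{(j-m)}(s)|\le C(1+|s|)^2(1+k)^{r+2}(\kappa_1^{n-2k}+\vartheta^k)$, and multiplying by $P_m(n,s)$ leaves the product in $\mathcal{SP}$ since $n^{r+2}(\log n)^{2(r+2)}(\kappa_1^{n-2k}+\vartheta^{(\log n)^2})\to0$ faster than any power of $n$; this is the $a_n$-term. At $s=0$, $\Pi_0=\EXP_{\bar\nu}[\,\cdot\,]\mathbf 1_{\bar\Delta}$ and $\EXP_{\bar\nu}\circ\cL=\EXP_{\bar\nu}$ give $M_n(0)=\EXP_{\bar\nu}(\bar h_{k,0,\xi})\EXP_{\bar\nu}(\bar h_{k,0,\psi})$, and~\eqref{hk1} at $j=s=0$ together with $\EXP_\nu(h_{k,0,H})=\EXP_\mu(H)$ yields $\EXP_{\bar\nu}(\bar h_{k,0,H})\to\EXP_\mu(H)$; hence $B_0=\lim_nH_n(0)=\EXP_\mu(\psi)\EXP_\mu(\xi)$, and the $B_j=\lim_nH_n^{(j)}(0)$ exist (trivially when the systems coincide, and in general from the stabilisation of $\bar h_{k,\cdot,H}$ and its $s$-derivatives as $k\to\infty$, as in~\cite{FL,HP}). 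Finally $\sigma^2=\sigma_\phi^2>0$ is~\eqref{devlambdamodif}. This is $(\alpha)[r]$.

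\textit{$(C)\Rightarrow(\gamma)$ and $(D)[r]\Rightarrow(\delta)[r]$.} Both are vacuous for $\bbX=\bbZ$, so take $\bbX=\bbR$. In~\eqref{approxHn0} estimate the main term by submultiplicativity and the duality $\|\cdot\|_{\cB_2'}$:
\[
\bigl|\EXP_{\bar\nu}\bigl(\bar h_{k,-s,\xi}\,\cL_{is}^{n-2k}(\cL_{is}^{2k}\bar h_{k,s,\psi})\bigr)\bigr|\le\|\bar h_{k,-s,\xi}\|_{\cB_2'}\,\bigl\|\cL_{is}^{n-2k}\bigr\|_{\cB_1\to\cB_2}\,\bigl\|\cL_{is}^{2k}\bar h_{k,s,\psi}\bigr\|_{\cB_1}.
\]
For $(\gamma)$: by $(C)(1)$ and $(C)(2)$ this is $\le C|s|^{1+\alpha}e^{-(n-2k)^{\alpha_1}\hat\delta|s|^{-\alpha}}\le C|s|^{1+\alpha}e^{-n^{\alpha_1}(2^{-\alpha_1}\hat\delta)|s|^{-\alpha}}$ for $n$ large, and the error $\mathcal O((1+|s|)\vartheta^k)$ from~\eqref{approxHn0} is super-polynomially small when $|s|\lesssim n^{\alpha_1/\alpha}$ (so it goes into the $a_n$-term) and is dominated by $|s|^{1+\alpha}e^{-n^{\alpha_1}\hat\delta|s|^{-\alpha}}$ for larger $|s|$; this gives $(\gamma)$. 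For $(\delta)[r]$ (here $\xi\equiv1$, so $\bar h_{k,-s,\xi}=\bar h_{k,-s,1}$): write $|s|^{-1}=|s|^{-d_1}|s|^{-d_2}$, use $(D)(2)$ to bound $|s|^{-d_2}\|\bar h_{k,-s,1}\|_{\cB_2'}\|\cL_{is}^{2k}\bar h_{k,s,\psi}\|_{\cB_1}$ uniformly over $K<|s|<Bn^{(r-1)/2}$ and $k\ge1$, and obtain
\[
\int_{K<|s|<Bn^{(r-1)/2}}\frac{|\EXP_\mu(\psi\,e^{isS_n})|}{|s|}\,ds\le C\!\!\int_{K<|s|<Bn^{(r-1)/2}}\!\!\frac{\|\cL_{is}^{n-2k}\|_{\cB_1\to\cB_2}}{|s|^{d_1}}\,ds+C\vartheta^k\bigl(1+n^{(r-1)/2}\bigr).
\]
Enlarging $B$ to $2B$ and $K$ accordingly so that $\{K<|s|<Bn^{(r-1)/2}\}\subseteq\{K<|s|<2B(n-2k)^{(r-1)/2}\}$ for $n$ large, $(D)(1)$ applied with $n$ replaced by $n-2k$ makes the first term $o((n-2k)^{-r/2})=o(n^{-r/2})$, while $\vartheta^{(\log n)^2}(1+n^{(r-1)/2})=o(n^{-r/2})$. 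This is $(\delta)[r]$.

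\textit{Main obstacle.} The crux is the bookkeeping of Step~1: one must carry the coboundary telescoping through the two factor maps with the correct placement of $h_{k,s,\psi}$, $h_{k,-s,\xi}$ and the $2k$-splitting, and check that every intermediate product and dual pairing lives in the appropriate space of the chain --- this is exactly where the integrability exponents $q(\psi),q(\xi),r_0$ and the estimates~\eqref{hk0}--\eqref{hk2} enter. The subtler point in Step~2 is the factor $\lambda(is)^{-2k}$, which is super-polynomially large in $n$ since $k\sim(\log n)^2$ and $|\lambda(is)|<1$ for $s\ne0$: it must be killed by the exact cancellation $\Pi_{is}\cL_{is}^{2k}=\lambda(is)^{2k}\Pi_{is}$ (or, failing a clean domain for $\Pi_{is}$, by absorbing its $e^{O((\log n)^2)s^2}$ size into the Gaussian weight $|\lambda(is)|^n\le e^{-\sigma_\phi^2s^2n/4}$ before extracting $e^{-\sigma_\phi^2s^2/8}$); the analogous $\kappa_1^{-2k}$ loss inside $\Lambda_{is}^{n-2k}$ is beaten by $\kappa_1^n$ precisely because $n\gg(\log n)^2$.
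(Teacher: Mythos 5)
The structure and strategy you propose match the paper's proof closely: the telescoping of the coboundary identity to obtain~\eqref{approxHn0}, the use of the spectral decomposition from Proposition~\ref{propHP}, the choice $k=\lfloor(\log n)^2\rfloor$, and the way $(\gamma)$ and $(\delta)[r]$ follow from the $\cB_1\to\cB_2$ norm bounds. Your handling of the $Bn^{(r-1)/2}$ vs.\ $B(n-2k)^{(r-1)/2}$ mismatch in $(\delta)[r]$ is actually spelled out more carefully than in the paper.

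However, there is one genuine gap. You assert that the limits $B_j=\lim_nH_n^{(j)}(0)$ exist ``from the stabilisation of $\bar h_{k,\cdot,H}$ and its $s$-derivatives as $k\to\infty$,'' but Assumption $(A)[r](2)$ only gives \emph{bounds} on $\bar h_{k,s,H}$ and its derivatives (via \eqref{hk0}--\eqref{hk2}), not any form of convergence in $k$. Since the ``main term'' $M_n^{(j)}(0)$ still depends on $k=k(n)$ through $\bar h_{k,\cdot,\psi}$, $\bar h_{k,\cdot,\xi}$ and $\cL_{is}^{2k}$, its convergence is not automatic, and moreover $(\alpha)[r]$ demands the quantitative rate $|H_n^{(j)}(0)-B_j|=\cO(a_n)$ with $a_n\in\mathcal{SP}$. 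The paper supplies this with a deliberate trick: since $H_n^{(j)}(0)$ is intrinsic (it does not depend on the auxiliary $k$), one may \emph{temporarily re-choose} $k=\lfloor n/5\rfloor$, for which the errors $k^N\vartheta_0^{n-2k}$ and $m^N\vartheta^k$ are exponentially small, and then run a dyadic Cauchy estimate on $|H_n^{(N)}(0)-H_m^{(N)}(0)|$ via $H_{2^jn}^{(N)}(0)-H_{2^{j+1}n}^{(N)}(0)$, giving $|H_n^{(N)}(0)-B_N|=\cO(\vartheta_1^n)$ with $\vartheta_1=\max(\vartheta_0^{1/2},\vartheta^{1/10})$. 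This two-scale choice of $k$ (one scale for the Gaussian bound, another for the convergence of the Taylor coefficients) is the crux of $(\alpha)[r]$ and is not captured by your sketch.

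A secondary point concerns your simplification $\Pi_{is}\cL_{is}^{2k}\bar h_{k,s,\psi}=\lambda(is)^{2k}\Pi_{is}\bar h_{k,s,\psi}$: while $\Pi_{is}\cL_{is}^{2k}=\lambda(is)^{2k}\Pi_{is}$ is valid as an operator identity on each $\cX_a$, the function $\bar h_{k,s,\psi}$ is only known (via \eqref{hk0}) to lie in an $L^p(\bar\nu)$ space, not in any $\cX_a$, so the right-hand side $\Pi_{is}\bar h_{k,s,\psi}$ is not a priori defined; only $\Pi_{is}(\cL_{is}^{2k}\bar h_{k,s,\psi})$ is, which is what the paper keeps. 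You do hedge this in your ``Main obstacle'' paragraph with the alternative of absorbing $\lambda(is)^{-2k}$ into $|\lambda(is)|^n$, and that alternative is indeed what the paper implicitly does: after multiplying the bound for $\sup_u|H_n^{(L)}(us)|$ by $|\lambda(is)|^n$, the problematic term becomes $k^L|\lambda(is)|^{n-2k}\le(\log n)^{2L}e^{-\sigma^2_\phi s^2/8}$ because $n-2k\ge n/2$. So the fallback you mention is correct and should be promoted to the primary argument.
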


\begin{proof}
For $s\in\mathbb X^*$, we set
\begin{equation}\label{Hn}
H_n(s)=\EXP_\mu(\psi\, e^{isS_n}\, \xi\circ f^n)\lambda(is)^{-n}.
\end{equation}
The idea is now to approximate both $H_n(s)$ and $\lambda(is)^n$ by some expansions. Recall that $(S_n)^j\in L^{\frac{r_0+2}j}(\nu)$ and that $\xi\in L^{\frac{r_0+2}{q(\xi)}}(\nu)$ and $\psi\in L^{\frac{r_0+2}{q(\psi)}}(\nu)$ with $q(\xi)+q(\psi)\le r_0-r$. Note that $H_n$ is $C^{r+2}$ on $(-\delta,\delta)$. So, for $L \leq r+2$, we have 
\begin{equation}\label{formulaHn}
\left|H_n(s)-\sum_{N=0}^{L-1}\frac {H_n^{(N)}(0)}{N!}
    s^N\right|
\le  \sup_{u\in[0,1]}\left|H_n^{(L)}(us)\right|\, |s|^L\, ,
\end{equation}
where $H^{(0)}_n(0)=H_n(0)=\mathbb E_{\mu}[\psi\,\,\xi\circ f^n]$ for all $n$. 
Observe that the Assumption $(A)[r](1)$ implies that $S_n\circ\mathfrak p=\bar S_n\circ \bar {\mathfrak p}+\chi-\chi\circ F^n$. Combining this with the fact that $\mu=\mathfrak p_*\nu$ we obtain that
\begin{align}\label{Code1}
\EXP_\mu(\psi\,  e^{isS_n}\, \xi\circ f^n) 
&= \EXP_\nu(\psi\circ{\mathfrak p}\, e^{is\chi}e^{is \bar S_n\circ\bar{\mathfrak{p}}} (\xi\circ{\mathfrak p\,}\, e^{-is\chi}
)\circ F^n) \nonumber \\
&= \EXP_\nu(\psi\circ{\mathfrak p}\circ F^k e^{is\chi\circ F^k}e^{is\bar S_n\circ\bar{\mathfrak{p}}\circ F^{k}} \xi\circ{\mathfrak p\circ F^{n+k}}\,e^{-is\chi\circ F^{n+k}}) \nonumber \\
&=\EXP_\nu([\psi\circ{\mathfrak p}\circ F^ke^{is\chi\circ F^k}e^{-is\bar S_k\circ\bar{\mathfrak{p}}}]e^{is \bar S_n\circ\bar{\mathfrak{p}}} [ \xi\circ{\mathfrak p\circ F^{k}}e^{-is\chi\circ F^k}e^{is\bar S_k\circ\bar{\mathfrak{p}}}]\circ F^n) \nonumber \\
&
=\EXP_\nu(h_{k,s,\psi}e^{is \bar S_n\circ\bar{\mathfrak{p}}} h_{k,-s,\xi} \circ F^n)\, ,
\end{align}
using the notation ${h_{k,s,H}}$  given in the Assumption $(A)$.
Recall that the superscript $(j)$ denotes the $j$th derivative of a function with respect to $s$, and that the duality relation 
\begin{equation}\label{Code2}
\EXP_{\bar\nu}(g\, e^{is\bar{S}_n}\, h\circ \bar{F}^n)=\EXP_{\bar\nu}(h\,\cL^n_{is}(g))
\end{equation}
holds.

Estimating $h_{k,s,H}$ by $\bar h_{k,s,H}$ for $H\in\{\psi,\xi\}$, using $(A)[r]$ along with \eqref{Code1}, and assuming $3k\le n$, we obtain that for $N\le r+2$, \begin{align*}
H_n^{(N)}(s)&=\frac{\partial ^N}{\partial s^N}
(\EXP_\mu(\psi e^{isS_n}\xi\circ f^n)\lambda(is)^{-n})\nonumber\\
&=\sum_{m_1+m_2+m_3=N}\frac{N!}{m_1!m_2!m_3!}\EXP_\nu\left(h_{k,s,\psi}^{(m_1)}\frac{\partial^{m_2}}{\partial s^{m_2}}[e^{is \bar S_n\circ\bar{\mathfrak{p}}} \lambda(is)^{-n}]
h_{k,-s,\xi}^{(m_3)}\circ F^n\right) \nonumber\\
&=\sum_{m_1+m_2+m_3=N}\frac{N!}{m_1!m_2!m_3!}\EXP_{\bar\nu}\left(\bar h_{k,s,\psi}^{(m_1)}\frac{\partial^{m_2}}{\partial s^{m_2}}[e^{is \bar S_n} \lambda(is)^{-n}]
\bar  h_{k,-s,\xi}^{(m_3)}\circ \bar{F}^n\right)
\\
 &\phantom{\frac{N!}{m_1!m_2!m_3!}\EXP_{\bar\nu}\left(\bar h_{k,s,\psi}^{(m_1)}\frac{\partial^{m_2}}{\partial s^{m_2}}\right)aaaaaaa}+\mathcal O(n^{N}(1+|s|)(1+|s|\vartheta^k)\vartheta^k|\lambda(is)|^{-n})
 \, ,\nonumber
\end{align*}
uniformly in $s\in\mathbb X^*$
where we used
\eqref{hk1} combined with 
the fact that 
\[
\left\Vert\frac{\partial^{m_2}}{\partial s^{m_2}}[e^{is \bar S_n}
]\right\Vert_{L^\frac{r_0+2}{m_2}(\bar\nu)}=O\left(
\Vert \bar S_n\Vert^{m_2}_{L^{r_0+2}(\bar\nu)}=O(n^{m_2})
\right)\, 
\]
and that for $H=\xi$ or $H=\psi$,
\[
\Vert  h^{(j)}_{k,s,H}\Vert_{L^{\frac {r_0+2}{j+q(H)}}(\nu)}=
\Vert (\chi\circ F^k-\bar S_k\circ\bar p)^j h_{k,s,H}\Vert_{L^{\frac {r_0+2}{j+q(H)}}(\nu)}=O((1+k)^j)
\]
which with  \eqref{hk1} implies that $\Vert \bar  h^{(j)}_{k,s,H}\Vert_{L^{\frac {r_0+2}{j+q(H)}}(\bar\nu)}\le O((1+|s|\vartheta^k)(1+k)^j)$. When $N=0$, we replace the above estimate of $\Vert \bar  h^{(j)}_{k,s,H}\Vert_{L^{\frac {r_0+2}{j+q(H)}}(\bar\nu)}$ by \eqref{hk0} and obtain \eqref{approxHn0}.

We assume from now on that $|s|<\delta$. Due to \eqref{devlambdamodif}, up to decreasing (if necessary) the value of $\delta$, $s\mapsto |\lambda(is)|$ is decreasing on $(0,\delta)$ and increasing on $(-\delta,0)$, and 
\begin{equation}\label{minomajolambda}
\forall s\in(-\delta,\delta),\quad e^{-\frac{3
\sigma^2_\phi s^2}4}\le |\lambda(is)|\le e^{-\frac{
\sigma^2_\phi s^2}4}\, .
\end{equation}
Thus
\begin{align*}
&H_n^{(N)}(s)
=\sum_{n_1+n_2=N}\frac{N!}{n_1!n_2!}\EXP_{\bar\nu}\left(
\bar h_{k,-s,\xi}^{(n_1)}\frac{\partial^{n_2}}{\partial s^{n_2}}[\mathcal L_{is}^n(\bar h_{k,s,\psi})\lambda(is)^{-n}]\right) +\mathcal O(n^N\vartheta^k|\lambda(is)|^{-n})\\
&=\sum_{n_1+n_2=N}\frac{N!}{n_1!n_2!}\EXP_{\bar\nu}\left(
\bar h_{k,-s,\xi}^{(n_1)}\frac{\partial^{n_2}}{\partial s^{n_2}}[\mathcal L_{is}^{n-2k}(\mathcal L_{is}^{2k}\bar h_{k,s,\psi})\lambda(is)^{-n}]\right) +\mathcal O(n^N\vartheta^k|\lambda(is)|^{-n})\\
&=\sum_{n_1+n_2=N}\frac{N!}{n_1!n_2!}\EXP_{\bar\nu}\left(
\bar h_{k,-s,\xi}^{(n_1)}\frac{\partial^{n_2}}{\partial s^{n_2}}[(\lambda(is)^{-2k}\Pi_{is}(\cL_{is}^{2k}\bar h_{k,s,\psi})+\lambda(is)^{-n}\Lambda_{is}^{n-2k}(\cL_{is}^{2k}\bar h_{k,s,\psi})]\right)\\&\hspace{31em}+\mathcal O(n^{N}\vartheta^k|\lambda(is)|^{-n})\, ,
\end{align*}
uniformly on $s\in(-\delta,\delta)$,
using \eqref{DecompOp3'}.
From this, we deduce the first formula and the following three things:
\begin{itemize}[leftmargin=*]
\item First, 
for all $N \le r+2$,
$H_n^{(N)}(0)$ converges exponentially fast as $n\to \infty$.
Let $ n<m$. 

Note that if $(f,\cM,\mu)=(\bar F,\bar\Delta,\bar\nu)$, then $H_n^{(N)}(0)$ converges to $\mathbb E_{\bar\nu}\left(\xi\Pi_{i0}^{(N)}(\psi)\right)$. Otherwise, taking 
$k:=\lfloor n/5\rfloor$ and,
using \eqref{hk2} combined with \eqref{DecompOp3'Compl},
\begin{align*}
|H_n^{(N)}(0)-H_m^{(N)}(0)|&\ll k^N \vartheta_0^{n-2k}+m^N\vartheta^k \ll \vartheta_0^{\frac n 2}+m^N\vartheta^{n/5}\, 
\end{align*} 
and if  $n\le 2^\ell n\le m<2^{\ell+1}n$, then
\begin{align*}
|H_n^{(N)}(0)-H_m^{(N)}(0)|& \le 
 |H_{2^\ell n}^{(N)}(0)-H_m^{(N)}(0)|+\sum_{j=0}^{\ell-1} |H_{2^jn}^{(N)}(0)-H_{2^{j+1}n}^{(N)}(0)|\\
&\ll \sum_{j=0}^{\ell} (\vartheta_0^{n2^{j-1}}+n^N2^{(j+1)N}\vartheta^{2^{j}n/5})\\
&\ll \vartheta_0^{n/2}+\vartheta^{n/10}\, .
\end{align*}
This ensures the existence of
$B_N:= \lim_{n \to \infty} H^{(N)}_n(0)$ and that $H^{(N)}_n(0)=B_N+
\cO(\vartheta_1^{n})$ 
for all $N \leq r+2$, where $B_0=\mathbb E_{\mu}(\psi)\mathbb E_{\mu}(\xi)$
and $\vartheta_1:=\max(\vartheta_0^{\frac 12},\vartheta^{\frac 1{10}})$. 
 
We have proved that $e_N(n):=H^{(N)}_n(0)-B_N=O(\vartheta_1^n)$ for every nonnegative integer $N\le r+2$. Note that if $\psi=1$ or $\xi=1$, then $e_0(n) \equiv 0$.
\item 
Second, 
for $s\in(-\delta,\delta)$, 
\[ \sup_{u\in[0,1]}\left|H_n^{(L)}(us)\right|
\le |\lambda(is)|^{-n}(n^{L}\vartheta^k+ k^L\vartheta_0^{n-2k})+k^L|\lambda(is)|^{-2k}\, .
\]
\item Finally, observe that when $N=0$,
\begin{align*}
H_n(0)
&=\EXP_{\bar\nu}\left(\bar h_{k,s,\psi}
\bar  h_{k,-s,\xi}\circ \bar{F}^n\right)+\mathcal O(\vartheta^k) \, ,\\
&= \EXP_{\bar\nu}\left(\bar  h_{k,-s,\xi}\cL^n(\bar h_{k,s,\psi})
\right)+\mathcal O(\vartheta^k) \, ,\\
&= \EXP_{\bar\nu}\left(\bar  h_{k,-s,\xi}\right)\EXP_{\bar\nu}\left(\bar h_{k,s,\psi}
\right)+\mathcal O( \Vert \Lambda_{0}^{n-2k}\Vert+\vartheta^k\,).
\end{align*}

\end{itemize}
This ends the proof of $(\alpha)$ with $B_0=\mathbb E_\mu(\psi)\mathbb E_\mu(\xi)$ by taking $k=\vartheta^k=0$ if $(f,\cM,\mu)=(\bar F,\bar \Delta,\bar \nu)$
and $k=\lfloor (\log n)^2\rfloor$ otherwise.

Now let us assume $(C)$ and prove $(\gamma)$. The only case to study is the case $\mathbb X=\mathbb R$. Let $\cK$ be a compact subset of $\mathbb R\setminus\{0\}$. Recall that \eqref{approxHn0}  ensures that
\[
\mathbb E_\mu(\psi e^{isS_n}\xi\circ f^n)=\EXP_{\bar\nu}\left(
\bar h_{k,-s,\xi}\mathcal L_{is}^{n-2k}(\mathcal L_{is}^{2k}\bar h_{k,s,\psi})\right) +\mathcal O((1+|s|)\vartheta^k)\, ,
\]
uniformly in $s\in\cK$.
But Assumption $(C)$ implies that
\begin{align*}
\left\vert \EXP_{\bar\nu}\left(
\bar h_{k,-s,\xi}\mathcal L_{is}^{n-2k}(\mathcal L_{is}^{2k}\bar h_{k,s,1})\right) \right\vert
&\le  \Vert {\bar h_{k,s,\xi}}\Vert_{\cB_2'}\left\Vert \mathcal L_{is}^{n-2k}\right\Vert_{\mathcal B_1 \rightarrow 
\mathcal B_2}\left\Vert \mathcal L_{is}^{2k}{\bar h_{k,s,\psi}}\right\Vert_{\cB_1}\\
&\le (C'_0)^2|s|  ( C |s|^\alpha e^{-(n-2k)^{\alpha_1}\hat\delta |s|^{-\alpha}}  )
\end{align*}
and we conclude by taking $k=0$ if $(f,\cM,\mu)=(\bar F,\bar \Delta,\bar \nu)$
and $ k=\lfloor (\log n)^2\rfloor$ otherwise.

Finally, assume $(D)[r]$. Let $K>0$. Then for the $B$ given by Assumption $D[r]$,
using \eqref{approxHn0}, we compute, 
\begin{align*}
\int_{K<|s|<Bn^{\frac{r-1}2}}&\frac{|\EXP_\mu(\psi e^{isS_n})|}{|s|}\, ds \\ &=\int_{K<|s|<Bn^{\frac{r-1}2}}\frac{\left|\EXP_{\bar\nu}\left(
\bar h_{k,-s,1}\mathcal L_{is}^{n-2k}(\mathcal L_{is}^{2k}\bar h_{k,s,\psi})\right)\right|}{|s|}\, ds + \mathcal O(n^{(r-1)/2}\vartheta^k)\, ,
\end{align*}
and 
\begin{align*}
&\int_{K<|s|<Bn^{\frac{r-1}2}}\frac{\left|\EXP_{\bar\nu}\left(
\bar h_{k,-s,1}\mathcal L_{is}^{n-2k}(\mathcal L_{is}^{2k}\bar h_{k,s,\psi})\right)\right|}{|s|}\, ds \\ &\leq \int_{K<|s|<Bn^{\frac{r-1}2}}\frac{
\|\bar h_{k,-s,1}\|_{\cB_2^{\prime}}\|\mathcal L_{is}^{n-2k}\|_{\cB_1\to\cB_2}\|(\mathcal L_{is}^{2k}(\bar h_{k,s,\psi})\|_{\cB_1}}{|s|}\, ds \lesssim \int_{K<|s|<Bn^{\frac{r-1}2}}\frac{
\|\mathcal L_{is}^{n-2k}\|_{\cB_1\to\cB_2}}{|s|^{d_1}}\, ds
\end{align*}
provided $$\sup_{K<|s|<Bn^{\frac{r-1}2},\ k\ge 1} |s|^{-d_2} \Vert \bar h_{k,-s,1}\Vert_{\cB_2^{\prime}}\, \left\Vert \mathcal L_{is}^{2k}(\bar h_{k,s,\psi})\right\Vert_{\mathcal B_1}<\infty.$$
Then Assumption $(\delta)[r]$ follows 
by taking again $k=0$ if $(f,\cM,\mu)=(\bar F,\bar \Delta,\bar \nu)$
and $ k=\lfloor (\log n)^2\rfloor$ otherwise.

\end{proof}
The next result translates Assumption $(B)(2)$ in terms of uniform estimates of operator norms and proves that $(\beta)$ is a consequence of Assumptions $(A)[0]$ and $(B)$.
\begin{prop}\label{lemmaABCgreek1}
Suppose Assumptions $(0)$ and $(B)(2)$ hold, $s\mapsto \cL_{is}$ is continuous as a function from $\mathbb X^*$ to $\cL(\cX_0,\cX_{0}^{(+)})$ and $\cL_{is}\in\cL(\cX_0^{(+)})$ for every $s\in\mathbb X^*$. Set $\psi_n=\psi$ and $\xi_n=\xi\circ f^n$. 

Then, for all compact subset $\cK$ of $\mathbb X^*\setminus\{0\}$, there exists $\gamma\in(0,1)$ such that $$\sup_{s\in\cK}\Vert \cL_{is}^n\Vert_{\cX_0\rightarrow\cX_{0}^{(+)}}=\cO(\gamma^n).$$
Moreover, if Assumptions $(A)[0]$ and $(B)(1,3)$ hold true, then $(\beta)$ holds true.
\end{prop}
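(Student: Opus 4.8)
The plan is to prove the two claims separately, the first by a compactness-plus-spectral-radius argument and the second by assembling pieces already available. For the first claim, fix a compact $\cK\subset\mathbb X^*\setminus\{0\}$. For each $s\in\cK$, Assumption $(B)(2)$ says the spectrum of $\cL_{is}$ acting on $\cX_0$ (or on $\cX_0^{(+)}$) lies strictly inside the open unit disk, so the spectral radius $\rho(\cL_{is})$ on $\cX_0^{(+)}$ is $<1$. By the spectral radius formula there is an integer $n(s)$ and a radius $\gamma(s)\in(0,1)$ with $\Vert \cL_{is}^{n(s)}\Vert_{\cX_0^{(+)}}\le \gamma(s)^{n(s)}$; since $\Vert \cL_{iu}^{n(s)}\Vert$ depends continuously on $u$ (using that $u\mapsto\cL_{iu}$ is continuous from $\mathbb X^*$ to $\cL(\cX_0,\cX_0^{(+)})$, hence so is $u\mapsto \cL_{iu}^{m}$ into $\cL(\cX_0,\cX_0^{(+)})$, composing with the boundedness $\cL_{iu}\in\cL(\cX_0^{(+)})$), there is an open neighborhood $U_s$ of $s$ on which $\Vert \cL_{iu}^{n(s)}\Vert_{\cX_0\rightarrow\cX_0^{(+)}}\le \tfrac12(1+\gamma(s)^{n(s)})<1$. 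Extract a finite subcover $U_{s_1},\dots,U_{s_N}$, set $M:=\max_i n(s_i)$, and let $\gamma\in(0,1)$ be chosen with $\gamma^{M}$ dominating each of the finitely many constants $\tfrac12(1+\gamma(s_i)^{n(s_i)})^{M/n(s_i)}$ together with the uniform bound $\sup_{u\in\cK,\,0\le j<M}\Vert\cL_{iu}^{j}\Vert_{\cX_0\rightarrow\cX_0^{(+)}}<\infty$ (finite again by continuity and compactness). Writing $n=qn(s_i)+t$ for $s\in U_{s_i}$ and submultiplicativity of the operator norms on the chain $\cX_0\hookrightarrow\cX_0^{(+)}$ then yields $\Vert\cL_{is}^n\Vert_{\cX_0\rightarrow\cX_0^{(+)}}=\cO(\gamma^n)$ uniformly on $\cK$.

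For the second claim, I would invoke Proposition~\ref{lemmaABCgreek}, which under $(0)$, $(A)[0]$ and $(B)(1,3)$ gives the approximation~\eqref{approxHn0}: for $s\in\mathbb X^*$,
\[
\mathbb E_\mu(\psi e^{isS_n}\xi\circ f^n)=\EXP_{\bar\nu}\!\left(\bar h_{k,-s,\xi}\,\mathcal L_{is}^{n-2k}(\mathcal L_{is}^{2k}\bar h_{k,s,\psi})\right)+\mathcal O((1+|s|)\vartheta^k),
\]
with $k=0$ in the case $(f,\cM,\mu)=(\bar F,\bar\Delta,\bar\nu)$ and $k=\lfloor(\log n)^2\rfloor$ otherwise. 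On a fixed compact $\cK\subset\mathbb X^*\setminus\{0\}$ we bound the main term by duality,
\[
\left|\EXP_{\bar\nu}\!\left(\bar h_{k,-s,\xi}\,\mathcal L_{is}^{n-2k}(\mathcal L_{is}^{2k}\bar h_{k,s,\psi})\right)\right|
\le \Vert \bar h_{k,s,\xi}\Vert_{(\cX_0^{(+)})'}\,\Vert \mathcal L_{is}^{n-2k}\Vert_{\cX_0\rightarrow\cX_0^{(+)}}\,\Vert \mathcal L_{is}^{2k}\bar h_{k,s,\psi}\Vert_{\cX_0},
\]
where the first and third factors are $\cO(1+|s|)$ (hence $\cO(1)$ on $\cK$) by the bounds~\eqref{hk0} and~\eqref{hk2} from $(A)[0]$, and the middle factor is $\cO(\gamma^{n-2k})$ by the first part of the proposition applied with the compact set $\cK$. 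Since $2k=\cO((\log n)^2)=o(n)$ and $\vartheta^k$ decays faster than any polynomial in $1/n$, both $\gamma^{n-2k}$ and the error term $\mathcal O((1+|s|)\vartheta^k)$ lie in $\mathcal{SP}$ uniformly in $s\in\cK$; this is exactly Assumption $(\beta)$.

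The only genuinely delicate point is the passage from pointwise spectral smallness in $(B)(2)$ to a \emph{uniform} geometric bound on a compact set, i.e. the first part; everything in the second part is bookkeeping given Proposition~\ref{lemmaABCgreek}. The subtlety there is that $\rho(\cL_{is})<1$ at a point does not by itself control $\Vert\cL_{is}^n\Vert$ for moderate $n$, so one must pass through a suitable power $n(s)$ and use continuity of $s\mapsto\cL_{is}^{n(s)}$ in the norm of $\cL(\cX_0,\cX_0^{(+)})$ to get a uniform bound on a neighborhood; care is needed because the continuity hypothesis is only into $\cL(\cX_0,\cX_0^{(+)})$ while the iteration must be carried out using the boundedness $\cL_{is}\in\cL(\cX_0^{(+)})$, so the chosen power is applied once as a map $\cX_0\to\cX_0^{(+)}$ and the remaining factors are estimated inside $\cX_0^{(+)}$. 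Handling the remainder term $t$ in $n=qn(s_i)+t$ and merging the finitely many local rates into one global $\gamma$ is routine.
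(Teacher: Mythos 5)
Your argument is essentially the paper's argument. For the uniform geometric bound you use, at each $s\in\cK$, that $(B)(2)$ forces the spectral radius of $\cL_{is}$ below one, pass to a power $n(s)$ with $\Vert\cL_{is}^{n(s)}\Vert<1$, propagate this to a neighbourhood by continuity of $u\mapsto\cL_{iu}^{n(s)}$ in $\cL(\cX_0,\cX_0^{(+)})$, and close with a finite subcover; the paper does exactly this, writing the continuity of $u\mapsto\cL_{iu}^{n}$ via the telescoping identity $\cL_{is}^n-\cL_{it}^n=\sum_{k}\cL_{is}^{n-k-1}(\cL_{is}-\cL_{it})\cL_{it}^k$ rather than your ``composing with the boundedness'' phrasing. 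For the second claim, you invoke \eqref{approxHn0} and bound the main term by the duality estimate $\Vert \bar h_{k,-s,\xi}\Vert_{(\cX_0^{(+)})'}\Vert \cL_{is}^{n-2k}\Vert_{\cX_0\to\cX_0^{(+)}}\Vert\cL_{is}^{2k}\bar h_{k,s,\psi}\Vert_{\cX_0}$, then let $k=\lfloor(\log n)^2\rfloor$; this is the paper's computation verbatim, except the paper uses \eqref{hk2} directly for the factor $\Vert\bar h_{k,s,\psi}\Vert_{\cX_0}$ where you wrote $\Vert\cL_{is}^{2k}\bar h_{k,s,\psi}\Vert_{\cX_0}$ --- a harmless discrepancy since both are $\cO(1)$ on $\cK$.

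One remark on the part you yourself flag as delicate. Once the finite subcover gives a uniform $N$ and $\rho<1$ with $\Vert\cL_{is}^{n_{s_i}}\Vert_{\cX_0\to\cX_0^{(+)}}\le\rho$, the iteration $\cL_{is}^n=(\cL_{is}^{n_{s_i}})^q\cL_{is}^t$ does \emph{not} close in the way your last paragraph suggests: each factor $\cL_{is}^{n_{s_i}}$ is small only as a map $\cX_0\to\cX_0^{(+)}$, and there is no bounded inclusion $\cX_0^{(+)}\hookrightarrow\cX_0$ to feed one factor into the next, nor a uniform-in-$s$ bound on $\Vert\cdot\Vert_{\cX_0^{(+)}}$ with which to estimate ``the remaining factors inside $\cX_0^{(+)}$.'' When $\cX_0=\cX_0^{(+)}$ (allowed under $(A)(4)$) the submultiplicativity is genuine and your argument runs cleanly; in the genuinely two-space case one needs some additional uniform control (e.g.\ the Lasota--Yorke/Doeblin--Fortet bound holding on $\cK$, not just on $(-\delta,\delta)$ as stated in $(A)(4)$) to turn the single small power into a geometric rate. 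The paper's one-sentence ``conclude by continuity and compactness'' elides this same point, so I would not call it a gap in your proposal relative to the paper, but it is worth being aware that this step is not purely formal.
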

\begin{proof}
Let $0<\delta<K$ (with $K=\pi$ if $\mathbb X=\mathbb Z$). Let us prove the existence of $\gamma\in(0,1)$ such that
\[
\sup_{\delta\le |s|\le K}\Vert\cL_{is}^n\Vert_{\cX_0\rightarrow \cX_{0}^{(+)}}=\cO(\gamma^n)\, .
\]
This follows from the fact that, for every $s\in[\delta,K]$, there exists $n_s$ such that $$\Vert\mathcal L_{is}^{n_s}\Vert_{\cX_0\rightarrow \cX_0^{(+)}}<1$$ and we conclude by continuity of $s\mapsto\mathcal L_{is}^n$ from $\mathbb X^*$ to $\cL(\cX_0,\cX_0^{(+)})$ and by compactness of $[\delta, K]$. The continuity of $\cL_{is}^n$ comes from the following computation combined with our assumptions
\[
\cL_{is}^n-\cL_{it}^n=\sum_{k=0}^{n-1}\cL_{is}^{n-k-1}(\cL_{is}-\cL_{it})\cL_{it}^k\, .
\]
Let us assume Assumptions $(A)[0]$ and $(B)(1-3)$ and prove $(\beta)$.
Due to \Cref{lemmaABCgreek} and the above, for all $\delta<|s|<K$, we have
\begin{align*}
\left|\mathbb E_\mu(\psi e^{isS_n}\xi\circ f^n)\right|
&=\left|\EXP_{\bar\nu}\left(\bar  h_{k,-s,\xi}\cL_{is}^{n-2k}(\cL_{is}^{2k}\bar h_{k,s,\psi})
\right)\right|+\mathcal O(\vartheta^k) \, ,\\
&\le \Vert \bar  h_{k,-s,\xi}\Vert_{(\cX_0^{(+)})'} \cO(\gamma^{n-2k}) \Vert\bar h_{k,s,\psi}\Vert_{\cX_0}+\mathcal O(\vartheta^k) \, ,\\
&= \cO\left(\gamma^{n-2k})+\mathcal O(\vartheta^k\right)\, ,
\end{align*}
uniformly in $s$, and we conclude by taking $k=\vartheta^k=0$ if $(f,\cM,\mu)=(\bar F,\bar\Delta,\bar\nu)$
and $k=\lfloor (\log n)^2\rfloor$ otherwise.
\end{proof}

\section{Expansions of characteristic functions}\label{CharFun}

Let us show that our assumptions guarantee the existence of 
asymptotic expansions for $$\EXP_\mu(\psi_n e^{\frac{isS_n}{\sqrt{n}}} \xi_n)$$ as $n\to \infty$. In the next section, we use these expansions to establish our main results about higher order asymptotics for the CLT. 

\begin{lem}\label{lem0}
Assume $(\alpha)[r]$.
Then, up to a reduced value of $\delta$ if necessary, there exist polynomials $A_j$'s of the form $\sum_{l=0}^j a_{l,j}s^{2l+j}$ 
such that
\begin{equation}\label{CharAsympExp}
\EXP_\mu(\psi_n e^{\frac{isS_n}{\sqrt{n}}} \xi_n)=e^{-\frac{\sigma^2
s^2}{2}}\sum_{j=0}^{r}\frac{A_j(s)}{n^{j/2}}+ \mathfrak r_n(s),\ |s|<\delta\, ,
\end{equation}
where
\begin{align}
\mathfrak r_n(s)&=\mathcal O\left(
e^{-\frac{\sigma^2
s^2}8}\left(\frac{|s|^{r+2}\bar\psi_0(s/\sqrt{n})}{n^{\frac r2}}+e_0(n)+\frac{|s|+|s|^{3r+3}}{n^{\frac r2+\frac 14}}\right)+\frac{|s|^{r+1}}{n^{r+\frac 32}}\right)\nonumber
\end{align}
with $\bar\psi_0$ continuous and vanishing at $0$ and $e_0(n):=\mathbb E_\mu(\psi_n.\xi_n)-B_0$ where $B_0=a_{0,0}=\lim_{n\rightarrow +\infty} \mathbb E_\mu(\psi_n\,\xi_n)$ 
\end{lem}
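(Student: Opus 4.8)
The plan is to obtain \eqref{CharAsympExp} by expanding separately the two factors in the decomposition
\[
\EXP_\mu\!\left(\psi_n e^{\frac{isS_n}{\sqrt n}}\xi_n\right)=\lambda\!\left(\tfrac{is}{\sqrt n}\right)^{n} H_n\!\left(\tfrac{s}{\sqrt n}\right),
\]
which is exactly the product appearing in Assumption $(\alpha)[r]$. First I would Taylor-expand $s\mapsto\lambda(is)$ near $0$: since $\lambda$ is $C^{r+2}$ and $\lambda(is)=1-\frac{\sigma^2 s^2}{2}+o(s^2)$, write $\log\lambda(is)=-\frac{\sigma^2 s^2}{2}+\sum_{\ell=3}^{r+2}c_\ell s^\ell+O(|s|^{r+3})$ on a possibly smaller $[-\delta,\delta]$, so that
\[
\lambda\!\left(\tfrac{is}{\sqrt n}\right)^{n}=\exp\!\left(-\tfrac{\sigma^2 s^2}{2}+\sum_{\ell=3}^{r+2}\frac{c_\ell s^\ell}{n^{\ell/2-1}}+O\!\left(\tfrac{|s|^{r+3}}{n^{(r+1)/2}}\right)\right).
\]
Expanding the exponential of the finite sum (a standard manipulation, e.g.\ as in \cite[Chapter 2]{BR1}) gives $\lambda(is/\sqrt n)^n=e^{-\sigma^2 s^2/2}\bigl(\sum_{j=0}^{r}\frac{P_j(s)}{n^{j/2}}+O(\cdot)\bigr)$ with $P_j$ a polynomial of the stated parity type $\sum_{l}p_{l,j}s^{2l+j}$ (each $c_\ell$ with $\ell\ge 3$ contributes powers of $n^{-1/2}$ and monomials whose degree has the right parity), and with a remainder controlled by $e^{-\sigma^2 s^2/8}$ times $(|s|+|s|^{3r+3})n^{-r/2-1/4}$ after absorbing the Gaussian decay — this is where the odd exponents $3r+3$ and the fractional $1/4$ loss enter, coming from bounding products of the correction monomials against a fraction of the Gaussian.

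Next I would expand $H_n(s/\sqrt n)$. By \eqref{formulaHn} with $L=r+2$,
\[
H_n\!\left(\tfrac{s}{\sqrt n}\right)=\sum_{N=0}^{r+1}\frac{H_n^{(N)}(0)}{N!}\frac{s^N}{n^{N/2}}+O\!\left(\sup_{u\in[0,1]}\Bigl|H_n^{(r+2)}\!\bigl(\tfrac{us}{\sqrt n}\bigr)\Bigr|\,\frac{|s|^{r+2}}{n^{(r+2)/2}}\right),
\]
and then Assumption $(\alpha)[r]$ is used twice: the estimate $|H_n^{(j)}(0)-B_j|=O(a_n)$ lets me replace each $H_n^{(N)}(0)$ by the constant $B_N$ at the cost of a superpolynomially small error absorbed into the claimed $e_0(n)$-type term (note $e_0(n)=H_n(0)-B_0$ is singled out because for general $\psi_n,\xi_n$ it need not be superpolynomially small, only $o(n^{-p})$; the higher $B_N$ corrections are genuinely negligible), while the bound $|\lambda(is)^n H_n^{(r+2)}(s)|\le b_n e^{-\sigma^2 s^2/8}+a_n$ controls the Taylor remainder once combined with $|\lambda(is/\sqrt n)^n|\le e^{-\sigma^2 s^2/4}$ from \eqref{minomajolambda}-type bounds. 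The factor $\bar\psi_0(s/\sqrt n)$ multiplying $|s|^{r+2}n^{-r/2}$ is the term recording this remainder: set $\bar\psi_0$ to be (a constant times) the continuous function $s\mapsto\sup_u|H_n^{(r+2)}(us)-B_{r+2}|$-style quantity, which vanishes at $0$ by continuity of $H_n^{(r+2)}$ and the convergence $H_n^{(r+2)}(0)\to B_{r+2}$; the bookkeeping is routine but one must track that it is genuinely a function of the rescaled variable.

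Finally, multiplying the two expansions and collecting powers of $n^{-1/2}$ yields $\sum_{j=0}^r A_j(s)/n^{j/2}$ with $A_j=\sum_{N+j'=j}\frac{B_N}{N!}s^N P_{j'}(s)$, which has the required form $\sum_{l=0}^j a_{l,j}s^{2l+j}$ because both $s^N$ (contributing parity $N$) and $P_{j'}$ (contributing parity $j'$, same as the parity of $j'$ modulo the $2l$ shift) combine so that the total exponent is $\equiv j\pmod 2$; in particular $a_{0,0}=B_0=\lim_n\EXP_\mu(\psi_n\xi_n)$. The remainder $\mathfrak r_n(s)$ is the sum of: the product of the main term of one factor with the remainder of the other, and the product of the two remainders; one checks each such cross term is dominated by one of the four displayed contributions — the $|s|^{r+2}\bar\psi_0(s/\sqrt n)n^{-r/2}$ and $|s|^{r+1}n^{-r-3/2}$ from $H_n$'s remainder, $e_0(n)$ from replacing $H_n(0)$ by $B_0$, and $(|s|+|s|^{3r+3})n^{-r/2-1/4}$ from $\lambda$'s remainder — all against the Gaussian weight $e^{-\sigma^2 s^2/8}$ except the last algebraic term. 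The main obstacle I anticipate is not conceptual but the careful sorting of error terms: showing that every cross term, after using $|\lambda(is/\sqrt n)^n|\lesssim e^{-\sigma^2 s^2/4}$ to free up a sliver $e^{-\sigma^2 s^2/8}$ of Gaussian decay to kill polynomial factors in $s$, collapses into exactly the four terms listed and no worse — in particular getting the exponents $3r+3$ and the power $n^{-r/2-1/4}$ right rather than something cruder, and verifying the parity structure of the $A_j$ survives all the multiplications.
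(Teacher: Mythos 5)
Your plan follows the paper's own strategy: split off $\lambda(is/\sqrt n)^n$, Taylor-expand $H_n(s/\sqrt n)$, and multiply; that part is fine. The gap is in your treatment of the $\lambda$-factor, and it is exactly the step that produces $\bar\psi_0$. You write $\log\lambda(is)=-\tfrac{\sigma^2 s^2}{2}+\sum_{\ell=3}^{r+2}c_\ell s^\ell+O(|s|^{r+3})$. But Assumption $(\alpha)[r]$ only gives $\lambda\in C^{r+2}$, so the order-$(r+2)$ Taylor remainder of $\log\lambda(is)$ is merely $o(|s|^{r+2})$, i.e.\ of the form $|s|^{r+2}\bar\psi_0(s)$ with $\bar\psi_0$ continuous and $\bar\psi_0(0)=0$ — not $O(|s|^{r+3})$. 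After the rescaling $s\mapsto s/\sqrt n$ and multiplying by $n$, this remainder contributes $n\cdot|s/\sqrt n|^{r+2}\,\bar\psi_0(s/\sqrt n)=\frac{|s|^{r+2}\bar\psi_0(s/\sqrt n)}{n^{r/2}}$, which is precisely the first term in $\mathfrak r_n(s)$. There is no way to upgrade it to $|s|^{r+3}/n^{(r+1)/2}$ without a $C^{r+3}$ hypothesis you do not have.

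Because you overclaim the $\lambda$-remainder, you then look for $\bar\psi_0$ in the wrong place. It does not come from $H_n$: the $H_n$ remainder (using \eqref{formulaHn} with $L=r+1$, as the paper does) is $\sup_{u}|H_n^{(r+1)}(us/\sqrt n)|\,\bigl|s/\sqrt n\bigr|^{r+1}$, and after multiplying by $\lambda(is/\sqrt n)^n$ the second estimate of Assumption $(\alpha)[r]$ turns it into $\tfrac{|s|^{r+1}}{n^{(r+1)/2}}\bigl(b_n e^{-\sigma^2 s^2/8}+a_n\bigr)$; this feeds the $\tfrac{|s|^{r+1}}{n^{r+3/2}}$ term (via $a_n\in\mathcal{SP}$) and part of the $\tfrac{|s|^{3r+3}}{n^{r/2+1/4}}$ term (via $b_n\in\mathcal{LP}$), but carries no $\bar\psi_0$ factor. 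Moreover the object you propose as $\bar\psi_0$, a sup over $|H_n^{(r+2)}(us)-B_{r+2}|$, depends on $n$, whereas the lemma requires $\bar\psi_0$ to be a single fixed continuous function of one variable vanishing at the origin. To repair the argument you must replace $O(|s|^{r+3})$ by the honest $C^{r+2}$ remainder $|s|^{r+2}\bar\psi_0(s)$ in the $\log\lambda$ expansion (as the paper does via $\psi_0(is)=s^2\psi_r(is)+s^{r+2}\bar\psi_0(s)$) and carry that function through; the rest of your outline then goes through.
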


\begin{proof}
For $s \in (-\delta,\delta)$, 
we set again $H_n(s)=\EXP_\mu(\psi_n\, e^{isS_n}\, \xi_n)\lambda(is)^{-n}$.
It follows from $(\alpha)$ that,
uniformly for $s\in(-\delta,\delta)$.
Therefore, for $s \in (-\delta,\delta)$,
\begin{align}\label{AAA1}
\EXP_\mu(\psi_n &e^{isS_n}\xi_n)\nonumber \\
&=\lambda(is)^{n}H_n(s) \nonumber \\ &=\lambda(is)^{n}\left(\sum_{N=0}^{r}\frac {B_N}{N!}s^N+\cO(e_0(n)+a_n(|s|+|s|^r))\right)+\cO\Big(
\left(b_ne^{-\frac {\sigma^2 s^2}8}+a_n\right)
|s|^{r+1}\Big)\, ,
\end{align}
with $e_0(n):=\mathbb E[\psi_n.\xi_n]-B_0$.
Due to \eqref{devlambda}, and since $\lambda$ is $C^{r+2}$, there exists a function $\psi_0 \in C^{r+2}$ such that $\psi_0(0)=\psi_0'(0)=\psi_0''(0)=0$ and
\begin{align*}
\lambda(is) =e^{-
\frac{\sigma^2 s^2}{2} + \psi_0(is)}\, .
\end{align*}
Writing $\psi_0(is)=s^2\psi_r(is)+s^{r+2}\bar\psi_0(s)$ where $\psi_r(s)=\frac{\psi_0^{(3)}(0)s}{3!}+\dots+\frac{\psi_0^{(r+2)}(0)s^{r}}{(r+2)!}$, $\bar\psi_0(0)=0$ and $\bar\psi_0$ is continuous
and using
\[
\left|e^\alpha-\sum_{m=0}^r\frac{\beta^m}{m!}\right|
\le |e^\alpha-e^\beta|+\left| e^\beta-\sum_{m=0}^r\frac{\beta^m}{m!}\right|
\le e^{\max(\alpha,\beta)}
\left(|\alpha-\beta|+\frac{|\beta|^{r+1}}{(r+1)!}\right)\, ,
\]
with $\alpha:=n\psi_0(is/\sqrt n)$ and $\beta:=s^2\psi_r\big(\frac {is}{\sqrt{n}}\big)$, we obtain,
up to decreasing the value of $\delta$ if necessary,
\begin{align*}
\left|
\lambda\left(\frac {is}{\sqrt{n}}\right)^ne^{\frac{\sigma^2 s^2}2}-\sum_{m=0}^r\frac{\left(s^2\psi_r\big(\frac {is}{\sqrt{n}}\big)\right)^m}{m!}\right|\le
\mathcal O\left(e^{\frac {\sigma^2 s^2} 4}\left( \frac{|s|^{r+2}\bar\psi_0(
s/\sqrt{n})}{n^{\frac r2}}+ \frac{|s|^{3r+3}}{n^{\frac{r+1}2}}\right)\right)
\, ,
\end{align*}
for $|s|<\delta\sqrt{n}$.
Combining this with \eqref{AAA1}, we obtain
\begin{align*}
&\EXP_\mu(\psi_n e^{\frac{isS_n}{\sqrt{n}}}\xi_n) =\lambda\Big(\frac{is}{\sqrt{n}}\Big)^{n}H_n\Big(\frac{s}{\sqrt{n}}\Big) \\ &=e^{-\frac{\sigma^2 s^2}{2}}\sum_{m=0}^r\frac{\big(s^2\psi_r\big(\frac {is}{\sqrt{n}}\big)\big)^m}{m!}
\sum_{N=0}^{r}\frac {B_N}{N!}\left(\frac{s}{\sqrt{n}}\right)^N+\cO\left(e^{-\frac {
\sigma^2
 s^2}{4}}\left( \frac{|s|^{r+2}\bar\psi_0(s/\sqrt{n})}{n^{\frac r2}}+ \frac{|s|^{3r+3}}{n^{\frac{r+1}2}}\right)\right)\\
&\ \ \ +\cO\bigg(\frac{|s|^{r+1}}{n^{\frac{r+1}2}}\Big[
b_ne^{-\frac {\sigma^2 s^2}8}+a_n
\Big]\bigg) +\cO\bigg(\left|\lambda\left(\frac{is}{\sqrt{n}}\right)\right|^n\left(e_0(n)+\vartheta_1^n\left(\frac{|s|}{\sqrt{n}}+\frac{|s|^r}{n^{\frac r2}}\right)\right)\bigg)\, ,
\end{align*}
for $|s|<\delta\sqrt{n}$.
Collecting the individual terms in the principal part of the right hand side according to ascending powers of $\frac{1}{\sqrt{n}}$ and absorbing
higher order terms into the error term, we obtain
\begin{equation}\label{Aj}
e^{-\frac {\sigma^2 s^2}{2}}\sum_{m=0}^r\frac{\big(s^2\psi_r\big(\frac {is}{\sqrt{n}}\big)\big)^m}{m!}
\sum_{N=0}^{r}\frac {B_N}{N!}\left(\frac{s}{\sqrt{n}}\right)^N=e^{-\frac{\sigma^2 s^2}{2}}
\left(
\sum_{j=0}^{r}\frac{A_j(s)}{n^{j/2}}+ \cO\left(
\frac{|s|^{3r+1}
+|s|^{r+3}
}{n^{(r+1)/2}}\right)
\right)
\end{equation}
where $A_j$'s are polynomial of the form $\sum_{m=0}^j a_{m,j}s^{2m+j}$ where
\begin{align}\label{Coeffamj}
a_{m,j}&:=\frac {(-1)^m}{m!}\sum_{N= 0}^{j-m}\frac {B_N}{N!}\sum_{k_1,...,k_m\ge 3\, :\, k_1+...+k_m+N=2m +j}\prod_{l=1}^m\frac{i^{k_l}\psi_0^{(k_l)}(0)}{(k_l)!} \nonumber \\
&=
\frac {1}{m!}\sum_{N= 0}^{j-m}\frac {B_N\times [(-\psi_0(i\,\,\cdot\,))^m]^{(2m+j-N)}(0)}{N!
(2m+j-N)!
}\, .
\end{align}
recalling that 
$\psi_0(0)=\psi'(0)=\psi''(0)=0$.
Substituting this back and choosing $k = 
\lfloor(\log n)^2\rfloor
$, we obtain \eqref{CharAsympExp} with $A_0\equiv B_0
$. 
\end{proof}

\begin{rem}
Note that $A_j$ $($as a function$)$ has the same parity as $j$ $($as an integer$)$, and its coefficients, $a_{m,j}$'s, are explicitly expressed by \eqref{Coeffamj} where
$$B_N=\lim_{n\rightarrow +\infty}\left[\EXP_\mu(\psi_n\, e^{isS_n}\, \xi_n)\lambda(is)^{-n}\right]^{(N)}_{s=0}.$$
In the particular case of $(f,\mathcal M,\mu)=(F,\Delta,\nu)=(\bar F,\bar\Delta,\bar\nu)$, this reduces to $$B_N=\mathbb E_{\mu}(\xi \Pi^{(N)}_{i0}(\psi)).$$ 
The computation of $B_N$ in the general case is more delicate $($see \cite[Appendix A]{PN}$)$.
\end{rem}
%

\begin{rem}
In the special case of $\xi_n\equiv 1$ and $\mathbb E_\mu(\psi_n)=1$, in particular, if $\psi_n$'s are probability densities, we have that $e_0(n)=0$ for all $n$. Hence,
\begin{align}\label{CharExpErr2}
\mathfrak r_n(s)=\mathcal O\left(|s|\left[
e^{-\frac{\sigma^2 s^2}8}\left(\frac{|s|^{r+1}\bar\psi_0(s/\sqrt{n})}{n^{\frac r2}}+\frac{1+|s|^{3r+2}}{n^{\frac r2+\frac 14}}\right)+\frac{|s|^{r}}{n^{r+1}}\right]\right)\, .
\end{align}
This fact is used in the proof of \Cref{StongExp}. 
\end{rem}

\section{Proofs of the main results}\label{proofs}
In this section, we prove our main general
theorems stated in Section~\ref{se:assum}. First, we prove \Cref{WeakGlobalExp}.
\begin{proof}[Proof of Theorem~\ref{WeakGlobalExp}]
Let $g \in \fF^{q+2}_{0}$ with $q>\alpha\big(1+\frac{r}{2\alpha_1}\big)$. 
Recall $\widehat g$ has been defined in \eqref{hat g}
and that
$|\wh g(s)|\le C^{q+2}(g)\min(1,|s|^{-b})$ with $b=q+2> 2+\alpha+\frac{\alpha r}{2\alpha_1}$.
Since $g$ is continuous (if $\mathbb X=\mathbb R$), $\lambda$-integrable and $\hat g$ is integrable on $\mathbb X^*$, it follows that
\begin{align}\label{Planchrl}
\EXP_\mu\left(\psi_n\, g(S_n)\, \xi_n\right) = \frac{1}{2\pi}\int
_{\mathbb X^*} \wh{g}(s)\EXP_\mu(\psi_n\, e^{isS_n}\, \xi_n)\, ds
\, .
\end{align}
\noindent
{\bf Step 1:} 
Using Assumptions $(\beta)$, $(\gamma)$ 
and the decay of $\widehat g(s)$, it follows 
that
\begin{align}
\int_{s\in\mathbb X^*,\, 
|s|>\delta} \big|\widehat g(s)&\EXP_\mu\left(\psi_n\, e^{isS_n}\, \xi_n\right)\big|\, ds \nonumber \\
&\le \int_{s\in\mathbb X^*,\, 
|s|>K}|s|\left|\widehat g(s)\right|\mathcal O(a_n+|s|^{1+\alpha} e^{-n^{\alpha_1}\hat\delta |s|^{-\alpha}}) \, ds \, +\,  C^{q+2}(g)\cdot\cO(a_n
)\nonumber \\
&\le C^{q+2}(g)\int_{|s|>K} |s|\mathcal O(|s|^{\alpha-b} e^{-n^{\alpha_1\hat\delta |s|^{-\alpha}}})\, ds \,+\,  C^{q+2}(g)\cdot\cO(
a_n
)\nonumber \\
&\le C^{q+2}(g)n^{\frac{\alpha_1}{\alpha}}\int_{\mathbb R} |u|n^{\frac{\alpha_1}{\alpha}}
\cO(n^{{\alpha_1}(1-\frac b\alpha)}|u|^{\alpha-b} e^{-\hat\delta|u|^{-\alpha}})
\, du \,+\,  C^{q+2}(g)\cdot\cO(n^{1-\frac{q}\alpha})\nonumber\\
&= C^{q+2}(g)\cdot \mathcal O\left(n^{\alpha_1(1-\frac{b-2}\alpha)}+n^{1-\frac{q}\alpha}\right)=C^{q+2}(g) \cdot\cO\left(n^{\alpha_1(1-\frac{q}\alpha)}\right).\label{majoint0}
\end{align}
The last equality is true since $b=q+2$.

Recall that, due to Assumption $(\alpha)$, for every $s\in(-\delta,\delta)$,
\begin{align*}
\left|\mathbb E_\mu(\psi_n e^{isS_n}\xi_n)\right|=|\lambda(is)^n H_n^{(0)}(s)|\le b_ne^{-\frac {\sigma^2 s^2}8}+a_n\, .
\end{align*}
Fix $c>\frac{4(r+2)}{\sigma^2}$.
For $\sqrt{c\log n} <|s|<\delta $, 
$
\mathbb E_\mu(\psi_n e^{isS_n}\xi_n) = \cO\left( {n^{-(r+2)/2}} \right),$
and hence,
\begin{align*}
\int_{\sqrt{\frac{c\log n}{n}}<|s|<\delta} \big|\widehat g(s)\mathbb E_\mu(\psi_n e^{isS_n}\xi_n)\big|\, ds  &\lesssim \frac{1}{n^{(r+2)/2}} \int_{
\mathbb X^*
}|\wh{g}(s)|\, ds  \lesssim \frac{C^{q+2}(g) }{n^{(r+2)/2}}.
\end{align*}
This combined with~\eqref{Planchrl}
and \eqref{majoint0} leads to
\begin{align}\label{Planchrl2}
\EXP_\mu\left(\psi_n\, g(S_n)\, \xi_n\right) 
&= \frac{1}{2\pi}\int_{
s\in\mathbb X^*,\, 
|s|<\sqrt{\frac{c\log n}{n}}} \widehat g(s)\EXP_\mu\left(\psi_n\, e^{isS_n}\, \xi_n\right)\, ds \nonumber \\ &\hspace{15em}+ C^{q+2}(g) \cdot \cO(n^{\alpha_1(1-\frac{q}\alpha)}+n^{-\frac{r+2}2})\nonumber \\
&=\frac{1}{2\pi \sqrt{n}}\int_{|s|<\sqrt{c\log n}} \widehat g\Big(\frac{s}{\sqrt{n}}\Big)\EXP_\mu\left(\psi_n\, e^{i\frac{sS_n}{\sqrt{n}}}\, \xi_n\right)\, ds \\ 
&\hspace{15em}+ C^{q+2}(g) \cdot \cO(n^{\alpha_1(1-\frac{q}\alpha)}+n^{-\frac{r+2}2}) \, .\nonumber
\end{align}
Note that our assumption on $q$ ensures that the above error term is $o(n^{-\frac r2})$. \\

\noindent
{\bf Step 2}:
Recall the polynomials $A_j$'s from Lemma~\ref{lem0}.
Note that 
$$\int_{
\mathbb X^*} A_j(s) 
\wh{g}\Big(\frac{s}{\sqrt{n}}\Big)e^{-\frac{\sigma^2 s^2}{2}}\, ds = \int_{|s|<\sqrt{c\log n
}} A_j(s) 
\wh{g}\Big(\frac{s}{\sqrt{n}}\Big)e^{-\frac{\sigma^2 s^2}{2}}\, ds + 
C^{q+2}(g) \cdot o(n^{-r/2})$$
because 
$ \big|A_j(s)e^{-\frac{\sigma^2 s^2}{4}}\big| =\cO \left( n^{-(r+1)/2}\right)$ uniformly in $|s|>\sqrt{c
\log n
}$ .

Also, because  
$$(is)^k e^{-\frac{\sigma^2 s^2}{2}}=(is)^k \widehat{\fn} (s)=\widehat{\fn^{(k)}}(s)\, ,$$
where $\fn(s)=\frac{e^{-\frac{s^2}{2\sigma^2 }}}{\sqrt{2\pi \sigma^2}}$, defining the polynomials $R_j$'s via the relation
\begin{equation}\label{StronExpPolyDer}
R_j(s)\fn(s)=\frac{1}{\sqrt{2\pi \sigma^2
}}A_j\left(-i\frac{d}{ds}\right)\fn(s)\, ,
\end{equation}
we conclude that $A_j(s)e^{-\frac{\sigma^2 s^2}{2}}=\widehat{R_j \cdot \fn}(s),$ and so
$$\int_\reals \sqrt{n}g\left(s\sqrt{n}\right)R_j(s)\fn(s) \, dt = \frac{1}{2\pi}\int_{\reals}\widehat{g}\Big(\frac{s}{\sqrt{n}}\Big) A_j(s) e^{-\frac{\sigma^2 s^2}{2}} \, ds\, .$$
Substituting from \eqref{CharAsympExp}, integrating, and using the above observations, we obtain,
\begin{align*}
\frac{1}{2\pi\sqrt{n}}&\int_{|s|<\sqrt{c
\log n}} \widehat g\Big(\frac{s}{\sqrt{n}}\Big)\, \EXP_\mu\left(\psi_n\, e^{i\frac{sS_n}{\sqrt{n}}}\, \xi_n\right)\, ds \\ &=  \sum_{j=0}^{r}\frac{1}{ n^{j/2}}\frac{1}{2\pi \sqrt{n}}\int_{|s|<\sqrt{
c\log n
}} A_j(s) \wh{g}\Big(\frac{s}{\sqrt{n}}\Big)e^{-\frac{\sigma^2 s^2}{2}}\, ds + 
C^{q+2}(g)\cdot o(n^{-r/2}) \\ 
&=\sum_{j=0}^{r}\frac{1}{ n^{j/2}}\frac{1}{2\pi \sqrt{n}}\int
_{
\sqrt{n}\, \mathbb X^*} A_j(s) \wh{g}\Big(\frac{s}{\sqrt{n}}\Big)e^{-\frac{\sigma^2 s^2}{2}}\, ds + 
C^{q+2}(g) \cdot o(n^{-r/2}) \\
&
=\sum_{j=0}^{r}\frac{1}{ n^{j/2}}\frac{1}{2\pi}\int
_{
\mathbb X^*}
\widehat{R_j \cdot \fn}(s\sqrt{n}) \wh{g}\left(s\right)
\, ds + 
C^{q+2}(g) \cdot o(n^{-r/2}) \\
&
=\sum_{j=0}^{r}\frac{1}{ n^{(j-1)/2}}\int_{\mathbb X} (R_j\cdot \fn)\left(\frac x{\sqrt{n}}\right) g(x) \, d\lambda(x) + 
C^{q+2}(g) \cdot o(n^{-r/2})\, .
\end{align*}

In fact, when $\mathbb X=\mathbb R$, the last equality is due to the inverse Fourier formula, and when $\mathbb X=\mathbb Z$, it is due to the Plancherel formula and the Poisson formula which ensures that 
$$\sum_{m\in\mathbb Z}\widehat{R_j \cdot \fn}\left((s+2\pi m)\sqrt{n}\right)=\widehat a(s)$$
where $ a:\mathbb Z\rightarrow \mathbb C$ is given by $$a(m)=\frac 1{2\pi\sqrt{n}}(R_j \cdot \fn)\Big(\frac{m}{\sqrt{n}}\Big).$$

Finally, substituting this in \eqref{Planchrl2}, and combining error terms, we obtain the required expansion:
\begin{align*}
\EXP_\mu\left(\psi_n\, g(S_n)\, \xi_n\right)  &=\sum_{j=0}^{r}
\frac{1}{ n^{(j-1)/2}}
\int
_{\mathbb X} (R_j\cdot\fn)(x/\sqrt{n}) 
g
(x) \, d\lambda(x) + 
C^{q+2}(g)
\cdot o(n^{-r/2})\, .
\end{align*}\end{proof}

Next, we prove \Cref{WeakLocalExp}.

\begin{proof}[Proof of Theorem \ref{WeakLocalExp}]
First part of the proof is the same as Step 1 in the previous proof. Construction of the actual expansion (Step 2 above) is different. 

Note that because of the slightly stronger assumption of $q>\alpha\big(1+\frac{r+1}{2\alpha_1}\big)$, applying the Step 1 of the proof of Theorem~\ref{WeakGlobalExp}, we obtain \eqref{Planchrl2} and so
\begin{align*}
\EXP_\mu\left(\psi_n\, g(S_n)\, \xi_n\right) &= \frac{1}{2\pi}\int
_{|s|<\sqrt{\frac{c'
\log n}{
n}}} \widehat g(s)\EXP_\mu\left(\psi_n\, e^{isS_n}\, \xi_n\right)\, ds+ C^{q+2}(g) \cdot o(n^{-\frac{r+1}{2}})\, ,
\end{align*}
since $1-\frac q\alpha<-\frac {r+1}{2\alpha_1}$ and $\alpha_1 \in (0,1]$, 
with  $c'>\frac{4(r+3)}{\sigma^2}$.

Because $g \in \fF^{q+2}_{r+1}$, we can replace $\wh{g}$ with its order $r$ Taylor expansion,
$$\wh{g}(s)=\sum_{j=0}^{r}\wh{g}^{(j)}(0)\frac{s^j}{j!}+\frac{s^{r+1}}{r!}\int_0^1 (1-x)^r\wh{g}^{(r+1)}(xs)\, dx$$
on $\left\{|s|<\sqrt{\frac{c'\log n}{n}}\right\}$. Here $|\wh{g}^{(r+1)}(x)|\leq C_{r+1}(g)$. 

Along with this, we use the expansion of $\EXP_\mu\left(\psi_n\, e^{i\frac{sS_n}{\sqrt{n}}}\, \xi_n\right)$ provided by \eqref{CharAsympExp} to get 
\begin{align*}
\sqrt{n}\EXP_\mu\big(\psi_n\, g(S_n)\, \xi_n\big) 
&= \frac{1}{2\pi}\int_{|s|<\sqrt{c'\log n
}} \widehat g\Big(\frac{is}{\sqrt{n}}\Big)\EXP_\mu\left(\psi_n\, e^{i\frac{sS_n}{\sqrt{n}}}\, \xi_n\right)\, ds+ C^{q+2}_{r+1}(g) \cdot o(n^{-\frac{r}{2}}) \\ 
&= \frac{1}{2\pi}\sum_{l=0}^{r}\sum_{j=0}^r\frac{\wh{g}^{(l)}(0)}{n^{(l+j)/2}l!}\int_{|s|<\sqrt{c'\log n
}} s^l A_j(s) e^{-\frac{\sigma^2 s^2}{2}}\, ds+ C^{q+2}_{r+1}(g) \cdot o(n^{-\frac{r}{2}}).
\end{align*}

As in the previous proof, we can replace the integrals with integrals over $\reals$ due to our choice of $c$. In addition, because $A_j$ has the parity of $j$, when $l+j$ is odd, $$\int_{
|s|<\sqrt{c'\log n}} s^l A_j(s) e^{-\frac{\sigma^2 s^2}{2}}\, ds = 0\, .$$ Therefore, 
\begin{align*}
&\sqrt{n}\EXP_\mu\big(\psi_n\, g(S_n)\, \xi_n\big)  \\ 
&= \frac{1}{2\pi}\sum_{m=0}^{r}\sum_{l+j=2m}\frac{\wh{g}^{(l)}(0)}{n^{(l+j)/2}l!}\int_{\reals} s^l A_j(s) e^{-\frac{\sigma^2 s^2}{2}}ds+ C^{q+2}_{r+1}(g)\cdot o(n^{-\frac{r}{2}}) \\
&= \frac{1}{2\pi}\sum_{m=0}^{r}\frac{1}{n^m}\sum_{l+j=2m}  
\int_{\mathbb X} (-ix)^lg(x) \, d\lambda(x)
\frac{1}{l!}\int_{\reals} s^l A_j(s) e^{-\frac{\sigma^2 s^2}{2}}ds+ C^{q+2}_{r+1}(g) \cdot o(n^{-\frac{r}{2}})\\ 
&=\sum_{m=0}^{r} \frac{1}{n^m}\int _{\mathbb X}g(x)Q_m(x)
\, d\lambda(x)\left[\sum_{l+j=2m} \left( \int_{\reals} s^l A_j(s) e^{-\frac{\lambda''(0)s^2}{2}}ds \right) \frac{(-is)^l}{l!} \right]
 + C^{q+2}_{r+1}(g) \cdot o(n^{-\frac{r}{2}}).
\end{align*}
where
\begin{equation}\label{Qm}
Q_m(x)=\frac 1{2\pi}\sum_{l+j=2m} \left( \int_{\reals} u^l A_j(u) e^{-\frac{\sigma^2 u^2}{2}}du \right) \frac{(-ix)^l}{l!}\, ,
\end{equation} 
with $A_j$'s defined as in \eqref{Aj}.
Then, absorbing the higher order terms into the error we obtain the required expansion. 
\begin{align*}
\sqrt{n}\EXP_\mu\big(\psi_n\, g(S_n)\,\xi_n\big)=
\sum_{m=0}^{[r/2]}  \frac{1}{n^m} \int_{\mathbb R} g(s) Q_m(s) \, ds + C^{q+2}_{r+1}(g) \cdot o(n^{-\frac{r}{2}}).
\end{align*}
\end{proof}

Now, we prove \Cref{StongExp}. 
\begin{proof}[Proof of Theorem~\ref{StongExp}]
Recall the polynomials $R_j$'s defined by \eqref{StronExpPolyDer} with 
$ \xi_n\equiv 1$ and $\psi_n=\psi$. 
Define $P_j$'s to be the polynomials satisfying
\begin{equation}\label{EdgePolyRel}
\fn(x) R_j(x)=\frac{d}{dx}\Big[\fn(x)P_j(x)\Big].
\end{equation}
$P_j$'s are candidates for the polynomials appearing in the expansion. 
\\

\noindent 
{\bf Step 1:} From the Berry-Ess\'een inequality 
\begin{equation}\label{BerryEsseen1}
\left|F_{n}(x)-\cE_{r',n}(x)\right|\le \frac 1\pi\int_{-T}^{T}\left|\frac{\EXP_{\mu}(\psi_n\,
e^{\frac{isS_n}{\sqrt{n}}})-\widehat \cE_{r',n}(s)}{s}\right|\, ds+\frac{C_0}{T},
\end{equation}
where $$F_n(x)=\mathbb P \left(\frac{S_n}{\sqrt{n}}\leq x\right),\,\,\, \cE_{r',n}(x)=\fN(x)+\sum_{j=1}^{r'} \frac{P_j(x)}{n^{j/2}} \fn(x),$$
$$\widehat \cE_{r',n}(s) = \int_{\mathbb R} e^{-isx}\, d\cE_{r',n}(x)= e^{-\frac{\sigma^2 s^2}{2}}\sum_{j=0}^{r'}\frac{A_j(s)}{n^{j/2}}, $$ and $C_0$ is independent of $T$. 

We refer the reader to \cite[Chapter XVI.3,4]{Feller2} for a detailed discussion of the Berry-Ess\'{e}en inequality and its utility in establishing Edgeworth expansions for i.i.d.~sequences of random variables. Here we adapt those techniques to the non-i.i.d.~setting we deal with. \\

\noindent
{\bf Step 2:}  Now, we estimate the RHS of \eqref{BerryEsseen1} for an appropriate choice of $T$:

Given $\ve >0$, choose $B> \frac{C_0}{\ve}$. Then 
\begin{align}\label{BEOrdr}
|F_{n}(x)-\cE_{r',n}(x)| &\leq \frac 1\pi\int_{-Bn^{r/2}}^{Bn^{r/2}}\left|\frac{\EXP_{\mu}(\psi_n\, e^{is\frac{S_n}{\sqrt{n}}})-\widehat{\cE}_{r',n}(s)}{s}\right|\, ds+\frac{C_0}{Bn^{r/2}} 
 \\ 
& \leq I_1 + I_2+ I_3 + \frac{\ve}{n^{r/2}} \nonumber
\, .
\end{align}
where
\begin{align*}
I_1 &=\frac 1\pi\int_{|s|<\delta\sqrt{n}}\left|\frac{\EXP_{\mu}(\psi_n\,
e^{is\frac{S_n}{\sqrt{n}}})-\widehat{\cE}_{r',n}(s)}{s}\right|\, ds\, ,  \\
I_2 &=\frac 1\pi\int_{\delta\sqrt{n}<|s|<Bn^{r/2}}\left|\frac{\EXP_{\mu}(\psi_n\, e^{\frac{is \bar S_n}{\sqrt{n}}})}{s}\right|\, ds\, , \\
I_3 &=\frac 1\pi\int_{|s|>\delta\sqrt{n}} \left|\frac{\widehat{\cE}_{r',n}(s)}{s}\right|\, ds.
\end{align*}

Using \eqref{CharExpErr2},
\begin{align}\label{Near0Est}
 I_1&
= \cO\left(n^{-r'/2}\int_{|s|<\delta \sqrt{n}}\left( \bar\psi_0\Big(\frac{s}{\sqrt{n}}\Big)|s|^{r'+1}+\frac{1+|s|^{3r'+2}}{n^{\frac 14}}\right) e^{-\frac{\sigma^2 s^2}{8}}+\frac{|s|^{r'}}{n^{\frac {r'}2+1}}\, ds \right)\\ &= o(n^{-r'/2}) \, .\nonumber
\end{align}
Also, note that there exists a polynomial 
$$P(s)=\sum_{j=0}^r\sum_{\ell=0}^j|a_{\ell,j}|s^{2\ell+j}$$ such that $\big|\widehat{\cE}_{r,n}(s)\big|\le e^{-\frac{\sigma^2 s^2}{2}}P(|s|)$.
Therefore,
$$I_3=\frac 1\pi\int_{|s|>\delta\sqrt{n}} e^{-\frac{\sigma^2 s^2}{2}}\frac{P(|s|)}{|s|}\, ds = \cO(e^{-c'n})\, ,$$
for some $c'>0$. 

Because our choice of $\ve >0$ is arbitrary, if $I_2=o(n^{-r/2})$, then the proof is complete. To show this, we split $I_2$ to two integrals:
\begin{align}\label{decompI2}
I_2 &=\int_{\delta<|s|<K}\left|\frac{\EXP_{ \mu}(\psi_n\, e^{is S_n})}{s}\right|\, ds\, + \int_{K<|s|<Bn^{(r-1)/2}}\left|\frac{\EXP_{\mu}(\psi_n\,e^{is S_n})}{s} \right| \, ds\, ,
\end{align}
where $K$ as in the assumption $(\delta)[r]$ for our choice of $B$.
From the assumptions $(\beta)$ and $(\delta)[r]$, it follows that 
$$\int_{\delta<|s|<Bn^{(r-1)/2}}\left|\frac{\EXP_{\mu}(\psi_n e^{is S_n})}{s} \right| \, ds = o(n^{-r/2}).$$
This gives the required asymptotics for the right hand side of \eqref{decompI2}. 
\end{proof} 

Finally, we prove Corollaries~\ref{StongExpOrd1} and~\ref{BetterExpOrd1}.
\begin{proof}[Proof of Corollary~\ref{StongExpOrd1}]
We apply Theorem~\ref{StongExp} with $r'=r=1$, noticing that $(\delta)[1]$ holds true with $K=B$.
\end{proof}

\begin{proof}[Proof of Corollary~\ref{BetterExpOrd1}]
We apply Theorem~\ref{StongExp} with $r'=2$. Since $r\in (1,2)$, $\min\{r',[r]\}=\min\{r',1\}=1$, so there is only one term in the expansion. The error is $o(n^{-\min\{r,r'\}/2})=o(n^{-r/2})$ as $r \leq 2=r'$. 
\end{proof}

\part{Examples}\label{part2}

\section{Verification of Assumptions for Dynamical Systems}\label{verif}

In this section, we describe how to verify the assumptions stated in the abstract setting. The two key examples we focus on are 
\begin{itemize}[leftmargin=*]
\item mixing subshifts of finite type (SFTs):\\
We will consider the case when $(F,\Delta,\nu)=(\sigma,\Sigma_A,m)$
is an SFT on $k-$symbols $\cA=\{1,\dots, k\}$, with incidence matrix $A\in \{0,1\}^{\cA\times\cA}$ endowed with the Gibbs measure
associated to a Lipschitz continuous potential. Recall that $\sigma$ is the restriction of the shift map $\sigma((x_n)_{n\in \integers })=(x_{n+1})_{n\in \integers }$ to the subset $\Sigma_A$ of $\cA^{\mathbb Z}$ made of words $\vec x=(x_n)_{n\in\mathbb Z}$ such that
$A(x_n,x_{n+1})=1$ for every $n$. A will be assumed to be irreducible and aperiodic.
A more detailed description is provided in \Cref{SFT}. 
For the rest of the paper, we will write $
\vec x
$ instead of $(x_n)_{n\in \integers}$.\vspace{5pt}

\item tower systems (with exponential tails) constructed by Young in \cite{Y}:\\
We will also consider the case when $(F,\Delta,\nu)$ is the Young tower tailored to some hyperbolic system $(f,\mathcal M,\mu)$ as in \cite{Y}. Recall that this corresponds to a space $$\Delta=\bigcup_{i\ge 0}\left(\Lambda_i\times\{0,...,r_i-1\}\right)$$ for some nice sets $\Lambda_i\subset\mathcal M$ and positive integers $r_i$ corresponding to some nice return time constant on $\Lambda_i$ of the initial map $f$ to the set $\Lambda:=\bigcup_{i\ge 0}\Lambda_i$ along with dynamics given by $F(x_0,l)=(x_0,l+1)$ and $F(x_0,r_i-1)=(f^{r_i}(x_0),0)$ if $x_0\in\Lambda_i$ and $l<r_i-1$. More details are in \Cref{Towers}.
\end{itemize} 
In particular, our setting allows us to obtain more precise CLTs for axiom A attractors, Sinai billiards and H\'enon-type attractors. 
\subsection{Assumptions $(A)(1)$ and $(A)(2)$ in a hyperbolic framework}\label{sechyp}

Any dynamical system that satisfies the two assumptions $(H0)$ and $(H1)$ given below, satisfies $(A)(1)$ and $(A)(2)$. In particular, we will see in the next section that our two key examples satisfy these assumptions. \\

\noindent
{\bf Assumption $(H0)$:}
\begin{itemize}[leftmargin=*]\setlength\itemsep{10pt}
\item {\bf Separation time.}
We consider a dynamical system $(F,\Delta,\nu)$ endowed with a separation time $\hat s(\cdot,\cdot):\Delta\times\Delta\rightarrow\mathbb N_0\cup\{\infty\}$ satisfying $\hat s(x,y)= n+\hat s(F^nx,F^ny)$ whenever $\hat s(x,y)> n$.\\ \\
\underline{For 
an SFT}: $\hat s(\vec x
,\vec y
):=\inf\{k\ge 0\, :\, x_k\ne y_k\}$.\\ 
\underline{For a 
Young tower}: 
Consider the numerable partition of $\Delta$ in $\Delta_{l,j}$ introduced in \cite{Y}. Recall that each $\Delta_{l,j}$ is contained in the $l$-th level $\Delta_l$ of the tower $\Delta$. We take for $\hat s(x,y)$
the infimum of the integers $n\ge 0$ such that $F^n(x)$
and $F^n(y)$ do not belong to a same atom of the partition $\{\Delta_{l,j},\ l,j\}$.

\item {\bf Hyperbolicity.}
We assume that $(F,\Delta,\nu)$ is hyperbolic in the following sense.
There exist a family $\Gamma^s$ of measurable subsets $\gamma^s$ of $\Delta$ and a family
$\Gamma^u$ of measurable subsets $\gamma^u$ of $\Delta$
such that
\begin{itemize}[leftmargin=*]
\item there exist a unique $\gamma^s(x)\in\Gamma^s$ and a unique $\gamma^u(x)\in\Gamma^u$ both containing $x$;
\item for every $x,y\in\Delta$, $\hat s(x,y)=\infty$ if and only if $\gamma^s(x)=\gamma^s(y)$;
\item for every $x,y,z\in \Delta$ such that $\gamma^s(x)=\gamma^s(y)$, $\hat s(x,z)=\hat s(y,z)$;
\item For all $x,y\in \Delta$ such that $\hat s(x,y)>n$, $\gamma^s(x)=\gamma^s(y)$ if and only if $\gamma^s(F^n(x))=\gamma^s(F^n(y))$ and $\gamma^u(x)=\gamma^u(y)$ if and only if $\gamma^u(F^n(x))=\gamma^u(F^n(y))$. 
\end{itemize}
\vspace{10pt}
\underline{For 
an SFT}: $\gamma^s(\vec x
)=\{\vec y
\, :\ \forall m\ge 0,\ y_{m}=x_{m}\}$ and $\gamma^u(\vec x
)=\{\vec y
\, :\ \forall m\ge 0,\ y_{-m}=x_{-m}\}$).\\
\underline{For a 
Young tower}: Using the notations $\Lambda$, $\gamma^s(x)$ and $\gamma^u(x)$ of \cite[Section 1.1]{Y}, $\gamma^s((x,l))=(\Lambda\cap\gamma^s(x))\times\{l\}$ and $\gamma^u((x,l))=\{z\in\Lambda\cap\gamma^u(x)\, :\, \hat s(x,z)>l\}\times\{l\}$ .

\item {\bf Product structure.}
We assume that there exists an at most numerable partition of $\Delta$ in subsets of the form $\Delta^{(0)}_i:=\{y\in\Delta\, :\, \hat s(x^{(i)},y)\ge 1\}$ and that each $\Delta^{(0)}_i$ has a product structure of the following form: for every $x,y\in\Delta^{(0)}_i$, $\gamma^s(x)\cap\gamma^u(y)$ contains a single point.\\

\noindent
\underline{For an SFT}: The partition $\{\Delta^{(0)}_i\}$ corresponds to the partition in $0$-cylinders $\{\vec y 
\, :\, y_0=i\}$, $x^{(i)}$ being a fixed element of $\Delta^{(0)}_i$.\\
\underline{For a 
Young tower}: Consider the partition of $\Delta$ to $\Delta_{l,j}$'s.

\item {\bf Quotient system.}
We define $\bar\Delta:=\bigcup_i\gamma^u(x^{(i)})$ and $\bar{\mathfrak p}:\Delta\rightarrow\bar\Delta$ to be the projection along the $\gamma^s$, that is, for any $x\in\Delta^{(0)}_i$, $\bar{\mathfrak p}(x)$ is the intersection point of $\gamma^s(x)$
with $\gamma^u(x^{(i)})$. We define $\bar F:\bar\Delta\rightarrow\bar\Delta$ such that $\bar F\circ\bar{\mathfrak p}=\bar{\mathfrak p}\circ F$ and $\bar\nu=\bar{\mathfrak p}_*\nu$. This ensures that 
$(\bar F,\bar \Delta,\bar\nu)$ is a factor of $(F,\Delta,\nu)$
by $\bar{\mathfrak p}$. Also note that $\hat s$ is preserved under composition by $\bar{\mathfrak p}$.
\vspace{10pt}

\noindent 
\underline{For 
an SFT}: Define $\bar{\mathfrak p}$ on $\Delta^{(0)}_i$ by setting $\bar{\mathfrak p}(
\vec y
)=\vec z
$ with $z_n=y_n$ and $z_{-n}=x^{(i)}_{-n}$ for all $n\ge 0$, so that $(\bar F,\bar\Delta,\bar\nu)$ is the one-sided subshift associated to $(F,\Delta,\nu)$ (up to identifying $\bar{\mathfrak p}(\vec y
)=\vec z
$ with $(z_n)_{n\ge 0}=(y_n)_{n\ge 0}$).\\
\underline{For a 
Young tower}: $(\bar F,\bar\Delta,\bar\nu)$ corresponds to the quotient (expanding) Young tower.

\item {\bf Spaces of smooth functions.}
Let $\beta\in(0,1)$. As in \cite[Section 3.4]{DNP}, we define $\mathcal B^{(0)}_\beta$ as the space of functions $\psi:\Delta\rightarrow\mathbb C$ such that
the following quantity is finite
\begin{equation}\label{tildeB}
\Vert \psi\Vert^{(0)}_{\beta}:=\Vert \psi\Vert_\infty+\sup_{\gamma^u;\ x,y\in\gamma^u}
\frac{|\psi(x)-\psi(y)|}{\beta^{\hat s(x,y)}}+\sup_{n\ge 0,\ \gamma^s;\ x,y\in\gamma^s}
\frac{|\psi(F^n(x))-\psi(F^n(y))|}{\beta^{n}}\, .
\end{equation}
Due to the product structure, this space corresponds to the set of functions which are Lipschitz continuous with respect to the metric $(x,y)\mapsto\beta^{\hat s_0(x,y)}$,
with 
\begin{equation}\label{hats0}
\hat s_0(x,y):=\inf\{n\ge 0\, :\, \forall x'\in F^{-n}(x),\ \forall y'\in F^{-n}(y),\ \hat s(x',y')\le 2n\}
\end{equation}
In the case of SFTs, $\hat{s}_0$ reduces to $\hat s_0(\vec x,\vec y)=\inf\{m\ge 0\, :\, (x_m,x_{-m})\ne(y_m,y_{-m})\}$.
\vspace{5pt}

Denote by $\mathcal B_\beta$ the space of functions
$\bar\psi:\bar\Delta\rightarrow \mathbb C$ that are bounded and Lipschitz continuous with respect to the metric $(x,y)\mapsto \beta^{\hat s(x,y)}$.
\end{itemize}
\vspace{10pt}

\noindent
{\bf Assumption $(H1)$:}\\
Assume moreover that any point $x\in\bar\Delta$ has a number at most numerable of preimages by $F$ and that the transfer operator $\mathcal L$ associated to
$(\bar\Delta,\bar F,\bar\nu)$ has the following form
\begin{equation}\label{formuleOp}
\exists \bar g:\bar\Delta\rightarrow\mathbb R,\ \bar C_{\bar 
g}:=\sup_{x,y:\, \hat s(x,y)>1}\frac{|e^{\bar 
g(y)-\bar 
g(x)}-1|}{\sqrt{\beta}^{\hat s(x,y)}}<\infty\quad\mbox{and}
\quad \mathcal L\psi(x)=\sum_{z\in \bar F^{-1}(x)}e^{\bar g(z)}\psi(z)\, 
\end{equation}
and that for all $x,y\in\bar\Delta$ such that $\hat s(x,y)>0$,
\begin{equation}\label{bijpreimages}
\exists W_{x,y}:\bar F^{-1}(\{x\})\rightarrow \bar F^{-1}(\{y\})\,\,\text{injective such that}\,\,
\forall z\in \bar F^{-1}(\{x\}),\ \hat s(z,W_{x,y}(z))> 1.
\end{equation}

\noindent
\underline{For 
an SFT}:
 $\bar F^{-1}(\{\vec{x}\})=\{(z_0,x_0,x_1,\dots),\, z_0\in \mathcal A,\ A(z_0,x_0)=1\}$ and $W_{x,y}(z_0,x_0,x_1,\dots)=W_{x,y}(z_0,y_0,y_1,\dots)$.\\
\underline{For a Young tower}: if $x=(x_0,l)$ with $l\ge 1$, then $z=(x_0,l-1)$ and $W(x,y)(z)=(y_0,l-1)$; if $x=(x_0,0)$, then $\bar F$ defines a bijection between $\Lambda_i\times\{r_i-1\}$ and $\Lambda$ and given $z=(z_0,r_i-1)\in \Lambda_i\times\{r_i-1\}$ such that $\bar F(z)=x$, then $W_{x,y}(z)$ is the only element of $\Lambda_i\times\{r_i-1\}$ such that $\bar F(W_{x,y}(z))=y$.
\vspace{10pt}

The following result describes how $(H0) \implies (A)(1)$ and $(H0),(H1) \implies (A)(2)$. 

\begin{lem}\label{LEM0}
Assume $(H0)$.
Let $\phi,\psi,\xi:\Delta\rightarrow\mathbb R$ be three functions such that $\phi\in \mathcal B^{(0)}_\beta$, $\psi,\xi\in\mathcal B^{(0)}_{\sqrt{\beta}}$. Then
\begin{itemize}[leftmargin=15pt]
\item Assumption $(A)(1)$ holds true for $\mathfrak p=id$, more precisely,
there exists $\bar\phi\in\mathcal B_{\sqrt{\beta}}$ and $\chi\in\mathcal B^{(0)}_{\sqrt{\beta}}$
such that $\phi=\bar\phi\circ\bar{\mathfrak p}+\chi-\chi\circ F$.
\item If, in addition, $(H1)$ is true, then Assumption $(A)[r](2)$ holds true  for any integer $r\ge 0$, $p_0=\infty$, $\cX_a=\cX_a^{(+)}=\mathcal{B}_{\sqrt\beta} \hookrightarrow L^1(\bar{\nu})$, $r_0=r$, $q(H)=0$, $\mathfrak p=id$ and $\vartheta=\sqrt{\beta}$. 
\end{itemize}
\end{lem}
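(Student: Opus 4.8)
The plan is to establish the two claims in order. For the first claim, the goal is to produce the Sinai-type coboundary decomposition $\phi = \bar\phi\circ\bar{\mathfrak p} + \chi - \chi\circ F$ with $\bar\phi\in\mathcal B_{\sqrt\beta}$ and $\chi\in\mathcal B^{(0)}_{\sqrt\beta}$. First I would fix, for each atom $\Delta^{(0)}_i$ of the product partition, the reference point $x^{(i)}$, and for an arbitrary $x\in\Delta^{(0)}_i$ define $\chi(x):=\sum_{k\ge 0}\bigl(\phi(F^k x)-\phi(F^k \hat x)\bigr)$, where $\hat x:=\gamma^s(x)\cap\gamma^u(x^{(i)})$ is the point obtained by sliding $x$ along its stable leaf onto the unstable leaf through $x^{(i)}$ (this is exactly $\bar{\mathfrak p}(x)$). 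The series converges geometrically: since $\gamma^s(x)=\gamma^s(\hat x)$, the last property in the Hyperbolicity part of $(H0)$ gives $\gamma^s(F^k x)=\gamma^s(F^k\hat x)$, hence $\hat s(F^k x, F^k\hat x)=\infty$ for all $k$, so $F^k x$ and $F^k \hat x$ lie in a common stable leaf and the third summand in \eqref{tildeB} controls $|\phi(F^k x)-\phi(F^k\hat x)|$ by $\Vert\phi\Vert^{(0)}_\beta\,\beta^{k}$. Then I would set $\bar\phi(x):=\phi(x)-\chi(x)+\chi(F x)$ and check (i) that $\bar\phi$ is constant on stable leaves — a telescoping computation using $\hat x = \bar{\mathfrak p}(x)$ and $\widehat{Fx}=F\hat x$ — so it descends to a function on $\bar\Delta$, and (ii) that both $\chi$ and $\bar\phi$ are Lipschitz for $\beta^{\hat s_0(\cdot,\cdot)/2}=(\sqrt\beta)^{\hat s_0(\cdot,\cdot)}$; the loss of half the exponent is the usual price for summing the geometric series, and the regularity on unstable leaves comes from the second summand of \eqref{tildeB} together with the product-structure estimate relating $\hat s(x,y)$ on an unstable leaf to $\hat s$ of the slid points.

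For the second claim, under $(H1)$ with $\cX_a=\cX_a^{(+)}=\mathcal B_{\sqrt\beta}$, $r_0=r$, $q(H)=0$, $\mathfrak p=\mathrm{id}$, $\vartheta=\sqrt\beta$, I need to produce the functions $\bar h_{k,s,H}$ and verify \eqref{hk0}, \eqref{hk1}, \eqref{hk2}. Because $\mathfrak p=\mathrm{id}$, the functions $h_{k,s,H}=(H e^{is\chi})\circ F^k e^{-is\bar S_k\circ\bar{\mathfrak p}}$ are explicit; the natural candidate is $\bar h_{k,s,H}:=(\overline{He^{is\chi}})\circ \bar F^k\, e^{-is\bar S_k}$, where $\overline{(\cdot)}$ denotes an appropriate "stable-average" projection of a $\mathcal B^{(0)}_{\sqrt\beta}$ function onto a $\mathcal B_{\sqrt\beta}$ function on $\bar\Delta$ — concretely $\overline{w}(\bar{\mathfrak p}(x)):=w(\bar{\mathfrak p}(x))$ would not be $\bar F$-adapted, so instead I would average $w\circ F^k$ over the stable leaf (or simply evaluate at the slid point $\bar{\mathfrak p}$) and absorb the discrepancy into the error. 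Since $p_0=\infty$ and $q(H)=0$, all the $L^{(r_0+2)/(j+q(H))}$ norms are just sup-norms, so \eqref{hk0} is the statement $\Vert \bar h_{k,s,H}\Vert_\infty\le C_0$, which is immediate from boundedness of $\psi,\xi$ and $\vert e^{is\chi}\vert=1$, $\vert e^{-is\bar S_k}\vert=1$; the polynomial-in-$s$ and $k$ factors $(1+|s|)(1+k)^j$ on the right of \eqref{hk1}–\eqref{hk2} come from differentiating $e^{is\chi}$ and $e^{-is\bar S_k}$ in $s$ (each derivative brings down a factor $\chi$ or $\bar S_k$, and $\Vert\bar S_k\Vert_\infty\lesssim k$ on the relevant leaves — here one uses that $\phi$, hence $\bar\phi$, is bounded, which holds since $\mathcal B^{(0)}_\beta\subset L^\infty$). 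For \eqref{hk1} the key point is that $h^{(j)}_{k,s,H}-\bar h^{(j)}_{k,s,H}\circ\bar{\mathfrak p}$ measures how far $H e^{is\chi}$ (and $\bar S_k\circ\bar{\mathfrak p}$) is from being constant on the stable leaf through a point after $k$ iterates of $F$, and that discrepancy is $O(\vartheta^k)=O((\sqrt\beta)^k)$ by the $\mathcal B^{(0)}_{\sqrt\beta}$-regularity (third summand of \eqref{tildeB}) — this is precisely why one iterates $k$ times before projecting.

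For \eqref{hk2} I would use \eqref{formuleOp}: the transfer operator $\mathcal L$, and more generally $\mathcal L_{is}$, preserves $\mathcal B_{\sqrt\beta}$ with uniformly bounded norm for $|s|\le\delta$ — this is the standard Lasota–Yorke/Doeblin–Fortet estimate, whose distortion input is exactly the bound $\bar C_{\bar g}<\infty$ on $e^{\bar g(y)-\bar g(x)}-1$ and whose covering/injectivity input is \eqref{bijpreimages} (the maps $W_{x,y}$ let one compare preimage sums over nearby points). Hence $\Vert (\mathcal L_{is}^{2k}\bar h_{k,s,\psi})^{(j)}\Vert_{\mathcal B_{\sqrt\beta}}\lesssim (1+|s|)(1+k)^j$ after expanding the $j$-th $s$-derivative via Leibniz and using $(\mathcal L^n_{is})^{(\ell)}=\mathcal L^n_{is}((i\bar S_n)^\ell\cdot)$ with $\Vert(i\bar S_{2k})^\ell\,\bar h_{k,s,\psi}\Vert_{\mathcal B_{\sqrt\beta}}\lesssim k^\ell$; and $\Vert \bar h^{(j)}_{k,s,\xi}\Vert_{(\mathcal B_{\sqrt\beta})'}\le \Vert \bar h^{(j)}_{k,s,\xi}\Vert_{L^1(\bar\nu)}\le \Vert\bar h^{(j)}_{k,s,\xi}\Vert_\infty\lesssim (1+|s|)(1+k)^j$ since $\mathcal B_{\sqrt\beta}\hookrightarrow L^1(\bar\nu)$. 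Condition (4) in $(A)[r]$ is vacuous here because all the spaces coincide. I expect the main obstacle to be the bookkeeping in the first claim: checking that the naturally defined $\bar\phi$ is genuinely constant along stable leaves and simultaneously Lipschitz on unstable leaves with the correct exponent $\sqrt\beta$, i.e. handling the interaction between the product structure, the separation time $\hat s$, and the auxiliary quantity $\hat s_0$ in \eqref{hats0} — the rest is a routine (if lengthy) propagation of these estimates through \eqref{hk0}–\eqref{hk2}.
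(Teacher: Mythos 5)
Your first claim goes through essentially as in the paper: both you and the paper define $\chi := \sum_{k\ge 0}\bigl(\phi\circ F^k - \phi\circ F^k\circ\bar{\mathfrak p}\bigr)$, and then $\bar\phi := \phi - \chi + \chi\circ F$ restricted to $\bar\Delta$; the convergence comes from the third summand of \eqref{tildeB} along stable leaves, the $\sqrt\beta$-regularity from summing the geometric series, and the regularity on unstable leaves from the second summand together with the product structure. This part is fine.

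The second claim is where the gap lies, and it is a real one, not bookkeeping. Two issues, the second of which is fatal as written.

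First, your definition of $\bar h_{k,s,H}$ as a ``stable-leaf average'' is not the right projection. The paper conditions on the cylinder $\{\hat s(\cdot,x) > 2k\}$, not on the stable leaf $\{\hat s=\infty\}$. This choice is load-bearing: after pushing by $F^k$ the separation $\hat s_0(F^k\cdot, F^k\cdot) > k$, which is exactly what makes the discrepancy in \eqref{hk1} of size $\sqrt\beta^{\,k}$; and, more importantly for \eqref{hk2}, the quantity $\gamma_{k,s,H}(z) := \EXP_\nu[H\circ F^k e^{is\chi\circ F^k}\mid\hat s(\cdot,z)>2k]$ is \emph{invariant under the bijection} $L_{x,y,2k}$ built from the $W_{x,y}$ of $(H1)$, because $\hat s(z, L_{x,y,2k}(z)) > 2k$ forces the two conditioning events to agree. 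That invariance is the mechanism that tames the Lipschitz seminorm of $\mathcal L^{2k}_{is}\bar h_{k,s,H}$.

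Second, and this is the step that fails outright: you claim $\Vert(i\bar S_{2k})^\ell\,\bar h_{k,s,\psi}\Vert_{\mathcal B_{\sqrt\beta}}\lesssim k^\ell$, which would require $\Vert\bar h_{k,s,\psi}\Vert_{\mathcal B_{\sqrt\beta}} = O(1)$. This is false. The factor $e^{-is\bar S_k(x)}$ inside $\bar h_{k,s,\psi}$ has Lipschitz seminorm (with respect to the metric $\sqrt\beta^{\,\hat s(\cdot,\cdot)}$) that blows up like $|s|\,\sqrt\beta^{\,-k}$: for $\hat s(x,y)=m\ge k$ one has $|e^{-is\bar S_k(x)}-e^{-is\bar S_k(y)}|\le |s|\,|\bar\phi|\sum_{l<k}\sqrt\beta^{\,m-l}\sim|s|\sqrt\beta^{\,m-k}$, and dividing by $\sqrt\beta^{\,m}$ leaves the uncontrolled factor $\sqrt\beta^{\,-k}$. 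So you cannot bound $\bar h_{k,s,\psi}$ in $\mathcal B_{\sqrt\beta}$ and then invoke an abstract uniform Lasota--Yorke bound for $\mathcal L^{2k}_{is}$. The correct route, and the paper's, is to exploit the exact cancellation inside the composition:
\[
\mathcal L^{2k}_{is}(\bar h_{k,s,H}) \;=\; \mathcal L^{2k}\!\bigl(e^{is\bar S_{2k}}\,e^{-is\bar S_k}\,\gamma_{k,s,H}\bigr)
\;=\; \mathcal L^{2k}\!\bigl(e^{is\,\bar S_k\circ\bar F^k}\,\gamma_{k,s,H}\bigr)\, ,
\]
and similarly for the $s$-derivatives, so that the problematic $e^{-is\bar S_k}$ never needs to be estimated in $\mathcal B_{\sqrt\beta}$ on its own. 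After this cancellation one then estimates the Lipschitz seminorm of $\mathcal L^{2k}(\tilde h^{(j)}_{k,s,H})$ by pairing preimages of $x$ and $y$ via $L_{x,y,2k}$, using \eqref{regulsumg}-type distortion control for the weights $e^{S^{\bar g}_{2k}}$, the $L_{x,y,2k}$-invariance of $\gamma_{k,s,H}$, and the fact that $\bar F^k$ of paired preimages are at separation $\ge \hat s(x,y)+k$ so that $e^{is\,\bar S_k\circ\bar F^k}$ contributes the correct $\sqrt\beta^{\,\hat s(x,y)}$ factor. Your ``Leibniz + abstract Doeblin--Fortet'' shortcut hides all of this, and without the cancellation and the $L_{x,y,2k}$-pairing the estimate \eqref{hk2} does not follow.

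Minor point: with $r_0=r$, $q(H)=0$ the relevant norms in \eqref{hk0}--\eqref{hk1} are $L^{(r+2)/j}$, not $L^\infty$; but since $\bar\nu$ is a probability measure, $L^\infty$-bounds imply all of these, so your simplification is harmless.
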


\begin{proof} Following \cite[(3.27)]{DNP}, we prove Assumption $(A)(1)$ with $$\chi:=\sum_{k\ge 0}\left(\phi\circ F^k-\phi\circ F^k \circ \bar{\mathfrak p}\right)\in\mathcal B^{(0)}_{\sqrt{\beta}}$$ and 
$$\bar\phi:=\left(\phi+\sum_{k\ge 1}(\phi\circ  F^k-\phi\circ F^{k-1}\circ\bar F)\right)\bigg|_{\bar \Delta}\in\mathcal B_{\sqrt{\beta}}.$$

First observe that since $\gamma^s(x)=\gamma^s(\bar{\mathfrak p}(x))$, $|\phi\circ F^k-\phi\circ F^k\circ\bar{\mathfrak p}|\le\Vert\phi\Vert^{(0)}_\beta\beta^k $. Analogously, since $\gamma^s(F(x))=\gamma^s(\bar F(x))$, $|\phi\circ F^k-\phi\circ F^{k-1}\circ\bar F|\le\Vert\phi\Vert^{(0)}_\beta\beta^{k-1}$. This ensures that $\chi$ and $\bar\phi$ are well defined and we have proved the identity
\[
\phi-\chi+\chi\circ F=\phi\circ\bar{\mathfrak p}+\sum_{k\ge 1}(\phi\circ  F^k\circ\bar{\mathfrak p}-\phi\circ F^{k-1}\circ\bar{\mathfrak p}\circ F)=\bar\phi\circ\bar{\mathfrak p}\, ,
\]
since $\bar{\mathfrak p}\circ F=\bar F\circ\bar{\mathfrak p}$.

Let us prove that $\bar\phi$ is in $\mathcal B_{\sqrt{\beta}}$.
Since $\phi\in\mathcal B^{(0)}_{\beta}$, 
for any $x,y\in\gamma^u\subset\bar\Delta$,
\begin{align*}
\left|\bar\phi(x)-\bar\phi(y)\right|&\le \left|\phi(x)-\phi(y)\right|
+2\sum_{k\ge\lceil\hat s(x,y)/2\rceil+1}\Vert
\phi\circ F^k-\phi\circ F^{k-1}\circ \bar {\mathfrak p}\circ F\Vert_\infty\\
&\ \ \ \ \ +\sum_{k=1}^{\lceil\hat s(x,y)/2\rceil}
\left|  \phi(F^k(x))-\phi(F^k(y))-[\phi( F^{k-1}( \bar  {\mathfrak p}( F(x))))-\phi( F^{k-1}( \bar {\mathfrak p}(F(y)))]\right|\\
&\le \Vert\phi\Vert^{(0)}_\beta \left(\beta^{\hat s(x,y)}+2\sum_{k\ge\lceil\hat s(x,y)/2\rceil+1}\beta^{k-1}+2\sum_{k=1}^{\lceil\hat s(x,y)/2\rceil}\beta^{\hat s(x,y)-k} \right)\\
&\le  \Vert\phi\Vert^{(0)}_\beta \beta^{\frac{\hat s(x,y) }2}\left(1+4 \frac{\beta^{-1}}{1-\beta}\right)\, ,
\end{align*}
since, whenever $1\le k<\hat s(x,y)$, $\hat s(F^k(x),F^k(y))=\hat s(x,y)-k$ 
and 
\begin{align*}
\hat s(x,y)=\hat s(F(x),F(y))+1 &=\hat s(\bar  {\mathfrak p}(F(x)),\bar  {\mathfrak p}(F(y)))+1\\ &=
\hat s(F^{k-1}(\bar {\mathfrak p}(F(x))),F^{k-1}(\bar  {\mathfrak p}(F(x))))+k\, .
\end{align*}
Now, let us prove that $\chi$ belongs to $\mathcal B^{(0)}_{\sqrt{\beta}}$.
Let $x,y\in\gamma^u\subset\Delta$, then
\begin{align*}
|\chi(x)-\chi(y)|&\le \sum_{k=0}^{\lceil\hat s(x,y)/2\rceil}\left|(\phi\circ F^k(x)-\phi\circ F^k(y))-(\phi\circ F^k \circ \bar{\mathfrak p}(x)-\phi\circ F^k \circ \bar{\mathfrak p}(y))\right|\\
&\ \ \ \ \ \ \ \ \ \ \ \ \ \ \ +2\sum_{k\ge \lceil\hat s(x,y)/2\rceil+1}\left\Vert\phi\circ F^k-\phi\circ F^k \circ \bar{\mathfrak p}\right\Vert_\infty\\
&\le 2\Vert\phi\Vert^{(0)}_\beta \left(\sum_{k=0}^{\lceil\hat s(x,y)/2\rceil}\beta^{\hat s(x,y)-k}
+\sum_{k\ge \lceil\hat s(x,y)/2\rceil+1}\beta^k\right)=O(\beta^{\frac{\hat s(x,y)}2})\, ,
\end{align*}
and, for any $x,y\in\gamma^s\subset\Delta$ and any integer $n\ge 0$, then $\bar {\mathfrak p}(F^n(x))=\bar {\mathfrak p}(F^n(y))$ and so
\begin{align*}
|\chi(F^n(x))-\chi(F^n(y))|&\le \sum_{k\ge 0}
\left|\phi\circ F^k(F^n(x))-\phi\circ F^k(F^n(y))\right|\\
&\le\Vert \phi\Vert^{(0)}_\beta
  \sum_{k\ge 0}   \beta^{k+n}=\frac{\Vert \phi\Vert^{(0)}_\beta\beta^n}{1-\beta}\, .
\end{align*}

Let us establish (A)[$\infty$](2). Let $H\in\{\psi,\xi\}\subset\mathcal B^{(0)}_{\sqrt{\beta}}$. Using the definition of $\bar\phi$ and $\chi$ as above, we define
\[
h_{k,s, H}:=H\circ F^ke^{is\chi \circ F^k }  e^{-is\bar{S}_k\circ \bar{\mathfrak p} },\quad\mbox{with}\quad
\bar S_n:=\sum_{k=0}^{n-1}\bar\phi\circ F^k\, ,
\]
and
\[
\quad\bar{h}_{k,s, H}(x)= e^{-is\bar S_k(x)}\EXP_{\nu}[H\circ F^k  e^{is\chi \circ F^k }\, |\,\hat s(\cdot , x) >2k],\quad x\in\bar\Delta\, .
\]

Observe that for any $p\ge 1$, $\Vert  \bar{h}
_{k,s, H}\Vert_{L^p(\bar\nu)}\le\Vert  h
_{k,s, H}\Vert_{L^p(\nu)}=\Vert  H\Vert_{L^p(\mu)}$ and so \eqref{hk0} and for all $x\in\bar\Delta$,
\begin{align*}
\bar{h}^{(j)}_{k,s, H}(x)&
=\mathbb E_\nu\left[\left.  h^{(j)}_{k,s, H}\right|\,\hat s(\cdot , x) >2k\right]\\
&= e^{-is\bar S_k(x)}\EXP_{\nu}\big[H\circ F^k\, \cdot \big(i(\chi \circ  F^k-\bar S_k(x))\big)^je^{is\chi \circ F^k }\, \big|\,\hat s(\cdot , x) >2k\big]\, .
\end{align*}
and so
\begin{align*}
\Vert\bar{h}^{(j)}_{k,s, H}&\circ\bar{\mathfrak p}-h^{(j)}_{k,s, H}\Vert_\infty \\
&\leq  
\sum_{m=0}^{j}{j\choose m}\Vert \bar S_k\Vert_\infty^{j-m}
 \sup_{x,y\in\Delta,\ \hat s(x,y)>2k}\left|(H\chi^{m}e^{is\chi})(F^k(x))-(H\chi^{m}e^{is\chi})(F^k(y))\right| \\
&\leq  
\sum_{m=0}^{j}{j\choose m}(k\Vert \bar \phi\Vert_\infty)^{j-m}
 \Vert H\chi^{m}e^{is\chi}\Vert^{(0)}_{\sqrt{\beta}}\sqrt{\beta}^k \, ,
\end{align*}
since $\hat s(x,y)>2k$ implies that $\hat s_0(F^k(x),F^k(y))>k$.
But $H,\chi\in \mathcal B^{(0)}_{\sqrt{\beta}}$ and $$\Vert e^{is\chi}\Vert^{(0)}_{\sqrt{\beta}}\le 1+\Vert s\chi\Vert^{(0)}_{\sqrt{\beta}}\, .$$
Therefore, there exists $C>0$ such that 
$$\left\Vert h^{(j)}_{k,s, H}-\bar h^{(j)}_{k,s, H}\circ\bar{\mathfrak p}\right\Vert_\infty \leq C(1+|s|)\, k^j \, \sqrt{\beta}^{k}\, ,$$
and we have proved \eqref{hk1}.
Next, we note that
\begin{equation}\label{h_Bound}
\|\bar h^{(j)}_{k,s, H}\|_\infty \leq \Vert H\Vert_\infty(\|\bar{S}_k\|_\infty+\|\chi\|_\infty)^{j} \leq \Vert H\Vert_\infty(\|\chi\|_\infty+k\|\bar\phi\|_\infty)^{j}\, ,
\end{equation}
which leads to the second part of \eqref{hk2} since $\mathcal B_{\sqrt{\beta}}
\hookrightarrow L^1(\bar\nu)$.

It remains to prove the first part of \eqref{hk2}.
Recalling that $\mathcal L^{2k}_{is}=\mathcal L^{2k}(e^{is\bar S_{2k}}\,\cdot\,)$ and
using \eqref{formuleOp}, we obtain that for every $x\in\bar\Delta$,
\[
\cL^{2k}_{is}\bar{h}_{k,s, H}(x)=\sum_{y\in F^{-2k}(\{x\})} e^{S_{2k}^{\bar{g}}(y)}e^{is\bar{S}_{k}\circ \bar F^k(y)}\EXP_{\nu}[H\circ F^k e^{is\chi \circ F^k }\, |\,\hat s( \cdot , y) >2k]\, ,
\]
with $S_{2k}^{\bar{g}}:=\sum_{m=0}^{2k-1}\bar g\circ \bar F^{m}$.
Therefore,
\begin{align*}
(\cL^{2k}_{is}&\bar{h}_{k,s, H})^{(j)}(x)\\&=\sum_{y\in F^{-2k}(\{x\})} e^{S^{(\bar g)}_{2k}(y)}\EXP_{\nu}[i^j(\bar{S}_{k}\circ\bar F^k(y)+\chi \circ \bar F^k)^j H\circ F^ke^{is(\bar{S}_{k}\circ\bar F^k(y)\chi \circ \bar F^k) }\, |\,\hat s( \cdot , y) >2k]\\ &=\cL^{2k}(\tilde{h}^{(j)}_{k,s, H})(x),\quad\mbox{with}\quad
\tilde{h}_{k,s, H}(x):= e^{is\bar S_k\circ \bar F^k(x)}\EXP_{\nu}[H\circ F^k e^{is\chi \circ F^k }\, |\,\hat s( \cdot , y) >2k]\, .
\end{align*} 
From this, we have the following estimate
\begin{align*}
\left\Vert (\cL^{2k}_{is}\bar{h}_{k,s, H})^{(j)}\right\Vert_\infty\leq \Vert H\Vert_\infty(\|\bar S_{k}\|_\infty+\|\chi\|_\infty)^j 
\Vert \cL^{2k}{{\bf 1}_{\bar \Delta}}\Vert_\infty 
\leq  \Vert H\Vert_\infty (\|\chi\|_\infty+k\|\bar{\phi}\|_\infty)^{j}\, .
\end{align*}

Finally, we need to estimate the Lipschitz constant of $(\cL^{2k}_{is}\bar{h}_{k,s,H})^{(j)}$. Due to \eqref{bijpreimages}, for any $x,y\in\bar\Delta$ such that
$\hat s(x,y)>1$, there exists a bijection $L_{x,y,r} : \bar F^{-r}(\{x\}) \to  \bar F^{-r}(\{y\}) $ such that, for all $z\in\bar F^{-r}(\{x\})$, $\hat s(z,L_{x,y,r}(z))> r$,
with $L_{x,y,r}$ being defined inductively on $r\ge 1$ by $L_{x,y,1}=W_{x,y}$ and $L_{x,y,r+1}(z)= W_{\bar F(z),L_{x,y,r}(\bar F(z))}(z)$.

Then it is immediate that, for all $z \in  \bar F^{-r}(\{x\})$,
$$\gamma_{k,s, H}(z):=\EXP_{\nu}[H\circ F^k \cdot e^{is\chi \circ F^k }\, |\,\hat s(\cdot , z) >2k] 
$$
remains unchanged if $z$ is replaced by $L_{x,y,2k}(z)$.
Therefore, writing 
$$\psi_{k,s}(z)=e^{is\bar{S}_{k}(z)},$$
we have 
\begin{align}\label{diffLh}
(\cL^{2k}_{is}&\bar{h}_{k,s,H})^{(j)}(x)-(\cL^{2k}_{is}\bar{h}_{k,s,H})^{(j)}(y) \\ &= \cL^{2k}(\tilde{h}^{(j)}_{k,s,H})(x)-\cL^{2k}(\tilde{h}^{(j)}_{k,s,H})(y) \nonumber \\ &=\sum_{r=0}^j {j\choose r}\sum_{z\in \bar F^{-2k}(\{x\})} \Big(e^{S_{2k}^{(\bar{g})}(z)}\psi_{k,s}^{(r)}\circ \bar F^k(z)-e^{S_{2k}^{(\bar{g})}(L_{x,y,2k}(z))}\psi^{(r)}_{k,s}\circ \bar F^k(L_{x,y,2k}(z))\Big)\gamma_{k,s,H}^{(j-r)}(z)\, . \nonumber
\end{align}
Next, we estimate each term in the above sum. Observe that
\begin{equation}\label{majogamma}
\|\gamma_{k,s,  H}^{(j-r)}\|_\infty\leq \Vert H\Vert_\infty (1+\|\chi\|_\infty
)^{j-r}\, .
\end{equation}
Also, 
\begin{align}
\nonumber\Big|e^{S_{2k}^{(\bar{g})}(z)}\psi_{k,s}^{(r)}\circ \bar F^k(z)&-e^{S_{2k}^{(\bar{g})}(L_{x,y,2k}(z))}\psi^{(r)}_{k,s}\circ \bar F^k(L_{x,y,2k}(z))\Big|  \\ 
\nonumber&\leq e^{S_{2k}^{(\bar{g})}(z)}|1-e^{S_{2k}^{(\bar{g})}(L_{x,y,2k}(z))-S_{2k}^{(\bar{g})}(z)}||\psi_{k,s}^{(r)}\circ \bar F^k(z)| \\ &\phantom{\leq e^{S_{2k}^{(\bar{g})}(z)}|1-e^{S_{2k}^{(\bar{g})}}}+ e^{S_{2k}^{(\bar{g})}(L_{x,y,2k}(z))}|\psi_{k,s}^{(r)}\circ \bar F^k(z)-\psi^{(r)}_{k,s}\circ \bar F^k(L_{x,y,2k}(z))|\, .\label{diffLhbis}
\end{align}

Note that the first part of \eqref{formuleOp} implies that
\[\hat s(x,y)>1\quad\Rightarrow\quad  e^{-\bar C_{\bar g}\sqrt{\beta}^{\hat s(x,y)}}\leq (1+\bar C_{\bar g}\sqrt{\beta}^{\hat s(x,y)})^{-1}\leq e^{\bar g(y)-\bar g(x)}\leq 1+\bar C_{\bar g}\sqrt{\beta}^{\hat s(x,y)}\leq e^{\bar C_{\bar g}\sqrt{\beta}^{\hat s(x,y)}}\]
and so
\[
|1-e^{S_{2k}^{(\bar{g})}(L_{x,y,2k}(z))-S_{2k}^{(\bar{g})}(z)}|\leq 
e^{\bar C_{\bar g}\sum_{m=0}^{2k-1}\sqrt{\beta}^{\hat s(x,y)+2k-m}}-1
\leq \max(1,e^{\frac{\bar C_{\bar g}}{1-\sqrt{\beta}}})\frac{
\bar C_{\bar g}
\beta^{(\hat s(x,y)+1)/2}}{1-\sqrt\beta}\, .
\]
Moreover,
\begin{align*}
|\psi_{k,s}^{(r)}\circ \bar F^k(z)-\psi^{(r)}_{k,s}\circ &\bar F^k(L_{x,y,2k}(z))| \\ &\leq \big|(\bar{S}_{k}(\bar F^k(z)))^r e^{is\bar{S}_{k}(\bar F^k(z))}  - (\bar{S}_{k}(\bar F^k (L_{x,y,2k}(z))))^r e^{is\bar{S}_{k}((\bar F^k(L_{x,y,2k}(z)))}\big| \\ 
&\leq
\sum_{m=0}^{k-1}\left|\bar\phi(\bar F^{k+m}(L_{x,y,2k}(z)))-\bar\phi(\bar F^{k+m}(z)) \right|\, |(r\|\bar S_k\|^{r-1}_\infty+ |s|\|\bar S_k\|^r_\infty) \\ 
&\leq \frac{|\bar\phi|_{\wt\cB_{\sqrt{\beta}}}\beta^{(\hat s(x,y)+1)/2}}{1-\sqrt\beta} (r(k\|\bar\phi\|_\infty)^{r-1}_\infty+ |s|(k\|\bar\phi\|_\infty)^r)\, . 
\end{align*}
Combining this with \eqref{diffLh},  \eqref{majogamma} and \eqref{diffLhbis}, we obtain 
\begin{align*}
\frac{|(\cL^{2k}_{is}\bar{h}_{k,s})^{(j)}(x)-(\cL^{2k}_{is}\bar{h}_{k,s})^{(j)}(y)|}{\sqrt{\beta}^{\hat s(x,y)}} = \cO((1+|s|)k^j)\, ,
\end{align*}
as required. This establishes 
$(A)[r](2)$ for all $r$. 
\end{proof}

\begin{rem}
In the case of SFTs$,$ $\EXP_{\nu}[g\circ F^k|\hat s(\cdot,x)>2k]=\EXP_\nu[g|C_{-k,...,k}(F^k(x))]$ where $C_{-k,...,k}(z)$ is the two-sided cylinder $\{\vec y\, :\, \forall m=-k,...,k,\ y_m=z_m\}$.

\end{rem}
\subsection{Assumptions $(B)$ and $(A)(3)$}\label{A_B}
Assumptions $(A)(3)$ and $(B)$ describe the spectrum of the transfer operator and its perturbations, and is typical for the Nagaev-Guivarc'h approach to establishing limit theorems for dynamical systems and Markov processes. However, Assumption $(B)(2)$ gives more control than the conventional approach allowing us to prove the first order strong Edgeworth expansion
as well as expansions in the local limit theorem. In this section, we describe general techniques to verify these assumptions for dynamical systems. 

For Assumption $(B)$, the standard tool available is a 
Doeblin-Fortet
inequality. This idea is summarized in the following proposition. 

\begin{prop}[\cite{ITM,Hennion}]\label{DF} Suppose $\cP$ is a bounded linear operator on a complex Banach space $(\cB,\Vert\cdot\Vert_{\mathcal B})$ and that $\Vert\cdot\Vert_{*}$ is a norm on $\mathcal B$ such that
\begin{itemize} 
\item[$(i)$] $\cB-$bounded sets are precompact in $(\cB,\Vert\cdot\Vert_{*})$,
\item[$(ii)$] There exist 
positive constants $A,B,\theta$
such that $0<\theta<1$ and 
\begin{align}
\label{DF_0} 
 \forall h \in \cB,\ \forall n, \quad \|\cP^{n} h\|_{\cB} &\leq A\theta^n\| h \|_{\cB}+ B\| h \|_{*}.
\end{align}
\end{itemize}
Then 
the essential spectral radius of $\cP: \cB \to \cB$ satisfies $r_{\text{ess}}(\cP) \leq \theta<1$.

\end{prop}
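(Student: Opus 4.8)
The plan is to prove the Ionescu-Tulcea--Marinescu / Hennion-type statement by the classical Hennion compactness argument, which reduces the question about the essential spectral radius to a quasi-compactness estimate obtained by iterating the Doeblin-Fortet inequality \eqref{DF_0} and invoking Nussbaum's formula for the essential spectral radius.

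\textbf{Step 1: Reduce to a single quasi-compactness estimate.} Fix $\theta'\in(\theta,1)$. I would first show that for every $\rho>\theta$ there exist $m\ge 1$ and a $\|\cdot\|_*$-bounded, hence (by $(i)$) precompact, operator-valued estimate: more precisely, the iterate $\cP^m$ can be written as $\cP^m = \cP^m_{\text{small}} + \cP^m_{\text{cpt}}$ where $\|\cP^m_{\text{small}}\|_{\cB\to\cB}\le (\theta')^m$ and $\cP^m_{\text{cpt}}$ maps the closed unit ball of $\cB$ into a precompact subset of $(\cB,\|\cdot\|_{\cB})$. This decomposition is not literally needed; what is needed is the measure-of-noncompactness bound below. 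The key input is that iterating \eqref{DF_0} (using $\cP^{n+k}h = \cP^n(\cP^k h)$ and submultiplicativity) gives, for all $n,k$,
\[
\|\cP^{n+k}h\|_{\cB}\le A\theta^{n}\|\cP^{k}h\|_{\cB} + B\|\cP^{k}h\|_{*}\le A^2\theta^{n+k}\|h\|_{\cB} + AB\theta^n\|h\|_* + B\|\cP^k h\|_*\, .
\]

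\textbf{Step 2: Apply Nussbaum's formula.} Recall $r_{\text{ess}}(\cP)=\lim_{n\to\infty}\beta(\cP^n)^{1/n}$, where $\beta(T)$ is the infimum of all $\varepsilon>0$ such that $T(\text{unit ball})$ can be covered by finitely many $\varepsilon$-balls in $(\cB,\|\cdot\|_{\cB})$. I would estimate $\beta(\cP^n)$: given $h$ in the unit ball of $\cB$, write $n=n_1+n_2$ with $n_1,n_2\to\infty$ appropriately; by the iterated inequality, $\cP^n h$ lies within $A\theta^{n_1}$ (in $\|\cdot\|_{\cB}$) of the set $\{g : \|g\|_{\cB}\le B\|\cP^{n_2}h\|_*\le BC_0 \}$ for a suitable constant $C_0$ bounding $\|\cP^{n_2}\|_{\cB\to *}$ on the unit ball via \eqref{DF_0} again. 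That bounded set, being $\cB$-bounded, is precompact in $(\cB,\|\cdot\|_*)$ by $(i)$; but Nussbaum's formula requires covering in $\|\cdot\|_{\cB}$, so one must interleave once more: apply $\cP$ to the precompact-in-$*$ set and use \eqref{DF_0} to convert precompactness in $\|\cdot\|_*$ into an arbitrarily good finite cover in $\|\cdot\|_{\cB}$. Concretely, if $\{g_j\}$ is a finite $\varepsilon$-net in $\|\cdot\|_*$ for the relevant bounded set, then $\{\cP g_j\}$ is, by \eqref{DF_0} with $n=1$, an $(A\theta \cdot(\text{diam})+B\varepsilon)$-net in $\|\cdot\|_{\cB}$ for its image. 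Chaining these observations yields $\beta(\cP^n)\le C\theta^{\lfloor n/3\rfloor}$ for a constant $C$ independent of $n$.

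\textbf{Step 3: Conclude.} Taking $n$-th roots and letting $n\to\infty$ gives $r_{\text{ess}}(\cP)=\lim_n \beta(\cP^n)^{1/n}\le \theta^{1/3}$; but running the same argument with $n$ split into $N$ roughly equal blocks (iterating \eqref{DF_0} $N$ times) improves the exponent to $\theta^{(N-1)/N}$, and letting $N\to\infty$ gives $r_{\text{ess}}(\cP)\le\theta$, as claimed. The main obstacle is purely bookkeeping: carefully tracking how precompactness in the weak norm $\|\cdot\|_*$ is upgraded, via one application of $\cP$ and \eqref{DF_0}, to an explicit finite cover in the strong norm $\|\cdot\|_{\cB}$ with controlled radius, and making sure the constants $A,B$ do not accumulate badly when the inequality is iterated; the spectral-radius conclusion itself is then immediate from Nussbaum's formula. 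Since this is exactly the content of \cite{ITM,Hennion}, I would in fact just cite those references for the quantitative statement and only sketch the iteration above for the reader's convenience.
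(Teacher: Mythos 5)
The paper does not prove this proposition; it is attributed to \cite{ITM,Hennion}, which is consistent with your closing remark that in practice you would simply cite those references. Your overall strategy, namely Nussbaum's formula for the essential spectral radius combined with the compact-embedding argument that upgrades $\|\cdot\|_{*}$-nets into $\|\cdot\|_{\cB}$-nets, is exactly the standard route in those references, so the approach is the right one.

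That said, two points in your sketch need repair. First, the sentence ``$\cP^n h$ lies within $A\theta^{n_1}$ (in $\|\cdot\|_{\cB}$) of the set $\{g:\|g\|_{\cB}\le B\|\cP^{n_2}h\|_{*}\}$'' conflates the norm inequality with a set-theoretic decomposition of $\cP^n h$; the Doeblin--Fortet bound controls the number $\|\cP^n h\|_{\cB}$, not the distance from $\cP^n h$ to a prescribed ball. (Your later $\varepsilon$-net interleaving is the correct way to extract a cover.) Second, the $n=n_1+n_2$ splitting, the $\theta^{\lfloor n/3\rfloor}$ intermediate bound, and the subsequent $N$-block refinement are unnecessary, because hypothesis $(ii)$ is already stated for \emph{every} iterate $\cP^n$, not just $\cP$. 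One can argue directly: by $(i)$ the unit ball of $(\cB,\|\cdot\|_{\cB})$ is totally bounded in $\|\cdot\|_{*}$, so for each $\varepsilon>0$ pick a finite set $\{h_j\}\subset\{\|h\|_{\cB}\le1\}$ with $\min_j\|h-h_j\|_{*}\le\varepsilon$ for every $h$ in the unit ball. For such $h$ and the corresponding $j$ one has $\|h-h_j\|_{\cB}\le 2$, hence by \eqref{DF_0}
\[
\|\cP^n h-\cP^n h_j\|_{\cB}=\|\cP^n(h-h_j)\|_{\cB}\le 2A\theta^{n}+B\varepsilon\,,
\]
so $\{\cP^n h_j\}$ is a finite $(2A\theta^n+B\varepsilon)$-net, in $\|\cdot\|_{\cB}$, for $\cP^n$ applied to the unit ball. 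Letting $\varepsilon\downarrow0$ gives $\beta(\cP^n)\le 2A\theta^n$ for every $n$, and Nussbaum's formula yields $r_{\mathrm{ess}}(\cP)=\lim_{n}\beta(\cP^n)^{1/n}\le\theta$ in one step, with no accumulation of constants and no limit over block sizes.
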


Note that \eqref{DF_0} holds true as soon as there exists a positive integer $n_0$ such that $\|\cP^{n_0} h\|_{\cB} \leq \theta^{n_0}\| h \|_{\cB}+ B_0\| h \|_{*}$ and if $\sup_{n\ge 1}\|\cP^n\|_*<\infty$. 

In our setting, we will take $\cP=\cL_{is}$. Recall that these define bounded operators on $L^1(\bar\nu)$ and that if ${\mathbf 1}_{\Delta} \in\cB$, then it is an eigenvector of $\cL$ associated to the eigenvalue 1 and if the hypothesis of \Cref{DF} is satisfied by $\cP=\cL_{is}$ then
\begin{enumerate}
\item The spectrum $\sigma(\cL_{is})$ of $\cL_{is}:\cB \to \cB$, is contained in the closed unit disc, and is the union of the essential spectrum, $\sigma_{\text{ess}}(\cL_{is})$, and  finitely many eigenvalues of finite multiplicity $\{\lambda_{s,1},\dots,\lambda_{s, k_s}\}$ with $\theta_s<|\lambda_{s,j}|\leq 1$ (quasi-compactness).
\item  If $\cL_{is}$ has eigenvalues of modulus $1$, they are semi-simple, i.e., no Jordan blocks (applying Proposition~\ref{DF}, this comes from the fact that $(i)$ implies that $\Vert\cdot\Vert_*$ is dominated by $\Vert\cdot\Vert_{\mathcal B}$ and so by $(ii)$ $\sup_{n\ge 1}\Vert \mathcal P^n\Vert_{\cB}<\infty$).
\end{enumerate}

To conclude $(B)(1,2)$, we need to understand the eigenvalues of modulus $1$ of $\cL_{is}$.
From the positivity of $\cL$, it follows that all its eigenvalues of modulus $1$ are roots of unity, and hence, the corresponding eigenfunctions yield invariant densities for $\bar F^n$. Since $\bar\nu$ is $\bar{F}-$invariant, $\cL({\bf 1}_{\bar{\Delta}})={\bf 1}_{\bar{\Delta}}$ and $\cL^*(\bar\nu)=\bar\nu$ where $\cL^*$ is the adjoint of $\cL$, $1$ is an eigenvalue of $\cL$. It follows that $1$ is simple if and only if $\bar\nu$ is ergodic. This is because ergodicity is equivalent to $\bar F-$invariant functions being constants. Also $\cL$ not having eigenvalues other than $1$ on the unit circle is equivalent to exactness of the transformation $\bar{F}$ which we have to establish through dynamical arguments.

Recall also that $\cL$ can often be written, by a change of variable, under the form 
\begin{equation}\label{TOp}
\cL h (x) = \sum_{z \in \bar{F}^{-1}(\{x\})} e^{\bar g(z)}{h(z)},
\end{equation}
where $e^{-\bar g}=J\bar{F}$ is the Jacobian of $\bar F$ with respect to $\bar \nu$.

\begin{defin}
The function $\phi:\cM\rightarrow\mathbb X$ is said to be {\bf non-arithmetic} if it is not $f-$cohomologous in $L^2(\cM,\mu)$ to a sublattice-valued
function, i.e. if there exists no triple $(a,B,\theta)$  with  $a\in\mathbb X$, $B$ a closed proper subgroup of $\mathbb X$ and $\theta \in L^2(\mu)$ such that  $\phi+\gamma-\gamma\circ f
 \in  \theta+B $  $\mu$-a.s.. 
\end{defin}

\begin{lem}\label{periphericalspect}
Suppose Assumptions $(0)$ with $\phi:\cM\rightarrow\mathbb X$ non-arithmetic and that the conclusion of Proposition~\ref{DF} holds true
for $\cL_{is}$ for all $s\in\mathbb X^*$ and that the spectral radius of $\cL_{is}$ on $\cB$ is dominated by 1.
Further assume that $(\bar F,\bar \nu)$ is exact,  \eqref{TOp} holds $\bar\nu$-a.s.~with $F^{-1}(\{x\})$ at most numerable and that Assumption $(A)(1)$ is true. 
Then
Assumption $(B)(1,2)$ is true.
If, in addition, $\bar\phi\in\cB\hookrightarrow L^2(\bar\nu)$, then 
the entirety of 
Assumption $(B)$ is true.
\end{lem}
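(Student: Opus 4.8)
The plan is to verify the three parts of Assumption $(B)$ in turn, using the quasi-compactness supplied by Proposition~\ref{DF} together with the positivity of $\cL$ and the non-arithmeticity of $\phi$.

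First I would establish $(B)(1)$. Since the conclusion of Proposition~\ref{DF} holds for $\cL_0 = \cL$, the operator $\cL$ is quasi-compact on $\cB$: its spectrum outside the disk of radius $\theta < 1$ consists of finitely many eigenvalues of finite multiplicity, all of modulus at most $1$, and every such peripheral eigenvalue is semi-simple. Because $\cL$ is a positive operator (by \eqref{TOp}) preserving $\mathbf 1_{\bar\Delta}$ with $\cL^*\bar\nu = \bar\nu$, a standard argument (e.g.\ via the ergodic-theoretic interpretation of peripheral eigenfunctions as invariant densities for iterates $\bar F^N$) shows that all peripheral eigenvalues are roots of unity and their eigenspaces are spanned by bounded functions. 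Exactness of $(\bar F,\bar\nu)$ then forces the only peripheral eigenvalue to be $1$, with a one-dimensional eigenspace: indeed an eigenfunction for a root of unity $e^{2\pi i p/q}$ would give a non-constant $\bar F^q$-invariant $L^1$ function (after taking modulus and normalizing), contradicting ergodicity of $\bar F^q$, which follows from exactness of $\bar F$. This gives the spectral gap on $\cB$; the same reasoning applied with each $\cX_a$ or $\cX_a^{(+)}$ in place of $\cB$ (the hypothesis that Proposition~\ref{DF}'s conclusion holds is assumed for the relevant spaces, and on each of them $\mathbf 1_{\bar\Delta}$ is the unique peripheral eigenfunction up to scalars by the identical argument) yields $(B)(1)$ as stated.

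Next, for $(B)(2)$, fix $s\in\mathbb X^*\setminus\{0\}$ and suppose for contradiction that $\cL_{is}$ has an eigenvalue $e^{i\theta}$ of modulus $1$ on $\cX_0$ (or $\cX_0^{(+)}$), with eigenfunction $h\neq 0$. Quasi-compactness of $\cL_{is}$ (again from Proposition~\ref{DF}) and the assumed domination of its spectral radius by $1$ mean such an eigenfunction is the only obstruction. The standard Nagaev--Guivarc'h peripheral-eigenvalue analysis (as in \cite[Chapter 4]{HH}) converts the identity $\cL(e^{is\bar\phi}h) = e^{i\theta}h$ into a cohomological equation: writing $h = e^{iu}|h|$ where one checks $|h|$ is a (nonzero, hence positive) eigenfunction of $\cL$ for the eigenvalue $1$, so $|h|\equiv c$ by $(B)(1)$, one obtains that $s\bar\phi + u - u\circ\bar F \equiv \theta \pmod{2\pi}$, i.e.\ $s\bar\phi$ is $\bar F$-cohomologous in $L^2(\bar\nu)$ to a function valued in $\theta + 2\pi\mathbb Z$. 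Pulling this back along $\bar{\mathfrak p}$ and using Assumption $(A)(1)$ ($\phi\circ\mathfrak p = \bar\phi\circ\bar{\mathfrak p} + \chi - \chi\circ F$) to transfer the coboundary to the level of $(F,\Delta,\nu)$, and then descending to $(f,\cM,\mu)$ via $\mathfrak p$, one concludes that $\phi$ is $f$-cohomologous to a function valued in a proper closed subgroup of $\mathbb X$ (the subgroup $\frac{2\pi}{|s|}\mathbb Z$ when $\mathbb X=\mathbb R$, translated by $\theta/s$; the lattice case is similar), contradicting non-arithmeticity. Hence no such eigenvalue exists and the spectral radius of $\cL_{is}$ on $\cX_0$ and $\cX_0^{(+)}$ is strictly less than $1$, giving $(B)(2)$.

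Finally, $(B)(3)$: assuming in addition $\bar\phi\in\cB\hookrightarrow L^2(\bar\nu)$, we have $\cL\bar\phi\in\cB$ with $\EXP_{\bar\nu}(\bar\phi)=\int_{\mathcal M}\phi\,d\mu=0$ (from Assumption $(A)(1)$ and $\mathbf 1_{\bar\Delta}\in\cB$), so $\cL\bar\phi$ lies in the complement of the top eigenspace, where $\cL$ acts with spectral radius $\le\theta<1$ by $(B)(1)$; therefore $\|\cL^n\bar\phi\|_{\cB} = \|\cL^{n-1}(\cL\bar\phi)\|_{\cB} = O(\theta'^{\,n})$ for some $\theta'\in(\theta,1)$, and summing the resulting geometric bound on $\|\cL^n\bar\phi\|_{L^2(\bar\nu)}\le\mathfrak c\|\cL^n\bar\phi\|_{\cB}$ gives $\sum_{n\ge 0}\|\cL^n\bar\phi\|_{L^2(\bar\nu)}<\infty$, which is $(B)(3)$. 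I expect the main obstacle to be the rigorous bookkeeping in $(B)(2)$: carefully justifying that $|h|$ is constant (which needs the peripheral-eigenfunction identity $|\cL_{is}h|\le\cL|h|$ together with the simplicity of $1$ for $\cL$), and then transporting the cohomological equation faithfully through the three systems in Figure~\ref{setup} while keeping track of the $L^2$ regularity required by the definition of non-arithmeticity.
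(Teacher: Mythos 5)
Your proposal follows essentially the same route as the paper's proof: quasi-compactness from Proposition~\ref{DF} plus exactness to pin down the peripheral spectrum of $\cL$, the triangle-inequality argument showing $|h|$ is constant for any unimodular eigenfunction of $\cL_{is}$, the resulting cohomological equation $s\bar\phi+\gamma-\gamma\circ\bar F+\theta\equiv 0\pmod{2\pi}$ contradicting non-arithmeticity via Assumption $(A)(1)$, and the spectral-gap decay of $\cL^n\bar\phi$ for $(B)(3)$. The only cosmetic difference is that the paper derives $|h|=\mathrm{const}$ directly from $\cL^n|h|\ge|h|$ and the $L^1$-convergence $\cL^n|h|\to\int|h|\,d\bar\nu$, whereas you recast it as $|h|$ being a $1$-eigenfunction and invoke simplicity of the eigenvalue $1$; these are equivalent.
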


\begin{proof}
Due to the conclusion of Proposition~\ref{DF},
we know that $\cL$
is quasicompact. By exactness of the system, we already know that $1$ is the only eigenvalue of $\cL$ and that it is simple. Thus
\[
\exists c>0,\quad \sum_{n\ge 0}\Vert \cL^n(\bar\phi)\Vert_2<c \sum_{n\ge 0}\Vert \cL^n(\bar\phi)\Vert_{\cB}<\infty\, ,
\]
since $\Vert\cL^n(\bar\phi)\Vert_{\cB}$ decreases exponentially fast as $n\rightarrow+\infty$.

Let $s\in\mathbb X^*\setminus\{0\}$. Let us show that $\cL_{is}$ does not have eigenvalues on the unit circle. To see this, assume that $e^{i\theta}$ is an eigenvalue of $\cL_{is}$ with $h\in \cB$ as a corresponding eigenfunction.  Then,  ${\cL}_{is}h=e^{i\theta}h$ in $L^1(\bar\nu)$. Observe that, for $\bar\nu$-almost every $x$, the following holds true
\begin{align*}
\cL|h|(x)=\sum_{z\in\bar F^{-1}(\{x\})} e^{\bar g(z)}|h(z)|\geq \bigg|\sum_{z\in\bar F^{-1}(\{x\})}{e^{\bar g(z)+i s \bar \phi(z)}h(z)} \bigg|=|\cL_{is}h(x)| =|e^{i\theta}h(x)| = |h(x)|\, ,
\end{align*}
and thus $\cL^n|h|\geq |h|,\,\, \bar\nu $-a.s. for all $n$. But 
$ \lim_{n \to \infty} (\cL^n|h|) = \int_{\bar\Delta} |h| \, d\bar\nu $ in $L^1(\bar\nu)$ because $\bar F$ is exact. 
So $ \int_{\bar\Delta} |h|\, d\bar{\nu} \geq |h| $.
This implies that $|h|$ is constant $\bar\nu$-a.e. Without loss of generality, we assume $|h|\equiv 1$. So $h =e^{i\gamma(\cdot)}$ for some $\gamma : \bar \Delta \to \reals$. Substituting back 
$$ \cL_{is} h(x) = \sum_{z\in\bar F^{-1}(\{x\})}e^{\bar g(z)+i(s\bar \phi(z)+\gamma(z))} = e^{i (\theta+\gamma(x))}\quad\mbox{for}\ \bar\nu\mbox{-a.e.}\, x\in\bar\Delta\, ,
$$
and so
$$\sum_{z\in\bar F^{-1}(\{x\})} e^{\bar g(z)+i(s\bar\phi(z)+\gamma(z)-\gamma(\bar F(z))-\theta)} = 1  \quad\mbox{for}\ \bar\nu\mbox{-a.e.}\ x\in\bar\Delta\, .$$
Since, $ \cL(\mathbf 1_{\bar{\Delta}})(x) = \sum_{z\in\bar F^{-1}(\{x\})}e^{\bar g(z)}  = 1$ for $\bar\nu$-a.e. $x$ and $e^{i s\bar\phi(z)+\gamma(z)-\gamma(x)-\theta)}$ are unit vectors and thus we conclude that
\begin{equation}\label{LatticeVal}
s\bar\phi+\gamma-\gamma\circ \bar{F}+\theta = 0\mod 2\pi\quad \bar\nu-a.e.
\end{equation}
which, combined with Assumption $(A)(1)$, contradicts the non-arithmeticity of $\phi$.
\end{proof}


Now let us focus on Assumption $(A)(3)$. The following lemma applies typically (but not only) when $\cB$ is a Banach algebra
\begin{lem}\label{lemA_B}
Suppose Assumption $(0)$ is true with $(\bar F,\bar \nu)$ exact. Assume that $\cL$ defines a continuous operator on $\cB$ and that 
the multiplication by $\bar\phi$ defines a continuous linear map on $\mathcal B$ and that $s\mapsto e^{is\bar\phi}\times\,\cdot\, $ defines a $C^1$ map from
$\mathbb R$ to $\mathcal L(\cB)$ with derivative $s\mapsto (i\bar\phi)e^{is\bar\phi}\times\,\cdot\, $. Then Assumption $(A)(3)$ holds for any $r\ge 0$.
\end{lem}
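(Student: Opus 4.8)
Since the hypothesis involves only a single Banach space $\cB$, I would verify Assumption $(A)(3)$ with $\cX_a=\cX_a^{(+)}=\cB$ for every $a$; thus it suffices to establish three things about the family $\cL_{is}=\cL\circ M_{is}$, where $M_{is}h:=e^{is\bar\phi}h$: that each $\cL_{is}^n$ is a bounded operator on $\cB$, that $s\mapsto\cL_{is}$ is continuous, and that $s\mapsto\cL_{is}^n$ is $C^j$ on $(-\delta,\delta)$ for each $j$, with $j$-th derivative $\cL_{is}^n\big((i\bar S_n)^j\,\cdot\,\big)$. Everything will follow once the $C^1$-hypothesis on $s\mapsto M_{is}$ has been bootstrapped to $C^\infty$.

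First I would observe that the derivative $M_{is}'=(i\bar\phi)M_{is}$ supplied by the hypothesis is the left composition of $M_{is}$ with the fixed bounded operator ``multiplication by $i\bar\phi$''. Since left composition by a fixed element of $\mathcal L(\cB)$ is a bounded linear endomorphism of $\mathcal L(\cB)$, it preserves $C^k$-regularity; hence $M_{\cdot}'$ is $C^1$, so $M_{\cdot}$ is $C^2$, and by an immediate induction $M_{\cdot}\in C^\infty(\mathbb R,\mathcal L(\cB))$ with $M_{is}^{(j)}=(i\bar\phi)^jM_{is}$, where multiplication by $(i\bar\phi)^j$ is bounded on $\cB$ because multiplication by $\bar\phi$ is. Composing on the left with the fixed bounded operator $\cL$ gives $s\mapsto\cL_{is}=\cL M_{is}\in C^\infty(\mathbb R,\mathcal L(\cB))$ with $\cL_{is}^{(j)}=\cL_{is}\big((i\bar\phi)^j\,\cdot\,\big)$; in particular each $\cL_{is}$ is bounded and $s\mapsto\cL_{is}$ is continuous. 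As composition $\mathcal L(\cB)\times\mathcal L(\cB)\to\mathcal L(\cB)$ is continuous and bilinear, $s\mapsto\cL_{is}^n$ is then $C^\infty$ for every $n$, which in particular yields the required $C^j$-smoothness on $(-\delta,\delta)$ for all $j\le r+2$ and all $r\ge 0$.

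It remains to identify $(\cL_{is}^n)^{(j)}$. I would record first the cocycle identity $\cL_{is}^n(h)=\cL^n\big(e^{is\bar S_n}h\big)$, proved by induction on $n$ from the defining relation $a\cdot\cL(b)=\cL\big((a\circ\bar F)\,b\big)$ of the transfer operator (iterated, this gives $\cL^m\big(a\cdot\cL^\ell(b)\big)=\cL^{m+\ell}\big((a\circ\bar F^\ell)\,b\big)$) together with $\bar S_n=\bar S_\ell+\bar S_{n-\ell}\circ\bar F^\ell$. Differentiating the $n$-fold composition $\cL_{is}^n$ by the Leibniz rule and using $\cL_{is}^{(1)}=\cL_{is}\big((i\bar\phi)\,\cdot\,\big)$, each $(\cL_{is}^n)^{(j)}$ becomes a finite sum of operators obtained by inserting $j$ factors $i\bar\phi$ at various positions along the composition; pushing each inserted factor to the right by the identities above (which turns it into $i\bar\phi\circ\bar F^k$ for the appropriate $k$) and collecting terms via the multinomial expansion of $\big(\sum_{k=0}^{n-1}\bar\phi\circ\bar F^k\big)^j=\bar S_n^{\,j}$, the sum collapses to $\cL^n\big(e^{is\bar S_n}(i\bar S_n)^jh\big)=\cL_{is}^n\big((i\bar S_n)^jh\big)$. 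One should note that $h\mapsto\bar S_n^{\,j}h$ need not be bounded on $\cB$ on its own; it is the composite $\cL_{is}^n\big((i\bar S_n)^j\,\cdot\,\big)$ that is bounded, being already known from the previous paragraph to equal the bounded operator $(\cL_{is}^n)^{(j)}$.

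The only mildly delicate point is the bookkeeping in this last regrouping---matching the factors $\bar\phi\circ\bar F^k$ produced by the Leibniz rule with the multinomial expansion of $\bar S_n^{\,j}$; the rest is soft functional analysis once the hypothesis has been upgraded to $C^\infty$.
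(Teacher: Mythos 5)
Your proof is correct and follows essentially the same route as the paper's: the paper verifies differentiability of $s\mapsto\cL_{is}$ directly via the quotient $\frac{1}{u}\|\cL_{i(s+u)}-\cL_{is}-u\,\cL_{is}(i\bar\phi\,\cdot)\|_{\cL(\cB)}=\|\cL(e^{is\bar\phi}\frac{e^{iu\bar\phi}-1-iu\bar\phi}{u}\,\cdot)\|_{\cL(\cB)}$, which tends to $0$ as $u\to 0$ by the $C^1$ hypothesis, and then ``concludes by induction''; you instead bootstrap $s\mapsto e^{is\bar\phi}\times\cdot$ to $C^\infty$ by observing that its derivative is the composition of itself with the fixed bounded operator of multiplication by $i\bar\phi$, which is the same induction in slightly different clothing. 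You also spell out the Leibniz-rule identification $(\cL_{is}^n)^{(j)}=\cL_{is}^n((i\bar S_n)^j\,\cdot)$, which the paper leaves implicit; that is a welcome addition but not a genuinely different argument.
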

\begin{proof}
Let $s\in\mathbb R$ and $u\in\mathbb R\setminus\{0\}$. Observe that
\[
\frac 1u\Vert \cL_{i(s+u)}-\cL_{is}-\cL_{is}(i\bar\phi\,\,\cdot\,)u\Vert_{\cL(\cB)}=\left\Vert \cL\left(e^{is\bar\phi}\frac{e^{iu\bar\phi}-1-i\bar\phi u}u\,\, \cdot\,\right)\right\Vert_{\cL(\cB)}
\] goes to $0$ as $u\rightarrow 0$. Thus $s\mapsto \cL_{is}$ is $C^1$ from $\mathbb R$ to $\cL(\cB)$ with derivative $\cL_{is}(i\bar\phi\,\,\cdot\,)$ and we conclude by induction.
\end{proof}

We will see in \Cref{SFT} and \Cref{Towers} how these results can be applied in the case of subshifts of finite type and Young towers.

\subsection{Assumptions $(C)$ and $(D)$}\label{C_D}

We assume from now on that $\mathbb X=\mathbb R$.
Let us start by some comments on our
Assumptions $(C)$ and $(D)$. We first observe that both of them
deal with $\left\Vert \mathcal L_{is}^n\right\Vert_{\mathcal B_1 \rightarrow \cB_2}$
for some $\cB_1 \hookrightarrow \cB_2\hookrightarrow L^1(\bar\nu)$. 

Recall that  Assumption $(C)(1)$
is true if there exists $K>0$ such that
$$\exists \alpha\ge 0,\alpha_1>0,\hat\delta>0,\ \forall|s|>K, \quad \forall n\ge n_1,\quad \left\Vert \mathcal L_{is}^n\right\Vert_{\mathcal B_1 \rightarrow \cB_2}\le C |s|^\alpha e^{-n^{\alpha_1}\hat\delta |s|^{-\alpha}} $$
and that $(D)(1)$ holds true if for all $B>0$, there exists $K>0$ and $d_1\in [0,1]$ such that
$$
\int_{K}^{Bn^{(r-1)/2}}\frac{ \left\Vert \mathcal L_{is}^n\right\Vert_{\mathcal B_1 \rightarrow \cB_2} }{|s|^{d_1}}\, ds=  o(n^{-\frac r2})\,. 
$$
Observe that $(C)$ does not imply $(D)$ when $\alpha>0$ but when $\alpha=0$ it implies $(D)[r](1)$ for all $r$ with $d_1=0$ and $(C)(2)$ and $(D)(2)$ coincide (hence, $d_2=1$).


The next lemma gives a useful sufficient condition for Assumptions $(C)(1)$ and $(D)(1)$ and will be used in our examples. 

\begin{lem}\label{DIneqImpliesDC}
Assume $1_{\bar \Delta}\in\mathcal B_1$ and that there exist positive constants $\alpha_1\in(0,1]$, $K,C,\alpha$, and an integer $n_0$ such that
\begin{equation}\label{DI2}
\forall n\ge n_0,\ \forall|s|>K,\quad\|\cL^{n}_{is} \|_{\cB_1\rightarrow 
\cB_2} \leq C |s|^{\alpha}e^{-Cn^{\alpha_1}} \quad\mbox{with}\ \cB_1\hookrightarrow\cB_2\hookrightarrow L^1(\bar\nu)\, .
\end{equation}
Then Assumption $(C)(1)$ holds true.
Moreover, Assumption $(D)[r](1)$ hold also true for
any $d_1\in [0,1]$ and $r\geq 0$ 
\end{lem}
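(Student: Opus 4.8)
The plan is to deduce the two conclusions more or less directly from the hypothesis \eqref{DI2}, treating $\alpha_1$ as fixed and using only elementary estimates on the integral $\int s^{-d_1}\,ds$ over the relevant range.

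First I would observe that Assumption $(C)(1)$ is literally weaker than \eqref{DI2}: we are given $\|\cL^n_{is}\|_{\cB_1\to\cB_2}\le C|s|^\alpha e^{-Cn^{\alpha_1}}$ for all $|s|>K$ and $n\ge n_0$, and we must produce constants $\alpha,\alpha_1,\hat\delta$ such that $\|\cL^n_{is}\|_{\cB_1\to\cB_2}\le C|s|^\alpha e^{-n^{\alpha_1}\hat\delta|s|^{-\alpha}}$. Since $|s|>K$, we have $|s|^{-\alpha}\le K^{-\alpha}$ when $\alpha\ge0$ (if $\alpha=0$ this is trivial), so it suffices to take $\hat\delta:=CK^{\alpha}$; then $n^{\alpha_1}\hat\delta|s|^{-\alpha}=Cn^{\alpha_1}(K/|s|)^\alpha\le Cn^{\alpha_1}$ and hence $e^{-n^{\alpha_1}\hat\delta|s|^{-\alpha}}\ge e^{-Cn^{\alpha_1}}$, which gives exactly the bound required in $(C)(1)$ with the stated $\alpha$, $\alpha_1$, $\hat\delta$ and the same $n_1=n_0$. (If one prefers $\alpha$ strictly positive in $(C)(1)$, enlarge $\alpha$ at the cost of enlarging $C$, using again $|s|>K$.)

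Second, for Assumption $(D)[r](1)$, fix $B>0$ and $d_1\in[0,1]$, and let $K$ be as in \eqref{DI2}. For $n\ge n_0$, using \eqref{DI2},
\begin{align*}
\int_{K<|s|<Bn^{(r-1)/2}}\frac{\|\cL^n_{is}\|_{\cB_1\to\cB_2}}{|s|^{d_1}}\,ds
&\le 2\,C\,e^{-Cn^{\alpha_1}}\int_{K}^{Bn^{(r-1)/2}}|s|^{\alpha-d_1}\,ds\\
&\le C'\,e^{-Cn^{\alpha_1}}\bigl(1+n^{(r-1)(\alpha-d_1+1)/2}\bigr),
\end{align*}
where $C'$ depends on $C,\alpha,d_1,K,B,r$ (and the last step bounds the polynomial integral by a power of $n$, the ``$1+$'' covering the case $\alpha-d_1+1\le 0$, with $\alpha-d_1+1$ replaced by $\max(\alpha-d_1+1,\varepsilon)$ for a fixed small $\varepsilon>0$ if equality with $-1$ occurs in the exponent, producing a logarithm that is still absorbed). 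Since $\alpha_1>0$, the factor $e^{-Cn^{\alpha_1}}$ decays faster than any negative power of $n$, so the whole right-hand side is $o(n^{-r/2})$ for every $r\ge 0$. This gives $(D)[r](1)$. I do not expect any genuine obstacle here; the only point requiring a little care is bookkeeping of the exponent of the polynomial factor (handled by the crude bound $\le C'(1+n^{M})$ for a suitable fixed $M=M(r,\alpha,d_1)$), after which the super-polynomial decay of $e^{-Cn^{\alpha_1}}$ does all the work.
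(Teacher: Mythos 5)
Your proof is correct and follows essentially the same route as the paper: for $(C)(1)$ you absorb the constant $C$ into $\hat\delta$ by using $|s|>K$ (the paper takes $\hat\delta\in(0,CK^\alpha)$ rather than $\hat\delta=CK^\alpha$, an immaterial difference), and for $(D)[r](1)$ you bound the integral by a polynomial in $n$ and let the stretched-exponential factor $e^{-Cn^{\alpha_1}}$ dominate. No gap.
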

\begin{proof}[Proof of Lemma~\ref{DIneqImpliesDC}]
To establish Assumption $(C)(1)$, choose $\hat \delta \in (0, K^\alpha C)$. Due to~\eqref{DI2}, for any $n\ge n_0$
$$
\left\Vert \mathcal L_{is}^n\right\Vert_{\mathcal B\rightarrow L^1(\bar\nu)}\lesssim 
|s|^\alpha e^{-Cn^{\alpha_1}} \leq |s|^\alpha  e^{-Cn^{\alpha_1}\frac{K^\alpha}{|s|^\alpha}} \leq |s|^\alpha e^{-n^{\alpha_1} \frac{\hat \delta}{|s|^\alpha }}\, ,\,\,\text{for}\,\, |s|>K.
$$
Assumption $(D)[r](1)$ is also straightforward because
\begin{align*}
\int_{K<|s|<Bn^{\frac{r-1}2}}\frac{ \left\Vert \mathcal L_{is}^n\right\Vert_{\mathcal B_1 \rightarrow \cB_2} }{|s|^{d_1}}\, ds &\leq e^{-Cn^{\alpha_1}} \int_{K<|s|<Bn^{\frac{r-1}2}}|s|^{\alpha-d_1}\, ds \\ &\leq 2\max(K^{\alpha-d_1}, \sqrt{n}^{(\alpha-d_1)(r-1)}) n^{(r-1)/2} e^{-Cn^{\alpha_1}}.
\end{align*}
\end{proof}

In \Cref{sec:dolgoineg}, we present general strategies to prove Assumptions $(C)(1)$ and $(D)(1)$. In Section~\ref{sec:cond(C)}, 
we explain how to infer Assumption $(C)$ with $\alpha\ge 0$ for a tower over a Gibbs Markov map satisfying Assumption $(C)$.

\subsubsection{Dolgopyat type inequalities}\label{sec:dolgoineg}

In order to verify our Assumptions $(C)$ and $(D)$, we present here two strategies that have been extensively used since the seminal works of Dolgopyat \cite{D1,D2} to establish rates of decay of correlation for suspension flows.

These two strategies rely on two different argument: the uniform non-integrability (UNI) and the absence of approximate eigenfunctions (AAE, see Section~\ref{sec:cond(C)}). While the former (UNI)
will imply both $(C)$ and $(D)$, the latter (AAE)
will only lead to $(C)$, and may be used when the first strategy fails.
We refer the reader to \cite{AGY, D1, D3} for a discussion about UNI and its applications to decay of correlations for flows, and to \cite{D2,IM,IM18} for  AAE. 

Both conditions can be interpreted in terms of contraction of the transfer operators $\cL_{is}$, and to talk about examples in the literature in general, we assume that there exist seminorms $|\cdot|_{j}$, $j=1,2$ such that $|\cdot|_1+|\cdot|_2$ is a norm which is equivalent to $\|\cdot\|_{\cB_1}$
and such that for the new norm
\begin{equation}\label{defnormes}
\|\cdot\|_{(s)} =\ |\cdot |_{1}+ (1+|s|)^{-1}|\cdot |_2\, ,
\end{equation}
on $\cB_1$. UNI implies the following Dolgopyat inequality: there exist $c, C,K>0$ such that
\begin{equation}\label{DolgopyatIneq1}
\forall |s|>K,\ \forall n \geq c \log |s|,\quad
\|\cL^n_{is} \|_{(s)} \leq 
 e^{-Cn}
\, ,
\end{equation}
and AAE
leads to an estimate of the following form: there exist $c,C,K>0$ such that
\begin{equation}\label{AAE}
\forall |s|>K,\quad
\|\cL^{\lceil c\log|s|\rceil }_{is} \|_{(s)} \leq 
 1-C|s|^{-\alpha'}\le e^{-C|s|^{-\alpha'}}
\, .
\end{equation}

The following lemma describes how the above estimates lead to estimates in Assumptions $(C)$ and $(D)$.

\begin{lem}\label{DIneqImpliesD}
Let $K>1$. Suppose \begin{equation}\label{eq:DolgopyatIneq2}
\forall |s|>K,\quad \| \cL^{\lceil c \log |s| \rceil}_{is}  \|_{(s)} \leq e^{-g(s)}\, ,
\end{equation}
with $g(s)>0$ and $M=\sup_{s,n} \|\cL_{is}^n\|_{(s)} <\infty$. 
Then, for every $\eps>0 
$, 
\begin{equation}\label{DIneg1}
\forall n\ge 0,\ \forall |s|>\max\left(K,e^{c^{-1}}\right),\quad  \|\cL^{n}_{is} \|_{(s)} \le M e^{2\eps c\, g(s)} e^{-\eps\frac{ ng(s)}{\log s}}\, ,
\end{equation}
and hence, there exists $M_0>0$ such that
\begin{equation}\label{DIneg2}
\|\cL^{n}_{is} \|_{\cB_1}\leq M_0\max(|s|,e^{2\eps c\, g(s)}) e^{-\eps\frac{ ng(s)}{\log s}}\quad\mbox{and}\quad
\|\cL^{n}_{is} \|_{\cB_1\rightarrow |\cdot|_1}\leq  M e^{2\eps c\, g(s)} e^{-\eps\frac{ ng(s)}{\log s}}\, .
\end{equation}
\end{lem}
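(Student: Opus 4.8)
The plan is to iterate the single‑block contraction \eqref{eq:DolgopyatIneq2} over blocks of length $m=m(s):=\lceil c\log|s|\rceil$. Fix $|s|>\max(K,e^{c^{-1}})$, so that $\log|s|>c^{-1}$ and hence $1<c\log|s|\le m<2c\log|s|$. For this fixed $s$, $\|\cdot\|_{(s)}$ from \eqref{defnormes} is a genuine norm on $\cB_1$, so the associated operator norm is submultiplicative in the number of iterates. Writing $n=qm+\rho$ with $q=\lfloor n/m\rfloor$ and $0\le\rho<m$, applying \eqref{eq:DolgopyatIneq2} $q$ times and bounding the leftover block by $\|\cL_{is}^{\rho}\|_{(s)}\le M$ gives
\[
\|\cL_{is}^{n}\|_{(s)}\ \le\ \|\cL_{is}^{m}\|_{(s)}^{q}\,\|\cL_{is}^{\rho}\|_{(s)}\ \le\ M\,e^{-q\,g(s)}.
\]

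To deduce \eqref{DIneg1} I would use $q\ge n/m-1$ together with $m<2c\log|s|$ to write $e^{-q g(s)}\le e^{g(s)}\,e^{-n g(s)/(2c\log|s|)}$ and then compare the two exponents with $\eps n g(s)/\log|s|$ and $2\eps c\,g(s)$; this is a purely elementary manipulation with $\lceil c\log|s|\rceil$, the at‑most‑one incomplete block at the front being exactly what produces the harmless prefactor $e^{2\eps c g(s)}$.

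The passage to \eqref{DIneg2} rests on the equivalence of $|\cdot|_1+|\cdot|_2$ with $\|\cdot\|_{\cB_1}$: for $v\in\cB_1$ one has $|v|_1\le\|v\|_{(s)}$, $|v|_2\le(1+|s|)\|v\|_{(s)}$ and $\|v\|_{(s)}\le\|v\|_{\cB_1}\lesssim(1+|s|)\|v\|_{(s)}$. Taking $v=\cL_{is}^{n}h$ and invoking \eqref{DIneg1} immediately gives $\|\cL_{is}^{n}\|_{\cB_1\to|\cdot|_1}\lesssim\|\cL_{is}^{n}\|_{(s)}$, hence the second bound of \eqref{DIneg2}. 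For the first bound the crude estimate $\|\cL_{is}^{n}\|_{\cB_1}\lesssim(1+|s|)\|\cL_{is}^{n}\|_{(s)}$ would pollute the prefactor with a spurious factor $|s|$, so instead I would peel off a fixed number $k=k(c,\eps)$ of trailing blocks, $\cL_{is}^{n}=\cL_{is}^{km}\cL_{is}^{n-km}$ for $n\ge km$ (the case $n<km$ being trivial): each peeled block contributes $e^{-g(s)}$, which offsets the factor $(1+|s|)$ from the norm comparison and the $e^{\cO(k\eps c\,g(s))}$ lost in shortening $n$ to $n-km$; choosing $k$ large enough that $2\eps c-k(1-2\eps c)\le 0$ leaves $\|\cL_{is}^{n}\|_{\cB_1}\lesssim|s|\,e^{-\eps n g(s)/\log|s|}\le M_0\max(|s|,e^{2\eps c g(s)})\,e^{-\eps n g(s)/\log|s|}$.

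The argument is a deterministic iteration, so I do not expect a real obstacle; the only mildly delicate point is obtaining the clean form $\max(|s|,e^{2\eps c g(s)})$ rather than a product in \eqref{DIneg2}, which is precisely what the trailing‑block peeling above is designed to handle.
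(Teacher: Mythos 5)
Your block‑iteration decomposition is the same one the paper uses, and your derivation of the second bound in \eqref{DIneg2} from \eqref{DIneg1} also matches the paper's.

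The passage from the iteration bound to \eqref{DIneg1} has a genuine gap, however. Your intermediate estimate is $\|\cL^n_{is}\|_{(s)} \le Me^{g(s)}e^{-ng(s)/(2c\log|s|)}$, and you describe the remaining comparison to $Me^{2\eps c g(s)}e^{-\eps ng(s)/\log|s|}$ as purely elementary, attributing the prefactor $e^{2\eps c g(s)}$ to the at‑most‑one incomplete block. But the incomplete block actually contributes $e^{g(s)}$, and $e^{g(s)} > e^{2\eps c g(s)}$ whenever $\eps<1/(2c)$ --- which is exactly the regime in which the lemma has content and in which it is applied in the paper. Dividing the two bounds and taking logarithms, the inequality you need is $1-2\eps c \le \tfrac{n}{\log|s|}\bigl(\tfrac{1}{2c}-\eps\bigr)$, which fails for $n$ below roughly $2c\log|s|$ and is flatly false at $n=0$. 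The fix is a case split, which the paper makes explicit: for $n<\lceil c\log|s|\rceil$ one uses the trivial bound $\|\cL^n_{is}\|_{(s)}\le M$ together with $\eps n/\log|s|\le 2\eps c$, which gives the target directly; for $n\ge \lceil c\log|s|\rceil$ the number $k$ of complete blocks is at least $1$, so $k/(k+1)\ge 1/2$ yields $Me^{-ng(s)/(4c\log|s|)}$ and the target then follows without any prefactor once $\eps\le 1/(4c)$. As written, your argument does not yet close.

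On the first bound of \eqref{DIneg2}, the paper simply applies $\|\cdot\|_{\cB_1}\lesssim (1+|s|)\|\cdot\|_{(s)}$ to \eqref{DIneg1} and accepts the resulting $(1+|s|)e^{2\eps c g(s)}$ factor; it does not peel off extra blocks. Your trailing‑block peeling is more machinery, and its dismissal of the range $n<km$ as trivial does not hold up: for such $n$ you are still left with $(1+|s|)Me^{2\eps c g(s)}e^{-\eps ng(s)/\log|s|}$, and $(1+|s|)e^{2\eps c g(s)}$ is not dominated by $\max(|s|,e^{2\eps c g(s)})$ when the two arguments of the max are comparable. Nor does the per‑block factor $e^{-g(s)}$ offset $(1+|s|)$ in general, since nothing in the hypotheses forces $g(s)\gtrsim\log|s|$. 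The clean route is to follow the paper and keep the $(1+|s|)$ factor, treating the $\max$ in the statement as shorthand for such a polynomial prefactor.
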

\begin{proof}[Proof of Lemma~\ref{DIneqImpliesD}] Without loss of generality assume $s>K$. 
From \eqref{eq:DolgopyatIneq2}, we have 
$$\| \cL^{k\lceil c \log s \rceil}_{is}  \|_{(s)}\leq \| \cL^{\lceil c \log s \rceil}_{is}  \|_{(s)}^k \leq e^{-k g(s)} \, .$$
Assume $s\ge e^{c^{-1}}$, which implies $c\log s\ge 1$ and so $\lceil c\log s\rceil \le c\log s+1\le 2 c\log s$.

If $n=k\lceil c \log s \rceil+r$ where $0\leq r< \lceil c \log s\rceil $ then
$$\| \cL^{n}_{is}  \|_{(s)} \leq e^{- k g(s)} \|\cL^{r}_{is}  \|_{(s)}  \leq M e^{-n\frac{k g(s)}{k \lceil c \log s \rceil + r }} 
 \leq M e^{-n\frac{k g(s)}{2(k +1)   c \log s}} \, .$$
If $n\geq\lceil c\log s\rceil$, then $k\ge 1$ and so $k/(k+1)\ge k/(2k)=1/2$ and so
$$\| \cL^{n}_{is}  \|_{(s)} \leq M e^{-\frac{ ng(s)}{4   c \log s}} \, .$$
If $n\leq \lceil c\log s\rceil$, then 
$
e^{-n\eps\frac{g(s)}{\log s}}\ge e^{-\eps\lceil c\log s\rceil\frac{g(s)}{\log s}}\ge 
 e^{-2\eps c\, g(s)}\, ,
$
and so
\[
\Vert\cL_{is}^n\Vert_{(s)}\le 
M\le M e^{2\eps c\, g(s)}
e^{-\eps\frac{ng(s)}{\log s}}\, .
\]
This ends the proof of \eqref{DIneg1}.

Inequalities \eqref{DIneg2} follows directly from this and from
$$ (1+s)^{-1}(|h|_1+|h|_2)\leq \|h\|_{(s)} \leq |h|_1+|h|_2 $$
combined with the fact that $|\cdot|_1+|\cdot|_2$ is equivalent to $\|\cdot\|_{\cB_1}$. In fact, taking $$c_0=\Vert \textsf{Id}\Vert_{\cB_1\rightarrow |\cdot|_1+|\cdot|_2}\Vert \textsf{Id}\Vert_{ |\cdot|_1+|\cdot|_2\rightarrow \cB_1},$$
we obtain
$$\| \cL^{n}_{is}  \|_{\cB_1}\leq   c_0 (1+s) \| \cL^{n}_{is}  \|_{(s)} 
\,\,\text{and}\,\,\,\| \cL^{n}_{is}  \|_{\cB_1\rightarrow|\cdot|_1} \leq \| \cL^{n}_{is}  \|_{(s)} 
\, .$$
\end{proof}

The following is a direct consequence of the Lemma \Cref{DIneqImpliesDC} and \Cref{DIneqImpliesD}.
\begin{cor}
Assume the hypothesis of \Cref{DIneqImpliesD}. 
\begin{itemize}[leftmargin=*]
\item If $C|s|^{-\alpha_2}\log |s|\le g(s)$, and 
\begin{itemize}[leftmargin=*]
\item if $e^{2\eps cg(s)}=\cO(|s|^\alpha)$ with $\cB_2=\{f\, :\, |f|_1<\infty\}$ and Assumptions $(B)(2),$ $(A)[0](3)$ hold$,$ then Assumption $(C)$ holds true. 
\item if $\max(|s|,e^{2\eps cg(s)})=\cO(|s|^\alpha)$ with $\cB_2=\cB_1$,  
and Assumptions $(B)(2),$ $(A)[0](3)$ hold$,$ then Assumption $(C)$ holds true. 
\end{itemize} 
\item If $0<\inf_sg(s)/\log(|s|)<\sup_sg(s)/\log(|s|)<\infty$, 
then Assumption $(D)[r]$ holds true for any $r\geq 0$ for any choice of $d_1\in [0,1]$ in the cases of $\cB_2=\cB_1$ and $\cB_2=\{f\, :\, |f|_1<\infty\}$.
\end{itemize}
\end{cor}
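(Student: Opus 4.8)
The plan is to chain \Cref{DIneqImpliesD} with \Cref{DIneqImpliesDC}, the only genuine work being to recognise the bounds produced by the former as inputs to the latter after tuning the free parameter $\eps$ against the growth of $g$. Since the Dolgopyat-type estimate \eqref{eq:DolgopyatIneq2} is exactly the hypothesis of \Cref{DIneqImpliesD}, that lemma gives, for every $\eps>0$, the estimates \eqref{DIneg1} and \eqref{DIneg2}: on $\{|s|>\max(K,e^{1/c})\}$ and for all $n\ge0$ one has $\|\cL^{n}_{is}\|_{\cB_1}\le M_0\max(|s|,e^{2\eps c\,g(s)})\,e^{-\eps ng(s)/\log s}$ and $\|\cL^{n}_{is}\|_{\cB_1\to|\cdot|_1}\le M e^{2\eps c\,g(s)}\,e^{-\eps ng(s)/\log s}$.

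For the first bullet, I would use $C|s|^{-\alpha_2}\log|s|\le g(s)$ to pass from $e^{-\eps ng(s)/\log s}$ to $e^{-\eps C n|s|^{-\alpha_2}}$, and the standing hypothesis on the prefactor — $e^{2\eps c g(s)}=\cO(|s|^{\alpha})$ in the sub-case $\cB_2=\{f:|f|_1<\infty\}$ (using the $\cB_1\to|\cdot|_1$ bound), $\max(|s|,e^{2\eps c g(s)})=\cO(|s|^{\alpha})$ in the sub-case $\cB_2=\cB_1$ — to dominate the polynomial part. Setting $\alpha':=\max(\alpha,\alpha_2)$, $\alpha_1:=1$, $\hat\delta:=\eps C$, and using $|s|>1$ to absorb $|s|^{\alpha}\le|s|^{\alpha'}$ and $|s|^{-\alpha_2}\ge|s|^{-\alpha'}$, the estimates above become $\|\cL^n_{is}\|_{\cB_1\to\cB_2}\le C|s|^{\alpha'}e^{-n^{\alpha_1}\hat\delta|s|^{-\alpha'}}$, i.e. Assumption $(C)(1)$. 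The remaining clause $(C)(2)$ — the bounds $\|\cL^{2k}_{is}\bar h_{k,s,\psi}\|_{\cB_1}\le C'_0|s|$ and $\|\bar h_{k,s,\xi}\|_{\cB_2'}\le C'_0$ — is not spectral in nature and follows from the explicit description of $\bar h_{k,s,H}$, exactly as in \Cref{LEM0} (cf. the Lipschitz computation behind \eqref{diffLh} with $j=0$); Assumptions $(B)(2)$ and $(A)[0](3)$ enter only to supply, via the continuity and compactness argument of \Cref{lemmaABCgreek1}, the exponential contraction of $\cL_{is}$ on the rest of $\mathbb X^*\setminus\{0\}$ that is invoked when $(C)$ is used. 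The delicate point here is that the single exponent $\alpha$ in $(C)(1)$ controls both the prefactor $|s|^{\alpha}$ and, with a sign change, the stretched exponential, so one must check that enlarging it to $\max(\alpha,\alpha_2)$ only weakens the prefactor — which forces $\eps$ to be taken small enough that $e^{2\eps c g(s)}=\cO(|s|^{\alpha})$.

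For the second bullet, the double comparison $0<\inf_s g(s)/\log|s|\le\sup_s g(s)/\log|s|<\infty$ makes $g(s)\asymp\log|s|$, so $e^{-\eps ng(s)/\log s}\le e^{-\eps c_1 n}$ for some $c_1>0$, and for $\eps$ small $\max(|s|,e^{2\eps c g(s)})=\cO(|s|^{\alpha})$ with $\alpha$ arbitrarily small. Hence \eqref{DIneg2}, and likewise its $|\cdot|_1$-version, takes the form $\|\cL^n_{is}\|_{\cB_1\to\cB_2}\le C|s|^{\alpha}e^{-c_1 n}$ required in hypothesis \eqref{DI2} of \Cref{DIneqImpliesDC} with $\alpha_1=1$, for both $\cB_2=\cB_1$ and $\cB_2=\{f:|f|_1<\infty\}$; that lemma then yields Assumption $(D)[r](1)$ for every $r\ge0$ and every $d_1\in[0,1]$, with the companion estimate $(D)(2)$ (for $d_2=1-d_1$) coming once more from the $\bar h$ bounds above. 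Beyond these substitutions no further argument is needed, so I expect the only real care to be the exponent bookkeeping in $(C)(1)$ described above.
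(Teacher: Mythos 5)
Your proposal is essentially correct and does what the paper intends, since the paper states this corollary only as a ``direct consequence'' of \Cref{DIneqImpliesDC} and \Cref{DIneqImpliesD} without supplying a proof; you are filling in the routine parameter bookkeeping, and the chaining you describe is the right one. Two remarks on accuracy. First, in the $\cB_2=\cB_1$ sub-case you cannot make $\alpha$ arbitrarily small: the prefactor produced by \eqref{DIneg2} contains $\max(|s|,e^{2\eps c\,g(s)})\ge |s|$, so necessarily $\alpha\ge 1$ there. This is harmless because both $(C)(1)$ and the hypothesis \eqref{DI2} of \Cref{DIneqImpliesDC} tolerate any fixed $\alpha$, but the parenthetical ``with $\alpha$ arbitrarily small'' is wrong as written. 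Second, the role you assign to Assumptions $(B)(2)$ and $(A)[0](3)$ — supplying contraction on the rest of $\bbX^*\setminus\{0\}$ via \Cref{lemmaABCgreek1} — pertains to the verification of $(\beta)$, not $(C)$; the corollary does not actually invoke that proposition. Since Assumption $(C)$ lets one choose $K$ freely, taking $K=\max(K,e^{1/c})$ already covers the whole range, so these two hypotheses are not doing work in your chain of estimates; they are more plausibly standing assumptions carried along so that ``$(C)$ holds'' feeds into the rest of Section~\ref{sec:hypo}. Finally, your treatment of $(C)(2)$ and $(D)(2)$ as following ``from the $\bar h$ bounds'' is a fair reading of the paper's own looseness, but strictly speaking only the spectral parts $(C)(1)$ and $(D)(1)$ are consequences of the Dolgopyat-type estimate; the $(2)$-clauses concern the test functions $\psi,\xi$ and are verified separately in the examples, as the paper itself notes just before the corollary.
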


\begin{rem}
Note that \eqref{DolgopyatIneq1} implies~\eqref{eq:DolgopyatIneq2} with $g(s)=C c\log |s|$. $\sup_{s,n}\|\cL_{is}^n\|_{(s)}<\infty$ holds as soon as $\sup_{s,n}|\cL_{is}^n|_1<\infty$ and if $\cL_{is}$ satisfies a uniform Doeblin Fortet inequality of the following form
\[
\exists\rho\in(0,1),\exists \tilde C>0,\quad \forall s,\forall h\in\mathcal B,\quad |\mathcal L_{is}h|_2\le \rho |h|_2+\tilde C(1+s)|h|_1\, .
\]
\end{rem}
\begin{rem}\label{Cfordeltabis}
The same conclusion holds if $\|\cdot\|_{(s)}$ is replaced by $\ |\cdot |_{1}+ |s|^{-1}|\cdot |_2$
since $   (K/(K+1))   s^{-1}\le (1+s)^{-1}\le s^{-1}$, or if $\|\cdot\|_{(s)}$ is replaced by $\max \left( |\cdot |_{1}, |s|^{-1}|\cdot |_2\right)$ since $\max(a,b)\le a+b\le 2\max(a,b)$.
\end{rem}

Now we list some examples for which \eqref{DolgopyatIneq1} holds. 
\begin{itemize}[leftmargin=*]
\item Subshifts of finite type with the set of H\"older continuous functions as $\cB_1$:

\eqref{DolgopyatIneq1} follows directly from the work of Dolgopyat (see \cite{D3}) provided that $\bar \phi$ is \textit{strongly non-integrable}, an analogue of UNI in the symbolic setting. This condition is satisfied by an open and dense class of observables. This is detailed in \Cref{SFT}.\\ 

\item Uniformly expanding Markov maps (see \cite[Definitions 2.2, 2.3, Proposition 7.4]{AGY}) with the set of $C^1$ observables as $\cB_1$: 

Let $(\bar{\Delta},\bar \nu)$ be a John domain and  $\{\bar\Delta_{k}\}_{k\in \mathbb E}$ (with $\mathbb E\subset\naturals$) be a full measure partition of $\bar \Delta$, let 
$\bar F:\bigcup_{k \in \mathbb E}\bar\Delta_{k} \to \bar \Delta $ and set $J$ for
the inverse of the Jacobian of $\bar F$ with respect to $\bar \nu$.  Suppose that
there exists $\lambda>1$ such that, for every $k\in\mathbb E$, the following properties hold true:

\begin{enumerate}
\item $\bar{F}$ is a $C^1-$diffeomorphism between $\bar \Delta_k$ and $\Delta$,
\item There exists $C_k\ge \lambda$ such that for all $v\in T_x\Delta$, $\lambda \|v\|\leq \|D\bar F(x).v\|\leq C_{k}\|v\|,$
\item  $\log J$ is $C^1$ on $\Delta_k$,
\item There exists $C'$ such that for all inverse branches $h$ of $\bar F^n$, $\|D(\log J)\circ h\|_{\infty} \leq C'$. 
\end{enumerate}
Further assume that $\bar\phi :\bar\Delta\rightarrow\mathbb R$ is $C^1$ on each $\bar\Delta_k$ and there exists $C(\bar\phi)\in(0,+\infty)$ such that for all inverses branches of $h$ of $\bar F$, $\|D(\bar\phi\circ h)\|_\infty \leq C(\bar\phi)$.

Under these assumptions, $(\bar{F}, \bar \phi)$ satisfies UNI:
There exist $C>0, n_0\in\naturals$ such that for all $n \geq n_0$ there exist two $\bar{F}^n-$inverse branches $h,k$ and a continuous unitary vector field $\Upsilon$ such that 
$$|D(\bar{S}_n \circ h)(x).\Upsilon(x) - D(\bar{S}_n \circ k)(x).\Upsilon(x)|>C\, ,$$
if and only if
\begin{quote}
$\bar{\phi}$ is not almost surely cohomologous (up to a measurable coboundary) to a \\ function constant on each $\bar\Delta_k$.
\end{quote}
\vspace{5pt}

From this, it is proved in \cite[Proposition 7.16]{AGY} that $\Vert \cL_{is}^n\Vert_{(s)}\le C\min(1,e^{-cn}|s|)$ taking $|\cdot|_1=\Vert\cdot\Vert_\infty$
and $|\cdot|_2=\Vert D(\cdot) \Vert_\infty$. This implies $\|\cL_{is}^{\lceil (2/c)\log s\rceil}\|_{(s)}\le e^{-\log s}.$ Applying Lemma~\ref{DIneqImpliesD} with $\cB_1$ for the set of $C^1$-observables, we obtain $$\Vert \cL_{is}^n\Vert_{\cB_1}\le C'|s|e^{- \eps n}\,\,\, \text{and}\,\,\, \Vert \cL_{is}^n\Vert_{\cB_1\rightarrow L^\infty}\le C'|s|^{\frac {4\eps}c}e^{-\eps  n}$$ for any $\eps\in(0,c/8)$, and hence, Assumption $(C)$ with $\cB_2=\cB_1$ and $\alpha_1=\alpha=1$, Assumption $(C)$ with $\cB_2=L^\infty$, $\alpha_1=1$ and any $\alpha>0$, and finally, Assumption $(D)$ for $(f,\cM,\mu)=(\bar F,\bar\Delta,\bar\nu)$ if $\psi\in L^1(\bar\nu)$ hold.\\


\item A particular case of the previous example -- Uniformly expanding maps of the torus:

Let $\bar{F}\in C^{r}(\bbT,\bbT)$ with $r \geq 2$ be such that $\inf{|\bar F\, '|} \geq \lambda >1$ and $\bar \phi \in C^{r-1}(\bbT,\reals)$. Then 
$(\bar{F}, \bar \phi)$ satisfies UNI in the following sense:
There exist $C>0$, $n_0 \in \bbN$ such that for all $n \geq n_0$ there exist two inverse branches $h,k$ of $\bar{F}^n$ satisfying 
$$\left| \frac{d}{dx} \bar S_n\circ h (x)-\frac{d}{dx} \bar S_n \circ k (x)\right| \geq C$$
for all $x\in \bbT$ if and only if
\begin{quote}
$\bar{\phi}$ is not cohomologous to a function constant on each maximal invertibility domain.
\end{quote}
 This implies that \eqref{DolgopyatIneq1} holds. See \cite[Appendix B]{DL}. 
\end{itemize}

\subsubsection{Assumption $(C)$  for towers via \emph{AAE} 
}\label{sec:cond(C)}

We assume here that $(\bar F,\bar\Delta,\bar\nu)$ is a tower over
a Gibbs Markov map  $(\widetilde F:=\bar F^{R(\cdot)},Y,\bar\nu_Y:=\bar\nu(\cdot|Y))$ where $Y=\bar\Delta_0$ is the base of the tower $\bar\Delta$ and
$R:\bar\Delta\rightarrow \mathbb N_0\cup\{\infty\}$ is the first visit time to $Y$ of $(\bar F^l (\cdot))_{l\ge 1}$, so $R(y)-1$ is the height 
of the tower $\bar \Delta$ over $y\in Y$.  Let $\ell:\bar\Delta\rightarrow\mathbb N_0$ be the level function given by $\ell(x,l)=l$.
Let $\widetilde{\mathcal L}$ be the transfer operator associated to $\widetilde{F}$ with respect to $\bar\nu_Y=\bar\nu(\,\cdot\,|Y)$. We have
\begin{equation}\label{decomptildeL}
\widetilde{\mathcal L}^k=\sum_{n\ge 1}\mathcal L^n(1_{Y\cap \{R_k=n\}}\,\cdot\,)\, ,
\end{equation}
where $\cL$ is the transfer operator corresponding to the tower and 
$R_k(\cdot):=\sum_{j=0}^{k-1}R(\wt{F}^j \cdot )$.

We consider a separation time $s_0:Y\times Y\rightarrow \mathbb N_0\cup\{+\infty\}$ such that $s_0(y,y')\ge m$ if and only if for every $j=0,...,m-1$, $\widetilde F^j(y)$ and $\widetilde F^j(y')$ belong to the same atom of the partition $\pi=\{Y_i,\ i\}$ of the Gibbs Markov map. Recall that $R|_{Y_i}$ is constant (let us write $r_i$ for this constant), $\widetilde F_i:=\widetilde F|_{Y_i}:Y_i\rightarrow Y$ defines a bijection and that $\widetilde {\cL}$ has the following form:
\begin{equation}\label{formuletildeL}
\widetilde {\cL}h=\sum_i(e^{g_0}h)\circ\widetilde F_{i}^{-1},\quad
\mbox{with}\ \ |e^{g_0(y)-g_0(y')}-1|\le C_{g_0}\beta^{s_0(y,y')}\, .
\end{equation}
The map $(\bar \Delta,\bar F)$ is also endowed with a separation time
$\hat s:\bar\Delta\times\bar\Delta\rightarrow \mathbb N_0\cup\{\infty\}$ which satisfies:
\[
  \hat s(\bar F^j(y),\bar F^j(y'))\ge 
 \left(\sum_{k=0}^{s_0(y,y')-1}R\circ \widetilde F^k(y)\right)-j\ge s_0(y,y')-j\, .
\]
We also write $\widetilde\phi:Y\rightarrow \mathbb R$ for the induced function associated to $\bar\phi:\bar\Delta\rightarrow\mathbb R$, i.e.,
\[
\widetilde\phi:=\sum_{k=0}^{R(\cdot)-1}\bar\phi\circ \bar F^k\, .
\]

Consider the family of operators $(\widetilde{\mathcal L}_{is,iu})_{s\in\mathbb R,\ u\in[-\pi,\pi]}$ given by
\[
\widetilde{\mathcal L}_{is,iu}:=\widetilde{\mathcal L}(e^{is \widetilde\phi+iuR})\, .
\]

Fix $\beta\in(0,1)$. Let $\widetilde{\mathcal B}$ be the set of $\beta$-dynamically Lipschitz functions on $Y$, i.e.~the set of functions $ h:Y\rightarrow\mathbb C$ such that the following quantity is finite
$$
\Vert h\Vert_{\widetilde \cB}=\Vert h\Vert_\infty+|h|_{\beta},\,\,\,\text{with}\,\,\, 
|h|_{\beta}:=\sup_{y,y'\in Y,\ y\ne y',\ s_0(y,y')\ge 1}\frac{|h(y)-h(y')|}{\beta^{s_0(x,y)}} 
$$
Let us also consider, as in Section~\ref{sechyp},  $\mathcal B_\beta$ to be the set  of $\beta$-dynamically Lipschitz functions on $\bar\Delta$, i.e., the set of functions $h:\bar\Delta\rightarrow\mathbb C$ such that
$$\Vert h\Vert_{\cB_\beta}=\Vert h\Vert_\infty+
\sup_{y,y'\in Y,\ y\ne y',\ \hat s(y,y')\ge 1}\frac{|h(y)-h(y')|}{\beta^{\hat s(x,y)}}<\infty\,.$$ 

Assume that $\bar\phi\in L^1(\bar\nu)$ 
and that
\begin{equation}\label{lipint}
\exists \gamma\in(0,1],\quad
\sum_i \bar\nu_Y(Y_i)\, 
\big|\widetilde\phi
|_{Y_i}\big|^\gamma_{\beta^{\frac 1\gamma}}<\infty\, .
\end{equation}
This implies that $\widetilde\phi$ is in $L^1(\bar\nu_Y)$ and that
$$\sum_i \bar\nu_Y(Y_i)\, \sup_{s\in\mathbb R}\frac{\big|e^{is\widetilde\phi}|_{Y_i}\big|_{\beta}}{1+|s|}<\infty, $$ which is enough to apply ideas in \cite{IM18}.

Observe that $\|\wt{\cL}_{is,iu}\|_\infty\leq 1$. Moreover 
it has been proved in \cite[Propositions 7.7 and 8.10]{IM18} (combined with \cite[Lemma 3.14]{IM}) that
$$\exists \bar C>0,\ \forall n\ge 0,\  \forall h\in \wt{\cB},\quad |\wt{\cL}^n_{is,iu} h|_{\beta} \leq \bar{C}(1+|s|)\|h\|_{\infty}+\bar{C}\beta^n |h|_{\beta}.$$
Here, we define the norm $\|\cdot\|_{(s)}$ as in~\eqref{defnormes} with $|\cdot|_1=\Vert\cdot\Vert_\infty$, $|\cdot|_2=|\cdot|_{\beta}$.

\begin{defin}\label{Dioph1}
We will say that $(Y,\widetilde F,\widetilde\phi,R,\beta)$ has no approximate eigenfunction $($hereafter written as $(Y,\widetilde F,\widetilde\phi,R,\beta)$ is \emph{AAE}$)$ if there exist a subset $Z_0\subset Y$ and $\alpha_0>0$ such that for any $\alpha,\xi>\alpha_0$ and $C>0$ and any sequences $(s_k,u_k,\psi_k,h_k)_k$ with $|s_k|\rightarrow+\infty$, $u_k\in[-\pi,\pi]$, $\psi_k\in[0,2\pi)$, $h_k\in\widetilde\cB$ where $|h_k|\equiv 1$ and $|h_k|_{\beta}\le C|s_k|$, 
\[
\exists y\in Z_0,\ \exists k\ge 1,\quad \left|e^{is_k\sum_{j=0}^{n_k-1}\widetilde\phi\circ \widetilde F^j(y)+iu_kR_{n_k}}h_k( \widetilde F^{n_k}(y))-e^{i\psi_k}h_k(y)\right|>C|s_k|^{-\alpha}\, ,
\]
for $n_k:=\lfloor \xi\log |s_k|\rfloor$.
\end{defin}
We refer to \cite[Section 5]{IM18} for a presentation of different criteria (temporal distance function, Diophantine conditions on periods and good asymptotics, all of which are innately dynamical) ensuring AAE, which is proved to hold for a wide class of $\bar{\phi}$.

\begin{prop}[Lemmas 7.7 and 7.12 of \cite{IM18}, Lemma 3.14 of \cite{IM}]\label{Lisn1-C}
If  $\bar\phi$ is a real valued $\bar\nu$-integrable function satisfying \eqref{lipint} and
if $(Y,\widetilde F,\widetilde\phi,\beta)$ is \emph{AAE},
then there exist 
$\alpha',\beta'>0$ and $C,C',C''\geq 1$ such that 
$\Vert \widetilde{\mathcal L}_{is,iu}^{\lceil \beta'\log|s|\rceil}\Vert_{(s)}\le 1-C'|s|^{-\alpha'}$ and $\Vert\widetilde \cL_{is,iu}^n\Vert_{(s)}\le C''$ for all $s \in \reals$ with $|s|>1$, and $u \in [-\pi,\pi]$.
\end{prop}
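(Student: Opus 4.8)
The plan is to verify that the hypotheses of the Dolgopyat-type results of \cite{IM,IM18} hold in the present situation and to transcribe their two conclusions; I indicate the structure and isolate the one genuinely hard point. The uniform bound $\Vert\widetilde{\mathcal L}_{is,iu}^n\Vert_{(s)}\le C''$ is immediate from the two facts already recalled above: $\Vert\widetilde{\mathcal L}_{is,iu}\Vert_\infty\le 1$, and the Lasota--Yorke inequality $|\widetilde{\mathcal L}^n_{is,iu}h|_\beta\le\bar C(1+|s|)\Vert h\Vert_\infty+\bar C\beta^n|h|_\beta$, valid for all $n\ge 0$ and $h\in\widetilde{\mathcal B}$ (this is the place where \eqref{lipint} is really used, via \cite[Propositions 7.7, 8.10]{IM18} and \cite[Lemma 3.14]{IM}). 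Indeed, since $\Vert\cdot\Vert_{(s)}=\Vert\cdot\Vert_\infty+(1+|s|)^{-1}|\cdot|_\beta$,
\[
\Vert\widetilde{\mathcal L}_{is,iu}^n h\Vert_{(s)}\le(1+\bar C)\Vert h\Vert_\infty+\frac{\bar C\beta^n}{1+|s|}\,|h|_\beta\le(1+\bar C)\,\Vert h\Vert_{(s)},
\]
using $\bar C\beta^n\le\bar C\le 1+\bar C$, so one may take $C''=1+\bar C$, uniformly in $s$, $u$ and $n$.

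The substance of the statement is the contraction $\Vert\widetilde{\mathcal L}^{\lceil\beta'\log|s|\rceil}_{is,iu}\Vert_{(s)}\le 1-C'|s|^{-\alpha'}$, which is exactly \cite[Lemmas 7.7 and 7.12]{IM18} together with \cite[Lemma 3.14]{IM}; I would invoke these after matching notation, and recall the mechanism. For $\Vert h\Vert_{(s)}\le 1$ (hence $\Vert h\Vert_\infty\le 1$ and $|h|_\beta\le 1+|s|$) and $n=n_k:=\lfloor\xi\log|s|\rfloor$, write $\widetilde{\mathcal L}^n_{is,iu}h$ as a sum over the inverse branches of $\widetilde F^n$, with positive weights $e^{\sum_{j=0}^{n-1}g_0\circ\widetilde F^j}$ of total mass essentially $1$ and unimodular multipliers $e^{is\sum_{j=0}^{n-1}\widetilde\phi\circ\widetilde F^j+iuR_n}\,h(\cdot)$. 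One then runs the Dolgopyat dichotomy: \emph{either} $|\widetilde{\mathcal L}^n_{is,iu}h|$ has dropped below $1-C'|s|^{-\alpha'}$ on a fixed reference set $Z_0$ --- and, by the Lipschitz control of the multipliers along cylinders, this loss then spreads to a definite proportion of the weight --- \emph{or} for some point of $Z_0$ the multipliers attached to all of its $\widetilde F^n$-preimages are aligned to within $O(|s|^{-\alpha})$, which is precisely the assertion that $h$ is an approximate eigenfunction in the sense of Definition~\ref{Dioph1}. Choosing $\alpha,\xi$ above the threshold $\alpha_0$, the AAE hypothesis excludes the second alternative, leaving $\Vert\widetilde{\mathcal L}^{\lceil\beta'\log|s|\rceil}_{is,iu}h\Vert_\infty\le 1-C'|s|^{-\alpha'}$ with $\beta'\asymp\xi$ and $\alpha'$ depending on $\alpha$ and the branch-contraction rate. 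Combining with the Lasota--Yorke inequality, and using $\beta^n\le|s|^{-c}$ when $n\asymp\log|s|$ (after enlarging $C'$ and shrinking $\alpha'$ if needed), upgrades the sup-norm bound to the asserted bound for $\Vert\cdot\Vert_{(s)}$; the parameter $u\in[-\pi,\pi]$ is harmless, since it enters the Lasota--Yorke inequality and Definition~\ref{Dioph1} on the same footing as $s$ and ranges over a compact set.

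The genuinely hard ingredient --- and the one I would not reprove --- is the Dolgopyat cancellation itself, i.e.\ that failure of contraction forces the approximate-eigenfunction alternative, including the reduction from a general $h$ with $|h|\le 1$ to a unit-modulus approximate eigenfunction; this is carried out in \cite[Lemmas 7.7, 7.12]{IM18}. My remaining task would be purely bookkeeping: checking that our normalisations of $\Vert\cdot\Vert_{(s)}$, of the twisted operator $\widetilde{\mathcal L}_{is,iu}=\widetilde{\mathcal L}(e^{is\widetilde\phi+iuR})$, of the separation time $s_0$, and of the summability condition \eqref{lipint} match those of \cite{IM,IM18}, and that \eqref{lipint} indeed delivers the Lasota--Yorke estimate with constants uniform in $s$ and $u$.
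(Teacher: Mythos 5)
Your proposal matches the paper's treatment: the paper does not reprove this proposition but cites it directly from \cite[Lemmas 7.7, 7.12]{IM18} and \cite[Lemma 3.14]{IM}, exactly as you do for the contraction estimate; your short derivation of the uniform bound $\Vert\widetilde{\mathcal L}_{is,iu}^n\Vert_{(s)}\le 1+\bar C$ from $\|\widetilde{\mathcal L}_{is,iu}\|_\infty\le 1$ and the Lasota--Yorke inequality is correct and simply spells out a step the paper leaves implicit.
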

These estimates are key to establishing estimates 
of the form $\|(\text{I}-\wt{\cL}_{is,iu})^{-1}\|_{(s)} \leq C|s|^\alpha$
and ensuring, due to Lemma~\ref{DIneqImpliesD}, that for every positive integer $n$ and every real number such that $|s|\ge \max(1,e^{\tilde\beta^{-1}})$,
\[
\Vert \widetilde \cL_{is,iu}^n\Vert_{\widetilde\cB\rightarrow L^\infty}\le\Vert \widetilde\cL_{is,iu}^n\Vert_{(s)}\le C'' e^{\frac{2\eps\beta'C'}{|s|^{\alpha'}}}e^{-\frac{\eps C'n}{|s|^{\alpha'} \log s}}\le C''' e^{-\frac{\eps C'n}{|s|^{\alpha'} \log s}}\le C''' e^{-\frac{\eps C'n}{|s|^{\alpha}}}\, ,
\]
with $C'''=C''e^{2\eps\beta'C'}$, for any $\alpha>\alpha'$.
Therefore Assumption $(C)$ holds for $(\tilde F,Y,\bar\nu_Y)$ for the observable $\widetilde\phi$ with $\alpha_1=1$, any $\alpha>\alpha'$, $\mathcal B_1=\widetilde\cB$ and $\cB_2=L^\infty$.
The next lemma provides an estimate of $\cL^n_{is}$ by a direct proof (let us indicate that the strategy used in \cite[Lemma 4.4]{IM} and \cite{IM18} to prove such a result is based on the resolvent).


Let $\varsigma,\varsigma_0:\mathbb N_0\rightarrow[1,+\infty)$ and let $\mathcal Z$ be the set of $h:\bar\Delta\rightarrow\mathbb C$ such that
\[
\Vert h\Vert_{\mathcal Z}:=\sup_{j,k: \, k\le r_j}\Big[\varsigma(k)^{-1}\Vert (h\circ\bar F^k)|_{Y_{j}}\Vert_{\infty}+\varsigma_0(k)^{-1}\big| (h\circ\bar F^k) |_{Y_{j}}\big|_{\beta}\Big]<\infty\, .
\]
In Section~\ref{Towers}, we will consider generalization of the spaces considered by Young in \cite{Y}, which corresponds to taking $\varsigma,\varsigma_0$ of the form $\varsigma(k)=e^{k\eps}$ and $\varsigma_0(k)=e^{k\eps'}$ with $\eps'>\eps$.

\begin{lem}\label{BaseToTower}
Let $p_0\in(1,+\infty)$ and $\gamma\in(0,1]$.
Assume the following:
\begin{itemize}
\item $M:=\left\Vert \varsigma(\ell)\right\Vert_{L^{p_0}(\bar\nu)}<\infty,$\, $\mathbb E_{\bar\nu}(\varsigma_0(\ell))<\infty$ and
 \begin{equation}\label{hypophibasetotower}
\sum_{j}\bar\nu_Y(Y_j)
\sum_{k=0}^{r_{j}-1}k\, \varsigma(k)\big|(\bar\phi\circ \bar F^k) |_{Y_{j}}\big|_{\beta^{\frac 1\gamma}}^\gamma
<\infty\,  .
\end{equation}
\item there exist $c_1>0$ and $\vartheta_1\in(0,1)$ such that for every $j\ge 0$, $\bar\nu_Y(R\ge j)\le c_1\vartheta_1^j$
\item there exist $\alpha\ge 0$ and positive real numbers $\widetilde\alpha_0,  n_0,\widetilde C>0$ and $K>1$ such that
\begin{equation}\label{UNIbase}
\forall n\ge n_0,\ \forall |s|>K,\sup_{u\in[-\pi,\pi]} 
\left\Vert \widetilde{\mathcal L}_{is,iu}^n \right\Vert_{
\widetilde{\cB}\rightarrow L^{p_0}(\bar\nu_Y)} \le \widetilde C |s|^\alpha e^{- \frac{\widetilde\alpha_0 n}{|s|^{\alpha}}}
\, .
\end{equation}
\end{itemize}
Then for all $p_1\in[1,p_0)$,
there exist $C>0$ and $\alpha_0>0$ such that
\[
\Vert \cL_{is}^n\Vert_{\cZ\rightarrow L^{p_1}(\bar\nu)}\le  C |s|^{1+2\alpha} e^{-\frac{\alpha_0 n}{2|s|^\alpha}}\quad \mbox{for all }|s|>K\mbox{ and }n\ge n_0.
\]
In particular, Assumption $(C)(1)$ is satisfied with 
$\delta=K,\, \cB_1=\cB_\beta,\, \cB_2=L^{p_1}(\bar\nu),\, \alpha_1=1$
and $\alpha$ being replaced by $1+2\alpha$.
\end{lem}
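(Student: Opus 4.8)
The plan is to bootstrap the estimate \eqref{UNIbase} on the induced Gibbs--Markov map $(\widetilde F,Y,\bar\nu_Y)$ up to an estimate on the full tower $(\bar F,\bar\Delta,\bar\nu)$, by the standard device of cutting a tower orbit of length $n$ into an initial ascent to the base, a sequence of complete excursions governed by $\widetilde{\mathcal L}_{is,iu}$, and a final partial excursion. Concretely, I would first decompose any $h\in\cZ$ living on level $k$ over an atom $Y_j$ as $h = (h\circ\bar F^k)|_{Y_j}$ pulled back, use \eqref{decomptildeL} to relate $\cL^n_{is}$ restricted to the base to a Fourier series in the extra variable $u\in[-\pi,\pi]$ of the twisted induced operators $\widetilde{\mathcal L}_{is,iu}$ (the $iuR$ twist keeps track of how many tower steps a block of $\widetilde F$-iterations consumes), and then reconstruct the action on the whole tower. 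The exponential tail hypothesis $\bar\nu_Y(R\ge j)\le c_1\vartheta_1^j$ controls the contributions of the initial and final partial excursions, while $M=\|\varsigma(\ell)\|_{L^{p_0}}<\infty$ and $\mathbb E_{\bar\nu}(\varsigma_0(\ell))<\infty$ bound the loss incurred by passing between the $\cZ$-norm on $\bar\Delta$ and the $\widetilde\cB$-norm on $Y$. Hypothesis \eqref{hypophibasetotower} is exactly what is needed to ensure the induced observable $\widetilde\phi$ satisfies the integrability-of-Lipschitz-constants condition \eqref{lipint} (with the weight $k\varsigma(k)$ absorbing the level-dependent factors), so that the $\widetilde{\mathcal L}_{is,iu}$ are well-behaved on $\widetilde\cB$.

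The key steps, in order, are: (1) Write $n = n_1 + R_{m}(y) + n_2$ schematically, i.e.\ split the $n$-fold action of $\cL_{is}$ into an operator taking a point up into the base, $m$ complete returns handled by $\widetilde{\mathcal L}^m_{is,iu}$ after a Fourier transform in $u$, and a terminal piece; carefully, this is done via the identity \eqref{decomptildeL} and its iterate, integrating out $u$ over $[-\pi,\pi]$. (2) Apply \eqref{UNIbase} to the middle factor: for $|s|>K$ and the number $m$ of complete returns comparable to $n/\bar R$ (where $\bar R = \int R\,d\bar\nu_Y$ is finite by the tail bound), we get a factor $\widetilde C|s|^\alpha e^{-\widetilde\alpha_0 m/|s|^\alpha} \lesssim |s|^\alpha e^{-\alpha_0 n/|s|^\alpha}$ after adjusting constants; the number of complete returns is at least $n/(2\bar R)$ except on an event whose measure decays exponentially in $n$ by the tail bound, and that exceptional event contributes a harmless $\vartheta_1^{cn}$. (3) Estimate the initial and terminal partial-excursion operators in the relevant norms: moving from $\cZ$ to $\widetilde\cB$ costs a factor controlled by $\varsigma$ and $\varsigma_0$ on the base, producing at worst an extra $|s|$ (from the $|h|_\beta\le C|s|$-type growth of twisted functions, cf.\ the Lasota--Yorke inequality for $\widetilde{\mathcal L}_{is,iu}$ already quoted before Definition~\ref{Dioph1}) and $L^{p_0}$-integrable weights, and moving from $L^{p_0}(\bar\nu_Y)$ back to $L^{p_1}(\bar\nu)$ on the tower costs, via H\"older with exponent $p_0/p_1$, a factor $M<\infty$ together with another power of $|s|$. (4) Collect powers of $|s|$: the three sources ($|s|^\alpha$ from \eqref{UNIbase}, plus the two $|s|$'s from entering and leaving) give $|s|^{1+2\alpha}$ — or after a further crude bound $e^{-\alpha_0 n/(2|s|^\alpha)}$ in place of $e^{-\alpha_0 n/|s|^\alpha}$, exactly the claimed $C|s|^{1+2\alpha}e^{-\alpha_0 n/(2|s|^\alpha)}$. (5) Finally, read off Assumption $(C)(1)$ by identifying $\cB_1=\cB_\beta$, $\cB_2=L^{p_1}(\bar\nu)$, $\alpha_1=1$, and $\alpha$ replaced by $1+2\alpha$; note $\cZ\supseteq\cB_\beta$ (or at least $\cB_\beta\hookrightarrow\cZ$ for the Young-type weights), which is what lets us restrict the $\cZ$-estimate to $\cB_\beta$.

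The main obstacle I anticipate is step (3) together with the bookkeeping of where the Fourier variable $u$ goes. The twist $e^{iuR}$ is essential — without it one cannot reassemble the tower dynamics from the induced dynamics — but it means \eqref{UNIbase} must be applied \emph{uniformly in $u\in[-\pi,\pi]$}, which is why the hypothesis is stated with the supremum over $u$; one then integrates in $u$ and must check that this integration does not cost a factor growing in $|s|$ (it does not, because the bound in \eqref{UNIbase} is $u$-independent). The second delicate point is quantifying ``most orbits of length $n$ make $\ge n/(2\bar R)$ complete returns'': this is a large-deviation statement for the return-time cocycle, which follows from $\bar\nu_Y(R\ge j)\le c_1\vartheta_1^j$ (exponential moments of $R$) by a Chernoff bound, and the exceptional set contributes a term that is exponentially small in $n$ and hence dominated by $|s|^{1+2\alpha}e^{-\alpha_0 n/(2|s|^\alpha)}$ in the range $|s|>K$, $n\ge n_0$, possibly after enlarging $n_0$. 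The rest — the Lasota--Yorke-type control of $|\cL^k_{is}h|_\beta$ over a single excursion of height $k$ in terms of $\varsigma_0(k)$, and the passage between $L^{p_0}$ and $L^{p_1}$ via H\"older — is routine given the hypotheses $\mathbb E_{\bar\nu}(\varsigma_0(\ell))<\infty$ and $M<\infty$.
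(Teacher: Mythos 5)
Your plan is essentially the paper's own: decompose the $n$-fold action of $\cL_{is}$ according to initial ascent, number of complete $\widetilde F$-returns, and terminal partial excursion; turn the constraint that the total number of tower steps consumed by $k$ returns equals a prescribed integer into a Fourier average in $u\in[-\pi,\pi]$ of $\widetilde{\mathcal L}^k_{is,iu}$ via \eqref{decomptildeL}; discard the low-return and high-terminal-level regimes using Chernoff-type bounds coming from the exponential moment of $R$; and use \eqref{hypophibasetotower} to pass from the $\|\cdot\|_{\cZ}$-norm on $\bar\Delta$ to the $\|\cdot\|_{\widetilde\cB}$-norm on $Y$ over a single excursion. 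This matches the paper's proof step for step.

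There is, however, a genuine arithmetic error in your step (4): you attribute the exponent $1+2\alpha$ to ``one $|s|^\alpha$ from \eqref{UNIbase} plus two $|s|$'s from entering and leaving,'' which would actually produce $|s|^{\alpha+2}$ and coincides with $|s|^{1+2\alpha}$ only when $\alpha=1$. The correct bookkeeping is: one $|s|^\alpha$ from a single application of \eqref{UNIbase} to $\widetilde{\mathcal L}^{k-1}_{is,iu}$; a \emph{second} $|s|^\alpha$ (not an $|s|$) from summing the geometric tail $\sum_{k\ge a_N}e^{-\widetilde\alpha_0(k-1)/|s|^\alpha}=\mathcal O\bigl(|s|^\alpha e^{-\widetilde\alpha_0 a_N/|s|^\alpha}\bigr)$ over the number of complete excursions; and a single $(1+|s|)$ from the transfer estimate $\sum_{j\ge 0}\bigl\|\widetilde{\cL}_{is,iu}(H_j)\bigr\|_{\widetilde\cB}=\mathcal O\bigl((1+|s|)\|h\|_{\cZ}\bigr)$, which is where \eqref{hypophibasetotower} enters. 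In particular, the terminal partial excursion contributes \emph{no} factor of $|s|$ at all: it is controlled purely through $\bar\nu_Y(R>m)^{1/s_0}$ by H\"older, with no $|s|$-dependence. The misattribution would give $|s|^{\alpha+2}$ in place of $|s|^{1+2\alpha}$, which changes the resulting condition on $q$ in Theorems \ref{WeakGlobalExp}--\ref{WeakLocalExp} (e.g.\ $q>(\alpha+2)(1+r/2)$ rather than $q>(1+2\alpha)(1+r/2)$), so it is not merely cosmetic; you should trace through the geometric sum over $k$ explicitly to see that the second power of $|s|$ is an $|s|^\alpha$.
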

\begin{proof}
Let $s_0\in(1,+\infty)$ be such that $\frac 1{p_0}+\frac 1{s_0}=\frac 1{p_1}$.
Let $u>0$ be such that $\mathbb E_{\bar\nu_Y}(e^{uR})$ is finite. Let us prove that there exists $x>0$ and $\vartheta_2\in(0,1)$ such that
\begin{equation}\label{LargedeviationRN}
\bar\nu_Y(R_N>xN) =\mathcal O \left(\vartheta_2^N\right)\, .
\end{equation}
Due to \eqref{formuletildeL}, $\widetilde {\cL}h=\sum_i(e^{g_0}h)\circ\widetilde F_{i}^{-1}$ and
\[
e^{g_0(\tilde F_i^{-1}(y))}\le \int_Y e^{g_0(\tilde F_i^{-1}(y'))}\left(1+C_{g_0}\beta^{s_0(y,y')+1}\right)\, d\bar\nu_Y(y')\le e^{C_{g_0}}\int_Y\cL(1_{Y_i})\, d\bar\nu_Y=e^{C_{g_0}}\bar\nu_Y(Y_i)\, ,
\]
and thus,
\[
\forall h\in L^\infty(\bar\nu_Y),\quad
\Vert\widetilde {\cL}(e^{uR}h)\Vert_{\infty}\le \sum_i e^{C_{g_0}}\bar\nu_Y(Y_i)e^{ur_i}\Vert h\Vert_\infty\le  e^{C_{g_0}}\mathbb E_{\bar\nu_Y}(e^{uR})\Vert h\Vert_\infty\, .
\]
Therefore, the linear map
$\widetilde{\cL}_{0,u}=\widetilde{\mathcal L}\left(e^{uR}\times\cdot\right)$ acts continuously on $L^\infty(\bar\nu_Y)$.
Let $x$ be such that $\vartheta_2:=e^{-xu}\Vert \widetilde{\cL}_{0,u}\Vert_\infty<1$. Then $\mathbb E_{\bar\nu_Y}(e^{uR_N})=\mathbb E_{\bar\nu_Y}(\widetilde{\cL}_{0,u}^N(1_Y))\leq \Vert \widetilde{\cL}_{0,u}\Vert_\infty^N$ and hence, it follows from the Markov inequality that
\[
\bar\nu_Y(R_N>xN)=\bar\nu_Y(e^{uR_N}>e^{xuN})\le e^{-xuN}\mathbb E_{\bar\nu_Y}(e^{uR_N}) =\mathcal O \left(\vartheta_2^N\right)
\]
and we have proved~\eqref{LargedeviationRN}.

We write $\bar\Delta_j:=\bar F^{j}(Y\cap\{R>j\})$ for the $j$-th level of the tower $\bar\Delta$ and
$\widetilde S_m:=\sum_{k=0}^{m-1}\widetilde \phi\circ \widetilde F^k$. Recall that $R_m:=\sum_{k=0}^{m-1}R\circ \widetilde F^k$.
Let $N$ be a positive integer.
If $x=\bar F^j(y)\in\bar\Delta_j$, then
\begin{equation}\label{decompSN}
\bar S_N(x)=\bar S_N(\bar F^j(y))=\widetilde S_{k}(y)-\bar S_j(y)+S_m(\widetilde F^k(y))\, ,
\end{equation}
with $N=R_k(y)-j+m$ and $0\le m<R\circ\tilde F^k(y)$ (i.e. $k,m$ are such that $\bar F^N(x)\in\bar\Delta_m$ and $\#\{\ell=1,...,N-1\, :\, \bar F^\ell(x)\in Y\}=k$).  Thus
\begin{equation}\label{AAAA0}
\mathcal L_{is}^{N}=\sum_{j,k,m\ge 0\, :\,  m+k-j\le N}\mathcal L_{is, N,j,k,m}\, ,
\end{equation}
with $\mathcal L_{is, N,j,k,m}=\mathcal L_{is}^N\left(1_{\bar\Delta_j\cap\{N=R_k+m\}\cap \bar F^{-N}(\bar\Delta_m)}\cdot\right)$.
Let $\epsilon\in(0,1)$ and $h\in\cZ$. Observe first that
\begin{align}
\bigg\Vert\sum_{m\ge \epsilon N,m,j} &\mathcal L_{is, N,j,k,m}(h)\bigg\Vert_{L^{p_1}(\bar\nu)} \nonumber \\ &\le  \left\Vert\mathcal L_{is}^N\left(1_{\bigcup_{m\ge \epsilon N}\bar\Delta_j}\circ\bar F^Nh\right)\right\Vert_{L^{p_1}(\bar\nu)}\le\left\Vert 1_{\bigcup_{m\ge \epsilon N}\bar\Delta_m}\mathcal L_{is}^N(h)\right\Vert_{L^{p_1}(\bar\nu)} \nonumber\\
\nonumber&\le\bar\nu\left(\bigcup_{m\ge \epsilon N}\bar\Delta_m\right)^{\frac 1{s_0}}\Vert \mathcal L_{is}^N(h) \Vert_{L^{p_0}(\bar\nu)}\le \left(\sum_{m\ge \epsilon N}\bar\nu_Y\left(R\ge m\right)\right)^{\frac 1{s_0}} M\Vert h \Vert_{\cZ}\\
&\le \left(\bar\nu(Y)\sum_{m\ge \epsilon N}\bar\nu_Y\left(R\ge m\right)\right)^{\frac 1{s_0}}M\Vert h \Vert_{\cZ}=\cO\left(\vartheta_1^{\frac{\epsilon}{s_0} N}\Vert h \Vert_{\cZ}\right)\, .
\label{smallm}
\end{align}
Note that if $m\le\epsilon N$, then $R_k=N+j-m\ge (1-\epsilon)N+j\ge (1-\epsilon)N$ and that, in this case, $k\le a_N$ implies $R_{a_N}\ge (1-\epsilon)N$.
Set  $a_N:= (1-\epsilon)N/x$.
Set $p\in (1,+\infty)$. 
Observe that
\begin{align*}
\left\Vert\sum_{k\le a_N,\,  j \ge 0,\, 0\le m\le\eps N} \mathcal L_{is, N,j,k,m}(h)\right\Vert_{L^{p_1}(\bar\nu)} 
&\le \left\Vert\mathcal L^N\left(\sum_{j\ge 0} 1_{\bar\Delta_j\cap \bar F^j(Y\cap\{R_{a_N}\ge (1-\epsilon)N\})}|h|\right)\right\Vert_{L^{p_1}(\bar\nu)}
\\
&\le \left\Vert \sum_{j\ge 0}1_{\bar F^j (Y\cap\{R>j,\, R_{
a_N}\ge(1-\epsilon) N\})}\right\Vert_{L^{s_0}
(\bar\nu)}
\Vert h\Vert_{L^{p_0}(\bar\nu)}
\\
&\le \left(\bar\nu(Y)\sum_{j\ge 0}
\bar\nu_Y\left( R>j,\, R_{a_N}\ge (1-\epsilon)N\right)\right)^{\frac 1{s_0}}
M\Vert h\Vert_\cZ
\\
&\le \left(
\bar\nu(Y)\mathbb E_{\bar\nu_Y}\left( R 1_{\{R_{a_N}\ge (1-\epsilon)N\}}\right)
\right)^{\frac 1{s_0}} M\Vert h\Vert_\cZ \\
&\le \left(\bar\nu(Y)\Vert R\Vert_{L^p(\bar\nu_Y)}\right)^{\frac 1{s_0}}
\left(\bar\nu_Y\left(R_{a_N}\ge (1-\epsilon)N\right)\right)^{\frac {p-1}{s_0p}}M\Vert h\Vert_\cZ\, .
\end{align*}
But
$\bar\nu_Y\left(R_{a_N
}\ge (1-\epsilon)N\right)=\cO\big(\vartheta_2^{\frac{(1-\epsilon)N}x}\big)$ 
due to~\eqref{LargedeviationRN},
and so
\begin{equation}\label{AAAA2b}
      \left\Vert\sum_{k\le\frac{(1-\epsilon)N}x,\  j,m\ge 0} \mathcal L_{is, N,j,k,m}(h)\right\Vert_{L^{p_1}(\bar\nu)}
      =\mathcal O\left(\vartheta_2^{\frac{(1-\epsilon)(p-1)N}{ps_0x}}\Vert h\Vert_\cZ\right)\, .
\end{equation}
It remains to estimate
$\left\Vert\sum_{k\ge
a_N 
,0\le j,m\le \epsilon N} \mathcal L_{is, N,j,k,m}(h)\right\Vert_{L^{p_1}(\bar\nu)}$.
To this end, we observe that
\begin{align*}
\mathcal L_{is, N,j,k,m}( h)&
=\mathcal L^{N}\left(e^{is\bar S_N}\, 1_{\bar\Delta_j\cap\{N=R_k+m\}\, \cap \bar F^{-N}(\bar\Delta_m)}\, h\right)\\
&=\mathcal L^{N+j}\left((e^{is\bar S_N} 1_{\bar\Delta_j\cap\{N=R_k+m\}\cap \bar F^{-N}(\bar\Delta_m)}h)\circ\bar F^j\right)\\
&=
\mathcal L^{N+j}\left(e^{is\bar S_{N}\circ \bar F^j}
1_{Y\cap\{R>j\}}(h\circ \bar F^j)1_{\{R_k=N+j-m\}}1_{Y\cap\{R>m\}}\circ\widetilde F^k\right)\, .
\end{align*}
Therefore, using~\eqref{decompSN} and \eqref{decomptildeL}, we obtain
\begin{align*}
\mathcal L_{is, N,j,k,m}( h)
&=\mathcal L^{N+j}\left((1_{Y\cap\{R>m\}}e^{isS_m})\circ \widetilde F^k\, 1_{\{R_k=N+j-m\}}e^{is\widetilde S_{k}}\, 1_{Y\cap\{R>j\}}\, e^{-is\bar S_j}\, h\circ\bar F^j\right)\\
&= \mathcal L^m\left(\widetilde{\mathcal L}^{k}\left((1_{Y\cap\{R>m\}}e^{isS_m})\circ \widetilde F^k 1_{\{R_k=N+j-m\}}\, e^{is\widetilde S_{k}}\, e^{-is\bar S_j}1_{Y\cap\{R>j\}}\, h\circ\bar F^j\right)\right)\\
&= \mathcal L^m\left((1_{Y\cap\{R>m\}}e^{isS_m})\widetilde{\mathcal L}^{k}\left( 1_{\{R_k=N+j-m\}}\, e^{is\widetilde S_{k}}\, e^{-is\bar S_j}\, 1_{Y\cap\{R>j\}}\, h\circ\bar F^j\right)\right)\, .
\end{align*}
Moreover, setting $H_j:= e^{-is\bar S_j}\, 1_{Y\cap\{R>j\}}\, h\circ\bar F^j$, we obtain
\begin{align*}
\widetilde {\cL}^{k}\left(1_{\{R_k=N+j-m\}}e^{is\widetilde S_{k}}H\right)
&=\frac 1{2\pi} \int_{[-\pi,\pi]}e^{-iu(N+j-m)}\widetilde{\cL}^k\left(e^{is\widetilde S_{k}+uR_k}H_j\right)\, du\\
&=\, \frac 1{2\pi} \int_{[-\pi,\pi]}e^{-iu(N+j-m)}
\widetilde{\cL}_{is,iu}^ {k-1}\left(\widetilde{\cL}_{is,iu}(H_j)\right)\, du\, .
\end{align*}
Now using~\eqref{UNIbase}
combined with the H\"older inequality with $\frac 1{p_0}+\frac 1{s_0}=\frac 1{p_1}$,
we obtain for $|s|>K$,
\begin{align}
&\sum_{k\ge a_N}\sum_{j,m\ge  0} 
\left\Vert\mathcal L_{is, N,j,k,m}\right\Vert_{L^{p_1}(\bar\nu)} \nonumber\\
&\hspace{5em}=\mathcal O\left(\sum_{k\ge a_N}\sum_{j,m\ge  0}|s|^\alpha e^{-\frac{\widetilde\alpha_0 (k-1)}{|s|^\alpha}}\left\Vert  \widetilde{\cL}_{is,iu}(H_j)\right\Vert_{\widetilde{\mathcal B}}\left\Vert 1_{\{R>m\}}e^{-is\bar S_m}\right\Vert_{L^{s_0}(\bar\nu_Y)}\right)\nonumber\\
&\hspace{5em}=\mathcal O\left(\sum_{k\ge a_N}\sum_{j,m\ge 0}|s|^\alpha e^{-\frac{\widetilde\alpha_0 {k-1}}{|s|^\alpha}} \Vert  \widetilde{\cL}_{is,iu}(H_j)\Vert_{\widetilde{\mathcal B}}\, \bar\nu_Y(R>m)^{\frac 1{s_0}}\right)\nonumber\\
&\hspace{5em}=\mathcal O\left( |s|^{2\alpha}\sum_{j\ge 0}\Vert  \widetilde{\cL}_{is,iu}(H_j)\Vert_{\widetilde{\mathcal B}} \, e^{-\frac {\widetilde\alpha_ 0  a_N}{|s|^\alpha}}\right)\, ,\label{majointerm}
\end{align}
since $\sum_{k\ge 0} e^{-\frac
{\widetilde\alpha_ 0 k}{
|s|^\alpha}}=O(
|s|^\alpha
)$.
Now it remains to prove that
\begin{equation}\label{Goodmajo}
\sum_{j\ge 0}\Vert  \widetilde{\cL}_{is,iu}(H_j)
\Vert_{\widetilde{\mathcal B}}\le C''(1+|s|)\Vert h\Vert_{\mathcal Z}\, 
\end{equation}
where $H_j:= e^{-is\bar S_j}\, 1_{Y\cap\{R>j\}}\, h\circ\bar F^j$.
First we observe that
\[
\widetilde{\cL}_{is,iu}(H_j)=\sum_{j':r_{j'}>j}\left(e^{g_0+iu\, r_{j'}+is\sum_{k=j}^{r_{j'}-1}\bar\phi\circ\bar F^k}\, h\circ\bar F^j\right)\circ\widetilde F^{-1}_{j'}\, .
\]
Therefore,
\begin{equation}\label{infinitenorm}
\sum_{j\ge 0}\Vert  \widetilde{\cL}_{is,iu}(H_j)\Vert_\infty\le \sum_{j\ge 0} \varsigma(j)\Vert h\Vert_\cZ e^{C_0}\bar\nu_Y(R>j)\le   \Vert h\Vert_\cZ e^{C_0}\mathbb E_{\bar\nu}(\varsigma)/\bar\nu(Y)\, .
\end{equation}
Next, let us control $\sum_{j\ge 0}\big| \widetilde{\cL}_{is,iu}(H_j)\big|_\beta$.
Consider $y,y'$ such that $s_0(y,y')\ge 1,$ and their primages
$y_{j'},y'_{j'}$ under $\widetilde F$ belonging to $Y_{j'}$, then
\begin{equation}\label{Lipshitzh}
e^{g_0(y_{j'})}|h(\bar F^j(y_{j'}))-h(\bar F^j(y'_{j'})|\le e^{C_0}\bar\nu_Y(Y_{j'})\, \Vert h\Vert_\cZ \varsigma_0(j)
    \beta^{\hat s(y_j,y'_j)}
\, ,
\end{equation}
\begin{equation}\label{Lipshitzg}
\left|h(\bar F^j(y_{j'}))(e^{g_0(y_{j'})}-e^{g_0(y'_{j'})})\right|\le \Vert h\Vert_\cZ \varsigma(j) e^{g_0(y_{j'})} C_0\beta^{s_0(y,y')}\le \Vert h\Vert_\cZ e^{C_0}\bar\nu_Y(Y_{j'}) \varsigma(j) C\beta^{s_0(y,y')} \, ,
\end{equation}
and
\begin{align}
\nonumber\bigg|h(\bar F^j(y_{j'}))e^{g_0(y_{j'})}&(e^{is\sum_{k=j}^{r_{j'}-1}\bar\phi\circ\bar F^k(y_{j'})}-e^{is\sum_{k=j}^{r_{j'}-1}\bar\phi\circ\bar F^k(y_{j'})})\bigg|\\
&\ \ \ \ \ \ \le \Vert h\Vert_\cZ \varsigma(j) e^{C_0}\bar\nu_Y(Y_{j'})2\left(|s| \sum_{k=j}^{r_{j'}-1}\big|(\bar\phi\circ \bar F^k)|_{Y_{j'}}\big|_{\beta^{\frac 1\gamma}} \beta^{\frac{s_0(y,y')+
1}{\gamma}}\right)^\gamma\, .
\label{Lipshitzphi}
\end{align}
Gathering \eqref{Lipshitzh}, \eqref{Lipshitzg} and \eqref{Lipshitzphi} (using the fact that $\left(\sum_k a_k\right)^\gamma\le \sum_k a_k^\gamma$ since $\gamma\in(0,1]$) and summing over $\sum_{j\ge 0}\sum_{j'\, :\, r_{j'}>j}$, we obtain
\begin{align*}
\sum_{j\ge 0}&\left| \widetilde{\cL}_{is,iu}(H_j)\right|_\beta \\ &\le C'\Vert h\Vert_\cZ \beta^{s_0(y,y')}\left(
\mathbb E_{\bar\nu}(\varsigma_0(\ell)+\varsigma(\ell))/\bar\nu(Y)+
|s|^\gamma \sum_{j'}\bar\nu_Y(Y_{j'})
\sum_{k=0}^{r_{j'}-1}k\,\varsigma(k)\big|(\bar\phi\circ \bar F^k)|_{Y_{j'}}\big|^\gamma_{\beta^{\frac 1\gamma}} 
\right)
\, ,
\end{align*}
which is $\mathcal O\left((1+|s|)\Vert h\Vert_{\cZ}\right)$ due to our assumption \eqref{hypophibasetotower} on $\bar\phi$. Combining this with \eqref{infinitenorm}, we obtain
\eqref{Goodmajo} and end the proof of the Lemma thanks to \eqref{smallm},  \eqref{majointerm} and \eqref{AAAA2b}.
\end{proof}

Therefore, whenever the factor $(\bar F,\bar \Delta, \bar \nu)$ in our setting (recall Figure~\ref{setup}), and the observable $\bar \phi$ given by Assumption $(A)$, satisfy the hypothesis of the \Cref{BaseToTower}, we have Assumption $(C)$. In fact, there is a large class of nonuniformly hyperbolic dynamical systems for which this Lemma applies. For example, the quotient system defined in \Cref{sechyp} for a Young tower $(F,\Delta, \nu)$ is a tower over a Gibbs Markov map. So any system $(f,\cM,\mu)$ modeled by a Young tower with exponential tales is a suitable candidate. We describe them in detail in \Cref{Towers}. 


\section{Subshifts of Finite Type}\label{SFT}

In this section, we establish exact limit theorems for mixing invertible subshifts of finite type (SFTs). Many concrete dynamical systems like Axiom A diffeomorphisms can be studied by converting them to SFTs via a symbolic coding.  Hence, the exact limit theorems we establish here, apply beyond the setting in which they are introduced. To illustrate this, we end this section with an application  of our results to co-compact group actions on $\bbH^2$.

\subsection{Context and results}
Let us recall some facts about SFTs without proof. \cite[Chapters 1--4]{PP} contain a detailed account of the theory as well as proofs of the following.

Let $A$ be a $k \times k$ matrix with only $0$ and $1$ as entries. Define  
$$\Sigma_A=\Big\{ \vec{x}=(x_j)_{j\in\mathbb Z}\, :\, x_j \in \{1,2,\dots,k\},\  A(x_j,x_{j+1})=1, \ \forall j \in \integers  \Big\}.$$
We consider the shift $\sigma : \Sigma_A \to \Sigma_A$ acting on a sequence by moving elements to the left by one position, i.e., $\sigma\big((x_n)_{n\in \integers}\big)=(x_{n+1})_{n\in\integers}$. Then, $(\sigma,\Sigma_A)$ is called a subshift of finite type (also known as a topological Markov chain). Define the period $d$ of $A$ by $d = \gcd \{n\ |\ \exists j, A^n_{jj}>0 \}$. If $d=1$, $A$ is called aperiodic. Also, $A$ is called irreducible if for all $i,j$ there exists $N$ such that $A^N_{ij}>0$. We assume from now on that $A$ is irreducible and aperiodic.
Let $\beta \in (0,1)$. 
We endow $\Sigma_A$ with the metric $\dd$
given by $\dd(\vec{x},\vec{y})=\beta^N$ where $N \in \integers$ is the supremum of the nonnegative integers $N$ such that $x_j=y_j$ for all $|j|<N$. For $\vec{x} \in \Sigma$, we write $\vec{x}_+=(x_n)_{n\geq 0}$ and $\vec{x}_-=(x_n)_{n \leq 0}$.

Take $F_\beta$ to be the set of complex valued Lipschitz continuous functions on $\Sigma_A$. We endow this space with the norm
 $\|\cdot\|_\beta = |\cdot|_\infty+ |\cdot|_\beta$ where
$|h|_\infty = \sup_{\Sigma_A}|h|$ and
$|h|_\beta= \sup_{\vec x\ne \vec y}\frac{|f(\vec x)-f(\vec y)|}{\dd(\vec x,\vec y)} $.
Then, $(F_\beta,\|\cdot\|_\beta)$ is a Banach space such that $\|\cdot\|_\beta-$bounded sets are $|\cdot|_\infty-$compact. 


\begin{defin}[\cite{PP}]
Let $\phi\in F_\beta$ be real valued. $\phi$ is said to be  {\bf generic} if the only solution $H\in F_\beta$ to $H(\sigma(\x)) = e^{itf(\x)}H(\x)$ is a constant $H$ and $t=0$. 
\end{defin}
\begin{defin}[\cite{D3}]
Let $\phi\in F_\beta$ be real valued. $ \phi$ is said to be {\bf strongly non-integrable} if  there exist $\delta, \vec{y}^1, \vec{y}^{2}$ with $\vec{y}^1_+=\vec{y}^2_+$ and a neighbourhood of $\vec{y}^1$, $U$, such that for all $\vec{x}^1 \in U$ with $\vec{x}^1_-=\vec{y}^1_-$ there exist $\vec{x}^2,\vec{x}^3,\vec{x}^4$ with 
$\vec{x}^2_-=\vec{y}^1_-$, $\vec{x}^2_+=\vec{x}^4_+$, $\vec{x}^4_-=\vec{x}^3_-=\vec{y}^2_-$, $\vec{x}^3_+=\vec{x}^1_+,$ $\dd(\vec{x}^1, \vec{x}^2)\leq \beta^N, \dd(\vec{x}^3, \vec{x}^4) \leq \beta^N $
and $|\varphi(\vec{x}^1,\vec{x}^2,\vec{x}^3,\vec{x}^4)| \geq \delta \beta^{N/2}\
$
where $\varphi$ is the temporal distance function 
\begin{equation*}
\varphi(\vec{x}^1,\vec{x}^2,\vec{x}^3,\vec{x}^4):=\sum_{k\in \integers} [\phi(\sigma^k\vec{x}^1)-\phi(\sigma^k\vec{x}^2)-\phi(\sigma^k\vec{x}^3)+\phi(\sigma^k\vec{x}^4)]\, .
\end{equation*}
\end{defin}

Note that strong non-integrability implies genericity, and due to ideas in \cite{D1, D2, D3}, the strong non-integrability condition will be the key to ensure Assumptions $(C)$ and $(D)$. Moreover, this condition is satisfied by an open and dense subset of observables $\phi$ of $F_\beta$.

Given a real valued $g\in F_\beta$, called potential,
we consider the unique invariant probability measure $\nu_g$ on $\Sigma_A$ which is $\sigma^{(+)}$-invariant and maximise $\mu\mapsto h_\mu(\sigma^{(+)})+\int_{\Sigma_A} g\, d\mu$ where $h_\mu(\sigma^{(+)})$ is the entropy of $\sigma$ with respect to $\mu$. This measure $\nu_g$ is called the stationary equilibrium state of $g$, or Gibbs measure with potential $g$.
\begin{thm}\label{StongExpOrd1SFT}
Let $\beta\in(0,1)$.
Suppose $(\sigma,\Sigma_A,\nu_g)$ is an invertible subshift of finite type with an irreducible, aperiodic $A$, endowed with a Gibbs measure, $\nu_g$, with potential $g \in F_\beta$.  Let $\phi \in F_\beta$ be $\mathbb X-$valued. 
Assume $\phi$ is $\nu_g$-centered and set $S_n:=\sum_{k=0}^{n-1}\phi\circ \sigma^k$.
\begin{itemize}[leftmargin=*]
\item If $\bbX=\reals $ and $\phi$ is generic, for any probability measure $\mathbb P$ absolutely continuous with respect to $\nu_g$ having density $\psi\in F_{\sqrt{\beta}}$, then the order $1$ Edgeworth exapnsion for $S_n$ exists. 
If, moreover, $\phi$ is strongly non-integrable, then all order Edgeworth expansions for $S_n$ exist.
\item If $\bbX=\reals$ and $\phi$ is strongly non-integrable, or if $\bbX=\integers$, 
then, for all $\psi, \xi \in F_{\sqrt{\beta}}$, and 
for all $g \in \fF^{q+2}_{0}$ where $q>0$, both local and global expansions in the \emph{MLCLT} of every order exist. 
\end{itemize}

\end{thm}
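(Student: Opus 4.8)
The plan is to verify, in the SFT setting, the abstract hypotheses of Part~\ref{part1} and then quote the appropriate general theorems: \Cref{StongExpOrd1} for the order~$1$ Edgeworth expansion, \Cref{StongExp} for Edgeworth expansions of all orders, and \Cref{WeakGlobalExp}--\Cref{WeakLocalExp} for the MLCLT expansions. First I would set $(f,\mathcal M,\mu)=(F,\Delta,\nu)=(\sigma,\Sigma_A,\nu_g)$, $\mathfrak p=\mathrm{id}$, and take $(\bar F,\bar\Delta,\bar\nu)$ to be the associated one-sided SFT with $\bar{\mathfrak p}$ the projection forgetting the past coordinates, exactly as in \Cref{verif}; since the metric $\mathrm d$ is comparable to $\beta^{\hat s_0}$, one has $F_\beta=\mathcal B^{(0)}_\beta$ and $\mathcal B_{\sqrt\beta}$ is the one-sided $\sqrt\beta$-Lipschitz space. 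Assumption~$(0)$ is immediate: $\phi$ is centred by hypothesis and is not an $L^2$-coboundary because genericity rules out the corresponding cohomological relation (a Livšic-type argument reduces an $L^2$ relation to an $F_\beta$ one). The structural assumptions $(H0)$ and $(H1)$ are just the standard facts about mixing SFTs with a Gibbs state (product structure of $\Sigma_A$, the Ruelle form $\mathcal L\psi(x)=\sum_{\bar F z=x}e^{\bar g(z)}\psi(z)$ with $\bar g\in F_\beta$, and the injective coordinate matchings of preimages of \eqref{bijpreimages}), so \Cref{LEM0}, applied with $\phi\in F_\beta$ and $\psi,\xi\in F_{\sqrt\beta}$, delivers $(A)(1)$ and $(A)[r](2)$ for every $r\ge 0$ with the single chain $\mathcal X_a=\mathcal X_a^{(+)}=\mathcal B_{\sqrt\beta}\hookrightarrow L^\infty(\bar\nu)$, $r_0=r$, $q(\psi)=q(\xi)=0$, $\vartheta=\sqrt\beta$, $p_0=\infty$. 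Since $\mathcal B_{\sqrt\beta}$ is a Banach algebra on which $\mathcal L$ and multiplication by $\bar\phi$ act continuously and $s\mapsto e^{is\bar\phi}\cdot$ is analytic into $\mathcal L(\mathcal B_{\sqrt\beta})$, \Cref{lemA_B} gives $(A)[r](3)$; the chain being constant, $(A)(4)$ falls in its trivial branch.

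Next I would handle $(B)$. The twisted operators $\mathcal L_{is}$ satisfy the classical Doeblin--Fortet (Lasota--Yorke) inequality on $\mathcal B_{\sqrt\beta}$, so \Cref{DF} applies; $(\bar F,\bar\nu)$ is exact since $A$ is irreducible and aperiodic; and $\phi$ is non-arithmetic (for $\mathbb X=\mathbb R$ this is exactly genericity, after upgrading an $L^2$ solution of the cohomological equation to an $F_\beta$ one by a Livšic-type argument; for $\mathbb X=\mathbb Z$ it is the aperiodicity of $\phi$, which is part of the standing hypotheses). Hence \Cref{periphericalspect} gives $(B)(1,2)$, and $\bar\phi\in\mathcal B_{\sqrt\beta}\hookrightarrow L^2(\bar\nu)$ supplies the rest of $(B)$; in particular $\sigma_\phi^2>0$. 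By \Cref{propHP}, \Cref{lemmaABCgreek} and \Cref{lemmaABCgreek1}, Assumptions $(\alpha)[r]$ (every $r$) and $(\beta)$ then hold with $\psi_n=\psi$ and $\xi_n=\xi\circ\sigma^n$. For $\mathbb X=\mathbb R$ with $\phi$ merely generic this already yields the order~$1$ Edgeworth expansion through \Cref{StongExpOrd1}, and for $\mathbb X=\mathbb Z$ Assumption $(\gamma)$ is automatic, so the $\mathbb Z$-valued part of the MLCLT statement is complete at this point (invoking \Cref{WeakGlobalExp}--\Cref{WeakLocalExp} and the Remark after the latter for $g\in\fF^0_0$, $g\in\fF^0_{r+1}$).

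What remains is $(C)$ and $(D)[r]$ for $\mathbb X=\mathbb R$ with $\phi$ strongly non-integrable. I would first note that $\bar\phi$ is then strongly non-integrable as well: the temporal distance function does not change when $\phi$ is replaced by $\bar\phi\circ\bar{\mathfrak p}=\phi-\chi+\chi\circ\sigma$, because under the past/future matching conditions in the definition the telescoping sum $\sum_{k\in\mathbb Z}$ of the coboundary contributions vanishes. Dolgopyat's argument (\cite{D3}, as recalled in \Cref{sec:dolgoineg}) then yields the Dolgopyat inequality~\eqref{DolgopyatIneq1} for $\mathcal L_{is}$ on the relevant Hölder space $\mathcal B_1$ (with $|\cdot|_1=\|\cdot\|_\infty$ and $|\cdot|_2$ the Hölder semi-norm); applying \Cref{DIneqImpliesD} with $g(s)=Cc\log|s|$, the Corollary following it with $\mathcal B_2=L^\infty$, and the already established $(B)(2)$ and $(A)[0](3)$, gives Assumption $(C)$ with $\alpha_1=1$ and $\alpha$ an arbitrary positive number, and $(D)[r](1)$ for all $r\ge 0$ with $d_1=0$. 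The remaining clauses $(C)(2)$ and $(D)[r](2)$ (taken with $d_2=1$) follow from the explicit description of $\bar h_{k,s,H}$ as a conditional expectation in the proof of \Cref{LEM0}: $\|\bar h_{k,-s,1}\|_{(L^\infty)'}\le\|\bar h_{k,-s,1}\|_\infty\le 1$, while \eqref{hk2} gives $\|\mathcal L_{is}^{2k}\bar h_{k,s,\psi}\|_{\mathcal B_1}=O(1+|s|)$, so the product is $O(|s|)$. By \Cref{lemmaABCgreek}, $(C)$ implies $(\gamma)$ and $(D)[r]$ implies $(\delta)[r]$ for all $r$, so \Cref{StongExp} (with $r'=r$ arbitrary) gives Edgeworth expansions of every order; and since $(C)$ holds with $\alpha_1=1$ and $\alpha$ arbitrarily small, for any fixed $q>0$ and any order $r$ one can pick $\alpha$ with $\alpha\big(1+\tfrac{r+1}{2\alpha_1}\big)<q$, whence \Cref{WeakGlobalExp} and \Cref{WeakLocalExp} give the global expansion of order $r$ for all $g\in\fF^{q+2}_0$ and the local one for all $g\in\fF^{q+2}_{r+1}$.

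The hard part will not be the spectral bookkeeping but the two ``Diophantine-flavoured'' reductions: showing that genericity/non-arithmeticity and strong non-integrability descend from $\phi$ on the two-sided shift to the reduced observable $\bar\phi$ on the one-sided factor — i.e.\ that they are invariant under adding the coboundary $\chi-\chi\circ\sigma$ — and, in tandem, keeping the Hölder exponents of $\chi$, $\bar\phi$ and $\bar h_{k,s,H}$ consistent with the function spaces that $(A)$, $(B)$ and the Dolgopyat estimate each require. Once this is arranged, everything else is a routine application of the verification lemmas of \Cref{verif}.
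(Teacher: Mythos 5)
Your proof follows essentially the same route as the paper's: identify the two-sided shift with $(F,\Delta,\nu)$ and the one-sided shift with $(\bar F,\bar\Delta,\bar\nu)$, verify $(H0)$, $(H1)$ and then $(A)$, $(B)$ through Lemmas~\ref{LEM0}, \ref{lemA_B}, \ref{periphericalspect}, obtain the order~$1$ Edgeworth expansion from Corollary~\ref{StongExpOrd1}, and, under strong non-integrability, invoke Dolgopyat's estimate and Lemma~\ref{DIneqImpliesD} to obtain $(C)$ and $(D)[r]$ for all $r$ and conclude via Theorems~\ref{StongExp}, \ref{WeakGlobalExp} and \ref{WeakLocalExp}. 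The one place where you add content the paper leaves implicit is the explicit observation that strong non-integrability and genericity are unaffected by the coboundary $\chi-\chi\circ\sigma$, since the temporal distance function telescopes to zero on a coboundary under the matching conditions; the paper simply passes from $\phi$ to $\bar\phi$ without comment and cites Dolgopyat's one-sided operator bound directly, but your version is the cleaner formulation.
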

\begin{proof}
Up to add a constant to $g$, we will assume without any loss of generality that $\int_{\Sigma_A}g\, d\nu=-h_{\nu}(\sigma)$.
Recall that SFTs fit the framework of Section~\ref{sechyp}
 with $(F,\Delta)=(\sigma,\Sigma_A)$
and with $(\bar F,\bar\Delta)$ isomorphic to $(\sigma^+,\Sigma^+_A)$,
where
$$\Sigma^+_A=\Big\{ \vec{x}=(x_j)_{j\ge 0}\, :\, x_j\in\{1,2,\dots,k\},\,  A(x_j,x_{j+1})=1, \ \forall j \in \naturals_0  \Big\},$$ 
and $\sigma^+\big((x_n)_{n\geq 0}\big)=(x_{n+1})_{n\geq 0}$.

$F_\beta$  corresponds to
$ {\cB}^{(0)}_\beta$ in Section~\ref{sechyp} with $\hat s(x,y)=\inf\{n\ge 0\, :\, x_n\ne y_n\}$
and $\Vert f\Vert_\beta\le\Vert f\Vert^{(0)}_{ \beta}\le 2\Vert f\Vert_\beta$.
The corresponding function spaces on $\Sigma^+_A$ are defined analogously, replacing $\integers$ by the set of nonnegative integers, and are denoted by a superscript $+$, and $(F^+_\beta,\|\cdot\|_\beta)$ is also a Banach space. It corresponds exactly to $(\mathcal B_\beta,\Vert\cdot\Vert_{\mathcal B_\beta})$
of Section~\ref{sechyp}.

With the above identifications, 
$(\Delta,F,\nu)=(\Sigma_A,\sigma,m)$
satisfies the condition $(H0)$ of Section~\ref{sechyp}.
Hence, Assumption $(A)(1)$ follows from Lemma~\ref{LEM0}, which 
ensures the existence of $\bar\phi\in F^+_{\sqrt{\beta}}$ and of $\chi\in F_{\sqrt{\beta}}$ such that $\phi=\bar\phi\circ\mathfrak p+\chi-\chi\circ F$.
Assumption $(0)$ holds true with $(f,\mathcal M,\mu)=(F,\Delta,\nu)=(\sigma,\Sigma_A,\nu_g)$ and $(\bar F,\bar \Delta,\bar \nu)=\big(\sigma^+,\Sigma_A^+,\bar{\mathfrak p}_*(\nu_{\bar g})\big)$ with $\bar{\mathfrak p}(\vec x)=(x_n)_{n\ge 0}$ and
$\mathfrak p=id$. The non-arithmeticity is ensured by the genericity of $\phi$.

It is known that $\bar{\mathfrak p}_*(\nu_{\bar g})$ corresponds to the Gibbs measure on $\Sigma_A^+$ associated to a potential $\bar g\in F_{\sqrt\beta}^+$ such that $\phi=\bar\phi\circ\mathfrak p+h-h\circ F$ with $h\in F_{\sqrt{\beta}}$. Thus
Assumption $(H1)$ of Section~\ref{sechyp} holds also true with this $\bar g$. Condition $(A)[r](2)$ then follows due to the second part of Lemma~\ref{LEM0}.
It is clear that $(\cL_{is})_{s\in\reals}$ is an analytic family of bounded linear operators on  $F_{\sqrt\beta}^+$ (see \Cref{lemA_B}). So we have $A[r](3)$ for all $r$.

Assumption $(B)(1)$ follows from the Ruelle-Perron-Frobenius Theorem (see \cite[Theorem 2.2]{PP}). The aperiodicity and irreducibility of A implies that the system is exact. Combining all this with the genericity of $\phi$ and $F_{\sqrt{\beta}}^+ \hookrightarrow L^2(\bar\nu)$, we have the hypothesis of \Cref{periphericalspect}, and hence Assumption $(B)$.   
This ends the proof of the first part of the first statement. 

Assume from now on that $\phi$, and so $\bar\phi$, is $\reals-$valued and strongly non-integrable.
Due to the work of Dolgopyat (see \cite{D1, D2}), the 
strong non-integrability condition implies the existence of  $C,s_0,\alpha'>0$ and $\gamma \in (0,1)$ such that
\begin{equation}\label{SFTDI}
\forall |s|>s_0, \quad\|\cL^n_{is}\|_{F^+_{\sqrt\beta}} \leq C \min\{\gamma^n|s|^{\alpha'}, 1\}\, .
\end{equation}
Without loss of generality we assume $\alpha'\ge 1$.
Fix $\alpha>0$ such that $q=1>\alpha\big(1+\frac{r+1}{2}\big)$.
Due to Lemmas~\ref{DIneqImpliesD} with $c=(\alpha'-1)/|\log(\gamma)|$ and $g(s)=\log |s|$, this ensures Assumptions $(C)$
with $\cB_1=F^+_{\sqrt{\beta}}$, $\cB_2=L^\infty$ and $\alpha_1=1$. 
Also $(D)[r]$ holds for $\psi\in F_\beta$ for all $r$.
When $\phi$ is $\mathbb Z$-valued, Assumptions $(A)[0](3)$ and $(B)(2)$ imply Assumption $(C)$ with $\alpha=0$ and $\alpha_1=1$. This establishes the rest of the theorem. 
\end{proof}

\subsection{An application to co-compact group action on $\bbH^2$} Let $\Gamma\subseteq PSL(2,\reals)$ be a co-compact or a convex co-comapct group acting on $\mathbb{H}^2$ via linear fractional transformations. This action can be coded using a subshift of finite type $(\Sigma^+_A, \sigma^+)$ by associating it to finite directed graph whose edges are labelled by the generators of $\Gamma$ and their inverses. Fix $y\in \bbH^2$. Then for a sequence of infinite path $\vec{x}\in \Sigma^+_A$ in the associated directed graph, there is induces an action on $\partial \mathbb{H}^2$ via the map $\pi: \Sigma^+_A \to \partial \mathbb{H}^2$ given by $\pi(\vec{x})= \lim_{n \to \infty} g_{x_{n-1}} \dots  g_{x_0} y$. 

Let $g: \Sigma \to \reals$ be a $\beta-$H\"older continuous function and $\mu_g$ be the unique Gibbs measure associated to $g$. Then there is an associated measure $\pi^*(\mu_g)$ on $\partial \mathbb{H}^2$ which is absolutely continuous (same measure class as the Patterson-Sullivan measure) when $\mu_g$ is the measure of maximal entropy. For a detailed discussion about this construction and claims we make without proofs, we refer the reader to \cite{PS}. 

Our results imply the existence of Edgeworth expansions, local and global expansions in the MLCLT for $S_n=\phi(\vec{x})+\dots+\phi(\sigma^{n-1}\vec{x})=\log|(g^{-1}_{x_{n-1}}\dots g^{-1}_{x_0})'(\pi(\vec{x}))$ where $\phi(\vec{x})=\log{|(g^{-1}_{x_{0}} )'(\pi(\vec{x}))|}-\lambda_\mu$ and $\lambda_\mu = \lim_{n \to \infty} \frac{1}{n}\textrm{d}(x,g_{x_0}\dots g_{x_{n-1}}x)$ (as a $\mu$-a.s.~limit). The associated operators $\cL_{is}$ acting on $F_\beta$ satisfy Assumptions $(A)[r], (B)$ and $(D)[r]$ for all $r$ (recall that we are in the case $(\bar{F},\bar{\Delta},\bar \nu)=(f,\cM,\mu)$). In particular, given $\delta>0$, there exist $C_\delta>0$, $\theta_\delta \in (0,1)$ and $\gamma >0$ such that for all $|t|\geq \delta$, 
$$\|\cL^n_{is}\|_{F^+_\beta} \leq C_\delta \min(\theta_\delta^n|t|^\gamma, 1).$$

As a consequence, apart from the results in \Cref{StongExpOrd1SFT} (which is a significant refinement of \cite[Corollary 1.2]{PS}, prescribing all order asymptotics), we have the following statistical results due to \cite[Section 5]{FL}.

\begin{enumerate}[leftmargin=*]
\item Moderate deviations: Then for all $c \in (0,r)$, when $1 \leq z\leq \sqrt{c \sigma^2\ln n},$
$$\lim_{n\to \infty} \frac{1-\mu \Big\{\vec{x}\in \Sigma^+_A\, \Big|\, \log|(g_{x_{n-1}}\dots g_{x_0})'(\pi(\vec{x}))|-n\lambda_\mu \leq  z \sqrt{n} \Big\}}{1-\fN(z)}=1.$$
\item Local limit theorems: Let $r \in \naturals$. Suppose $\eps_n \to 0$ and $\eps_n n^{r/2} \to \infty$ as $n \to \infty$. Then 
\begin{align*}
\frac{\sqrt{n}}{2\epsilon_n}\mu \Big\{\vec{x}\in \Sigma^+_A\, \Big|\, \log|(g_{x_{n-1}}\dots g_{x_0})'&(\pi(\vec{x}))|-n\lambda_\mu \in (u - \eps_n , u+\eps_n ) \Big\}\\ &=\frac{1}{\sqrt{2\pi \sigma^2}}e^{-\frac{u^2}{2n\sigma^2}}+\cO\big(\min(n^{-1/2},\eps_n n^{-1})\big) + o( \eps^{-1}_n n^{-r/2})
\end{align*}
as $n\to \infty$, uniformly for $u \in \reals$. Thus, we recover \cite[Theorem 1.3]{PS} but with more precise asymptotics.
\end{enumerate}

\section{Young Towers}\label{Towers}
\subsection{Context}\label{contextYoung}
In \cite{Y}, Young considered hyperbolic dynamical systems $(f,\mathcal M,\mu)$ and modeled them using towers by considering a subset $
\Lambda=\bigcup_{i\ge 0}\Lambda_i\subset \mathcal M$ with a product structure and with a return time $R:\Lambda\rightarrow\mathbb N^*$ (that is, $\widetilde F(x):=f^{R(x)}(x)\in\Lambda$ for all $x\in\Lambda$) which is constant equal to some positive integer $r_i$ on each $\Lambda_i$. Let us insist on the fact that $R$ is a priori not the first return time to $\Lambda$. 
We will recall only the properties we will use. 

Recall that the tower $(\Delta,F)$ is given by:

\begin{itemize}[leftmargin=20pt]
\item The space $\Delta$ is given by $\Delta=\bigcup_{i\ge 0}\left(\Lambda_i\times\{0,...,r_i-1\}\right)$,
\item The map $F$ is given by $F(x_0,l)=(x_0,l+1)$ and $F(x_0,r_i-1)=(f^{r_i}(x_0),0)$ if $x_0\in\Lambda_i$ and $l<r_i-1$.
\end{itemize}

For any integer $l\geq 0$, we write $\Delta_l$ for the $l$-th level of the tower $\Delta$, i.e., $\Delta_l=(\bigcup_{i:r_i>l}\Lambda_i)\times\{l\}$.
There is an SRB measure $\nu$ on $\Delta$ such that $(F,\Delta,\nu)$ is an extension by $\mathfrak p:\Delta\rightarrow\mathcal M$ of the initial system $(f,\mathcal M,\mu)$ with $\mathfrak p(x,l)=f^l(x)$. We consider also the quotient
tower $(\bar F,\bar\Delta,\bar\nu)$ obtained from the hyperbolic tower $(F,\Delta,\nu)$
by quotienting along stable curves, which is a factor of $(F,\Delta,\nu)$ by $\bar{\mathfrak p}:\Delta\rightarrow\bar\Delta$ (which is a projection along the $\gamma^s$ on $\gamma_0^u\times\mathbb N$ where $\gamma_0^u$ is some fixed unstable variety). 
We set $\bar\Delta_l:=\bar{\mathfrak p}(\Delta_l)$ for the $l$-th level of the quotient tower $\bar\Delta$.
Recall that each $\bar\Delta_l$
admits an at most numerable partition in $\{\bar\Delta_{l,j};\ j\}$ (such that $\{\bar F^{-1}(\bar\Delta_{l+1,j});\ j\}$ is finer than
$\{\bar F^{-1}(\bar\Delta_{j+1})\cap \bar\Delta_{l,j};\ j\}$).

Let us consider the separation time 
$\hat s(x,y)$ corresponding to the infimum of $n$ such that $\bar F^n(x)$
and $\bar F^n(y)$ do not belong to the same atom of the partition $\{\bar\Delta_{l,j};\ l,j\}$. 
Recall that these systems fit the general scheme of Section~\ref{sechyp} 
(in particular, Assumptions $(H0)$ and $(H1)$ therein).

The following family of complex Banach spaces of functions defined on $\bar\Delta$ is used in \cite{Y}:
$$\cB_{\beta,\eps} =\{h:\bar{\Delta} \to \complex\, |\, \|h\|_{\beta,\eps}<\infty\}\, .$$
This family is labeled by  $(\beta,\eps)\in(0,1)\times[0,\infty)$; where $\|\cdot\|_{\beta,\eps}$ is defined by $\|h\|_{\beta,\eps}=| h |_{\eps,\infty}+|h|_{\beta,\eps,{\text{Lip}}}$ with 
\begin{align}\label{defnormYoung}
|h|_{\eps,\infty}: = \sup_{l} e^{-l\epsilon}\|h|_{\bar{\Delta}_{l}}\|_{\infty}\,\,\, \text{and}\,\,\, |h|_{\beta,\eps,\text{Lip}} = \sup_{l,j} e^{-l\epsilon} \sup_{y,y'\in \bar{\Delta}_{l,j}}\frac{|h(y)-h(y')|}{\beta^{\hat s(y,y')}}\, .
\end{align}
We define the height function $\ell:\bar\Delta\rightarrow\mathbb N$ given by
\[\ell(x,l)=l\, .\]
This function will play an important role in our exposition. Observe that $\mathcal B_{\beta,0}$ corresponds to the set $\mathcal B_\beta$
of Lipschitz functions considered in Section~\ref{sechyp} and that $\mathcal B_{\beta,\eps}$ corresponds to the set of functions $h:\bar\Delta\rightarrow \mathbb C$ such that $e^{-\eps\ell}h$ is in the Lipschitz space $\mathcal B_{\beta}=\mathcal B_{\beta,0}$.

Due to \cite[Lemmas 1 and 2]{Y}, there exist $\beta_0\in (0,1)$ and a function $\bar g:\bar\Delta\rightarrow\mathbb R$ which is null outside $\bar\Delta_0$ such that~\eqref{formuleOp} (of Assumption (H1)) holds, i.e.,
\begin{equation}\label{formuleOpYoung}
\bar C_{\bar g}:=\sup_{x,y:\, \hat s(x,y)>1}\frac{|e^{\bar g(y)-\bar g(x)}-1|}{\beta_0^{\hat s(x,y)}}<\infty\quad\mbox{and}
\quad \mathcal L\psi(x)=\sum_{z\in \bar F^{-1}(x)}e^{\bar g(z)}\psi(z)\, ,
\end{equation}
with $\cL$ the transfer operator associated to
$(\bar F,\bar\Delta,\bar \nu)$ (see \cite[Lemma 2 and Section 3.2]{Y}).
\eqref{bijpreimages} of Assumption $(H1)$ also holds.

Moreover, there exists $c_1>0$ and $\vartheta_1\in (0,1)$ such that
\begin{equation}
\bar\nu(R\ge j)\le  c_1\vartheta_1^j\, .
\end{equation}
Note that the first part of \eqref{formuleOpYoung} implies that
\begin{equation}\label{regulg}
\hat s(x,y)>1\quad\Rightarrow\quad    e^{-\bar C_{\bar g}\beta_0^{\hat s(x,y)}}\leq e^{\bar g(x)-\bar g(y)}\leq  e^{\bar C_{\bar g}\beta_0^{\hat s(x,y)}}\, .
\end{equation}
Also there exists $\eps_0>0$ such that the transfer operator $\mathcal L$ of the quotient Young tower $(\bar F,\bar\Delta,\bar\nu)$ is quasicompact on $\cB_{\beta,\varepsilon}$ for every $\varepsilon\in (0,\varepsilon_0]$ and every $\beta\in [ \beta_0,1)$. 

The assumptions made by Young on $\eps_0$ can be expressed as follows in terms
of the height function $\ell$: first,
$e^{\eps_0\ell}\in L^2(\bar \nu)$, and second,
$ \bar m(e^\ell\mathbf 1_{\bar F^{-1}(\Delta_0)})\le 2\bar m (\bar\Delta_0)$
where $\bar m$ is the reference measure used by Young and which is equivalent to $\bar\nu$ with a density 
taking values in a compact subset of $(0,\infty)$.

Here, we relax these two conditions as well as Young's assumption that $R\ge N$
for some $N$ large enough
by assuming instead that
\begin{equation}\label{integrabilityeeps}
e^{\eps_0\ell(\cdot)}\in L^1(\bar \nu)\, ,
\end{equation}
which ensures that
$\mathcal B_{\beta,\eps}\hookrightarrow L^1(\bar\nu)$ for any $\eps\in[0,\eps_0]$.
Moreover, we assume from now on that $\text{gcd}(r_i)=1$, which will imply that 1 is the single dominating eigenvalue of $\mathcal L$ and that it has multiplicity 1.

Below, for $\bar\phi:\bar \Delta\rightarrow \mathbb C$, we set $\widetilde\phi:\Lambda\times\{0\}\rightarrow\mathbb C$ defined by $\widetilde\phi(x)=\sum_{k=0}^{R(x)-1}\bar\phi\,\circ\bar F^k(x)$.

\subsection{Main results}
Our goal is to study the case of possibly unbounded observables $\phi$.
We first state a result in the case of the expanding Young towers for possibly unbounded observables satisfying integrability conditions close 
to the optimal ones in the i.i.d.~setting.

The more natural unbounded observables $\bar\phi:\bar\Delta\rightarrow\mathbb R$ are the ones belonging to a Young space $\mathcal B_{\beta_1,\eps_1}$. In order to study these observables, it is natural to define the following new spaces (see the begining of Section~\ref{proofThmTower1}) that generalize the Young spaces:
$$\cB_{\beta,\eps,\eps'} :=\{h:\bar{\Delta} \to \complex\, |\, \|h\|_{\beta,\eps,\eps'}<\infty\}$$
where $\|\cdot\|_{\beta,\eps,\eps'}$ is defined by $\|h\|_{\beta,\eps}=
| h |_{\eps,\infty}+|h|_{\beta,\eps',{\text{Lip}}}$, so that $\cB_{\beta,\eps,\eps}$ coincide with
$\cB_{\beta,\eps}$.

\begin{thm}\label{EdgeExpforExpTowers}
Consider the expanding Young tower system $(\bar F,\bar\Delta,\bar\nu)$
with $\beta_0,\eps_0$ satisfying~\eqref{formuleOpYoung} and \eqref{integrabilityeeps}. Suppose this tower has exponential tails and satisfies $\emph{gcd}(r_i) =1$. 
Let $\bar\phi\in\mathcal B_{\beta_1,\eps_1,\eps'_1}$ be $\mathbb X$-valued  
and $\bar\nu$-centered with $\max(\beta_0,\beta_1)<e^{-\eps'_1},\, \eps_1,\eps'_1,\eps\ge 0,\,\gamma\in (0,1),$ $
\eps+(r+2)\eps_1+2\gamma\eps'_1<\eps_0$ and $\beta_0\leq \beta_1^\gamma\le e^{-\eps'_1}$. 
Set $\bar S_n:=\sum_{k=0}^{n-1}\bar\phi\circ\bar F^k$

\begin{itemize}[leftmargin=*]
\item If $\bbX=\reals$ and $\bar \phi$ non-arithmetic, for any probability measure $\mathbb P$ absolutely continuous with respect to $\bar\nu$ $\psi$ having density $\psi \in \mathcal B_{\beta_1^\gamma,
\eps,\eps+\gamma\eps'_1},$ then the order $1$ Edgeworth expansion for $\bar S_n$ exists.

\item  If $\bbX=\reals,$ $\eps'_1<\eps_0,$ $(\Lambda,\widetilde F,\widetilde\phi,R,\beta_1^\gamma)$ satisfies \emph{AAE}  
and $\alpha>\alpha'$ $($with $\alpha'$ as in Proposition~\ref{Lisn1-C}$)$, or if $\bbX=\integers,$  then for all
$\psi\in\mathcal B_{\beta_1^\gamma,
\eps,\eps+\gamma\eps'_1}$ and for all $\xi\in L^{\frac{\eps_0}{\eps_0-\eps-(r+2)\eps_1}+\eta}(\bar\nu)$ $($for some $\eta>0)$ and 
for all $g \in \fF^{q+2}_{0}$ with $q>(2\alpha+1)\big(1+\frac{r}{2}\big)$,
both local and global expansions in the \emph{MLCLT} of every order exist.
\end{itemize}
\end{thm}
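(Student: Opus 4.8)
The plan is to reduce the statement to the abstract \Cref{WeakGlobalExp} and \Cref{WeakLocalExp} by verifying, for $(f,\cM,\mu)=(F,\Delta,\nu)=(\bar F,\bar\Delta,\bar\nu)$, $\psi_n=\psi$ and $\xi_n=\xi\circ\bar F^n$, the Assumptions $(0)$, $(A)[r]$, $(B)$ and $(C)$; \Cref{lemmaABCgreek} and \Cref{lemmaABCgreek1} then yield $(\alpha)[r]$, $(\beta)$ and $(\gamma)$, the last with the parameter $\alpha$ of $(\gamma)$ equal to $1+2\alpha$ and $\alpha_1=1$. Since the ambient system coincides with its own factor, we are in the degenerate case of the hypotheses in which $\mathfrak p=\bar{\mathfrak p}=\mathrm{id}$, $\chi\equiv0$ and $k=\vartheta^k=0$; in particular $\bar h_{0,s,H}=H$ for $H\in\{\psi,\xi\}$, so $(A)(1)$ and all of $(A)(2)$ except the $j=0$ instance of \eqref{hk2} are vacuous. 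Assumption $(0)$ holds because $\bar\phi$ is $\bar\nu$-centered and, being non-arithmetic (resp.\ non-degenerate in the lattice case), is not an $L^2(\bar\nu)$-coboundary. When $\bbX=\integers$, Assumption $(C)$ holds in its trivial alternative and one works throughout with $\fF^0_0$ and $\fF^0_{r+1}$.

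First I would fix the chain of Banach spaces. Using the generalized Young spaces $\cB_{\beta,\eps,\eps'}$ introduced before \Cref{EdgeExpforExpTowers}, set $\cX_a:=\cB_{\beta_1^\gamma,\ \eps+a\eps_1,\ \eps+a\eps_1+\gamma\eps'_1}$ for $a=0,\dots,r+2$, together with intermediate spaces $\cX_a^{(+)}$ obtained by slightly raising the weights, so that the nested embeddings of $(A)[r]$ hold. The hypotheses $\beta_0\le\beta_1^\gamma\le e^{-\eps'_1}$ and $\max(\beta_0,\beta_1)<e^{-\eps'_1}$ guarantee compatibility of the metric $\beta_1^\gamma$ with the Lipschitz data of $\bar\phi$ and with \eqref{formuleOpYoung}, while $\eps+(r+2)\eps_1+2\gamma\eps'_1<\eps_0$ together with \eqref{integrabilityeeps} gives $\cX_{r+2}^{(+)}\hookrightarrow L^{p_0}(\bar\nu)$ for some $p_0>1$. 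With $k=0$, $(A)(2)$ reduces to $\|\psi\|_{\cX_0}\le C_0$, true by the hypothesis on $\psi$, and $\|\xi\|_{(\cX_{r+2}^{(+)})'}\le C_0$, which follows from H\"older's inequality and $\xi\in L^{\eps_0/(\eps_0-\eps-(r+2)\eps_1)+\eta}(\bar\nu)$ (the $j\ge1$ bounds being trivial since $\bar h_{0,s,H}$ is $s$-independent). Assumption $(A)(3)$ — boundedness, continuity and $C^j$-smoothness of $s\mapsto\cL_{is}$ between consecutive spaces of the chain, with iterated derivative $(\cL_{is}^n)^{(j)}=\cL_{is}^n((i\bar S_n)^j\,\cdot\,)$ — follows from the standard Young-space estimates for multiplication by $e^{is\bar\phi}$ and by $\bar\phi$ (the latter mapping $\cX_a^{(+)}$ into $\cX_{a+1}$), i.e.\ from the chain-of-spaces version of \Cref{lemA_B}. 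Finally, $(A)(4)$ is the uniform-in-$\{|s|<\delta\}$ Doeblin--Fortet inequality for $\cL_{is}$ on each $\cX_a,\cX_a^{(+)}$, obtained exactly as the $s=0$ estimate of \cite{Y} since $|e^{is\bar\phi}|\equiv1$.

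Next I would dispatch $(B)$: by \cite{Y}, $\cL$ is quasi-compact with a spectral gap on each $\cX_a,\cX_a^{(+)}$; with $\gcd(r_i)=1$ and mixing this gives $(B)(1)$; $(B)(2)$ follows from \Cref{periphericalspect}, using exactness, the form \eqref{formuleOpYoung} of $\cL$ and the non-arithmeticity of $\bar\phi$; and $(B)(3)$ is automatic since $\bar\phi\in\cX_1\hookrightarrow L^2(\bar\nu)$ (here $2\eps_1\le\eps_0$) and $(A)(3)$ with $(B)(1)$ force $\|\cL^n\bar\phi\|_{L^2}$ to decay exponentially. For $(C)$ with $\bbX=\reals$, I would pass to the induced system: $(\bar F,\bar\Delta,\bar\nu)$ is a tower over the Gibbs--Markov map $(\widetilde F,\Lambda\times\{0\},\bar\nu_Y)$ with return time $R$ and induced observable $\widetilde\phi$. \Cref{Lisn1-C} converts the \emph{AAE} hypothesis on $(\Lambda,\widetilde F,\widetilde\phi,R,\beta_1^\gamma)$ into $\|\widetilde\cL_{is,iu}^{\lceil\beta'\log|s|\rceil}\|_{(s)}\le1-C'|s|^{-\alpha'}$ and $\|\widetilde\cL_{is,iu}^n\|_{(s)}\le C''$, whence \Cref{DIneqImpliesD} yields \eqref{UNIbase} for any $\alpha>\alpha'$, $\alpha_1=1$ and the above $p_0$. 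Feeding \eqref{UNIbase}, the exponential tails $\bar\nu_Y(R\ge j)\le c_1\vartheta_1^j$, the integrability $\|e^{\eps\ell}\|_{L^{p_0}(\bar\nu)}<\infty$ and $\mathbb{E}_{\bar\nu}(e^{(\eps+\gamma\eps'_1)\ell})<\infty$ (from \eqref{integrabilityeeps}), and the bound \eqref{hypophibasetotower} for $\bar\phi\in\cB_{\beta_1,\eps_1,\eps'_1}$ (which holds because $\eps+\gamma\eps'_1<\eps_0$) into \Cref{BaseToTower} gives $\|\cL_{is}^n\|_{\cX_0\to L^{p_1}(\bar\nu)}\lesssim|s|^{1+2\alpha}e^{-\alpha_0 n/(2|s|^\alpha)}$, i.e.\ $(C)(1)$ with $\cB_1=\cX_0$, $\cB_2=L^{p_1}(\bar\nu)$, $\alpha_1=1$ and $\alpha$ replaced by $1+2\alpha$; and $(C)(2)$ holds since $\|\cL_{is}^0\psi\|_{\cX_0}=\|\psi\|_{\cX_0}\le C_0'|s|$ for $|s|>1$ and $\|\xi\|_{(L^{p_1})'}<\infty$ by the integrability of $\xi$, choosing $p_1<p_0$ close enough to $p_0$.

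With $(\alpha)[r]$, $(\beta)$ and $(\gamma)$ in hand, \Cref{WeakGlobalExp} gives the global order-$r$ expansion for $g\in\fF^{q+2}_0$ with $q>(2\alpha+1)(1+\tfrac r2)$, and \Cref{WeakLocalExp} the local order-$(r+1)$ expansion for $g\in\fF^{q+2}_{r+1}$ in the corresponding range; since $r\ge0$ is arbitrary, both hold to every order. The main obstacle is the construction and verification of the chain of weighted Young spaces for the unbounded observable $\bar\phi$: one must choose the exponents $\beta_1^\gamma,\eps,\eps_1,\eps'_1,\gamma$ so that all embeddings of $(A)[r]$ hold, the top space still embeds in some $L^{p_0}$ with $p_0>1$, the moments $\|\bar S_n\|_{L^{r_0+2}}=O(n)$ and their powers are available for a suitable $r_0>r$, and simultaneously \eqref{hypophibasetotower} holds for $\widetilde\phi$ — this last point requiring the bookkeeping that transfers the Lipschitz data of $\bar\phi$ on the tower levels to that of $\widetilde\phi$ on the base partition, and is precisely where the numerology $\eps+(r+2)\eps_1+2\gamma\eps'_1<\eps_0$ is consumed. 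The uniform-in-$s$ estimate $(A)(4)$ and the smoothness $(A)(3)$ for an unbounded twisting function are the remaining technical points, routine once the chain is in place.
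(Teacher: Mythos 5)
Your proposal follows the paper's proof essentially step by step: take $\mathfrak p=\bar{\mathfrak p}=\mathrm{id}$, $\chi\equiv0$, $k=0$, build the chain of three-parameter Young spaces $\cX_a=\cB_{\beta_1^\gamma,\,\eps+a\eps_1,\,\eps+a\eps_1+\gamma\eps'_1}$ (the paper uses the shifted indices $a_k,b_k$ of Proposition~\ref{propCond3'} to interleave $\cX_a$ and $\cX_a^{(+)}$), verify $(A)(2)$ through $\psi\in\cX_0$, $\xi\in(\cX_{r+2}^{(+)})'$, get $(A)(3,4)$ and $(B)$ from Proposition~\ref{propCond3'} (equivalently, Lemmas~\ref{DFHP}, \ref{compactinclusion}, \ref{contlinYoung}, \ref{periphericalspect}), and get $(C)$ from Proposition~\ref{Lisn1-C}, Lemma~\ref{DIneqImpliesD} and Lemma~\ref{BaseToTower}. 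Two points to flag.

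First, your description of $(A)(4)$ as ``obtained exactly as the $s=0$ estimate of \cite{Y} since $|e^{is\bar\phi}|\equiv1$'' undersells the central technical obstacle of the unbounded case. The modulus-one observation controls only $|\cdot|_{\eps,\infty}$; the Lipschitz seminorm picks up the factor $|e^{is\bar\phi(x)}-e^{is\bar\phi(y)}|\le 2(|s|\,|\bar\phi|_{\beta_1,\eps'_1,\mathrm{Lip}}\,\beta_1^{\hat s(x,y)}e^{l\eps'_1})^\gamma$, which is precisely why $\cB_{\beta,\eps,\eps'}$ with $\eps'>\eps$ had to be introduced and why the uniform-in-$s$ Doeblin--Fortet inequality (Lemma~\ref{DFHP}) is a genuine new estimate: $N$ must be taken large enough so that the contraction rates $\beta_1^{\gamma N}$ and $e^{-\eps N}$ dominate the extra $(1+|T|)^\gamma$ factor for $s\in[-T,T]$, and the $L^{\eps_0/(\eps_0-\gamma\eps'_1)}$ weak norm must still absorb the weights. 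You implicitly use this when you set up the chain, but you would still have to prove it.

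Second, your final paragraph derives only the MLCLT conclusions from Theorems~\ref{WeakGlobalExp} and~\ref{WeakLocalExp}; the first bullet of the statement (the order-$1$ Edgeworth expansion under $\bar\phi$ non-arithmetic, with no AAE hypothesis) is a different conclusion and comes from Corollary~\ref{StongExpOrd1}, which requires only $(\alpha)[1]$ and $(\beta)$ — both of which you have once $(A)[1]$ and $(B)$ are verified, with no Dolgopyat-type input. This is worth one extra sentence to close the argument.
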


\begin{rem}\label{ClosetoIID}
Observe that the condition on $\bar\phi$ of Theorem~\ref{EdgeExpforExpTowers} is very close to the optimal moment condition in the i.i.d.~setting $($existence of a moment of order $r+2)$.

First, since $\eps_0$ satisfies $e^{\eps_0\ell(\cdot)}\in L^1(\bar\nu)$, 
$\mathcal B_{\beta_1,\eps_1,\eps'_1}\subset L^{\frac{\eps_0}{\eps_1}}(\bar\nu)$.
The condition $ (r+2)\eps_1+2\gamma\eps'_1< \eps_0$ implies that $\eps_0/\eps_1>(r+2)+2\gamma\eps'_1/\eps_1$.

Second, suppose $e^\ell$ admits a critical integrability order $k_0,$ i.e.$,$ $e^{\ell}$ admits moment of every order strictly smaller than $k_0$ and no moment of order strictly larger than $k_0$. Such a $k_0$ exists for example if $e^{-bm}<\bar\nu_{\Lambda}(R>m)<e^{-am}$ for some positive $a,b$. Then if
$(r+2)\eps_1+2\gamma\eps'_1<k_0<(r+2+\theta)\eps_1$ $($we can take for example $\eps'_1=0)$, the space $\mathcal B_{\beta_1,\eps_1,\eps'_1}$ is contained in $L^{r+2}(\bar\nu)$ but not in $L^{r+2+\theta}(\bar\nu)$.
\end{rem}

Now, we state an analogous result for the initial map $(f,\mathcal M,\mu)$. Recall that, by the Young tower construction, a function $\phi:\mathcal M \to \reals$ being H\"older continuous, translates to $\bar\phi \in \cB_{\sqrt{\beta},0}$ and  $\phi\,\circ\,\mathfrak p \in \mathcal B^{(0)}_\beta$ 
for some $\beta \in (0,1)$ (see \eqref{tildeB}).
In the next result, we allow $\phi\circ\mathfrak p$ to be
in the set $\mathcal V^{(0)}_{\beta,\eps_1}$
of possibly unbounded functions $h:\Delta\rightarrow\mathbb C$ such that 
$he^{-\ell\eps_1}$ belongs to the space $\mathcal B^{(0)}_\beta$ defined by \eqref{tildeB}.
As we will see it later, this holds true for example when $(f,\mathcal M,\mu)$ is the Sinai billiard and $\phi(x)=h(x)d(x,S_0)^{-\alpha}$
for small $\alpha$, where $S_0$ is the set of unit vectors tangent to the boundary of the billiard domain and where $h:\mathcal M\rightarrow\mathbb R$ is Lipschitz continuous. \vspace{10pt}

\begin{thm}\label{EdgeExpforTowers}
Suppose $(f,\cM,\mu)$ is a nonuniformly hyperbolic map 
modeled as in \cite{Y} by
a Young tower with exponential tails, $\emph{gcd}(r_i) =1$. Let 
$\phi,\,\psi,\,\xi : \mathcal M\rightarrow\mathbb R$ with $\phi:\mathcal M\rightarrow\mathbb X$ non-arithmetic  and $\mu$-centered. Set $S_n:=\sum_{k=0}^{n-1}\phi\circ f^k$. Assume
\begin{itemize}[leftmargin=20pt]
\item either that $\phi,\psi,\xi$ are H\"older$,$
\item or that there exist $\beta,\gamma\in (0,1)$ and some 
$\eps_1\ge 0$ such that
$\max(\beta_0,
\sqrt{\beta })<e^{-\eps_1}$, $
(r+2+2\gamma)\eps_1<\eps_0$ and $\beta_0\leq 
\beta^{\frac \gamma 2}
\le
e^{-\eps_1}
$ such that
$$\phi\,\circ\,\mathfrak p\in \mathcal V_{\beta,\eps_1}^{(0)},\,\,\, \psi\,\circ\,\mathfrak p\in\mathcal V^{(0)}_{
\beta^{\frac\gamma 2},\eps_3},\,\,\, 
\xi\circ\mathfrak p\in\mathcal V^{(0)}_{
\beta^{\frac\gamma 2},\eps_2}\, ,$$ 
where $\eps_2+\eps_3+(r+2+2\gamma)\eps_1< \eps_0$.
\end{itemize}
Then$,$ for any probability measure $\mathbb P$ absolutely continuous with respect to $\bar\nu$ $\psi$ having density $\psi$, the order $1$ Edgeworth expansion for $S_n$ exists.

If, in addition, Assumption $(C)$ holds true $($this is true with $\alpha_1=1,$ for example$,$ if $\mathbb X=\mathbb Z$ $($while dropping the assumption of $\bar \phi$ non-arithmaticity$),$ or if $(\Lambda,\widetilde F,\widetilde\phi,R,\beta^{\frac\gamma 2})$ is \emph{AAE} and $\alpha>\alpha'$ with $\alpha'$ as in Proposition~\ref{Lisn1-C}$),$ then 
for all $g \in \fF^{q+2}_{0}$ where $q>(2\alpha+1)\big(1+\frac{r}{2\alpha_1}\big)$ both local and global expansions in the \emph{MLCLT} of every order exist.
\end{thm}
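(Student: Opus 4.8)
\textbf{Proof proposal for Theorem~\ref{EdgeExpforTowers}.}

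The plan is to reduce everything to the abstract Theorems~\ref{WeakGlobalExp}, \ref{WeakLocalExp} and Corollary~\ref{StongExpOrd1} by verifying Assumptions $(0)$, $(A)[r]$, $(B)$, $(C)$ and (for the first-order Edgeworth statement) $(\delta)[1]$, with the dynamical systems in Figure~\ref{setup} being $(F,\Delta,\nu)$ the hyperbolic Young tower, $(\bar F,\bar\Delta,\bar\nu)$ its quotient along stable leaves, and $(f,\cM,\mu)$ the original map; then invoking Propositions~\ref{propHP}, \ref{lemmaABCgreek} and \ref{lemmaABCgreek1} to pass from $(A)$--$(D)$ to $(\alpha)$--$(\delta)$. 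The Banach space chain will be taken to be a chain of the generalized Young spaces $\cB_{\beta^{\gamma/2},\eps,\eps'}$ introduced just before Theorem~\ref{EdgeExpforExpTowers}; concretely I would set $\cX_a=\cX_a^{(+)}$ for the Hölder case (so the chain is constant and one uses the bounded-linear-operator perturbation theory of Proposition~\ref{propHP}), while in the unbounded case the chain is $\cX_a=\cB_{\beta^{\gamma/2},\,\eps^{(a)},\,\eps^{(a)}_{\mathrm{Lip}}}$ with the exponents $\eps^{(a)}$ increasing in $a$ by increments of order $\eps_1$, chosen so that the top space still embeds in $L^{p_0}(\bar\nu)$ by virtue of \eqref{integrabilityeeps} and the hypothesis $\eps_2+\eps_3+(r+2+2\gamma)\eps_1<\eps_0$; this is exactly the budget that makes the moment/integrability bookkeeping work.

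The verification proceeds step by step. \emph{Step 1 (Assumptions $(A)(1),(2)$):} since $(f,\cM,\mu)$ is modeled by a Young tower, Assumptions $(H0)$ and $(H1)$ of Section~\ref{verif} hold, and $\phi\circ\mathfrak p\in\mathcal V^{(0)}_{\beta,\eps_1}$, $\psi\circ\mathfrak p,\xi\circ\mathfrak p\in\mathcal V^{(0)}_{\beta^{\gamma/2},\eps_i}$ are precisely the weighted-Lipschitz classes to which the argument of Lemma~\ref{LEM0} applies verbatim after multiplying through by $e^{-\eps_1\ell}$, $e^{-\eps_i\ell}$; this yields the coboundary decomposition $\phi\circ\mathfrak p=\bar\phi\circ\bar{\mathfrak p}+\chi-\chi\circ F$ with $\bar\phi\in\cB_{\beta^{\gamma/2},\eps_1,\eps_1}$ and the estimates \eqref{hk0}--\eqref{hk2} with $\vartheta=\beta^{\gamma/2}$, $q(\psi),q(\xi)$ governed by $\eps_3,\eps_2$ and $r_0-r$ absorbing the extra $2\gamma\eps_1$ margin. \emph{Step 2 (Assumptions $(A)(3),(4)$ and $(B)$):} the quasicompactness of $\cL$ on each $\cB_{\beta^{\gamma/2},\eps}$ with $\eps\le\eps_0$ is Young's theorem (as recalled in Section~\ref{contextYoung}); $\mathrm{gcd}(r_i)=1$ gives exactness, hence simplicity of the eigenvalue $1$ and absence of other peripheral eigenvalues, so $(B)(1)$ holds; $(B)(3)$ follows since $\cX_1\hookrightarrow L^2(\bar\nu)$; analyticity of $s\mapsto\cL_{is}$ on the chain (hence $(A)(3)$) comes from Lemma~\ref{lemA_B}, and the uniform Doeblin--Fortet estimate $(A)(4)$ across the chain comes from the Keller--Liverani setup described after Proposition~\ref{propHP}. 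For $(B)(2)$ I would invoke Lemma~\ref{periphericalspect}: the non-arithmeticity of $\phi$ rules out solutions of \eqref{LatticeVal}, so $\cL_{is}$ has spectral radius $<1$ on $\cX_0$ for every $s\in\mathbb X^*\setminus\{0\}$. Together with Proposition~\ref{lemmaABCgreek1} this gives $(\beta)$. \emph{Step 3 (Assumption $(C)$, hence $(\gamma)$):} in the Hölder case with $\mathbb X=\mathbb R$ this is handed in as a hypothesis ($\alpha_1=1$; the AAE route of Lemma~\ref{BaseToTower} being the canonical way to get it, since the quotient Young tower is a tower over a Gibbs--Markov map, with $\widetilde\phi$ satisfying the integrability condition \eqref{lipint}/\eqref{hypophibasetotower} because $\bar\phi\in\cB_{\beta^{\gamma/2},\eps_1,\eps_1}$ and $e^{\eps_0\ell}\in L^1$); for $\mathbb X=\mathbb Z$, $(C)$ is automatic and we drop the non-arithmeticity hypothesis. \emph{Step 4 (assembling the conclusions):} Proposition~\ref{lemmaABCgreek} now yields $(\alpha)[r]$ and $(\gamma)$; the constraint $q>(2\alpha+1)(1+\frac{r}{2\alpha_1})$ is exactly what Theorems~\ref{WeakGlobalExp}, \ref{WeakLocalExp} demand once $\alpha$ is replaced by the $1+2\alpha$ coming from Lemma~\ref{BaseToTower} (the "$+1$" being the $|s|$ factor in \eqref{hk2var}), so both the global and local MLCLT expansions of every order follow; and for the first-order Edgeworth statement one checks $(\delta)[1]$ trivially (as in the proof of Corollary~\ref{StongExpOrd1}, $(\delta)[1]$ holds with $K=B$), so Corollary~\ref{StongExpOrd1} applies with $\psi_n=\psi$, $\xi_n\equiv1$.

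The main obstacle I anticipate is Step 1 together with the exponent bookkeeping that threads through all steps: one must choose the increments of the chain $\{\cX_a\}$ so that (i) the weighted-Lipschitz estimates of Lemma~\ref{LEM0}-type survive when $\bar\phi$, $\psi$, $\xi$ are genuinely unbounded — which forces the $j$-th derivative in $s$ to cost a factor $(1+k)^j$ living in a space one notch up the chain, consuming $\eps_1$ per notch over $r+2$ notches — while (ii) the extra $2\gamma\eps'_1$ (equivalently $2\gamma\eps_1$) is spent to give $r_0-r\ge q(\psi)+q(\xi)$ room for the unbounded test functions, and (iii) the top space remains inside $L^{p_0}(\bar\nu)$, i.e. below $\eps_0$. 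Verifying that the single inequality $\eps_2+\eps_3+(r+2+2\gamma)\eps_1<\eps_0$ simultaneously closes all three requirements, and that the tower-over-Gibbs--Markov hypotheses of Lemma~\ref{BaseToTower} (in particular \eqref{hypophibasetotower} with the weight $\varsigma(k)=e^{k\eps}$) hold under these same exponent constraints, is the delicate part; everything else is an application of the machinery already set up in Part~\ref{part1} and Sections~\ref{sechyp}--\ref{C_D}.
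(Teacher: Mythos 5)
Your overall plan matches the paper's: verify Assumptions $(0)$, $(A)$--$(D)$ using a chain of generalized Young spaces, then invoke Propositions~\ref{propHP}, \ref{lemmaABCgreek}, \ref{lemmaABCgreek1} and Theorems~\ref{WeakGlobalExp}, \ref{WeakLocalExp}, and Corollary~\ref{StongExpOrd1}; your treatment of $(A)(1)$, $(B)$, $(C)$ via AAE and Lemma~\ref{BaseToTower}, the $K=B$ trick for $(\delta)[1]$, and the origin of the exponent $1+2\alpha$ in the condition on $q$ are all correct and the same as the paper's.

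There is, however, a genuine gap in your Step~2 for the unbounded case. You invoke Lemma~\ref{lemA_B} to get $(A)(3)$, but that lemma explicitly requires that multiplication by $\bar\phi$ be a bounded operator on a \emph{single} space $\cB$ and that $s\mapsto e^{is\bar\phi}\times\cdot$ be $C^1$ from $\reals$ to $\cL(\cB)$. When $\bar\phi\in\cB_{\beta_1,\eps_1,\eps'_1}$ with $\eps_1>0$ these hypotheses fail on every member of your chain: as the paper emphasizes at the start of Section~\ref{proofThmTower1}, $s\mapsto\cL_{is}$ is not even continuous from $\reals$ to $\cL(\cB_{\beta_1^\gamma,\eps,\eps+\gamma\eps'_1})$. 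What is needed is the chain version — $C^j$ smoothness from $\cX_a^{(+)}$ to $\cX_{a+j}$ with a strict loss of $j$ notches along the chain — and that is Lemma~\ref{contlinYoung}, packaged together with the Doeblin--Fortet estimate and compact embedding in Proposition~\ref{propCond3'}. Similarly, for $(A)(2)$ you say Lemma~\ref{LEM0} "applies verbatim after multiplying through by $e^{-\eps_1\ell}$". Multiplying by $e^{-\eps_1\ell}$ does give $(A)(1)$, but the estimates \eqref{hk0}--\eqref{hk2} for the $s$-derivatives $\bar h^{(j)}_{k,s,H}$ are not obtained by rescaling Lemma~\ref{LEM0}: each $s$-derivative brings out a factor of $\chi\circ F^k - \bar S_k$ which is unbounded and costs a factor $e^{\eps_1\ell}$, and the resulting weighted Lipschitz bounds and the placement of $(\cL_{is}^{2k}\bar h_{k,s,\psi})^{(j)}$ in $\cX_j$ require the dedicated exponent bookkeeping of Lemma~\ref{CondA2Young} — with the observation, absent from your sketch, that the coboundary decomposition actually lands $\bar\phi$ in the \emph{three-parameter} space $\cB_{\sqrt\beta,\eps_1,\eps_1}$, not in $\cB_{\sqrt\beta,\eps_1}$. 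You flag the bookkeeping as the delicate point, but without Lemma~\ref{contlinYoung} and Lemma~\ref{CondA2Young} (or something playing their role) the argument does not close.
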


\subsection{An application to Sinai-Billiards}
For completness, we provide an illustration of the above result in the context of the Sinai billiard with finite horizon. Consider a finite family $O_1,...,O_I$ of $I$ open convex sets in the torus $\mathbb T^2=\mathbb R^2/\mathbb Z^2$, with pairwise disjoint closures and with boundary $C^3$-smooth, with non null curvature.
The $O_i$'s are called obstacles.
We consider the billiard domain given by $Q:=\mathbb T^2\setminus\bigcup_{i=1}^IO_i$. We assume the horizon of the billiard to be finite, which means that the projection on $\mathbb T^2$ of every line in $\mathbb R^2$ intersects at least one obstacle $O_i$.
The space $\mathcal M$ of configurations of the billard system is the set of $(q,\vec v)\in Q\times S^1$ where $\vec v$ is a post-collisional vector, i.e. is a vector pointing inward into $Q$, i.e. $\vec v$ is such that $\langle \vec n_q,\vec v\rangle\ge 0$ where
$\vec n_q$ is the unit vector normal to $\partial Q$ at $q$ directed inward into $Q$.
The map $f:\mathcal M\rightarrow\mathcal M$ maps a post-collisional vector $(q,\vec v)\in\mathcal M$ to the post-collisional vector at the first collision time $s>0$ such that $(q+s\vec v)\in\partial Q$. This maps preserves the probability measure $\mu$ absolutely continuous with respect to the Lebesgue measure, with density $(q,\vec v)\mapsto \frac{\langle \vec n_q,\vec v\rangle}{2|\partial Q|}$.

Let us write $S_0$ for the set of $(q,\vec v)\in \mathcal M$ corresponding to 
vectors tangent to $\partial Q$, i.e. $S_0=\{(q,\vec v)\in\mathcal M\, :\, \langle \vec n_q,\vec v\rangle=0\}$. Observe that $f$ is discontinuous at points of $f^{-1}(S_0)$.
The presence of these discontinuities and the fact that the differential explodes at these points complicate seriously the study of this system. Nevertheless
$f^n$ is $C^1$ from $\mathcal M\setminus \bigcup_{k=0}^nf^{-k}(S_0)$ to  $\mathcal M\setminus \bigcup_{k=0}^nf^{k}(S_0)$. 

The ergodicity of the Sinai billiard $(f,\mathcal M,\mu)$ has been proved by Sinai in \cite{Sin70}.
Since this seminal work, further stochastic properties of this system have been studied. Central limit theorems have been proved in \cite{BS81,BCS91}. Exponential rate of mixing for H\"older observables has been proved in \cite{Y}. Using the tower constructed by Young in \cite{Y} to model the Sinai billiard, Sz\'asz and Varj\'u established the local limit theorem in \cite{SV}. We refer to \cite{ChernovMarkarian}
for a general reference on billiard systems.
\begin{exmp}
Assume $(f,\mathcal M,\mu)$ is the Sinai billiard system with finite horizon as described above.
Let $\phi:\mathcal M\rightarrow \mathbb R$ $\mu$-centered and non-arithmetic be such that
\begin{equation}\label{hypphibill}
|\phi(x)|\le C(d(x,S_0))^{-\alpha}\ \mbox{and}\ 
     |\phi(x)-\phi(y)|\le Cd(x,y)\max(d(x,S_0)^{-1-\alpha},d(y,S_0)^{-1-\alpha}).
\end{equation}
For any integer $r\ge 0$, if $\alpha$  is small enough, then the hypotheses of Theorem~\ref{EdgeExpforTowers} $($except maybe Assumption $(C))$ hold true for some parameters $\beta,\gamma,\eps_1$ and so the corresponding conclusions.
\end{exmp}
Observe that, if $h$ is Lipschitz continuous, then $\phi(\cdot)=(d(\cdot,S_0))^{-\alpha}h(\cdot)$ satisfies \eqref{hypphibill}.
\begin{proof}
Recall, from \cite{Y}, that, for every $x,y\in\Delta$, 
$\hat s_0(x,y)\le s_1(\mathfrak p(x),\mathfrak p(y))$ where $\hat s_0$ has been defined in \eqref{hats0} and 
where $s_1(x,y)$ is the infimum of the integers $n\ge 0$ such that $x$ and $y$ do not lie in the same connected component of $\mathcal M\setminus\bigcup_{k=-n}^{n}f^{-k}(S_0)$. 
By hyperbolicity of $f$ (see for example \cite{Sin70,ChernovMarkarian}), there exist $\widetilde c_0>0$ and $\widetilde \vartheta_0\in (0,1)$ such that, for all $x,y\in
\mathcal M$,
$d(x,y)\le  \widetilde c_0\widetilde\vartheta_0^{s_1(x,y)}$.
Finally, we recall that there exists $\widetilde\vartheta$ such that for every $\widetilde\vartheta_1\in (1,\widetilde\vartheta]$, Young constructed in \cite[Section 8.4]{Y} a tower 
$(F,\Delta,\nu)$ with base $\Lambda\subset\cM$ 
on which 
$d(f^{\pm
n}(\cdot),S_0)\ge\widetilde c_1\widetilde\vartheta_1^{-n}$ for some $\widetilde c_1>0$.

Now, let us study $\phi\circ\mathfrak p$. Observe first that for any $(x,l)\in\Delta$,
\[
\phi(\mathfrak p(x,l))=\phi (f^lx)\le C(d(f^l(x),S_0))^{-\alpha}\le
C\left(\widetilde c_1\widetilde\vartheta_1^{-l}\right)^{-\alpha}\le 
C\widetilde c_1^{-\alpha}\widetilde\vartheta_1^{l\alpha}\, .
\]
Moreover, for every $u\in(0,1]$ and $x,y\in\Delta$ such that $\hat s_0(x,y)\ge 1$,
\begin{align*}
    |\phi(\mathfrak p(x))-\phi(\mathfrak p(y))|&\le C\min\left(2 \max_{z\in\{x,y\}}d(\mathfrak p(z),S_0))^{-\alpha},d(\mathfrak p(x),\mathfrak p(y))\max_{z\in\{x,y\}}d(\mathfrak p(z),S_0)^{-1-\alpha}\right)\\
&\le  C 2^{1-u} \max_{z\in\{x,y\}}d(\mathfrak p(z),S_0))^{-u-\alpha} d(\mathfrak p(x),\mathfrak p(y))^{u}\\
&\le  C 2^{1-u} \left(\widetilde c_1\widetilde\vartheta_1^{-\ell(x)}\right)^{-u-\alpha} \widetilde c_0^u\widetilde\vartheta_0^{u\, s_1(\mathfrak p(x),\mathfrak p(y))}\\
&\le C 2^{1-u} \left(\widetilde c_1\widetilde\vartheta_1^{-\ell(x)}\right)^{-u-\alpha} \widetilde c_0^u\widetilde\vartheta_0^{u\hat s_0(x,y)}\, .
\end{align*}
Thus the function $\phi\circ\mathfrak p$ is in the space $\mathcal V^{(0)}_{
\beta,
\eps_1}$ with 
$
\beta=\widetilde\vartheta_0^u$ and $e^{
\eps_1}=\widetilde\vartheta_1^{u+\alpha}$.

It remains to prove that, if $\alpha$ is small enough, the conditions on $\beta,\eps_1,\gamma$ of Theorem~\ref{EdgeExpforTowers}  are satisfied for a good choice of tower and an appropriate adjustment of parameters. 
Let us recall the dependences between $\widetilde\vartheta_0,\widetilde\vartheta_1,\eps_0,\beta_0$: $\widetilde\vartheta_0$ and $\beta_0$ are related to the billiard system $(f,\mathcal M,\mu)$, $\widetilde\vartheta_1>1$ can be taken as close to 1 as we wish; the tower $(F,\Delta,\nu)$ and so $\eps_0$ depend on the choice of $\widetilde\vartheta_1$.
Let $\gamma\in(0,1]$ small. 
We fix $\widetilde\vartheta_1\in(1,\widetilde\vartheta]$ so that $\widetilde\vartheta_0^{\frac\gamma 2}\widetilde\vartheta_1^{r+2+\gamma}<1$ (which implies in particular that
$\widetilde\vartheta_0\widetilde\vartheta_1^2<1$).
Next choose $u\in(0,1]$ small enough so that $\beta_0\le\sqrt{\beta}=\widetilde\vartheta_0^{\frac u2}$ and $e^{\eps_0}> \widetilde\vartheta_1^{(r+2+2\gamma)u}$ (this is true for example if $e^{\eps_0}> \widetilde\vartheta_0^{-\frac{(r+2+2\gamma)u}2}$). 
Assume $\alpha>0$ is small enough so that
\[
\beta^{\frac\gamma 2}e^{\eps_1(r+2+\gamma)}=\sqrt{\widetilde\vartheta_0^{\gamma u}\widetilde\vartheta_1^{2(u+\alpha)(r+2+\gamma)}}<1
\, ,
\]
\[
\beta^{\frac\gamma 2}=\widetilde\vartheta_0^{\frac {u\gamma}2}<\widetilde\vartheta_1^{-(u+\alpha)}=e^{-\eps_1}
\, ,
\]
\[
e^{\eps_0}> \widetilde\vartheta_1^{(r+2+2\gamma)(u+\alpha)}\quad\mbox{so that }
(r+2+2\gamma)\eps_1<\eps_0\, .
\]
This is possible by continuity since all these inequalities holds true for $\alpha=0$.
\end{proof}

\subsection{Proof of Theorem~\ref{EdgeExpforTowers} when $\bar\phi,\xi,\psi$ are H\"older}
We will check our Assumptions of Section~\ref{se:assum} with $p_0=\infty$ and
$\cX_a=\cX_a^{(+)}=\mathcal B_{\beta^{\frac\gamma2},\eps}$ (for some $\gamma\in(0,1)$ and $\eps<\eps_0$) and $\mathcal B_1=\mathcal B_{\sqrt{\beta},0}$
and $\cB_2=L^{p_1}(\bar\nu)$. Assumptions $(H0)$ and $(H1)$ of Section~\ref{sechyp} are satisfied (see \cite{Y} for details). Thus, due to Lemma~\ref{LEM0}, Assumption $(A)[r](1,2)$ holds true for every $r\ge 0$.

The fact that the $\cL_{is}$ have essential spectral radius strictly smaller than 1
follows from Proposition~\ref{DF} combined with Lemmas~\ref{DFHP} and~\ref{compactinclusion} applied with $\eps_1=\eps'_1=0$. Along with exactness and Lemma~\ref{periphericalspect}, this implies that Assumption $(B)$ holds true.
The fact that Assumption $A[r](3)$ holds true for any $r\ge 0$ is due to Lemma~\ref{lemA_B} combined with $\bar\phi\in\mathcal B_{\beta^{\frac\gamma 2},0}$ and from the following fact
\[
\forall h\in
\mathcal B_{\beta^{\frac\gamma 2},0},\ \forall g\in
\mathcal B_{\beta^{\frac\gamma 2},\eps},\quad
\Vert gh\Vert_{\beta^{\frac\gamma 2},\eps}\le \Vert h\Vert_{\beta^{\frac\gamma 2},0}\Vert g\Vert_{\beta^{\frac\gamma 2},\eps}\, .
\]
Assumption $(C)$ follows from AAE for $(\Lambda,\widetilde F,\widetilde\phi,R,\beta^{\frac\gamma 2})$ if $\mathbb X=\mathbb R$. Apply Proposition~\ref{Lisn1-C} and Lemma~\ref{BaseToTower} with $\varsigma=\varsigma_0=\mathbf 1$ since $\big|(\bar\phi\circ\bar F^k)|_{Y_j}\big|_{\sqrt{\beta}}\le|\bar\phi|_{\sqrt{\beta},0,\text{Lip}}\beta^{r_{j}-k-1}$ and  $\sum_j\bar\nu_Y(Y_j)r_j<\infty$.


\subsection{Proof of Theorem~\ref{EdgeExpforExpTowers}}\label{proofThmTower1}
Before stating the proof of the theorem, let us make a few remarks. We assume for the moment that $\bar\phi\in\mathcal B_{\beta_1,\eps_1}$ with
$\eps_1>0$. 

The first difficulty in the study of $\cL_{is}=\cL(e^{is\bar\phi}\,\cdot\,)$ is that the multiplication by $e^{is\bar\phi}\in\mathcal B_{\beta_1,0,\eps_1}$ does not preserve the spaces $\cB_{\beta_1,\eps_1}$. Indeed it maps $\cB_{\beta_1,\eps}$ into $\cB_{\beta_1,\eps,\eps+\eps_1}$
and more generally maps $\cB_{\beta_1^\gamma,\eps}$ into $\cB_{\beta_1^\gamma,\eps,\eps+\gamma\eps_1}$ for any $\gamma\in(0,1]$  (since
 $e^{is\bar\phi}\in\mathcal B_{\beta_1,0,\eps_1}\subset  \mathcal B_{\beta_1^\gamma,0,\gamma\eps_1}$).
This remark has led us to the introduction of these new Young spaces with three parameters. This first difficulty is solved by noticing that the multiplication by $e^{is\bar\phi}$
and then $\cL_{is}$ acts continuously on $\cB_{\beta_1^\gamma,\eps,\eps+\gamma\eps_1}$. 

The second difficulty is that $s\mapsto \cL_{is}$ is not continuous from $\mathbb R$ to $\mathcal L(\cB_{\beta_1^\gamma,\eps,\eps+\gamma\eps_1})$. But it is continuous from $\mathbb R$ to $\mathcal L(\mathcal B_{\beta_1^\gamma,\eps,\eps+\gamma\eps_1},\mathcal B_{\beta_1^\gamma,\eps+\eps'',\eps+\gamma\eps_1+\eps''})$ (for $\eps''>0$), and more generally, it is $C^r$  from $\mathbb R$ to $\mathcal L(\mathcal B_{\beta_1^\gamma,\eps,\eps+\gamma\eps_1},\mathcal B_{\beta_1^\gamma,\eps+r\eps_1+\eps'',\eps+r\eps_1+\gamma\eps_1+\eps''})$  (for $\eps''>0$). So we cannot hope our Assumption $(A)$ to be true 
with a single space.
However, our Assumption $(A)$ is true with a double chain of spaces.
Let us assume for the moment the following result, the proof of which is provided in Appendix~\ref{appendYoung}.

\begin{prop}\label{propCond3'}
Let $(\bar F,\bar\Delta,\bar\nu)$ and $\beta_0,\eps_0$ be as above.
Let $\bar\phi\in\mathcal B_{\beta_1,\eps_1,\eps'_1}$ with $\max(\beta_0,\beta_1)<e^{-\eps'_1}$ and $0<\eps_1$.
Let $\eps,\eps''>0$ and $\gamma\in(0,1)$  such that $\eps+(r+2)\eps_1+2\gamma\eps'_1+\eps''\le\eps_0$ and $\beta_0\leq \beta_1^\gamma< e^{-\eps'_1}$.
Then Assumptions $(A)[r](3,4)$ and $(B)$ hold true with 
$\cX_j=\cV^{(\eps)}_{a_j}$ and $\cX_j^{(+)}=\cV^{(\eps)}_{b_j}$ and
$p_0:=\frac{\eps_0}{\eps_0-\gamma\eps'_1},$ where
we set
$$\mathcal V^{(\eps)}_\theta:=\mathcal B_{\beta_1^\gamma,\eps+\theta\eps_1,\eps+\theta\eps_1+\gamma\eps'_1},\,\,\,a_k:=k\left(1+\frac{\eps''}{(r+3)\eps_1}\right)\,\,\,\text{and}\,\,\, b_k:=a_k+\frac{\eps''}{2(r+3)\eps_1}. $$
Moreover, the $\mathcal L_{is}$ are quasicompact on the $\cV_\theta^{(\eps)}$for every $\theta\in[0,r+2+\frac{\eps''}{\eps_1}]$.
\end{prop}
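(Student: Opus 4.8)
\textbf{Proof proposal for Proposition~\ref{propCond3'}.}

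The plan is to verify Assumptions $(A)[r](3,4)$ and $(B)$ directly from the structure of the Young tower transfer operator $\cL$ via~\eqref{formuleOpYoung}, exploiting the algebra-like multiplicativity of the three-parameter norms $\|\cdot\|_{\beta,\eps,\eps'}$ and the quasi-compactness of $\cL$ on the Young spaces established in~\cite{Y}. First I would record the basic multiplicativity and embedding estimates for the spaces $\cV^{(\eps)}_\theta = \cB_{\beta_1^\gamma,\eps+\theta\eps_1,\eps+\theta\eps_1+\gamma\eps'_1}$: namely that $e^{is\bar\phi}\in \cB_{\beta_1^\gamma,0,\gamma\eps'_1}$ with norm $\cO(1+|s|)$ (this uses $\bar\phi\in\cB_{\beta_1,\eps_1,\eps'_1}\subset\cB_{\beta_1^\gamma,\eps_1,\gamma\eps'_1}$ after passing to the $\gamma$-power, together with $\beta_1^\gamma\le e^{-\eps'_1}$ to control the Lipschitz-in-$\eps'$ part against the exponential weight), and that multiplication by an element of $\cB_{\beta_1^\gamma,\eps_1,\gamma\eps'_1}$ raises the weight index $\theta\mapsto\theta+1$ on the $\cV^{(\eps)}_\theta$ scale. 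Iterating, $(i\bar S_n)^j e^{is\bar S_n}$ lives in a space whose weight has grown by $j$ units of $\eps_1$, with norm $\cO((1+|s|)^j (1+n)^j)$; this is exactly what is needed for the $C^j$-statements in $(A)(3)$ once one checks that $s\mapsto\cL_{is}$ is continuous from $\cV^{(\eps)}_{a}$ into the slightly-larger $\cV^{(\eps)}_{b}$ (the $\eps''$-gap built into $a_k$ versus $b_k$ absorbs the loss of continuity caused by the weight mismatch, precisely as flagged in the two ``difficulties'' discussed before the proposition).

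Next I would handle Assumption $(A)(4)$, the uniform Doeblin--Fortet estimate $\sup_{|s|<\delta}\|\cL_{is}^n h\|_{\cX}\le \widetilde C(\widetilde\kappa_1^n\|h\|_{\cX}+\|h\|_{L^{p_0}})$. This should follow from the Lasota--Yorke inequality for $\cL$ on each Young space $\cB_{\beta_1^\gamma,\eps+\theta\eps_1+\gamma\eps'_1}$ (available from~\cite{Y} once $\eps+\theta\eps_1+\gamma\eps'_1\le\eps_0$, which is guaranteed by the hypothesis $\eps+(r+2)\eps_1+2\gamma\eps'_1+\eps''\le\eps_0$ since $\theta$ ranges over $[0,r+2+\eps''/\eps_1]$), combined with the uniform-in-$s$ bound on the multiplier $e^{is\bar\phi}$ and the standard observation that twisting by a unimodular multiplier does not worsen the $L^{p_0}$-contraction. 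The choice $p_0=\eps_0/(\eps_0-\gamma\eps'_1)$ is dictated by requiring $e^{\gamma\eps'_1\ell}\in L^{p_0'}(\bar\nu)$-type control, which holds because $e^{\eps_0\ell}\in L^1(\bar\nu)$ by~\eqref{integrabilityeeps}; I would verify this Hölder-type bookkeeping carefully. For the quasi-compactness of $\cL_{is}$ on $\cV^{(\eps)}_\theta$ for all admissible $\theta$, I would invoke Proposition~\ref{DF} together with the Lasota--Yorke inequality just mentioned and a compact-embedding statement of the form $\cB_{\beta_1^\gamma,\eps+\theta\eps_1+\gamma\eps'_1}$-bounded sets are precompact in a weaker norm (this is the analogue of Lemmas~\ref{DFHP} and~\ref{compactinclusion} referenced in the Hölder case, applied now with the general weights $\eps_1,\eps'_1>0$; the precompactness uses $e^{\eps_0\ell}\in L^1$ to truncate the tower at a high level with small error).

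Finally, Assumption $(B)$: part $(1)$ (isolated simple eigenvalue $1$ with spectral gap on each $\cX_a,\cX_a^{(+)}$) follows from quasi-compactness of $\cL$ on each $\cV^{(\eps)}_\theta$ plus the hypothesis $\mathrm{gcd}(r_i)=1$, which forces $1$ to be the unique peripheral eigenvalue and simple (exactness); part $(2)$ (no peripheral spectrum of $\cL_{is}$ for $s\neq0$ on $\cX_0$ or $\cX_0^{(+)}$) follows from Lemma~\ref{periphericalspect} applied with $\bar\phi$ non-arithmetic, once one knows $\cL_{is}$ satisfies the conclusion of Proposition~\ref{DF}, which is the quasi-compactness just established; part $(3)$ ($\sum_n\|\cL^n\bar\phi\|_{L^2}<\infty$) follows since $\bar\phi\in\cV^{(\eps)}_1\hookrightarrow L^2(\bar\nu)$ (using $(r+2)\eps_1+2\gamma\eps'_1<\eps_0$, which keeps the relevant weight well below $\eps_0$ so the space embeds in $L^2$) and $\cL$ contracts exponentially on $\cV^{(\eps)}_1$ modulo the one-dimensional top eigenspace, on which $\bar\phi$ has zero projection by centering. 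The main obstacle I expect is the precompactness/Lasota--Yorke step with the genuinely three-parameter weighted norms: one has to track how the two distinct exponential weights (the sup-weight $e^{\eps\ell}$ and the Lipschitz-weight $e^{\eps'\ell}$) interact under $\cL$ and under the level-truncation argument, and to confirm that all the weight indices appearing along the chain stay inside $[0,\eps_0]$ — this is exactly the arithmetic encoded in the definitions of $a_k$ and $b_k$ and in the hypothesis $\eps+(r+2)\eps_1+2\gamma\eps'_1+\eps''\le\eps_0$, and getting every inequality to line up is the delicate bookkeeping of the proof. Since this is the content deferred to Appendix~\ref{appendYoung}, I would carry it out there in full.
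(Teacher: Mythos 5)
Your proposal tracks the paper's own proof closely in overall structure: multiplicativity of the three-parameter norms and the ``gap'' between $a_k$ and $b_k$ for $(A)(3)$; a Doeblin--Fortet inequality plus a compact embedding (Lemmas~\ref{DFHP} and~\ref{compactinclusion}) for $(A)(4)$ and quasi-compactness; $\mathrm{gcd}(r_i)=1$ for $(B)(1)$; Lemma~\ref{periphericalspect} with non-arithmeticity for $(B)(2)$. However, your treatment of $(B)(3)$ has a concrete gap, and your sketch of $(A)(4)$ is a little too loose.

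For $(B)(3)$ you claim $\bar\phi\in\cV^{(\eps)}_1\hookrightarrow L^2(\bar\nu)$, with the parenthetical that $(r+2)\eps_1+2\gamma\eps'_1<\eps_0$ ``keeps the weight well below $\eps_0$.'' Neither inclusion is secured by the hypotheses. First, $\bar\phi\in\cB_{\beta_1,\eps_1,\eps'_1}$, and for $\cB_{\beta_1,\eps_1,\eps'_1}\subset\cV^{(\eps)}_1=\cB_{\beta_1^\gamma,\eps+\eps_1,\eps+\eps_1+\gamma\eps'_1}$ you would need the Lipschitz-weight inequality $\eps'_1\le\eps+\eps_1+\gamma\eps'_1$, i.e.\ $(1-\gamma)\eps'_1\le\eps+\eps_1$, which is not assumed. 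Second, even granting membership, $\cV^{(\eps)}_1\hookrightarrow L^2(\bar\nu)$ requires $2(\eps+\eps_1)\le\eps_0$; the standing hypothesis $\eps+(r+2)\eps_1+2\gamma\eps'_1+\eps''\le\eps_0$ only controls $\eps+\eps_1$ once, not twice, so this can fail (take $\eps$ close to $\eps_0-(r+2)\eps_1-2\gamma\eps'_1-\eps''$ and $\eps_1,\eps'_1,\eps''$ small). The paper avoids both problems by not working with $\bar\phi$ directly: it writes $\cL(\bar\phi)=-i\cL_0^{(1)}(\mathbf 1)$ and uses Lemma~\ref{contlinYoung} (with base parameter $\eps/2$ rather than $\eps$) to place $\cL(\bar\phi)\in\cV^{(\eps/2)}_{1+\eps''/(3\eps_1)}$, whose weight $\tfrac\eps2+\eps_1+\tfrac{\eps''}{3}$ is indeed $\le\tfrac{\eps_0}{2}$ by the hypothesis. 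That space is both quasi-compact for $\cL$ and embeds in $L^2$, and then $\sum_{n\ge1}\|\cL^n\bar\phi\|_{L^2}=\sum_{n\ge0}\|\cL^n(\cL\bar\phi)\|_{L^2}<\infty$ follows from the spectral gap and the centering of $\bar\phi$. Your version needs this repair.

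On $(A)(4)$, you propose to obtain the uniform Doeblin--Fortet estimate for $\cL_{is}$ by combining a Lasota--Yorke inequality for the \emph{untwisted} $\cL$ with the multiplier bound on $e^{is\bar\phi}$. This will not assemble naively: $\cL_{is}^N h=\cL^N(e^{is\bar S_N}h)$ and the dynamical Lipschitz constant of $e^{is\bar S_N}$ in the $(\beta_1^\gamma,\eps')$-norm grows like $\beta_1^{-\gamma N}e^{\gamma\eps'_1 N}$, so a black-box DF for $\cL^N$ produces an $N$-dependent blow-up in the contraction coefficient. What makes the estimate close is the interaction between that growth and the matching $\beta_1^{\gamma N}e^{-\eps'N}$ contraction coming from levels $l\ge N$, plus the $\min(2,\cdot)$-and-$\gamma$-power trick to keep the twist term bounded; this has to be done inside one computation, which is exactly what Lemma~\ref{DFHP} does. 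You gesture at ``the analogue of Lemma~\ref{DFHP}'' and flag the bookkeeping as the hard part, which is the right instinct, but the assembled-from-pieces version as stated would not yield a uniform-in-$s$ constant. Otherwise the remaining steps ($(A)(3)$ via Lemma~\ref{contlinYoung} and the chain $\cV_0\hookrightarrow\cV_\theta\hookrightarrow\cV_{\theta'}$, compact inclusion via Lemma~\ref{compactinclusion}, $(B)(1)$ and $(B)(2)$) match the paper's argument.
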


\begin{proof}[Proof of Theorem~\ref{EdgeExpforExpTowers}]
Since $\mathfrak p=\bar{\mathfrak p}=\text{Id}$, Assumption $(A)(1)$ is automatic with 
$\chi=0$. Thus $h^{(j)}_{k,s,H}=H\circ \bar F^k$ and 
 we can take $k=\vartheta^k=0$
(the term appearing in \eqref{hk1} is now null and the term appearing in \eqref{hk2} is dominated by a constant).
Our assumptions ensure that 
$\psi\in \cX_{0} $ and 
$\xi\in(\cX_{r+2}^{(+)})'$ if $\eps''$ is small enough since
$\cX_{r+2}^{(+)}\subset L^{\frac{\eps_0}{\eps+(r+2)\eps_1+\eps''}}(\bar\nu)$.
This combined with Proposition~\ref{propCond3'} ends the proof of Assumptions $(A)$
and $(B)$.

Assume now that $\mathbb X=\mathbb R$ and let us prove that
Assumption $(C)$ holds true with $\alpha_1=1$, $\cB_1=\cX_0$ and $\cB_2=L^{p_1}(\bar\nu)$  if $(\Lambda,\widetilde F,\widetilde\phi,R,\beta_1^\gamma)$ is AAE. 
Indeed, $\bar\phi\in\mathcal B_{\beta_1,\eps_1,\eps'_1}$ implies that
$\bar\phi\in L^1(\bar\Delta,\bar\nu)$ (since $\eps_1\le\eps_0$)
 and 
\[
|\widetilde\phi_{|\Lambda_j}|^\gamma_{\beta_1}\le \sum_{k=0}^{r_j-1}|(\bar\phi\circ \bar F^k)_{|\Lambda_j}|^\gamma_{\beta_1}\le 
 \sum_{k=0}^{r_j-1}|\bar\phi|^\gamma_{\beta_1,\eps'_1,\text{Lip}}e^{\gamma k\eps'_1} \beta_1^{\gamma(r_j-1-k)}
\le
 \frac{\beta_1^{-\gamma}e^{\gamma\eps'_1 r_j}}{e^{\gamma\eps'_1}\beta_1^{-\gamma}-1}|\bar\phi|^\gamma_{\beta_1,\eps'_1,\text{Lip}}
\]
and $e^{\gamma\eps'_1R}\in L^1(\Lambda,\bar\nu(\cdot|\Lambda))$ (since $\gamma \eps'_1\le\eps_0$).
This allows us to apply Proposition~\ref{Lisn1-C}, which combined with Lemma~\ref{DIneqImpliesD}, ensures (with the notations of Section~\ref{sec:cond(C)}) that 
$
\Vert \widetilde \cL_{is,iu}^n\Vert_{\widetilde\cB\rightarrow L^\infty}\le C''e^{-\frac{Cn}{|s|^\alpha \log s}}$.
Finally we conclude by applying Lemma~\ref{BaseToTower}
with $\varsigma(l)=e^{\eps l}$ $\varsigma_0(l)=e^{(\eps+\gamma\eps'_1)l}$ (so that $\cZ=\cX_0$), $p_0=\frac{\eps_0}{\eps}>p_1$, $\gamma$ and $\beta=\beta_1^{\gamma}$. 
Indeed
the quantity appearing in \eqref{hypophibasetotower} is bounded by
\[
|\bar\phi|^\gamma_{\beta_1,\eps'_1,\text{Lip}}
\sum_{j}\bar\nu_Y(Y_j)
\sum_{k=0}^{r_{j}-1}k e^{\eps k} 
e^{\gamma\eps'_1 k}\beta_1^{r_j-k-1}\le 
|\bar\phi|_{\beta_1,\eps'_1,\text{Lip}}
\mathbb E_{\bar\nu}(\ell e^{(\eps+\gamma\eps'_1) \ell})
 <\infty\,  ,
\]
since $\eps'_1<\eps_0$. 
Moreover $\varsigma(\ell)\in L^{\frac{\eps_0}{\eps_1}}(\bar\nu)$
and $\varsigma_0(\ell)\in L^{\frac{\eps_0}{\eps+\gamma\eps'_1}}(\bar\nu)\subset L^1(\bar\nu)$
since $\eps+\gamma\eps'_1\le\eps_0$. 
Our assumption on $\xi$ ensures that $\xi\in L^{\frac{\eps_0}{\eps_0-\eps_1}}(\bar\nu)$.
Thus Assumption $(C)$ holds true for some 
and $\alpha_1=1$. We conclude by Lemma~\ref{DIneqImpliesD} that  Assumption $(C)$ holds true with any $\delta>0$.
\end{proof}

\subsection{Proof of Theorem~\ref{EdgeExpforTowers} when $\bar\phi$ is not H\"older}
Let $\phi\in\mathcal V_{\beta,\eps_1}^{(0)}$. Due to Lemma~\ref{LEM0}
applied to $\phi e^{-\ell\eps_1}$, there exist two functions $\bar\phi\in\mathcal B_{\beta_1:=\sqrt{\beta},\eps_1,\eps_1}$ and 
$\chi\in\cV^{(0)}_{\sqrt{\beta},\eps_1} $
such that $\phi-\chi+\chi\circ F=\bar\phi\circ\bar{\mathfrak p}$.
Observe that both $\chi$ and $\bar\phi$ are dominated by a constant times $e^{\eps_1\ell}$ which is in $L^{\frac{\eps_0}{\eps_1}}(\bar\nu)$. Thus Assumption $(A)(1)$
hods true with $r_0:=\frac{\eps_0}{\eps_1}-2>r+2\gamma$.

In view of Condition $(A)[r]$ with  $p_0:=\frac{\eps_0}{\eps_0-\gamma\eps_1}$, we set $\beta_1=\sqrt{\beta}$. Take
$\eps,\eps''>0$ such that $\eps_3\le\eps\le \eps_0-\eps_2-(r+2+2\gamma)\eps_1$ and $\eps''\le\eps_0-\eps-\eps_2-(r+2+2\gamma)\eps_1$ and $\beta_0\leq \beta_1^\gamma< e^{-\eps_1}$. 
We set $$ \left(\mathcal{V}^{(\eps)}_\theta := \mathcal{B}_{\beta_1^\gamma,\eps+\theta\eps_1,\eps+(\theta+\gamma)\eps_1}\right)_{\theta\in[0,r+2+\frac{\eps''}{\eps_1}]} $$ and $\cX_j=\cV^{(\eps)}_{a_j}$ and 
$\cX_j^{(+)}=\cV^{(\eps)}_{b_j}$
with $$ a_k=k\left(1+\frac{\eps''}{(r+3)\eps_1}\right)\,\,\,\text{and}\,\,\, b_k=a_k+\frac{\eps''}{2(r+3)\eps_1}.$$

Assumption $(A)[r](2)$ follows from Lemma~\ref{CondA2Young}
and from our assumptions on $\phi,\psi,\xi$ setting $q(\xi)=\frac{\eps_2}{\eps_1}+\gamma$, $q(\psi)=\frac{\eps_2}{\eps_3}+\gamma$ and using the fact that $\eps_3\le\eps+\eps''$ and that $\eps_2\le \eps_0-\eps-(r+2)\eps_1-\eps''$.

Assumptions $(A)[r](3,4)$ and $(B)$ come from Proposition~\ref{propCond3'}.

For Assumption $(C)$, we proceed exactly as in the proof of Theorem~\ref{EdgeExpforExpTowers}
since $\bar\phi\in\mathcal B_{\sqrt{\beta},\eps,\eps}$, with
$\cB_1=\cX_0$ and $\cB_2=L^{p_1}(\bar\nu)$  with $p_1\in(1,\frac{\eps_0}{\eps})$.
Indeed $\Vert h_{k,s,\xi}\Vert_{L^{\frac {\eps_0}{\eps_2}}}\le \Vert \xi\Vert_{L^{\frac {\eps_0}{\eps_2}}}<\infty$ and $\eps_2+\eps<\eps_0$
and the estimate on $\psi$ has already been proved (see again \eqref{hk2young2} and \eqref{hk2young1}).

\section{Random Matrix Products}\label{RWalks}

As the last example, we describe briefly how our results apply to random matrix products, and more generally, to random walks on split semisimple Lie groups. The ideas we use are from \cite{BL, GY, LJ}. We refer the readers to those references and the references therein for the historical development of the subject as well as for the complete statements of the results we use and their proofs.

Let $V$ be a $d$-dimensional $\reals$-vector space with $d>1$. Fix a scalar product on $V$ and let the associated norm be $\|\cdot\|$. Write $X:=\Prob V$ for the projective space of $V$ with a suitable Riemannian distance $\textsf{d}(\cdot,\cdot)$ (as introduced in \cite[Chapter II]{BL}). Given $x\in V$ and a sequence $(g_n)_{n\ge 1}$ of i.i.d.~random variables with common distribution $\mu$ and with values in $G:=\text{GL}(V)$ the group of $d\times d$ invertible matrices over $V$, we are interested in the long term behaviour of $(g_n\dots g_1\cdot x)_{n\ge 1}$, and more precisely of $(S_n(x))_{n\ge 1}$ with 
\[
S_n(x):=\log \frac{\|g_n\dots g_1 \cdot x\|}{\|x\|}\, .
\]

A local limit theorem has been established in \cite{GY} under the following 
assumptions.

\begin{itemize}[leftmargin=*]
\item Suppose $\mu$ has an exponential moment, i.e., there exists $\delta>0$ such that 
$$\int_G \max(\|g\|, \|g^{-1}\|)^\delta\, d\mu(g)<\infty.$$
This  implies, among other things, the existence of the two following quantities:
the first Lyapunov exponent defined by
\begin{equation}\label{LypExp}
\lambda_{1} = \lim_{n \to \infty} \frac{1}{n}
\EXP\, [\log \|g_n\dots g_1\|]
\end{equation} 
and the asymptotic variance
\begin{equation*}
\sigma^2= \lim_{n\to\infty} \frac{1}{n}
\EXP\, [(\log \|g_n\dots g_1\| - n\lambda_{1})^2 ]\, .
\end{equation*}
\item Suppose the semigroup generated by $\text{supp}\mu $, $\Gamma_\mu$, is strongly irreducible, i.e., no finite union of proper subspaces is $\Gamma_\mu$-invariant, and contains a proximal element i.e., $g \in G$ such that $g$ has a simple dominant eigenvalue.
\end{itemize}

Recall that the quantity $\lambda_{1}$ given by \eqref{LypExp} is the long term average behaviour of the norm-cocyle in the following sense: 
For all $x\in \Prob V$, $\lim_{n\to\infty}\frac{1}{n}S_n(x)= \lambda_{1}$ almost surely (This convergence happens also in $L^1$ uniformly in $x$). Therefore, in order to consider a(n) (asymptotically) centered observable, we need to replace $\bar \phi$ by $\bar\phi - \lambda_{1}$.

Under the assumptions $\sigma^2>0$, the non-degenerate CLT takes the form:
\begin{equation}\label{CLTforRMP}
\lim_{n\to\infty}\Prob
\left(\frac{1}{\sqrt{n}}\big[S_n(x)-n\lambda_{1}\big]\le z\right)= \frac{1}{\sqrt{2\pi \sigma^2}} \int_{-\infty}^z \exp\left(-\frac{y^2}{2\sigma^2}\right) \, dy =: \fN_\mu(z)
\end{equation}
uniformly in $x\in\Prob V$.

Our next result provides more precise estimates.

\begin{thm}\label{Ord1RMP}
Suppose $\{g_n\}_{n\geq 0}$ is a sequence of i.i.d.~random matrices in \emph{GL}$(V)$ where $V$ is a $d-$dimensional $\reals-$vector space with $d>1$. Suppose the common distribution $\mu$ of the $g_n$ has an exponential moment and that the semigroup generated by $\emph{supp}\mu $ is strongly irreducible and contains a proximal element. Let $x\in \Prob V$. Then there exists a polynomial $P_1$ $($which depends on both $x$ and $\mu)$ such that 
$$\Prob\left(\frac{1}{\sqrt{n}}\Big[S_n(x)-n\lambda_{1}\Big]\le z\right) = \fN_\mu(z) + \fN'_\mu(z)\frac{P_1(z)}{\sqrt{n}}+ o(n^{-1/2})$$
uniformly in $z$.

If moreover $\mu$ is supported and Zariski dense in a connected algebraic subgroup of \emph{GL}$(V)$ and if $\xi: X\rightarrow \mathbb R$
is H\"older continuous, then there exist polynomials $P_k$ $($which depend on both $x$ and $\mu)$ such that 
$$\Prob\left(\frac{S_n(x)-n\lambda_1}{\sqrt{n}}\le z\right) = \fN_\mu(z) + \fN'_\mu(z)\sum_{j=1}^r\frac{P_k(z)}{n^{k/2}}+ o(n^{-r/2})$$
uniformly in $z$, for all $r \geq 0$ and
there exist polynomials $R_k$ and $Q_k$ such that 
$$\EXP\left( g(S_n(x)-n\lambda_1)\, \xi(g_n\dots g_1 \cdot x)\right) =\sum_{j=0}^{r} \frac{1}{n^{j/2}}\int_{\mathbb R}g(z \sqrt{n})\,R_{j}(z)\,\fN'_\mu(z) \, dz+ C^{q+2}(g)\cdot o(n^{-r/2})\, ,$$
for all $g \in \fF^{q+2}_{0}$ where $q>0$
and 
$$\sqrt{n}\EXP\left( g(S_n(x)-n\lambda_1)\, \xi(g_n\dots g_1 \cdot x)\right) =\sum_{j=0}^{[r/2]} \frac{1}{n^{j}}\int_{\mathbb R}g(z)Q_j(z) \, dz+C^{q+2}_{r+1}(g)\cdot o(n^{-r/2})\, ,$$
for all $g \in \fF^{q+2}_{r+1}$ where $q>0$.
\end{thm}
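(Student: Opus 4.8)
The theorem follows by transcribing the random matrix product setting into the abstract framework of Part I and invoking Corollary~\ref{StongExpOrd1} (for the first-order Edgeworth expansion), Theorem~\ref{StongExp} (for higher-order Edgeworth expansions), and Theorems~\ref{WeakGlobalExp} and~\ref{WeakLocalExp} (for the global and local expansions in the MLCLT). The first step is to recall the well-known reduction from \cite{BL,GY,LJ}: under an exponential moment, strong irreducibility and proximality, the Birkhoff sums $S_n(x)-n\lambda_1$ are realized as additive functionals of a Markov chain on $X=\mathbb PV$ driven by $\mu$, and the associated family of twisted transfer operators $\mathcal L_{is}$ acting on the space $\mathcal C^\gamma(X)$ of $\gamma$-H\"older functions on $X$ (for a suitable small $\gamma>0$) is an analytic family of quasicompact operators with a spectral gap at $s=0$; here one takes $(f,\mathcal M,\mu)=(\bar F,\bar\Delta,\bar\nu)$ in the language of Section~\ref{sec:hypo}, so that $\mathfrak p=\bar{\mathfrak p}=\mathrm{Id}$, $\chi\equiv 0$, $k=\vartheta^k=0$, and $\psi_n\equiv\psi$, $\xi_n=\xi\circ f^n$ (here the chain version of the abstract setup, as discussed after Assumption $(0)$, is the relevant one). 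Then I would verify Assumptions $(A)[r]$, $(B)$, $(C)$ and $(D)[r]$ one by one using the spectral input.

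\textbf{Verification of $(A)$ and $(B)$.} Because $\chi\equiv0$, Assumption $(A)(1)$ is vacuous; Assumption $(A)(2)$ reduces, as noted after Proposition~\ref{propHP} and in the H\"older case of the Young tower, to the boundedness estimate \eqref{hk2} with $j=0$, which holds since $\xi$ is H\"older (hence $\bar h_{0,s,\xi}=\xi\in(\mathcal C^\gamma)'$) and $\bar h_{0,s,\psi}=\psi$ is the H\"older density of $\mathbb P$ — here one takes $\cX_a=\cX_a^{(+)}=\mathcal C^\gamma(X)$ for all $a$. The exponential moment guarantees that multiplication by $e^{is\bar\phi}$ and all its $s$-derivatives $(i\bar\phi)^je^{is\bar\phi}$ act continuously on $\mathcal C^\gamma$ (after shrinking $\gamma$), so that $s\mapsto\mathcal L_{is}$ is real-analytic into $\mathcal L(\mathcal C^\gamma)$; this gives $(A)(3)$ (with $r_0=r$ arbitrary, $q(\psi)=q(\xi)=0$, $p_0=\infty$) via Lemma~\ref{lemA_B}, and $(A)(4)$ is automatic since all the spaces coincide. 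Assumption $(B)(1)$ is the spectral gap at $s=0$, known from \cite{BL,GY}; $(B)(3)$ follows from $(B)(1)$ and $\mathcal C^\gamma\hookrightarrow L^2$; and $(B)(2)$ — the absence of peripheral spectrum of $\mathcal L_{is}$ for $s\neq0$ — is precisely the statement that $S_n(x)-n\lambda_1$ is aperiodic, which holds under strong irreducibility and proximality (no nontrivial $a,B,\theta$ as in the non-arithmeticity definition), again from \cite{GY}. This already yields Assumptions $(\alpha)[r]$ and $(\beta)$ by Propositions~\ref{lemmaABCgreek} and~\ref{lemmaABCgreek1}, hence Corollary~\ref{StongExpOrd1} gives the first-order Edgeworth expansion unconditionally, and Theorems~\ref{WeakGlobalExp}, \ref{WeakLocalExp} would follow once $(\gamma)$ is checked.

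\textbf{Verification of $(C)$ and $(D)$ under Zariski density.} This is the technical heart. When $\mu$ is Zariski dense in a connected algebraic subgroup, the sharp operator-norm decay of $\mathcal L_{is}$ for large $|s|$ is available: by the spectral-gap estimates of \cite{GY} (building on Dolgopyat-type arguments, cf.\ \cite{D1,D2,D3}), there exist $C,c>0$ and $\alpha'\ge0$ such that $\|\mathcal L_{is}^n\|_{\mathcal C^\gamma}\le C|s|^{\alpha'}e^{-cn}$ for $|s|$ large (UNI/strong non-integrability holds generically here, and Zariski density forces it; indeed in the Zariski-dense case one gets exponential decay, i.e.\ effectively $\alpha=0$). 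Feeding this into Lemma~\ref{DIneqImpliesDC} (with $\cB_1=\cB_2=\mathcal C^\gamma$, or $\cB_2=L^\infty$) gives Assumption $(C)$ with $\alpha_1=1$ and any $\alpha>0$, and Assumption $(D)[r](1)$ for every $r$ and any $d_1\in[0,1]$; $(C)(2)$ and $(D)(2)$ hold trivially since $\bar h_{k,s,H}$ are just $\psi$ and $\xi$ with $k=0$, both bounded in the relevant norms. Then Proposition~\ref{lemmaABCgreek} upgrades these to $(\gamma)$ and $(\delta)[r]$, and applying Theorem~\ref{StongExp} (with $r'=r$ arbitrary), Theorem~\ref{WeakGlobalExp} and Theorem~\ref{WeakLocalExp} — noting that the constraint $q>\alpha(1+r/(2\alpha_1))$ can be met with any $q>0$ since $\alpha$ may be taken arbitrarily small — yields all the claimed expansions, with $P_k$, $R_j$, $Q_j$ given by the formulas \eqref{EdgePolyRel}, \eqref{StronExpPolyDer}, \eqref{Qm}. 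The main obstacle is marshalling from the literature the precise form of the large-$s$ contraction estimate for $\mathcal L_{is}$ in the Zariski-dense setting and confirming that the function spaces and norms used there are compatible with (or can be embedded into) the H\"older scale $\mathcal C^\gamma$ demanded by our abstract assumptions; the rest is bookkeeping. I would relegate the detailed statement and citation of the spectral estimates, together with the identification $\sigma^2=\sigma_\phi^2$, to a short lemma, and keep the proof of the theorem itself to the three-paragraph verification sketched above.
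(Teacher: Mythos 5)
Your plan takes a genuinely different (and, I think, ultimately problematic) route from the paper. The paper does \emph{not} pass through the dynamical Assumptions $(A)$–$(D)$ and Propositions~\ref{lemmaABCgreek}, \ref{lemmaABCgreek1}; it observes that
\[
\mathbb E\big(e^{is(S_n(x)-n\lambda_1)}\xi(g_n\cdots g_1\cdot x)\big)=\cL_{is}^n(\xi)(x)
\]
at a \emph{fixed} $x\in\mathbb PV$, and then verifies the abstract Assumptions $(\alpha)$–$(\delta)$ of Section~\ref{se:assum} directly for this sequence of numbers (taking $\psi_n\equiv1$ and $\xi_n=\xi(g_n\cdots g_1\cdot x)$ on the i.i.d.\ product space), using the spectral decomposition $\cL_{is}^n=\lambda_{is}^n\Pi_{is}+R_{is}^n$ from~\cite{GY} to get $(\alpha)[r]$ with $B_j=(\Pi_{i0}^{(j)}\xi)(x)$, the aperiodicity statement in~\cite{GY} for $(\beta)$, and~\cite[Theorem~4.19]{LJ} for the large-$|s|$ bound feeding $(\gamma)$ and $(\delta)$. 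Your routing through Assumption~$(0)$ and the chain $\cX_a=\cX_a^{(+)}=\mathcal C^\gamma(X)$ has a real gap: Propositions~\ref{lemmaABCgreek}, \ref{lemmaABCgreek1} are stated for $\mathbb E_\mu(\psi\,e^{isS_n}\,\xi\circ f^n)$ with $\mu$ the invariant measure of a PPDS, whereas the theorem is a \emph{pointwise} statement at $x$, i.e.\ the Markov chain is started from $\delta_x$, not from the stationary law; there is no density $\psi\in\cX_0$ realizing $\delta_x$, so the claim that $(A)(2)$ holds with $\bar h_{0,s,\psi}=\psi$ does not land. The direct verification of $(\alpha)$–$(\delta)$ sidesteps this entirely.

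Two further inaccuracies in your sketch: the large-$|s|$ contraction is from~\cite{LJ}, not~\cite{GY} (which only supplies the small-$s$ spectral gap and aperiodicity); and the bound is $\|\cL_{is}^n\|_\eps\le C|s|^{2\eps}e^{-cn}$, so $\alpha=2\eps>0$ rather than ``effectively $\alpha=0$'' — the constraint $q>\alpha(1+\frac{r+1}{2})$ is then met by shrinking the H\"older exponent $\eps$, which is exactly what the paper does. Your ``main obstacle'' paragraph correctly identifies where the work lies (matching the spectral estimates to the abstract hypotheses), but once you work in $(\alpha)$–$(\delta)$ directly the bookkeeping is much lighter than you anticipate, since one never has to fit a single point mass into a Banach space of observables.
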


\begin{proof}

Observe that 
\[
\mathbb E\left(e^{is(S_n(x)-n\lambda_1)}\xi(g_n\dots g_1\cdot x)\right)
=\cL_{is}^n(\xi)(x)
\]
with 
\[
\cL_{is}h(x):=\int_Ge^{is(\bar\phi(g, x)-\lambda_1)}h(g\cdot x)\, d\mu(g)
\]
with $\bar\phi(g,x)=\frac{\Vert g\cdot x\Vert}{\Vert x\Vert}$.

This is enough to follow the proofs of our main theorems up to checking 
Assumptions $(\alpha)-(\delta)$ with a single space. This combined with 
$\lim_{n\rightarrow+\infty}\frac 1n\mathbb E[S_n-n\lambda_1]=0$
and the definition of $\sigma^2$ which will lead to the asymptotic expansion of the dominated eigenvalue.

We study the action of the family of operators $(\cL_{is})_{s\in\mathbb R}$
on the space
$(\cB_{\eps}(X),\|\cdot\|_\eps)$ of $\eps$-H\"older continuous functions endowed with the norm $\|\,\cdot\,\|_\eps = \|\,\cdot\,\|_\infty+|\, \cdot \,|_\eps$ where $$| h |_\eps := \sup_{y_1\neq y_2} \frac{|h(y_1)-h(y_2)|}{\textsf{d}(y_1,y_2)^\eps}$$
is the the H\"older constant of $h$. This Banach space is compactly embedded in $C(X) \hookrightarrow L^\infty(X)$. In particular, 
\begin{equation}\label{majoespLis}
\left| \mathbb E\left(e^{is(S_n(x)-n\lambda_1)}\xi(g_n\dots g_1\cdot x)\right)
\right|\le \Vert \cL_{is}^n\Vert_\eps \Vert \xi\Vert_\eps\, .
\end{equation}

From the results in \cite[Section 2]{GY}, 
we have that $s \mapsto \cL_{is}$ is analytic and that there exists $\delta>0$ such that for any $|s|<\delta$,
\[
\cL_{is}^n=\lambda_{is}^n\Pi_{is}+R_{is}^n
\]
with, for all $j$, $\sup_{|s|<\delta}\Vert\lambda_{is}^{-n} (R_{is}^n)^{(j)}\Vert_{\eps}$
vanishing at an exponential rate,
with $\lambda_{is}$ $C^\infty$ with $\lambda_{is}=1-\frac{\sigma^2}2s^2+o(s^2)$ with $\sigma^2>0$.
In particular
\[
\sup_{|s|<\delta}\left|(\lambda_{is}^{-n}\mathbb E\left(e^{is(S_n(x)-n\lambda_1)}\xi(g_n\dots g_1\cdot x)\right))^{(j)}-\Pi_{is}^{(j)}\xi(x)\right|\le  \sup_{|s|<\delta}\Vert (\lambda_{is}^{-n}R_{is}^n)^{(j)}\Vert_{\eps}\Vert \xi\Vert_\eps
\]
This ensures $(\alpha)$ with $B_j=(\Pi_{i 0}^{(j)}(\xi))(x)$ up to
decrease if necessary the value of $\delta$ to get the second bound.
Morever $(\beta)$ is also proved in \cite{GY} via the aperiodicity of $\bar\phi$ ensuring that $\sup_{\delta<s<K}\Vert \cL_{is}^n\Vert_\eps$ vanishes at exponential rate for any $0<\delta<K$. This gives us the first part of the theorem.

The recent work \cite{LJ} obtains an estimate that imply our Assumptions $(\gamma)$ and $(\delta)$ using the techniques introduced by Dolgopyat in \cite{D1}. More precisely, let $K < G=\emph{GL}(V)$ be algebraic and connected, and suppose that $\mu$ is supported and Zariski dense on  $K$.
Then \cite[Theorem 4.19]{LJ} yields that, for $\eps$ small enough, there exists $C,c,K>0$ such that for all $|s|>K$,
$\Vert\cL_{is}^n\Vert_\eps\le C |s|^{2\eps}e^{-cn}$ which gives $(\delta)[r]$ for all $r$ and also, due to Lemma~\ref{DIneqImpliesD}, Assumption $(\gamma)$ holds true with $\alpha=2\eps$ and $\alpha_1=1$. To conclude, take
$\eps$ such that $q>2\eps\left(1+\frac{r+1}2\right)$.
\end{proof}

In fact, one can replace $\bar \phi$ by any non-arithmetic continuous function on $X$, and obtain the first order Edgewroth expansion for its Birkhoff sum. More generally, one can consider any $\mu-$contracting, strongly irreducible and measurable action of a Polish semigroup $G$ on a complete separable compact metric space $(X,\textsf{d})$. Then the assumptions $(\alpha)[r]$ for $\psi=\xi=1$ for all $r$ and Assumption $(\beta)$ hold. See \cite{GY}. Hence, 
the first result \Cref{Ord1RMP} can be further generalized. It should be noted that the recent work \cite{JWZ} proves first order Edgeworth expansions in the case of $GL(V)$ while relaxing the assumption of exponential moments but they do not discuss the more general setting of Polish semigroups. 

Higher order expansions cannot be extended in this manner because the results of \cite{LJ} hold only for the specific choice of the norm cocyle $\bar \phi$. Yet, the results on higher order expansions can be generalised in a different direction to include group actions groups of real points of connected semisimple algebraic groups defined and split over $\reals$. See \cite{BQ} and \cite{LJ} for details in this direction. In order to keep the exposition elementary, we decided to present the results for subgroups of $GL(V)$.

\begin{appendix}
\section{Additional proofs for Young towers}\label{appendYoung}

This appendix contains the technical results and proofs for Young towers. It completes Section~\ref{Towers}.

\subsection{Assumption $(A)(3,4)$ for expanding Young towers}
Here, we focus on the (quotient) expanding Young tower $(\bar F,\bar\Delta,\bar\nu)$ 
as in Section~\ref{contextYoung} along with the notations therein. 
Our goal is to study the family of operators $(\cL_{is}=\cL(e^{is\bar\phi}\cdot))_{s\in\mathbb R}$ when $\bar\phi\in\mathcal B_{\beta_1,\eps_1,\eps'_1}$ is a real valued centered observable. In particular, we will prove the quasi-compactness of these operators on appropriate Banach spaces  thanks to a Doeblin-Fortet inequality (Lemma~\ref{DFHP}) and a compact inclusion (Lemma~\ref{compactinclusion}). We will, moreover, prove the $C^r$ smoothness of $s\mapsto\mathcal L_{is}$ as a function with values on some spaces of the form $\mathcal L(\mathcal Y_{0},\mathcal Y_{1})$, with $\mathcal Y_{0}\ne \mathcal Y_{1}$ (Lemma~\ref{contlinYoung}). We end this section with the proof of  Proposition \ref{propCond3'}.

Let $\beta_0\in (0,1)$ and $\eps_0$ be as in Section~\ref{contextYoung}.
Recall that $\cB_{\beta,\eps,\eps'}$ is the set of $h:\bar{\Delta} \to \complex$
such that $\|h\|_{\beta,\eps,\eps'}<\infty$
where $\|\cdot\|_{\beta,\eps,\eps'}$ is defined by $\|h\|_{\beta,\eps}=
| h |_{\eps,\infty}+|h|_{\beta,\eps',{\text{Lip}}}$. (See~\eqref{formuleOpYoung}, \eqref{integrabilityeeps} and \eqref{defnormYoung}).
Let $\beta_1\in(0,1)$ and  $\eps_1,\eps'_1\in[0,\infty)$.
We consider a real-valued centered observable $\bar\phi \in \mathcal B_{\beta_1,\eps_1,\eps'_1}$. We set $\bar S_n:=\sum_{k=0}^{n-1}\bar\phi\circ\bar F^k$ and $\mathcal L_{is}:=\mathcal L(e^{is\bar\phi}\,\cdot\,)$ for every $s\in\mathbb R$.

Observe that the multiplication by $e^{is\bar\phi}$ does not preserve $\mathcal B_{\beta_1,\eps_1,\eps'_1}$. We will see in the next lemma that this multiplication defines a continuous operator from $\mathcal B_{\beta_1,\eps_1,\eps'_1}$ to $\mathcal B_{\beta_1,\eps_1,\eps_1+\eps'_1}$ and also from $\mathcal B_{\beta_1^\gamma,\eps,\eps}$ to $\mathcal B_{\beta^\gamma_1,\eps,\eps+\gamma\eps'_1}$ for any $\gamma\in(0,1]$ (since $e^{is\bar\phi}\in\mathcal B_{\beta_1,0,\eps'_1}\subset
\mathcal B_{\beta_1^\gamma,0,\gamma\eps'_1}$).

\begin{lem}\label{lem:product}Let $\beta\in(0,1)$ and $\eps,\eps',\eps_2,\eps'_2 \geq 0$ 
If $g\in \cB_{\beta,\eps,\eps'} $ and $h\in \cB_{\beta_2,\eps_2,\eps'_2} $, then $$gh\in \cB_{\max(\beta,\beta_2),\eps+\eps_2,\max(\eps+\eps'_2,\eps'+\eps_2)}$$
and
\[
\|gh\|_{\max(\beta,\beta_2),\eps+\eps_2,\max(\eps+\eps'_2,\eps'+\eps_2)}\leq\|g\|_{\beta,\eps,\eps'}\|h\|_{\beta_2,\eps_2,\eps'_2}\, .
\]
\end{lem}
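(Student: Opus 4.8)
This is a pointwise estimate on the product of two functions in "weighted Lipschitz" spaces, so the strategy is just to unwind the three pieces of the norm $\|\cdot\|_{\beta,\eps,\eps'}= |\cdot|_{\eps,\infty}+|\cdot|_{\beta,\eps',\mathrm{Lip}}$ and bound each in turn. Write $\beta_3:=\max(\beta,\beta_2)$, $\eps_3:=\eps+\eps_2$ and $\eps_3':=\max(\eps+\eps_2',\eps'+\eps_2)$. First I would handle the sup-part: for $x\in\bar\Delta_l$,
\[
e^{-l\eps_3}|g(x)h(x)|\le \big(e^{-l\eps}|g(x)|\big)\big(e^{-l\eps_2}|h(x)|\big)\le |g|_{\eps,\infty}\,|h|_{\eps_2,\infty},
\]
so $|gh|_{\eps_3,\infty}\le |g|_{\eps,\infty}|h|_{\eps_2,\infty}$.

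For the Lipschitz part, fix a level index pair $(l,j)$ and $y,y'\in\bar\Delta_{l,j}$, and use the standard product-rule decomposition
\[
|g(y)h(y)-g(y')h(y')|\le |g(y)|\,|h(y)-h(y')|+|h(y')|\,|g(y)-g(y')|.
\]
Divide by $\beta_3^{\hat s(y,y')}$. For the first term, $|g(y)|\le e^{l\eps}|g|_{\eps,\infty}$ and $|h(y)-h(y')|\le e^{l\eps_2'}|h|_{\beta_2,\eps_2',\mathrm{Lip}}\beta_2^{\hat s(y,y')}\le e^{l\eps_2'}|h|_{\beta_2,\eps_2',\mathrm{Lip}}\beta_3^{\hat s(y,y')}$ since $\beta_2\le\beta_3$; multiplying the weights gives $e^{l(\eps+\eps_2')}\le e^{l\eps_3'}$. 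Symmetrically the second term is bounded by $e^{l(\eps'+\eps_2)}|h|_{\eps_2,\infty}|g|_{\beta,\eps',\mathrm{Lip}}\beta_3^{\hat s(y,y')}\le e^{l\eps_3'}|h|_{\eps_2,\infty}|g|_{\beta,\eps',\mathrm{Lip}}\beta_3^{\hat s(y,y')}$. Hence
\[
e^{-l\eps_3'}\frac{|g(y)h(y)-g(y')h(y')|}{\beta_3^{\hat s(y,y')}}\le |g|_{\eps,\infty}|h|_{\beta_2,\eps_2',\mathrm{Lip}}+|h|_{\eps_2,\infty}|g|_{\beta,\eps',\mathrm{Lip}},
\]
and taking the supremum over $(l,j)$ and $y\ne y'$ yields $|gh|_{\beta_3,\eps_3',\mathrm{Lip}}\le |g|_{\eps,\infty}|h|_{\beta_2,\eps_2',\mathrm{Lip}}+|h|_{\eps_2,\infty}|g|_{\beta,\eps',\mathrm{Lip}}$.

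Adding the two bounds,
\[
\|gh\|_{\beta_3,\eps_3,\eps_3'}\le |g|_{\eps,\infty}|h|_{\eps_2,\infty}+|g|_{\eps,\infty}|h|_{\beta_2,\eps_2',\mathrm{Lip}}+|h|_{\eps_2,\infty}|g|_{\beta,\eps',\mathrm{Lip}},
\]
and since this is $\le \big(|g|_{\eps,\infty}+|g|_{\beta,\eps',\mathrm{Lip}}\big)\big(|h|_{\eps_2,\infty}+|h|_{\beta_2,\eps_2',\mathrm{Lip}}\big)=\|g\|_{\beta,\eps,\eps'}\|h\|_{\beta_2,\eps_2,\eps_2'}$, we are done (one should note the cross term $|g|_{\beta,\eps',\mathrm{Lip}}|h|_{\beta_2,\eps_2',\mathrm{Lip}}$ appears in the full product but is simply discarded, which is legitimate since all quantities are nonnegative). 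There is no real obstacle here; the only points requiring a moment's care are that $\hat s(y,y')\ge 1$ on each atom (so the Lipschitz quotient is well defined) and the choice $\beta_3=\max(\beta,\beta_2)$, which is exactly what lets us replace both $\beta^{\hat s}$ and $\beta_2^{\hat s}$ by $\beta_3^{\hat s}$ in the denominators — this is where the $\max$ in the statement comes from.
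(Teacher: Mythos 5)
Your proof is correct and follows essentially the same approach as the paper's: the sup part is handled by factoring the weight $e^{-l(\eps+\eps_2)}$, and the Lipschitz part by the standard product-rule decomposition $|g(y)h(y)-g(y')h(y')|\le|g(y)||h(y)-h(y')|+|g(y)-g(y')||h(y')|$ together with $\beta^{\hat s},\beta_2^{\hat s}\le(\max(\beta,\beta_2))^{\hat s}$ and $e^{l(\eps+\eps_2')},e^{l(\eps'+\eps_2)}\le e^{l\max(\eps+\eps_2',\eps'+\eps_2)}$. The only difference is cosmetic: you explicitly note that the cross term $|g|_{\beta,\eps',\mathrm{Lip}}|h|_{\beta_2,\eps_2',\mathrm{Lip}}$ from expanding $\|g\|\|h\|$ is discarded, which the paper leaves implicit.
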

\begin{proof}
First, observe that
\[
|gh|_{\eps+\eps_2,\infty}: = \sup_{l} e^{-l(\eps+\eps_2)}\|(gh)|_{\bar{\Delta}_{l}}\|_{\infty}\leq \sup_{l} e^{-l\eps}\|g|_{\bar{\Delta}_{l}}\|_{\infty}\sup_{l} e^{-l\eps_2}\|h|_{\bar{\Delta}_{l}}\|_{\infty}\leq 
|g|_{\eps,\infty}|h|_{\eps_2,\infty}\, .
\]
Next, observe that for every $l,j$ and every $y,y'\in \bar{\Delta}_{l,j}$,
\begin{align*}
|(gh)(y)-(gh)(y')|&\leq |g(y)|\, |h(y)-h(y')|+|g(y)-g(y')|\, |h(y')|\\
&\leq |g|_{\eps,\infty}e^{l\eps}|h|_{\beta_2,\eps'_2,\text{Lip}}\beta_2^{\hat s(y,y')}e^{l\eps'_2}+|g|_{\beta,\eps',\text{Lip}}
\beta^{\hat s(y,y')}e^{l\eps'}|h|_{\eps_2,\infty}e^{l\eps_2}\\
&\leq  (|g|_{\eps,\infty}|h|_{\beta_2,\eps'_2,\text{Lip}}
    +|g|_{\beta,\eps',\text{Lip}}   |h|_{\eps_2,\infty} )
    (\max(\beta,\beta_2))^{\hat s(y,y')}e^{l\max(\eps+\eps'_2,\eps'+\eps_2)}
\end{align*}
from which we conclude.
\end{proof}

\begin{lem}\label{continuousaction}
Let $\beta\in(0,1)$ and $\eps,\eps'\in[0,\eps_0]$.
$\cL$ acts continuously on $\cB_{\beta,\eps,\eps'}$.
\end{lem}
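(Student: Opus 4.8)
The plan is to bound, for $h\in\cB_{\beta,\eps,\eps'}$, the two seminorms $|\cL h|_{\eps,\infty}$ and $|\cL h|_{\beta,\eps',\text{Lip}}$ making up $\|\cL h\|_{\beta,\eps,\eps'}$ (cf.~\eqref{defnormYoung} and its three-parameter analogue), showing that each is $\lesssim\|h\|_{\beta,\eps,\eps'}$. Three facts will do the work. First, positivity of $\cL$ together with $\cL\mathbf 1_{\bar\Delta}=\mathbf 1_{\bar\Delta}$ forces $e^{\bar g}\le 1$ on $\bar\Delta$ (indeed $e^{\bar g}\equiv 1$ off the top levels $\bar F^{-1}(\bar\Delta_0)$, since a point $x\notin\bar\Delta_0$ has a single preimage). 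Second, the distortion bound $\bar C_{\bar g}<\infty$ of \eqref{formuleOpYoung} together with the Markov matching of preimages \eqref{bijpreimages}: for $x,y$ in a common atom $\bar\Delta_{l,j}$ there is an injection $W=W_{x,y}:\bar F^{-1}(x)\to\bar F^{-1}(y)$ with $z$ and $Wz$ in a common atom — hence at the same level, $\ell(z)=\ell(Wz)$ — and with $\hat s(z,Wz)=\hat s(x,y)+1$ by the cocycle relation for $\hat s$. Third, a pointwise bound on $\cL$ applied to the weights $e^{\theta\ell}$ ($\theta\in\{\eps,\eps'\}$), which are constant on each atom.

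I would establish the third fact first, as it is the only quantitative point. On a level $l\ge 1$ the point $x$ has a single preimage $z$, at level $l-1$, so $\cL(e^{\theta\ell})(x)=e^{\bar g(z)}e^{(l-1)\theta}\le e^{(l-1)\theta}$; on $\bar\Delta_0$ the preimages of a base point are the tops $z_i$ of the sub-towers, so $\cL(e^{\theta\ell})(x)=\sum_i e^{\bar g(z_i)}e^{(r_i-1)\theta}$, and by the bounded distortion of the return map (\cite[Lemmas~1 and 2]{Y}, or directly from $\bar C_{\bar g}<\infty$) one has $e^{\bar g(z_i)}\asymp\bar\nu(\Lambda_i)$ up to a uniform constant, whence
\[
\cL(e^{\theta\ell})(x)\ \lesssim\ \sum_i\bar\nu(\Lambda_i)\,e^{(r_i-1)\theta}\ <\ \infty ,
\]
the finiteness holding because $\theta\le\eps_0$ and $e^{\eps_0\ell}\in L^1(\bar\nu)$ by \eqref{integrabilityeeps} (equivalently, $R$ has exponential tails over the base). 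Thus $\cL(e^{\theta\ell})\le Ce^{\theta\ell}$ pointwise, with $C$ uniform. The sup-norm estimate then follows at once: $|h(z)|\le|h|_{\eps,\infty}e^{\eps\ell(z)}$ and positivity of $\cL$ give $|\cL h|\le|h|_{\eps,\infty}\,\cL(e^{\eps\ell})\le C|h|_{\eps,\infty}e^{\eps\ell}$, i.e.\ $|\cL h|_{\eps,\infty}\le C|h|_{\eps,\infty}$ (this also shows the defining series of $\cL h$ converges). I expect this $L^\infty$-bound on $\bar\Delta_0$ to be the main obstacle — it is exactly where the exponential-tail hypothesis enters; the rest is bookkeeping with the tower's Markov structure.

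For the Lipschitz seminorm, I would fix $x,y\in\bar\Delta_{l,j}$, write $\cL h(x)-\cL h(y)=\sum_{z\in\bar F^{-1}(x)}\big(e^{\bar g(z)}h(z)-e^{\bar g(Wz)}h(Wz)\big)$ with $W=W_{x,y}$, and split each summand as $e^{\bar g(z)}\big(h(z)-h(Wz)\big)+\big(e^{\bar g(z)}-e^{\bar g(Wz)}\big)h(Wz)$. Using $|h(z)-h(Wz)|\le|h|_{\beta,\eps',\text{Lip}}\,e^{\eps'\ell(z)}\beta^{\hat s(x,y)+1}$, the distortion estimate $|e^{\bar g(z)}-e^{\bar g(Wz)}|\le e^{\bar g(z)}\bar C_{\bar g}\,\beta_0^{\hat s(x,y)+1}$ (legitimate since $\hat s(z,Wz)>1$), and $|h(Wz)|\le|h|_{\eps,\infty}e^{\eps\ell(z)}$, then summing over $z$ and recognising $\sum_z e^{\bar g(z)}e^{\theta\ell(z)}=\cL(e^{\theta\ell})(x)\le Ce^{l\theta}$ from the previous step, one obtains
\[
\frac{|\cL h(x)-\cL h(y)|}{\beta^{\hat s(x,y)}}\ \le\ C\beta\,e^{l\eps'}|h|_{\beta,\eps',\text{Lip}}\ +\ C\bar C_{\bar g}\,\beta_0\Big(\tfrac{\beta_0}{\beta}\Big)^{\hat s(x,y)}e^{l\eps}\,|h|_{\eps,\infty} .
\]
Multiplying by $e^{-l\eps'}$ and taking the supremum over $l,j,x,y$ gives $|\cL h|_{\beta,\eps',\text{Lip}}\le C'\|h\|_{\beta,\eps,\eps'}$, using $\beta<1$, $\beta\ge\beta_0$ (so $(\beta_0/\beta)^{\hat s}\le 1$) and $\eps\le\eps'$ (so $e^{l(\eps-\eps')}\le1$) — the range of parameters that actually arises, e.g.\ in Proposition~\ref{propCond3'}. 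Adding the two seminorm bounds yields $\cL\in\mathcal L(\cB_{\beta,\eps,\eps'})$, as claimed.
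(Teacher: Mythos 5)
Your proof is essentially the paper's: handle each level $l$ of the tower, using the trivial backward dynamics and $\bar g\equiv 0$ for $l\geq1$, and for $l=0$ the Gibbs distortion together with the integrability $e^{\eps_0\ell}\in L^1(\bar\nu)$; for the Lipschitz seminorm split each preimage comparison into the $h$-difference and the Jacobian-difference, using the matched preimages $W_{x,y}$. Establishing the pointwise majorant $\cL(e^{\theta\ell})\le Ce^{\theta\ell}$ as a first step is a slightly tidier packaging than the paper's inline computation (cf.~\eqref{totoprevious}), but the substance is the same.

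One detail deserves attention. Your unified Lipschitz estimate
\[
\frac{|\cL h(x)-\cL h(y)|}{\beta^{\hat s(x,y)}}\ \lesssim\ e^{l\eps'}|h|_{\beta,\eps',\text{Lip}}+e^{l\eps}|h|_{\eps,\infty}
\]
carries a residual factor $e^{l(\eps-\eps')}$ after normalization, which you discharge by tacitly assuming $\eps\le\eps'$. That hypothesis is not in the lemma and is in fact unnecessary: for $l\geq1$ the unique $\bar F$-preimage $z$ of a point in $\bar\Delta_l$ lies outside $\bar F^{-1}(\bar\Delta_0)$, so $\bar g(z)=\bar g(Wz)=0$ and the distortion term vanishes identically — a consequence of your own first observation that $e^{\bar g}\equiv1$ off the top levels. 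Thus the $e^{l\eps}|h|_{\eps,\infty}$ term appears only at $l=0$, where the $l$-dependence is trivial, and no order relation between $\eps$ and $\eps'$ is needed. The paper sidesteps this by treating $l\geq1$ and $l=0$ as separate cases (see~\eqref{equa1bisprevious} versus~\eqref{BBBB1previous}). If you restrict the distortion contribution to $l=0$, your argument matches the paper for all $\eps,\eps'\in[0,\eps_0]$ (both proofs tacitly require $\beta\geq\beta_0$ to absorb the $(\beta_0/\beta)^{\hat s}$ factor, which holds in every application of the lemma).
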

\begin{proof}
Assume first that $l\ne 0$. On $\Delta_l$, $\bar F^{-1}$ is well defined by $\bar F^{-1}(x,l)=(x,l-1)$ and $\mathcal L(h)(x,l)=h(x,l-1)$. Therefore
\begin{align}
\big|\cL( h)|_{\bar\Delta_l}\big|_{\eps,\infty} \le    e^{-\eps}\big\Vert h|_{\bar\Delta_{l-1}}\big\Vert_\infty e^{-(l-1)\eps} \le e^{-\eps}|h|_{\eps,\infty}\label{equa1previous}
\end{align}
and
\begin{align}
\big|\cL ( h)|_{\bar\Delta_\ell}\big|_{\beta,\eps',\text{Lip}}
\le 
\beta e^{-\eps'}\sup_{l'}\sup_{y,y'\in\bar\Delta_{l',j}\, :\ \hat s(y,y')>1}\frac{|h(y)-h(y')|}{\beta^{ \hat s(y,y')}}e^{-l'\eps'}
\le   \beta e^{-\eps'}|h|_{\beta,\eps',\text{Lip}}\, .\label{equa1bisprevious}
\end{align}

Now, we study the case $l=0$. It follows from \eqref{formuleOpYoung} combined with \eqref{regulg} that
\begin{align}
\big|\cL ( h)|_{\bar\Delta_{0}}\big|_{\eps,\infty} &\le  \sup_{x\in\bar\Delta_{0}} \sum_{i} |e^{\bar g(\bar F_i^{-1}(x))}h(\bar F_i^{-1}(x))| \nonumber\\
&\le  \sup_{x\in\bar\Delta_{0
}} \sum_{i} e^{\bar g(\bar F_i^{-1}(x))}|h|_{\eps,\infty} e^{\eps(r_i-1)}
\nonumber\\
&\le  |h|_{\eps,\infty}\sup_{x\in\bar\Delta_{0
}} \sum_{i}\frac{e^{\bar C_{\bar g}}}{\bar\nu(\bar\Delta_0)} \int_{\bar\Delta_0}e^{\bar g(\bar F_i^{-1}(y))}\, d\bar\nu(y) e^{\eps(r_i-1)}\label{AAAAAA1previous}
\end{align}
where $\bar F_i $ stands for the restriction of $\bar F$ to $\bar{\mathfrak{p}}(\Lambda_i\times\{r_i-1\}) $.
But 
\[
 \int_{\bar\Delta_0}e^{\bar g(\bar F_i^{-1}(y))}\, d\bar\nu(y)=\int_{\bar\Delta}\cL(\mathbf 1_{\bar F_i^{-1}(\bar\Delta_0)})(y) \, d\bar\nu(y)\\
= \bar\nu(\bar F_{i}^{-1}(\bar\Delta_{0
}))   \, .
\]
Combining this with \eqref{AAAAAA1previous}, we obtain
\begin{align}
\big|\cL (h)|_{\bar\Delta_{0}}|_{\eps,\infty}&\le |h|_{\eps,\infty}\frac{ e^{\bar C_{\bar g}}}{\bar\nu(\Delta_{0})}  \sum_{i}\bar\nu(\bar F_{i}^{-1}(\bar\Delta_{0 })) e^{\eps(r_i-1)}=|h|_{\eps,\infty}\frac{ e^{\bar C_{\bar g}}}{\bar\nu(\Delta_{0})}  \Vert e^{\eps\ell}\Vert_{L^1(\bar\nu)}\ .\label{totoprevious}
\end{align}
Note that $\Vert e^{\eps\ell}\Vert_{L^1(\bar\nu)}$ is finite since
$\eps\le\eps_0$. Finally,
\begin{align}
\big|\cL ( h)|_{\bar\Delta_{0,j}}\big|_{\beta,\eps',\text{Lip}} &\le \sup_{x\in\bar\Delta_{0,j}} \sum_{i} \sup_{y\in \bar F^{-1}_{i}(\bar\Delta_0)}\frac{\left| e^{\bar g(\bar F_i^{-1}(x))}h(\bar F_i^{-1}(x))-e^{\bar g(y)}h(y)\right|}{\beta^{\hat s(F^{-1}_i(x),y)-1}} \nonumber \\
\nonumber&\le  \beta \sup_{x\in\bar\Delta_0} \sum_{i} \left(e^{\bar g(\bar F_i^{-1}(x))}|h|_{\beta,\eps',\text{Lip}}e^{\eps'(r_i-1)}
+e^{\bar g(\bar F_i^{-1}(x))}e^{\bar C_{\bar g}} |h|_{\eps,\infty}e^{\eps(r_i-1)}
\right)\\
\nonumber&\le  \beta \left(\big| \cL (e^{\eps'\ell})|_{\bar \Delta_{0}}\big|_{\eps',\infty}\,  |h|_{\beta,\eps',\text{Lip}}
+e^{\bar C_{\bar g}}\big| \cL (e^{\eps\ell})|_{\bar \Delta_{0}}\big|_{\eps,\infty}\, |h|_{\eps,\infty}
\right)\\
&\le  \beta \frac{ e^{\bar C_{\bar g}}}{\bar\nu(\Delta_{0})} \left( \Vert e^{\eps'\ell}\Vert_{L^1(\bar\nu)}\,  |h|_{\beta,\eps',\text{Lip}}
+e^{\bar C_{\bar g}} \Vert e^{\eps\ell}\Vert_{L^1(\bar\nu)}\, |h|_{\eps,\infty}
\right)
\label{BBBB1previous}
\end{align}
where we used \eqref{totoprevious} with both $\eps$ and $\eps'$ 
together with the fact that $\eps,\eps'\le\eps_0$. We conclude the proof of the lemma by gathering \eqref{equa1previous}, \eqref{equa1bisprevious}, \eqref {totoprevious}and \eqref{BBBB1previous}.
\end{proof}

\begin{lem}\label{contlinYoung}

Let $\gamma\in(0,1)$ and $\eps\ge 0$ be such that $\beta_0\le \beta_1^\gamma$ and
$\eps+\gamma\eps'_1<\eps_0$. Then $(\mathcal L_{is})_{s\in\mathbb R}$ is a family of bounded linear operators on  $\cB_{\beta_1^\gamma,\eps, \eps+\gamma\eps'_1}$. 
Moreover, for any nonnegative integer $r$ and any $\eps''>0$ such that $\eps+
r\eps_1+\gamma\eps'_1+\eps''\le\eps_0$, $s\mapsto\cL_{is}$ is 
$C^r$ from $\mathbb R$ to  $\mathcal L(\cB_{\beta_1^\gamma,\eps, \eps+\gamma\eps'_1},\cB_{\beta_1^\gamma,\eps+r\eps_1+\eps'', \eps+r\eps_1+\gamma\eps'_1+\eps''})$ with $(\mathcal L_{is})^{(m)}=\mathcal L_{is}((i\bar\phi)^m\,\cdot\,)$.
\end{lem}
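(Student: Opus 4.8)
The statement has two parts: boundedness of each $\mathcal L_{is}$ on $\cB_{\beta_1^\gamma,\eps,\eps+\gamma\eps'_1}$, and the $C^r$-smoothness of $s\mapsto\mathcal L_{is}$ with values in a ``larger'' space. The first part is the composition of two facts already available: by Lemma~\ref{lem:product} the multiplication operator $h\mapsto e^{is\bar\phi}h$ sends $\cB_{\beta_1^\gamma,\eps,\eps+\gamma\eps'_1}$ into itself, using that $e^{is\bar\phi}\in\cB_{\beta_1,0,\eps'_1}\subset\cB_{\beta_1^\gamma,0,\gamma\eps'_1}$ (since $\beta_1\le\beta_1^\gamma$ and taking the $\gamma$-power of the Lipschitz bound on $\bar\phi$, exactly as in the discussion preceding the lemma), with operator norm bounded by $\|e^{is\bar\phi}\|_{\beta_1^\gamma,0,\gamma\eps'_1}=\cO(1+|s|)$; then by Lemma~\ref{continuousaction} the transfer operator $\mathcal L$ acts continuously on $\cB_{\beta_1^\gamma,\eps,\eps+\gamma\eps'_1}$, provided $\eps,\eps+\gamma\eps'_1\le\eps_0$, which is exactly the hypothesis $\eps+\gamma\eps'_1<\eps_0$ (and $\beta_0\le\beta_1^\gamma$ so that the Doeblin-type estimates in Lemma~\ref{continuousaction} apply to this value of $\beta$). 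Composing gives $\mathcal L_{is}\in\mathcal L(\cB_{\beta_1^\gamma,\eps,\eps+\gamma\eps'_1})$.

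\textbf{The smoothness part.} The guiding identity is the formal one $\frac{\partial}{\partial s}e^{is\bar\phi}=i\bar\phi\,e^{is\bar\phi}$, hence $(\mathcal L_{is})^{(m)}=\mathcal L_{is}((i\bar\phi)^m\,\cdot\,)=\mathcal L\big((i\bar\phi)^m e^{is\bar\phi}\,\cdot\,\big)$. The key point is a bookkeeping of spaces: multiplication by $\bar\phi\in\cB_{\beta_1,\eps_1,\eps'_1}$ costs one factor of $e^{\eps_1\ell}$ in the sup-part (by Lemma~\ref{lem:product}), so $(i\bar\phi)^m e^{is\bar\phi}$ applied to an element of $\cB_{\beta_1^\gamma,\eps,\eps+\gamma\eps'_1}$ lands in $\cB_{\beta_1^\gamma,\eps+m\eps_1,\eps+m\eps_1+\gamma\eps'_1}$ (up to the cross-term $\eps'$-shift which is absorbed since we also carry the $\gamma\eps'_1$ Lipschitz-budget), with operator norm $\cO((1+|s|)^m)$. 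The truncation to the target space $\cB_{\beta_1^\gamma,\eps+r\eps_1+\eps'',\eps+r\eps_1+\gamma\eps'_1+\eps''}$ is what makes differentiation work: for the $m$-th derivative with $m\le r$ one has a genuine gain $e^{-(r-m)\eps_1\ell-\eps''\ell}$ which is a bounded multiplier from the natural space into the target, and it is precisely this $\eps''$-slack that turns the difference quotient estimate into convergence. Concretely, to show $\mathcal L_{is}$ is $C^1$ into $\mathcal L(\cB_{\beta_1^\gamma,\eps,\eps+\gamma\eps'_1},\cB_{\beta_1^\gamma,\eps+\eps_1+\eps'',\eps+\eps_1+\gamma\eps'_1+\eps''})$, write
\[
\tfrac1u\big(\mathcal L_{i(s+u)}-\mathcal L_{is}-u\,\mathcal L_{is}(i\bar\phi\,\cdot\,)\big)=\mathcal L\Big(e^{is\bar\phi}\,\tfrac{e^{iu\bar\phi}-1-iu\bar\phi}{u}\,\cdot\Big),
\]
and show the right side tends to $0$ in the relevant operator norm as $u\to0$; the pointwise bound $|e^{iu\bar\phi}-1-iu\bar\phi|\le\frac{u^2}{2}\bar\phi^2$ together with $\bar\phi^2\in\cB_{\beta_1,2\eps_1,\cdot}$ gives a factor $u$ times something bounded into the $\eps''$-enlarged space (dominated convergence / continuity of $u\mapsto$ the multiplier then finishes it). The general $C^r$ statement follows by the same argument applied inductively, differentiating $s\mapsto\mathcal L_{is}((i\bar\phi)^{m}\,\cdot\,)$ and each time paying one more $\eps_1$ in the index while keeping the $\eps''$ buffer; this is exactly the induction scheme of Lemma~\ref{lemA_B}, now with a chain of spaces instead of a single Banach algebra.

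\textbf{Main obstacle.} The routine parts (products, the explicit difference quotient) are immediate from Lemma~\ref{lem:product}; the only genuine subtlety is the cross-term in the Lipschitz seminorm, namely keeping track of whether multiplying by $\bar\phi\in\cB_{\beta_1,\eps_1,\eps'_1}$ (which has $\eps'_1\ge\eps_1$ in general) forces the Lipschitz-exponent to grow faster than the sup-exponent, and making sure the chosen indices $\eps+m\eps_1$ and $\eps+m\eps_1+\gamma\eps'_1$ (rather than $+m\eps'_1$) are actually respected — this is why $\bar\phi$ is split as $\bar\phi\in\cB_{\beta_1^\gamma,\eps_1,\eps_1}\cap(\text{a }\gamma\eps'_1\text{-correction})$ via $\beta_1\le\beta_1^\gamma\le e^{-\eps'_1}$, so that repeated multiplication only accumulates the $\gamma\eps'_1$-budget once rather than $m$ times. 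Verifying that all indices stay $\le\eps_0$ under the hypothesis $\eps+r\eps_1+\gamma\eps'_1+\eps''\le\eps_0$ so that Lemma~\ref{continuousaction} is applicable at each step is the bookkeeping that must be done carefully, but it is exactly the content of that inequality.
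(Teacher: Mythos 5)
Your overall plan matches the paper's proof closely: reduce to the multiplication operator $e^{is\bar\phi}\times\cdot$ via Lemmas~\ref{continuousaction} and~\ref{lem:product}, then establish boundedness, continuity, and differentiability of this multiplication between the shifted Young spaces, and finish by induction along the chain. The boundedness part is fine. However, the key difference-quotient estimate you propose for differentiability has a genuine gap.

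You bound $|e^{iu\bar\phi}-1-iu\bar\phi|\le\tfrac{u^2}{2}\bar\phi^2$ and then argue that, since $\bar\phi^2\in\cB_{\beta_1,2\eps_1,\cdot}$, the remainder multiplier maps into the $\eps''$-enlarged target with a factor of $u$. But the target space for the first derivative is $\cB_{\beta_1^\gamma,\eps+\eps_1+\eps'',\,\eps+\eps_1+\gamma\eps'_1+\eps''}$, whose sup-norm budget is $\eps+\eps_1+\eps''$. Multiplying $h\in\cB_{\beta_1^\gamma,\eps,\eps+\gamma\eps'_1}$ by $\bar\phi^2$ produces growth of order $e^{(\eps+2\eps_1)\ell}$, and $\cB_{\beta_1^\gamma,\eps+2\eps_1,\cdot}\not\hookrightarrow\cB_{\beta_1^\gamma,\eps+\eps_1+\eps'',\cdot}$ whenever $\eps''<\eps_1$ (one would need $e^{l(\eps_1-\eps'')}$ bounded in $l$, which it is not). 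Since the lemma only assumes $\eps''>0$, your estimate does not cover the full range. The paper instead uses the interpolated bound
\[
|e^{ih\bar\phi(y)}-1-ih\bar\phi(y)|\leq 2|h\bar\phi(y)|^{\min\left(2,\,1+\tfrac{\eps''}{\eps_1}\right)},
\]
obtained from $\min(2|h\bar\phi|,\tfrac12|h\bar\phi|^2)\le 2|h\bar\phi|^{\theta}$ for any $\theta\in[1,2]$, which produces growth exactly $e^{\min(2\eps_1,\eps_1+\eps'')\ell}\le e^{(\eps_1+\eps'')\ell}$ while still yielding a positive power of $|h|$ (namely $|h|^{\min(1,\eps''/\eps_1)}\to0$). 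A similar interpolation is needed in the Lipschitz seminorm and in the continuity step (with the same $\min(1,\eps''/\eps_1)$ exponent), which your write-up treats only as routine "dominated convergence." The conceptual framing — differentiation costs $\eps_1$ in the index and the $\eps''$-slack is what makes the limit exist — is exactly right; the missing ingredient is replacing the crude Taylor bound by the interpolation so the remainder actually fits the target space when $\eps''<\eps_1$.
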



\begin{proof}
Since we proved in Lemma~\ref{continuousaction} that
$\mathcal L$ is a bounded linear operator on $\cB_{\beta_1^\gamma,\eps, \eps'}$ for any $\eps,\eps'\in[0,\eps_0]$, it is enough to prove that, for any $\eps,\eps'\ge 0$ and $\eps''>0$, the linear map $s\mapsto (e^{is\bar\phi}\times\cdot)$ acts continuously on $\cB_{\beta_1^\gamma,\eps, \eps'}$ and is $C^r$ from $\mathbb R$ to  $\mathcal L(\cB_{\beta_1^\gamma,\eps, \eps+\gamma\eps'_1},\cB_{\beta_1^\gamma,\eps+r\eps_1+\eps'', \eps+r\eps_1+\gamma\eps'_1+\eps''})$ with the multiplication by $(i\bar\phi)^re^{is\bar\phi}$ as the $r^{\text{th}}$ derivative. This follows from the points given below.
\begin{itemize}[leftmargin=20pt]
\item for every $s\in\mathbb R$, the multiplication by $e^{is\bar\phi}$ is a bounded linear operator on 
$\cB_{\beta_1^\gamma,\eps, \eps+\gamma\eps'_1}$;
\item $s\mapsto e^{is\bar\phi}\times\cdot$ is a continuous from $\mathbb R$ to
$\mathcal L(\cB_{\beta_1^\gamma,\eps, \eps'},\cB_{\beta_1^\gamma,\eps+\eps'',\max(\eps',\eps+\gamma\eps'_1)+\eps''})$;
\item $s\mapsto e^{is\bar\phi}\times\cdot$ is differentiable from $\mathbb R$ to
$\mathcal L(\cB_{\beta_1^\gamma,\eps, \eps'},\cB_{\beta_1^\gamma,\eps+\eps_1+\eps'', \max(\eps',\eps+\gamma\eps'_1)+\eps_1+\eps''})$ with the derivative $i\bar\phi e^{is\bar\phi}\times \cdot\,$.
\end{itemize}

To prove the $C^r$ smoothness of $s\mapsto (e^{is\bar\phi}\times\cdot)$, we proceed by induction on $r$.
Let us write $\mathcal X_{\eps,r}:=\cB_{\beta_1^\gamma,\eps+r\eps_1, \eps+r\eps_1+\gamma\eps'_1}$.
The step $r=0$ follows from the second point above applied with $\eps'=\eps+\gamma\eps'_1$. Let $r\ge 0$. By induction hypothesis, $s\mapsto e^{is\bar\phi}\times\cdot$ is $C^r$ from $\mathbb R$ to  $\cL(\mathcal X_{\eps,0},\mathcal X_{\eps+\eps''/3,r})$
with $r^{\text{th}}$ derivative $e^{is\bar\phi}(i\bar\phi)^r\times\cdot$. But due to the third point above, 
$s\mapsto e^{is\bar\phi}$ is differentiable from $\mathbb R$ to $\mathcal L(\cX_{\eps+\eps''/3,r},\cB_{\eps+2\eps''/3,r+1})$ with derivative $i\bar\phi e^{is\bar\phi}\times \cdot\,$. So  $s\mapsto e^{is\bar\phi}\times\cdot$ is $(r+1)$ times differentiable from $\mathbb R$ to  $\mathcal L(\cX_{\eps, 0},\cX_{\eps+2\eps''/3, r+1)}$ with  $e^{is\bar\phi}(i\bar\phi)^{r+1}\times\cdot$ as the order $(r+1)$ derivative. 
In particular, $((i\bar\phi)^{r+1}\times\cdot)\in \mathcal L(\cX_{\eps, 0},\cX_{\eps+2\eps''/3, r+1})$. It follows from the second point above that
$s\mapsto e^{is\bar\phi}\times\cdot$ is continuous from $\mathbb R$ to 
$\mathcal L(\cX_{\eps+2\eps''/3, r+1},\cX_{\eps+\eps'', r+1})$. Thus 
$s\mapsto e^{is\bar\phi}(i\bar\phi)^{r+1}\times\cdot$ is continuous from
$\mathbb R$ to 
$\mathcal L(\cX_{\eps,0},\cX_{\eps+\eps'', r+1})$, and so $s\mapsto e^{is\bar\phi}\times\cdot$ is $C^{r+1}$ from $\mathbb R$ to  $\cL(\mathcal X_{\eps,0},\mathcal X_{\eps+\eps'',r+1})$
with $e^{is\bar\phi}(i\bar\phi)^{r+1}\times\cdot$ as the order $(r+1)$ derivative, which ends the proof by induction of the $C^r$-smoothness of $e^{is\bar\phi}\times\cdot$.

To complete the proofs, let us prove the three points above. For the first point, we observe that $e^{is\bar\phi}\in\cB_{\beta_1^\gamma,0,\gamma\eps'_1}$ since $\Vert e^{is\bar\phi}\Vert_\infty=1$ and
\[
\forall\gamma\in(0,1],\quad 
|e^{is\bar\phi(x)}-e^{is\bar\phi(y)}|\le \min(2,|s|\, |\bar\phi(x)-\bar\phi(y)|)\le
     2(|s|\, |\bar\phi(x)-\bar\phi(y)|)^\gamma\, 
\]
and since $\bar\phi\in\mathcal B_{\beta_1,\eps_1,\eps'_1}$.
Thus, the fact that the multiplication by $e^{is\bar\phi}$ is a bounded linear operator on 
$\cB_{\beta_1^\gamma,\eps, \eps+\gamma\eps'_1}$ comes from Lemma~\ref{lem:product}.

For the second point, due to Lemma~\ref{lem:product}, it is enough to prove that
$e^{is\bar\phi}-e^{it\bar\phi}$ is in 
$\mathcal B_{\beta_1^\gamma,\eps'',\gamma\eps'_1+\eps''}$ with norm going to 0 as $|s-t|\rightarrow 0$.
To see this, we first observe that
\begin{align*}
|e^{is\bar\phi}-e^{it\bar\phi}|_{\eps'',\infty} 
&\leq \sup_{l} e^{-l\eps''}\min(2,|s-t|\|\bar\phi|_{\bar{\Delta}_{l}}\|_{\infty})\\
&\le \sup_l e^{-l\eps''}\min(2,|s-t|\, |\bar\phi|_{\eps_1,\infty}e^{l\eps_1})\\
&\le   2 \left(|s-t|\, |\bar\phi|_{\eps_1,\infty}\right)^{\min(1,\frac {\eps''}{\eps_1})}
\end{align*}
and second that, for all $l,j$ and $y,y'\in\bar\Delta_{l,j}$ and for
$\gamma'\in(0,\eps'')$ such that $\gamma'\le 1-\gamma$,
\begin{align*}
|e^{is\bar\phi(y)}-&e^{it\bar\phi(y)}-e^{is\bar\phi(y')}+e^{it\bar\phi(y')}|\\
&\leq \min\left(|e^{is\bar\phi(y)}-e^{it\bar\phi(y)}|+|e^{is\bar\phi(y')}-e^{it\bar\phi(y')}|,|e^{is\bar\phi(y)}-e^{is\bar\phi(y')}|+|e^{it\bar\phi(y)}-e^{it\bar\phi(y')}|\right)\\
&\leq  \min\left(4,|s-t|\, |\bar\phi|_{\eps_1,\infty}e^{l\eps_1}
,(|s|+|t|)|\bar\phi|_{\beta_1,\eps'_1,\text{Lip}}\beta_1^{\hat s(y,y')}e^{l\eps'_1}\right)\\
&\leq   4\left(|s-t|\, |\bar\phi|_{\eps_1,\infty}e^{l\eps_1}\right)^{\gamma'}\left((|s|+|t|)|\bar\phi|_{\beta_1,\eps'_1,\text{Lip}}e^{l\eps'_1}\beta_1^{\hat s(y,y')}\right)^{\gamma}\, .
\end{align*}

For the third point, using again Lemma~\ref{lem:product}, it is enough to prove that
$$\frac{e^{i(t+h)\bar\phi}-e^{it\bar\phi}(1+ih\bar\phi)}{h} \in \mathcal B_{\beta_1^\gamma,\eps_1+\eps'',\eps_1+\gamma\eps'_1+\eps''}$$ with norm going to 0 as $h\rightarrow 0$. To this end, we observe that, for every $l,j$ and every $y,y'\in\bar\Delta_{l,j}$,
\begin{align*}
|e^{i(t+h)\bar\phi(y)}-e^{it\bar\phi(y)}(1+ih\bar\phi(y))|&= |e^{ih\bar\phi(y)}-1-ih\bar\phi(y)|\\
&\leq 2|h\bar\phi(y)|^{\min(2,1+\frac{\eps''}{\eps_1})}\leq 2 | h\, \bar\phi|^{\min(2,1+\frac{\eps''}{\eps_1})}_{\eps_1,\infty}  e^{l(\eps_1+\eps'')}
\end{align*}
and that, for all $\gamma_1\in(0,1]$
\begin{align*}
|e^{i(t+h)\bar\phi(y)}-e^{it\bar\phi(y)}(1+ih\bar\phi(y))&-e^{i(t+h)\bar\phi(y')}
+e^{it\bar\phi(y)}(1+ih\bar\phi(y'))|\\
&\ \ \leq \min\left(4|h\bar\phi(y)|^{1+\gamma_1},
|g_{t,h}(\bar\phi(y))-g_{t,h}(\bar\phi(y'))|\right)
\end{align*}
with $g_{t,h}(z):=e^{i(t+h)z}-e^{itz}(1+ihz)$. Since 
$$g'_{t,h}(z)=(i(t+h)(e^{ihz}-1)+thz)e^{itz},$$we have $ |g'_{t,h}(z)|\leq (2|t|+|h|)|h||z|$, and hence,
\begin{align*}
|e^{i(t+h)\bar\phi(y)}&-e^{it\bar\phi(y)}(1+ih\bar\phi(y))-e^{i(t+h)\bar\phi(y')}
+e^{it\bar\phi(y)}(1+ih\bar\phi(y'))|\\
\leq &\min\left(4|h|^{1+\gamma_1}\, |\bar\phi|_{\eps_1,\infty}^{1+\gamma_1}e^{\ell\eps_1(1+\gamma_1)},
(2|t|+|h|)|h| |\bar\phi|_{\eps_1,\infty}e^{\ell\eps_1}|\bar\phi|_{\beta_1,\eps'_1,\text{Lip}}\beta_1^{\hat s(y,y')}e^{\ell\eps'_1}\right)\\
\leq &4|h|^{1+\gamma_1(1-\gamma)}\, |\bar\phi|_{\eps_1,\infty}^{(1+\gamma_1)(1-\gamma)}e^{\ell\eps_1(1+\gamma_1)(1-\gamma)}
(2|t|+|h|)^\gamma |\bar\phi|_{\eps_1,\infty}^\gamma e^{\ell\eps_1\gamma}|\bar\phi|^\gamma_{\beta_1,\eps'_1,\text{Lip}}\beta_1^{\gamma\hat s(y,y')}e^{\ell\gamma\eps'_1}.
\end{align*}
We conclude by taking $\gamma_1\in(0,1]$ such that $0<\eps_1\gamma_1(1-\gamma)\le\eps''$.
\end{proof}

The Doeblin Fortet inequality given by the next result is a key estimate in the proof of the quasicompacity of $\cL_{is}$ on $\cB_{\beta_1^\gamma,\eps,\eps'}$.

\begin{lem}[Doeblin-Fortet inequality]\label{DFHP}

Assume $\max(\beta_0,\beta_1)<e^{-\eps'_1}$.
Let $T>1$, $0<\gamma_0<\gamma_1<1$ 
be such that $\beta_0<\beta_1^{\gamma_i}<e^{-\eps'_1}$ and
$\gamma_0<\eps_0/\eps'_1$. Then, for every $\eps'_0\in(0,\eps_0-2\gamma_1\eps'_1]$, for every $\theta\in (0,1)$, there exist $N'$ and $K'_\infty$ such that for every $s\in[-T,T]$, $\gamma\in[\gamma_0,\gamma_1]$ with $\gamma\eps'_1<\eps_0$ and for every $\eps\in[\eps'_0,\eps_0-2\gamma\eps'_1]$, 
\[
\quad \|\cL^{N'}_{is}h\|_{\beta_1^\gamma,\eps,\eps':=\eps+\gamma\eps'_1} \leq  \theta\|h\|_{\beta_1^\gamma,\eps,\eps'}+ K'_\infty\Vert h\Vert_{L^{\frac{\eps_0}{\eps_0-\gamma\eps'_1}}(\bar\nu)}\,, \forall h\in\mathcal B_{\beta_1^\gamma,\eps,\eps'}.
\]
\end{lem}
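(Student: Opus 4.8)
The plan is to prove the Doeblin--Fortet inequality for the iterate $\cL_{is}^{N'}$ by splitting according to the levels of the tower, exactly as in the proof of Lemma~\ref{continuousaction}, but now tracking the twist $e^{is\bar\phi}$ and the decay it costs. The point is that $\cL_{is}^{N'}$ applied to $h$ restricted to a level $\bar\Delta_l$ with $l$ large is governed by the uniform contraction $e^{-N'\eps}$ (for the sup-seminorm) and $(\beta_1^\gamma e^{-\eps'})^{N'}$ (for the Lipschitz seminorm) coming from the trivial part $F(x,l)=(x,l+1)$ of the dynamics, while the return map through the base $\bar\Delta_0$ is where the $L^{p}$-norm (with $p=\frac{\eps_0}{\eps_0-\gamma\eps'_1}$) enters, via the reference measure estimate of Young's distortion bound \eqref{regulg} as in \eqref{totoprevious}. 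First I would fix $\theta'<1$ with $\theta'>\max(e^{-\eps'_0},\beta_1^{\gamma_1}e^{-\eps'_0})$ --- note $\beta_1^\gamma e^{-\eps'}=\beta_1^\gamma e^{-\eps-\gamma\eps'_1}\le \beta_1^{\gamma_0}e^{-\eps'_0}<1$ uniformly over the stated ranges of $\gamma,\eps$ --- and choose $N'$ so that $(\theta')^{N'}<\theta/2$.

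\textbf{Key steps.} The second step is the level-by-level estimate. For $x=(x_0,l)$ with $l\ge N'$, the $N'$ preimages under $\cL_{is}$ are just $(x_0,l-1),\dots,(x_0,l-N')$ and $\bar S_{N'}$ along this orbit segment is a fixed sum, so $e^{is\bar S_{N'}}$ contributes a unimodular factor that is constant on each $\bar\Delta_{l,j}$; hence one gets, exactly as in \eqref{equa1previous}--\eqref{equa1bisprevious},
\[
\big|(\cL_{is}^{N'}h)|_{\bigcup_{l\ge N'}\bar\Delta_l}\big|_{\eps,\infty}\le e^{-N'\eps}|h|_{\eps,\infty},\qquad
\big|(\cL_{is}^{N'}h)|_{\bigcup_{l\ge N'}\bar\Delta_l}\big|_{\beta_1^\gamma,\eps',\mathrm{Lip}}\le (\beta_1^\gamma e^{-\eps'})^{N'}\big(|h|_{\beta_1^\gamma,\eps',\mathrm{Lip}}+C_0|h|_{\eps,\infty}\big),
\]
where the extra $|h|_{\eps,\infty}$ term on the right is the usual price of moving the twist factor $e^{is\bar\phi}$ across two nearby points and is where the hypothesis $\bar\phi\in\mathcal B_{\beta_1,\eps_1,\eps'_1}$ with $\beta_1 e^{-\eps'_1}<1$ (so that $\beta_1^\gamma e^{-\eps'}$ still dominates the geometric series $\sum_k |s|\,|\bar\phi|_{\beta_1,\eps'_1,\mathrm{Lip}}\beta_1^{\hat s(x,y)+k}$) is used. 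The third step handles the finitely many low levels $l<N'$: here I would use that $\cL_{is}^{N'}h|_{\bar\Delta_l}$ is a finite sum over return branches, estimate each branch using \eqref{regulg} and the computation $\int_{\bar\Delta_0}e^{\bar g(\bar F_i^{-1}(y))}d\bar\nu(y)=\bar\nu(\bar F_i^{-1}(\bar\Delta_0))$ as in \eqref{totoprevious} to replace the sup-norm of $h$ on high levels by its $L^1(\bar\nu)$-norm against the weight $e^{\gamma\eps'_1\ell}$, and then Hölder's inequality with exponents $p=\frac{\eps_0}{\eps_0-\gamma\eps'_1}$ and $p'=\frac{\eps_0}{\gamma\eps'_1}$ together with $\Vert e^{\eps_0\ell}\Vert_{L^1(\bar\nu)}<\infty$ (from \eqref{integrabilityeeps}, which requires $\eps+(\text{something})\le\eps_0$, guaranteed by $\eps\le\eps_0-2\gamma\eps'_1$) to land in $K'_\infty\Vert h\Vert_{L^p(\bar\nu)}$. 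One also needs the elementary bound $\sup_{s\in[-T,T]}\Vert\cL_{is}^{N'}\Vert_{\mathcal B_{\beta_1^\gamma,\eps,\eps'}}\le M$ uniformly over the parameter ranges, which follows by iterating Lemma~\ref{continuousaction} and noting the constants there depend only on $\eps_0,\beta_0,\bar C_{\bar g}$ and (through $|s|\le T$) on $T$; this is needed to absorb the low-level contribution's $\|h\|_{\mathcal B}$-dependence into the $\theta\|h\|_{\mathcal B}$ term once $N'$ is large (actually the low levels give only a $\|h\|_{L^p}$ term, so this is only needed to make the constants uniform).

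\textbf{Uniformity and the main obstacle.} The fourth step is to check all constants can be chosen independently of $s\in[-T,T]$, of $\gamma\in[\gamma_0,\gamma_1]$, and of $\eps\in[\eps'_0,\eps_0-2\gamma\eps'_1]$. For $s$ this is clear since $|s|$ enters only through $\le T$; for $\gamma$ and $\eps$ one uses that $\beta_1^\gamma e^{-\eps'}\le\beta_1^{\gamma_0}e^{-\eps'_0}<1$ and $e^{-\eps}\le e^{-\eps'_0}<1$ give a \emph{uniform} contraction factor, and that $\Vert e^{\gamma\eps'_1\ell}\Vert_{L^{p'}(\bar\nu)}=\Vert e^{\eps_0\ell}\Vert_{L^1(\bar\nu)}^{1/p'}\le 1+\Vert e^{\eps_0\ell}\Vert_{L^1(\bar\nu)}$ is bounded uniformly. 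I expect the main obstacle to be bookkeeping the twist-transfer interaction in the low-level terms cleanly: when $l<N'$ the orbit segment of length $N'$ under $\cL_{is}$ crosses the base, so $\bar S_{N'}$ is no longer locally constant and one must control $|e^{is\bar S_{N'}(z)}-e^{is\bar S_{N'}(z')}|$ for $z,z'$ in the same return branch; this is handled exactly by the $\gamma$-Hölder trick $|e^{ia}-e^{ib}|\le 2|a-b|^\gamma$ together with $\sum_k |(\bar\phi\circ\bar F^k)|_{Y_j}|_{\beta_1}\beta_1^{r_j-1-k}\lesssim e^{\gamma\eps'_1 r_j}$ --- the same estimate already appearing in the proof of Theorem~\ref{EdgeExpforExpTowers} --- and it is why the \emph{three-parameter} spaces $\cB_{\beta_1^\gamma,\eps,\eps+\gamma\eps'_1}$, rather than the two-parameter Young spaces, are the natural domain. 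Everything else is a routine repetition of the argument in Lemma~\ref{continuousaction} with an extra geometric factor, so once the low-level twist estimate is set up the inequality follows by choosing $N'$ large and collecting terms.
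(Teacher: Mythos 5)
Your approach is essentially the paper's: decompose $\cL_{is}^{N'}$ by tower level, use the translation structure above level $N'$ to extract a uniform contraction factor for both seminorms (at the price of a lower-order $|h|_{\eps,\infty}$ term from the twist), and bound the low-level contribution through the base by combining the distortion estimate \eqref{regulg} with the measure identity from \eqref{totoprevious} and a H\"older split with exponents $p = \eps_0/(\eps_0-\gamma\eps'_1)$ and $p' = \eps_0/(\gamma\eps'_1)$. You also correctly flag that the $\gamma$-H\"older interpolation $|e^{ia}-e^{ib}|\le 2|a-b|^\gamma$ is what makes the twist cost geometrically summable and why the spaces carry three parameters.

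Two inaccuracies are worth correcting. First, $\bar S_{N'}$ is genuinely nonconstant on $\bar\Delta_{l,j}$ even when $l\geq N'$, so the ``fixed sum'' phrasing is wrong --- but you correct for this a line later with the $C_0|h|_{\eps,\infty}$ term, so the stated estimate stands. Second, and more substantively, your parenthetical ``the low levels give only a $\|h\|_{L^p}$ term'' is false: replacing the pointwise value $|h(z)|$ by its average over the return branch $\bar F_{N'-l,z}^{-1}(\bar\Delta_0)$ produces, besides the $L^1$-term, an error of order $|h|_{\beta_1^\gamma,\eps',\text{Lip}}\,\beta_1^{\gamma(N'-l)}e^{\eps'\ell(z)}$. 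After weighting by $e^{-l\eps}$ and summing, this term is small only because $\beta_1^{\gamma(N'-l)}e^{-l\eps}\le\min\bigl(\beta_1^{\gamma N'/2},e^{-\eps N'/2}\bigr)$ --- one splits according to whether $l\geq N'/2$ --- and must itself be absorbed into the $\theta\|h\|_{\cB}$ bucket by taking $N'$ large. So the low-level regime does contribute a $\|h\|_{\cB}$-sized piece, not only a $\|h\|_{L^p}$ piece, and the uniform operator bound $M$ you introduce is not really the mechanism that closes this; the geometric decay of that hybrid factor is. With that correction the argument you sketch closes as in the paper.
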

Observe that, for any  $h\in\mathcal B_{\beta_1^\gamma,\eps,\eps'}$, $|h|\le|h|_{\eps,\infty}e^{\eps\ell}\in L^{\frac{\eps_0}{\eps_0-\gamma\eps'_1}}(\bar\nu)$
since $\eps\le \eps_0-\gamma\eps'_1$.
\begin{proof}
Note that the condition on $\gamma$ implies that $\beta_0\le\beta_1^\gamma$.
Let $T,\gamma,\eps'_0,\eps,\eps',s,h,\theta$ be as in the statement of the Lemma.
\begin{itemize}[leftmargin=*]
\item Assume first that $l\ge N$. On $\Delta_l$, $\bar F^{-N}$ is well defined by $\bar F^{-N}(x,l)=(x,l-N)$ and $\mathcal L_{is}^N(h)=(e^{is\bar S_N}h)\circ\bar F^{-N}$. Thus
\begin{align}
\nonumber|(\cL_{is}^N ( h))_{|\bar\Delta_l}
|_{\eps,\infty} &\le |(\cL^N ( |h|))_{|\bar\Delta_l}
|_{\eps,\infty}
\le   \Vert (\cL^N ( |h|))_{|\bar\Delta_l}\Vert_\infty e^{-l\eps}\\
&\le   e^{-N\eps}\Vert h_{|\bar\Delta_{l-N}}\Vert_\infty e^{-(l-N)\eps}
\le   e^{-N\eps}|h|_{\eps,\infty}\le \frac\theta 2\Vert h\Vert_{\beta_1^\gamma,\eps,\eps'}\label{equa1}
\end{align}
if $N$ is large enough so that $e^{-\eps'_0N}<\theta/2$.
Moreover,
\begin{align*}
&|(\cL_{is}^N ( h))_{|\bar\Delta_\ell}|_{\beta_1^\gamma,\eps',\text{Lip}}=|(\cL^N (e^{is\bar S_N} h))_{|\bar\Delta_\ell}|_{\beta_1^\gamma,\eps',\text{Lip}}\\
&\le 
\beta_1^{\gamma N} e^{-\eps'N}\sup_{l,j}\sup_{y,y'\in\bar\Delta_{l,j}\, :\ \hat s(y,y')>N}\frac{|e^{is\bar S_N(y)}h(y)-e^{is\bar S_N(y')}h(y')|}{\beta_1^{\gamma \hat s(y,y')}}e^{-l\eps'}\, .
\end{align*}
But,
for any $l\ge N$, any $j$ and any $y,y'\in\bar\Delta_{l,j}$ such that $\hat s(y,y')>N$, we have
\begin{align*}
|e^{si\bar S_N(y)}&h(y)-e^{is\bar S_N(y)}h(y')| \\ &\ \ \le |h(y)-h(y')|+|h(y)|\, |e^{is\bar S_N(y)}h(y)-e^{is\bar S_N(y')}h(y')|\\
&\ \ \le |h|_{\beta_1^\gamma,\eps',\text{Lip}}\beta_1^{\gamma\hat s(y,y')}e^{l\eps'}
+|h|_{\eps,\infty}e^{l\eps}\min\left(2,|s|\sum_{k=0}^{N-1}|\bar\phi(\bar F^k(y))-\bar\phi(\bar F^k(y'))|\right)\\
&\ \ \le |h|_{\beta_1^\gamma,\eps',\text{Lip}}\beta_1^{\gamma\hat s(y,y')}e^{l\eps'}
+|h|_{\eps,\infty}e^{l\eps}\min\left(2,|s|\sum_{k=0}^{N-1}|\bar\phi|_{\beta_1,\eps'_1,\text{Lip}}\beta_1^{\hat s(y,y')-k}e^{(l+k)\eps'_1}\right)\\
&\ \ \le |h|_{\beta_1^\gamma,\eps',\text{Lip}}\beta_1^{\gamma\hat s(y,y')}e^{l\eps'}
+|h|_{\eps,\infty}e^{l\eps}\min\left(2,|s||\bar\phi|_{\beta_1,\eps'_1,\text{Lip}}\beta_1^{\hat s(y,y')}\frac{\beta_1^{-N}e^{\eps'_1N}}{\beta_1^{-1}e^{\eps'_1}-1}e^{l\eps'_1}\right)\\
&\ \ \le |h|_{\beta_1^\gamma,\eps',\text{Lip}}\beta_1^{\gamma\hat s(y,y')}e^{l\eps'}+2|h|_{\eps,\infty}e^{l\eps}|s|^\gamma|\bar\phi|^\gamma_{\beta_1,\eps'_1,\text{Lip}}\beta_1^{\gamma\hat s(y,y')}\frac{\beta_1^{-\gamma N}e^{\eps'_1\gamma N}}{(\beta_1^{-1}e^{\eps'_1}-1)^\gamma}e^{\gamma l\eps'_1}\\
&\ \ \le \left(|h|_{\beta_1^\gamma,\eps',\text{Lip}}+2|h|_{\eps,\infty}|s|^\gamma|\bar\phi|^\gamma_{\beta_1,\eps'_1,\text{Lip}}\frac{\beta_1^{-\gamma N}e^{\eps'_1\gamma N}}{(\beta_1^{-1}e^{\eps'_1}-1)^\gamma}\right)\beta_1^{\gamma\hat s(y,y')}e^{l\eps'}\, .
\end{align*}
Therefore, for $l\ge N$,
\begin{align}
|(\cL_{is}^N ( h))_{\bar\Delta_l}|_{\beta_1^\gamma,\eps',\text{Lip}}&\le \beta_1^{\gamma N} e^{-\eps'N} |h|_{\beta_1^\gamma,\eps',\text{Lip}} + 2\frac{e^{-\eps N}|T|^\gamma|\bar\phi|^\gamma_{\beta_1,\eps'_1,\text{Lip}}}{(
\beta_1^{-1}e^{\eps'_1}-1)^\gamma} |h|_{\eps,\infty}\le \frac\theta 2\Vert h\Vert_{\beta_1^\gamma,\eps,\eps'}\, ,
\label{equa2}
\end{align}
if $N$ is large enough so that $(\beta_1^{\gamma_0} e^{-\eps'_0})^N\le\frac\theta 2$ and 
$2\frac{e^{-\eps'_0 N}|T|^{\gamma_1}(1+|\bar\phi|^{\gamma_1}_{\beta_1,\eps'_1,\text{Lip}})}{\min(1,(
\beta_1^{-1}e^{\eps'_1}-1)^{\gamma_1})}\le \frac\theta 2$.
\item Assume from now on that $l< N$. 
We will use \eqref{formuleOpYoung}.
Then
\begin{align}
|(\cL_{is}^N ( h))_{|\bar\Delta_{l
}}
\nonumber|_{\eps,\infty} &\le  \sup_{x\in\bar\Delta_{l
}} \sum_{z\in \bar F^{-N}(\{x\})} |e^{S^{\bar g}_N(z)}e^{is
\bar S
_N(z)}h(z)|e^{-l\eps}\\
&\le  \sup_{x\in\bar\Delta_{l
}} \sum_{z\in \bar F^{-N}(\{x\})} e^{S^{\bar g}_N(z)}|h(z)|e^{-l\eps}\, .\label{AAAAAA1}
\end{align}
Note that for any $x\in\bar\Delta_l$ and any $z\in \bar F^{-N}(\{x\})$,
$\bar F^{-l}(x)\in\bar\Delta_0$ and 
$S_N^{\bar g}(z)=S_{N-l}^{\bar g}(z)$.
Moreover,  due to \eqref{regulg}, for all $x,y\in\bar\Delta_{l}$ and $z\in \bar F^{-N}(\{x\})$,
\begin{align}\label{regulsumg}
e^{S^{\bar g}_{N-l}(z)-S^{\bar g}_{N-l}(\bar F_{N-l,z}(\bar F^{-l}(y))
)}&\leq  e^{\bar C_{\bar g}\sum_{k=0}^{N-l-1}\beta_0^{\hat s(x,y)+N-k}} \leq  e^{\bar C_{\bar g}\frac{\beta_0^{\hat s(x,y)+l}}{1-\beta_0}}\, ,
\end{align}
where $\bar F_{N-l,z}^{-1}$ is the inverse branch of $\bar F^{N-l}$
sending $z$ to $\bar F^{-l}(x)$.
So for any $y'\in \bar F_{N-l,z}^{-1}(\bar\Delta_0)$,
\[
e^{S^{\bar g}_{N-l}(z)}
\leq e^{S^{\bar g}_{N-l}(
y'
)
}  e^{\frac{\bar C_{\bar g}}{1-\beta_0}}\, . 
\]
Thus,
\begin{align}
e^{S^{\bar g}_{N-l}(z)} \bar\nu(\bar\Delta_{0
})= \int_{\bar\Delta_0}e^{S^{\bar g}_{N-l}(z)}\, d\bar\nu(y')
 &\le e^{\frac{\bar C_{\bar g}}{1-\beta_0}} 
\int_{\bar\Delta_0}e^{S^{\bar g}_{N-l}(\bar F_{N-l,z}^{-1}(y'))}
\, d\bar\nu(y') \nonumber \\
\nonumber&\le   e^{\frac{\bar C_{\bar g}}{1-\beta_0}}\int_{\bar\Delta}(\cL^{N-l}\mathbf 1_{\bar F_{N-l,z}^{-1}(\bar\Delta_0)})(y')
\, d\bar\nu(y')\\
&\le \bar\nu(\bar F_{N-l,z}^{-1}(\bar\Delta_{0
}))   e^{\frac{\bar C_{\bar g}}{1-\beta_0}} \, .
\label{AAAAAA2}
\end{align}
Moreover,
\begin{align}\label{AAAAAA2bis}
\left|\bar\nu(\bar F_{N-l,z}^{-1}(\bar\Delta_{0
}))|h(z)|-\int_{\bar F_{N-l,z}^{-1}(\bar\Delta_{0
})}|h|\, d\bar\nu\right|\le\bar\nu(\bar F_{N-l,z}^{-1}(\bar\Delta_{0})) \vert h\vert_{\beta_1^\gamma,\eps',\text{Lip}}\beta_1^{\gamma(N-l)} e^{\eps'\ell(z)}.
\end{align}
Combining this with \eqref{AAAAAA1} and \eqref{AAAAAA2}, we obtain
\begin{align}
|(\cL_{is}^N& (h))_{|\bar\Delta_{l,j}}|_{\eps,\infty}\nonumber\\&\le\frac{ e^{\frac{\bar C_{\bar g}}{1-\beta_0}}}{\bar\nu(\Delta_{0})} e^{-l\eps}\sup_{x\in\bar\Delta_l} \sum_{z\in \bar F^{-N}(\{x\})} \left(\int_{\bar F_{N-l,z}^{-1}(\bar\Delta_{0})}|h|\, d\bar\nu+ \bar\nu(\bar F_{N-l,z}^{-1}(\bar\Delta_{0 })) \vert h\vert_{\beta_1^\gamma,\eps',\text{Lip}}\beta_1^{\gamma(N-l)} e^{\eps'\ell(z)}\right)\nonumber\\
&\le\frac{ e^{\frac{\bar C_{\bar g}}{1-\beta_0}}}{\bar\nu(\Delta_{0})} e^{-l\eps}\left(\Vert h\Vert_{L^1(\bar\nu)}+ \vert h\vert_{\beta_1^\gamma,\eps',\text{Lip}}\beta_1^{\gamma(N-l)} \Vert e^{\eps'\ell}\Vert_{L^1(\bar\nu)}\right)\nonumber\\
&\le\frac{ e^{\frac{\bar C_{\bar g}}{1-\beta_0}}}{\bar\nu(\Delta_{0
})} 
 \left(\Vert h\Vert_{L^1(\bar\nu)}+ \vert h\vert_{\beta_1^\gamma,\eps',\text{Lip}}\min(\beta_1^{\frac{\gamma N}2},e^{-\frac {N\eps}2}) \Vert e^{\eps'\ell}\Vert_{L^1(\bar\nu)}\right)
\end{align}
Indeed, either $ N\le 2l$ and then $\beta_1^{\gamma(N-l)}e^{-l\eps}\le e^{-l\eps}\le e^{-\frac{N\eps}2}$ or $N>2l$ and then $\beta_1^{\gamma(N-l)}e^{-l\eps}\le \beta_1^{\gamma(N-l)}\le\beta_1^{\frac {\gamma N}2}
$.
Note that $\Vert e^{\eps'\ell}\Vert_{L^1(\bar\nu)}$ is finite since
$\eps'\le\eps_0$.
Thus
\begin{equation}
|(\cL_{is}^N ( h))_{|\bar\Delta_{l,j}}|_{\eps,\infty}\leq\frac{ e^{\frac{\bar C_{\bar g}}{1-\beta_0}}}{\bar\nu(\Delta_{0
})} 
\Vert h\Vert_{L^1(\bar\nu)}+\frac\theta 2 \Vert h\Vert_{\beta_1^\gamma,\eps,\eps'}\, ,
\label{equa3}
\end{equation}
if $N$ is large enough so that
$\frac{ e^{\frac{\bar C_{\bar g}}{1-\beta_0}}}{\bar\nu(\Delta_{0
})} 
\min(\beta_1^{\frac{\gamma_0 N}2},e^{-\frac {N\eps'_0}2}) \Vert e^{\eps_0\ell}\Vert_{L^1(\bar\nu)}\le\frac\theta 2$.

It remains to estimate the Young Lipschitz constant in the case $l<N$.
\begin{align}
|(\cL_{is}^N ( h))&_{|\bar\Delta_{l,j}}|_{\beta_1^\gamma,\eps',\text{Lip}} \nonumber \\ &\le  e^{-l\eps'}\sup_{x\in\bar\Delta_{l,j
}} \sum_{z\in \bar F^{-N}(\{x\})} \sup_{y\in \bar F^{-1}_{N-l,z}(\bar\Delta_0)\, :\, \hat s(y,z)\ge N}\frac{\left| e^{S^{\bar g}_N(z)}e^{is\bar S_N(z)}h(z)-e^{S^{\bar g}_N(y)}e^{is\bar S_N(y)}h(y)\right|}{\beta_1^{\gamma (\hat s(y,z)-N)}} \nonumber \\
\nonumber&\le  e^{-l\eps'}\beta_1^{\gamma N}\sup_{x\in\bar\Delta_{l,j
}} \sum_{z\in \bar F^{-N}(\{x\})} \left(e^{S^{\bar g}_N(z)}|h|_{\beta_1^\gamma,\eps',\text{Lip}}e^{\eps'\ell(z)}
+\omega_N(z)\sup_{\bar F_{N-l,z}^{-1}(\bar\Delta_0)} |h|
\right)\\
&\le  \beta_1^{\gamma N}  |h|_{\beta_1^\gamma,\eps',\text{Lip}}   |\cL^N(e^{\eps'\ell})|_{\eps',\infty}+e^{-l\eps'}\beta_1^{\gamma N} 
\sup_{x\in\bar\Delta_{l,j
}} \sum_{z\in \bar F^{-N}(\{x\})}\left(\omega_N(z)\sup_{\bar F_{N-l,z}^{-1}(\bar\Delta_0)} |h|
\right)\, 
\label{BBBB1}
\end{align}
where 
$$\omega_N(z):=\sup_{y\in \bar F^{-1}_{N-l,z}(\bar\Delta_0)\, :\, \hat s(y,z)\ge N}\frac{\left| e^{S^{\bar g}_N(z)}e^{is\bar S_N(z)}-e^{S^{\bar g}_N(y)}e^{is\bar S_N(y)}\right|}{\beta_1^{\gamma\hat s(y,z)}}.$$

Observe first that the first term of the right hand side of 
\eqref{BBBB1} can be dominated by
\begin{equation}\label{BBBB1bis}
\beta_1^{\gamma N}  |h|_{\beta_1^\gamma,\eps',\text{Lip}} \frac{
e^{\frac{\bar C_{\bar g}}{1-\beta_0}}}{\bar\nu(\Delta_{0
})} 
\Vert e^{\eps'\ell}\Vert_{L^1(\bar\nu)}\le  \frac\theta 4  \Vert h\Vert_{\beta_1^\gamma,\eps,\eps'}\, ,
\end{equation}
thanks to \eqref{equa3} taking $N$ large enough so that
$\beta_1^{\gamma_0 N}  |\frac{
e^{\frac{\bar C_{\bar g}}{1-\beta_0}}}{\bar\nu(\Delta_{0
})} 
\Vert e^{\eps_0\ell}\Vert_{L^1(\bar\nu)}\le  \frac\theta 4 $. 
Note that
\begin{align}
\omega_N(z)\le\sup_{y\in \bar F^{-1}_{N-l,z}(\bar\Delta_0)\, :\, \hat s(y,z)\ge N} &\frac{e^{S^{\bar g}_N(z)}|e^{is\bar S_N(z)}-e^{is\bar S_N(y)}|}{\beta_1^{\gamma\hat s(y,z)}} \label{BBBB2} \\
&\ \ \ \ \ \ \ \ \ \ \ \ \ \ \ +\sup_{y\in \bar F^{-1}_{N-l,z}(\bar\Delta_0)\, :\, \hat s(y,z)\ge N} \frac{|e^{S^{\bar g}_N(z)}-e^{S^{\bar g}_N(y)}|}{\beta_1^{\gamma\hat s(y,z)}}\, .\nonumber
\end{align}
But, due to~\eqref{regulsumg},
\begin{align}
\nonumber|e^{S^{\bar g}_N(z)}-e^{S^{\bar g}_N(y)}|&=e^{S^{\bar g}_N(z)}\left|1-e^{S^{\bar g}_N(y)-S^{\bar g}_N(z)}\right|
\\
\nonumber&\le e^{S^{\bar g}_N(z)}
\left(e^{\bar C_{\bar g}\frac{\beta_0^{\hat s(y,z)-N}}{1-\beta_0}}-1\right)\\
&\le e^{S^{\bar g}_N(z)+\frac{\bar C_{\bar g}}{1-\beta_0}}\bar C_{\bar g}\frac{\beta_0^{\hat s(y,z)-N}}{1-\beta_0}\, .\label{BBBB3}
\end{align}
Moreover,
\begin{align*}
|e^{is\bar S_N(z)}-e^{is\bar S_N(y)}|&\le \min\left(2,\sum_{k=0}^{N-1}|s\bar \phi|_{\beta_1,\eps'_1,\text{Lip}}\beta_1^{\hat s(y,z)-k}e^{\eps'_1(\ell(z)+k)}\right)\\
&\le \min\left(2,|s\bar \phi|_{\beta_1,\eps'_1,\text{Lip}}\frac{\beta_1^{\hat s(y,z)-N}e^{\eps'_1(\ell(z)+N)}}{\beta_1^{-1} e^{\eps'}-1}\right)\\
&\le 2|s\bar \phi|^\gamma_{\beta_1,\eps'_1,\text{Lip}}\frac{
e^{\eps'_1(\gamma\ell(z)+\gamma N)}}{(\beta_1^{-1} e^{\eps'}-1)^\gamma}\beta_1^{\gamma(\hat s(y,z)-N)}\, .
\end{align*}
Combining this with \eqref{BBBB2} and \eqref{BBBB3} and using the fact that
$\beta_0\le\beta_1^\gamma$, we obtain
\[
\omega_N(z)\le e^{S^{\bar g}_N(z)+\frac{\bar C_{\bar g}}{1-{\beta_0}}}\bar C_{\bar g} \frac{{\beta_1}^{-\gamma N}}{1-{\beta_0}}+ 2e^{S^{\bar g}_N(z)}|T\bar \phi|^\gamma_{\beta_1,\eps'_1,\text{Lip}}\frac{\beta_1^{-\gamma N}
e^{\eps'_1(\gamma\ell(z)+\gamma N)}}{(\beta_1^{-1} e^{\eps'}-1)^\gamma}\, ,
\]
and so
the last term of \eqref{BBBB1} is less than
\begin{align*}
c\, e^{-l\eps'}
\sup_{x\in\bar\Delta_{l,j
}} \sum_{z\in \bar F^{-N}(\{x\})} &e^{S^{\bar g}_N(z)}
 \left(1+ 
e^{\eps_1'(\gamma\ell(z)+\gamma N)}
\right)\sup_{\bar F_{N-l,z}^{-1}(\bar\Delta_0)} |h|\\
&\le 2c\, e^{-l\eps'} \sup_{x\in\bar\Delta_{l,j}} \sum_{z\in \bar F^{-N}(\{x\})} e^{S^{\bar g}_N(z)}e^{\eps_1'(\gamma\ell(z)+\gamma N)}
 \sup_{\bar F_{N-l,z}^{-1}(\bar\Delta_0)} |h|\, ,
\end{align*}
where $c$ is some positive constant depending on $\gamma_0,\gamma_1,T$. To control this term, we will proceed as for \eqref{equa3}.
Recall that \eqref{AAAAAA2} says that
\[
e^{S^{\bar g}_{N}(z)} \leq \frac{\bar\nu(\bar F_{N-l,z}^{-1}(\bar\Delta_{0
}))}{\bar\nu(\bar\Delta_{0
})}   e^{\frac{\bar C_{\bar g}}{1-\beta_0}} \, 
\]
and note that, analogously to \eqref{AAAAAA2bis},
\begin{align*}
\bigg|\bar\nu(\bar F_{N-l,z}^{-1}(\bar\Delta_{0}))\sup_{\bar F_{N-l,z}^{-1}(\bar\Delta_0)} |h|e^{\eps'_1\gamma\ell(z)}-&\int_{\bar F_{N-l,z}^{-1}(\bar\Delta_{0})}|h|e^{\eps'_1\gamma\ell(z)}\, d\bar\nu\bigg| \\ &\le \bar\nu(\bar F_{N-l,z}^{-1}(\bar\Delta_{0}))\vert h\vert_{\beta_1^\gamma,\eps',\text{Lip}}\beta_1^{\gamma(N-l)} e^{\eps'\ell(z)}e^{\eps'_1\gamma\ell(z)}\, .
\end{align*}
Since $\eps'=\eps+\gamma\eps'_1$, 
we obtain a domination of  the last term of \eqref{BBBB1} by
\begin{align*}
&2c\, e^{-l\eps'}e^{\eps_1'\gamma N} \frac{
e^{\frac{\bar C_{\bar g}}{1-\beta_0}} }{\bar\nu(\bar\Delta_{0
})}   \left(\Vert he^{\gamma\eps'_1\ell}\Vert_{L^1(\bar\nu)}+
\vert h\vert_{\beta_1^\gamma,\eps',\text{Lip}}\beta_1^{\gamma(N-l)} \Vert e^{(\eps'+\eps'_1\gamma)\ell}\Vert_{L^1(\bar\nu)}
\right)\\
&\le 2c\, \frac{
e^{\frac{\bar C_{\bar g}}{1-\beta_0}} }{\bar\nu(\bar\Delta_{0
})}   \left(e^{\eps_1'\gamma N} \Vert h\Vert_{L^{\frac{\eps_0}{\eps_0-\gamma\eps'_1}}(\bar\nu)}\, \Vert e^{\gamma\eps'_1\ell}\Vert_{L^{\frac{\eps_0}{\gamma\eps'_1}}(\bar\nu)}+
\vert h\vert_{\beta_1^\gamma,\eps',\text{Lip}}\max(e^{-\frac{\eps'_0N} 2 },(\beta_1 e^{\eps'_1})^{\frac {\gamma_0 N}2})  \Vert e^{(\eps'+\eps'_1\gamma)\ell}\Vert_{L^1(\bar\nu)}
\right)\\
\end{align*}
Indeed, since $\eps'=\eps+\gamma \eps'_1$, 
$e^{(\eps'_1\gamma N-\eps' l)}\beta_1^{\gamma(N-l)}
=(\beta_1 e^{\eps'_1})^{\gamma_0(N-l)} e^{-\eps l}
$ with $\beta_1 e^{\eps'_1}<1$ and $e^{-\eps'_0}<1$  and either $N-l\ge N/2$ or $l\ge N/2$.
Now 
$\eps'+\eps'_1\gamma\le \eps_0$ ensures that 
$\Vert e^{(\eps'+\eps'_1\gamma)\ell}\Vert_{L^1(\bar\nu)}$.

We conclude by combining this with \eqref{equa1}, \eqref{equa2},
\eqref{equa3}, \eqref{BBBB1bis} (for the first term of  the right hand side of \eqref{BBBB1}).
\end{itemize}\end{proof}
 
\begin{lem}[Compact inclusion]\label{compactinclusion}
Let $\gamma,\eps>0$ such that $\eps+\gamma\eps'_1\le\eps_0$. Then the unit ball of $\mathcal B_{\beta_1^\gamma,\eps,\eps':=\eps+\gamma\eps'_1}$
is relatively compact in $L^{\frac{\eps_0}{\eps_0-\gamma\eps'_1}}(\bar\nu)$.
 \end{lem}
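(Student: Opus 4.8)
The plan is to run the classical Arzelà--Ascoli-type compactness argument for Young-type spaces (as in \cite[Lemma~2]{Y} and \cite{DNP}), combining a uniform control of the mass in the high levels of the tower with a uniform approximation of functions in the unit ball by ``cylinder step functions'' on each fixed level. Throughout, set $p:=\frac{\eps_0}{\eps_0-\gamma\eps'_1}$ and $\eps':=\eps+\gamma\eps'_1$, and let $(h_n)_{n\ge 1}$ be a sequence in the unit ball of $\mathcal B_{\beta_1^\gamma,\eps,\eps'}$; we shall extract a subsequence that is Cauchy, hence convergent, in $L^{p}(\bar\nu)$.

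\textbf{Step 1 (tails).} On the level $\bar\Delta_l$ one has $|h_n|\le e^{l\eps}$, so $|h_n|^{p}\le e^{l\eps p}\le e^{l\eps_0}$, where the last inequality is exactly the hypothesis $\eps+\gamma\eps'_1\le\eps_0$ (equivalently $\eps p\le\eps_0$). Since $e^{\eps_0\ell}\in L^1(\bar\nu)$ by~\eqref{integrabilityeeps}, it follows that
\[
\sup_{n\ge 1}\int_{\bigcup_{l\ge L}\bar\Delta_l}|h_n|^{p}\,d\bar\nu\le \int_{\{\ell\ge L\}}e^{\eps_0\ell}\,d\bar\nu\longrightarrow 0\quad(L\to\infty).
\]
In particular the embedding $\mathcal B_{\beta_1^\gamma,\eps,\eps'}\hookrightarrow L^p(\bar\nu)$ makes sense.

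\textbf{Step 2 (finitely many levels).} Fix $L$ and work on $\bar\Delta^{<L}:=\bigcup_{l<L}\bar\Delta_l=\bigsqcup_{l<L}\bigsqcup_j\bar\Delta_{l,j}$, a countable union of atoms of finite $\bar\nu$-measure. For $m\ge 1$ the relation ``$\hat s(y,y')>m$'' is an equivalence relation on each $\bar\Delta_{l,j}$ (transitivity comes from the ultrametric inequality for $\hat s$); call its classes the $m$-cylinders. On an $m$-cylinder contained in $\bar\Delta_{l,j}$ any $h$ in the unit ball oscillates by at most $e^{l\eps'}\beta_1^{\gamma(m+1)}\le e^{L\eps'}\beta_1^{\gamma(m+1)}$, since there $\hat s\ge m+1$. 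The $m$-cylinder partition of $\bar\Delta^{<L}$ being countable (this is part of the Young-tower structure recalled in Section~\ref{contextYoung}) and $\bar\nu(\bar\Delta^{<L})<\infty$, for every $\epsilon>0$ there are finitely many $m$-cylinders $C_1,\dots,C_K$ with $\bar\nu\big(\bar\Delta^{<L}\setminus\bigcup_iC_i\big)<\epsilon$. Choosing points $y_i\in C_i$, the vectors $(h_n(y_1),\dots,h_n(y_K))$ are bounded by $e^{L\eps}$, so by a diagonal extraction there is a subsequence (not relabeled) along which each $h_n(y_i)$ converges; along it
\[
\limsup_{n,n'\to\infty}\ \big\|h_n-h_{n'}\big\|_{L^\infty(\bigcup_iC_i)}\le 2e^{L\eps'}\beta_1^{\gamma(m+1)},
\]
while on $\bar\Delta^{<L}\setminus\bigcup_iC_i$ the functions $h_n,h_{n'}$ differ by at most $2e^{L\eps}$ on a set of measure $<\epsilon$.

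\textbf{Step 3 (diagonalization).} Combining Steps~1--2: given $\eta>0$, choose $L$ so the tail in Step~1 contributes $<\eta$ to the $L^p(\bar\nu)$-norm, then $m$ and $\epsilon$ so that the $L^p(\bar\nu)$-contributions of the two terms in Step~2 are $<\eta$, and extract the corresponding subsequence. A standard nested diagonal procedure over $\eta=1/k\to 0$ yields a single subsequence of $(h_n)$ which is Cauchy in $L^p(\bar\nu)$; by completeness it converges, proving the relative compactness of the unit ball of $\mathcal B_{\beta_1^\gamma,\eps,\eps'}$ in $L^{\frac{\eps_0}{\eps_0-\gamma\eps'_1}}(\bar\nu)$. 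The only point requiring care is the bookkeeping in Step~2 --- namely that each atom $\bar\Delta_{l,j}$ carries an at most countable $m$-cylinder partition, so that finitely many $m$-cylinders exhaust all but $\epsilon$ of the mass of $\bar\Delta^{<L}$; this is built into the construction. Everything else is the routine truncation/equicontinuity scheme, the only arithmetic input being the inequality $\eps p\le\eps_0$ furnished precisely by the hypothesis $\eps+\gamma\eps'_1\le\eps_0$.
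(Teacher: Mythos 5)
Your proof is correct and follows the same underlying mechanism as the paper's: truncation in the tower levels using $e^{\eps_0\ell}\in L^1(\bar\nu)$ together with the arithmetic hypothesis $\eps p\le\eps_0$, plus a uniform oscillation bound on deep cylinders, plus compactness of the resulting finite-dimensional approximants. The only presentational difference is that you argue via sequential compactness (Cauchy subsequences extracted pointwise at cylinder representatives, then diagonalized), while the paper proves total boundedness directly: it introduces conditional-expectation projectors $P_k$ onto a finite union $\Sigma_k$ of $k$-cylinders with base level $l_0<k$ (simultaneously handling your Steps~1 and~2 in one parameter), proves $\sup_{\|h\|\le 1}\|h-P_kh\|_{L^{p_0}}\to 0$, and then covers the finite-dimensional image of the unit ball by explicit $\eta$-balls; the two formulations are of course equivalent in a metric space.
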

\begin{proof}
Let $\eta>0$. Let us prove that the unit ball $ B(0,1)$ of $\cB_{\beta_1^\gamma,\eps,\eps'}$ admits a finite $\eta$-cover for $\Vert\cdot\Vert_{L^{p_0}(\bar\nu)}$ with $p_0:=\frac{\eps_0}{\eps_0-\gamma\eps'_1}$. 

We consider an increasing sequence $(\Sigma_k)_{k\ge 0}$ of subsets of $\bar\Delta$ such that $\Sigma_k$ consists of a finite union of distinct elements $(Q_j)_{j=1,...,N_k}$ of the family $\{\bigcap_{m\le k}\bar F^{-m}(\bar\Delta_{l_m,j_m}),\,  l_0<k\}$ and such that
\[
\lim_{k\rightarrow+\infty}\mathbb E_{\bar\nu}\left( e^{\ell\eps_0}\mathbf 1_{\bar\Delta\setminus\Sigma_k}\right)=0\, .
\]
We consider the projector $P_k$ given by $P_kh(x)=\mathbb E_{\bar\nu}[h|Q_j]$ if $x\in Q_j$ and $P_kh(x)=0$ if $x\not\in\Sigma_k$. Set $p_0:={\frac{\eps_0}{\eps_0-\gamma\eps'_1}}$.
Observe that
\begin{align*}
\Vert h-P_kh\Vert_{L^{p_0}(\bar\nu)}^{p_0}
&\leq   |h|^{p_0}_{\eps,\infty} \mathbb E_{\bar\nu}\left(e^{p_0\ell\eps}\mathbf 1_{\bar\Delta\setminus\Sigma_k}\right)+
         |h|^{p_0}_{\beta_1^\gamma,\eps',\text{Lip}} \mathbb E_{\bar\nu}\left(e^{p_0\eps'\ell}\beta_1^{p_0\gamma k}\right)\\
&\leq     \Vert h\Vert^{p_0}_{\beta_1^\gamma,\eps,\eps'}
                \mathbb E_{\bar\nu}\left(e^{\ell\eps_0}\left(\mathbf 1_{\bar\Delta\setminus\Sigma_k}+
      \beta_1^{p_0\gamma k}\right)\right)\, ,
\end{align*}
which converges to $0$ as $k\rightarrow +\infty$.
Let $\eta>0$ and let us take $k$ large enough so that 
$\Vert \text{Id}-P_k\Vert_{\cL(\cB_{\beta_1^\gamma,\eps,\eps'},L^{p_0}(\bar\nu))}<\eta/3$.

\begin{itemize}[leftmargin=*]
\item Observe first that, for every $h\in B(0,1)$, $P_kh$ has the form $P_kh=\sum_{j=1}^{N_k} a_j e^{\eps\ell(\cdot)}\mathbf 1_{Q_j}$ with $|a_j|\le 1$.
Let us consider a finite covering  $\mathcal A$ of the closed unit complex disk $\mathbb D$ made of balls of diameter
$\eta/(3\Vert   e^{\eps_0\ell}\Vert_{L^{1}(\bar\nu)}^{1/p_0})$. We observe that for any $h\in B(0,1)$ there exists
$A_{i_1},...,A_{i_{N_k}}\subset \mathcal A$ such that $P_kh\in\sum_{j=1}^{N_k} A_{i_j} e^{\eps\ell(\cdot)}\mathbf 1_{Q_j}$.
\item Now if $h_1,h_2\in B(0,1)$ are such that $P_k(h_1),P_k(h_2)\in\sum_{j=1}^{N_k} A_{i_j} e^{\eps\ell(\cdot)}\mathbf 1_{Q_j}$, then
\[
\Vert P_k(h_1)-P_k(h_2)\Vert^{p_0}_{L^{p_0}(\bar\nu)}\le  (\eta/ (3\Vert   e^{\eps_0\ell}\Vert_{L^{1}(\bar\nu)}^{1/p_0}))^{p_0}\Vert   e^{\eps\ell}\Vert^{p_0}_{L^{p_0}(\bar\nu)}\le 
 (\eta/ 3)^{p_0}\, ,
\]
since $\eps p_0\le \eps_0$.
\item Moreover for all $h_1,h_2$ in the unit ball $B(0,1)$ of $\cB_{\beta_1^\gamma,\eps,\eps'}$ such that
$\Vert P_k(h_1)-P_k(h_2)\Vert_{L^{p_0}(\bar\nu)}<\eta/3$, we also have
\[
\Vert h_1-h_2\Vert_{L^{p_0}(\bar\nu)}\le \Vert P_k(h_1)-P_k(h_2)\Vert_{L^{p_0}(\bar\nu)}+\sum_{i=1}^2 \Vert h_i-P_k(h_i)\Vert_{L^{p_0}(\bar\nu)}<\eta\, .
\]
\item Thus the sets $P_k^{-1}\left(A_{i_j} e^{\eps\ell(\cdot)}\mathbf 1_{Q_j}\right)$ realize a finite covering of $B(0,1)$ in sets of diameter less than $\eta$ for
$\Vert\cdot\Vert_{L^{p_0}(\bar\nu)}$.
\end{itemize}
\end{proof}

\begin{proof}[Proof of Proposition \ref{propCond3'}]
Recall that $\cX_j=\cV_{a_j}^{(\eps)}$ and $\cX_j^{(+)}=\cV_{b_j}^{(\eps)}$
with
$\mathcal V^{(\eps)}_\theta:=\mathcal B_{\beta_1^\gamma,\eps+\theta\eps_1,\eps+\theta\eps_1+\gamma\eps'_1}$ for any $\theta\in[0,r+2+\frac{\eps''}{\eps_1}]$, $a_k=k\left(1+\frac{\eps''}{(r+3)\eps_1}\right)$ and $b_k=a_k+\frac{\eps''}{2(r+3)\eps_1}c$. Observe that for every
$0\le\theta<\theta'\le r+2+\frac{\eps''}{\eps_1}$, the following sequence of continuous inclusions hold true
\[
\mathcal V_{0}\hookrightarrow \mathcal V_{\theta}\hookrightarrow \mathcal V_{\theta'}\hookrightarrow \mathcal V_{r+2+\frac{\eps''}{\eps_1}}\, ,
\]
and for every $j\in\{0,...,r+2\}$, the map
$s\mapsto \mathcal L_{is}$ is $C^{j}$ from $\mathbb R$ to $\mathcal L(\mathcal V_\theta,\mathcal V_{\theta'})$ as soon as $\theta'-\theta>j$. This ensures Assumption $(A)(3)$. The fact that
$\mathcal V_\theta\hookrightarrow L^{\frac{\eps_0}{\eps_0-\gamma\eps'_1}}(\bar\nu)$ comes from $e^{\eps_0\ell}\in L^1(\bar \nu)$ and $\eps+\theta\eps_1\le\eps_0$. Moreover, the Doeblin Fortet inequalities of Assumption $(A)(4)$ follow from Lemma~\ref{DFHP}.

Since $\cL\mathbf 1=\mathbf 1$, the quasicompacity of $\cL$ on $\cB_{\eps,\eps+\gamma\eps'_1}$ as soon as $\eps+2\gamma\eps'_1\le\eps_0$ follows from Proposition~\ref{DF}
combined with Lemmas~\ref{DFHP} (ensuring the Doeblin Fortet inequality) and~\ref{compactinclusion} (ensuring the compact inclusion of $\cB_{\eps,\eps+\gamma\eps'_1}$ in $L^{\frac{\eps_0}{\eps_0-\gamma\eps'_1}}(\bar\nu)$).
The fact that 1 is the unique eigenvalue of modulus 1 of $\cL$ on these spaces and is simple follows from the assumption that $\text{gcd}(r_i)$ and  \cite[Lemma 5]{Y}. This ends the proof of Assumption $(B)(1)$.

Let us prove that $\sum_{n\ge 0}\Vert \cL^n(\bar\phi)\Vert_2<\infty$. We have already noticed that $\bar\phi\in L^2(\bar\nu)$. Moreover, 
$$\mathcal L(\bar\phi)=-i\mathcal L_0^{(1)}(\mathbf 1)\in \mathcal V^{(\eps/2)}_{1+\frac{\eps''}{3\eps_1}}.$$
Since $\mathcal L$ is quasicompact on $\mathcal V^{(\eps/2)}_{1+\frac{\eps''}{3\eps_1}}$ with single dominating eigenvalue 1 which has multiplicity 1, we conclude that $\sum_{n\ge 1}\Vert \cL^n(\bar\phi)\Vert_{L^2(\bar\nu)}\le c\sum_{n\ge 1}\Vert \cL^n(\bar\phi)\Vert_{\mathcal V^{(\eps/2)}_{1+\frac{\eps''}{3\eps_1}}}<\infty$. Since 
\[
\frac{\eps}{2}+\eps_1(1+\frac{\eps''}{3\eps_1})<\frac12(\eps+(r+2)\eps_1+\gamma\eps'_1+\eps'')\le \frac{\eps_0}2\, ,
\]
we conclude that $$\mathcal V^{(\eps/2)}_{1+\frac{\eps''}{3\eps_1}}\hookrightarrow  L^{\frac{\eps_0}{\frac\eps 2+\eps_1+\frac{\eps''}3}}(\bar\nu)\hookrightarrow L^2(\bar\nu),$$ and hence, $\sum_{n\ge 1}\Vert \cL^n(\bar\phi)\Vert_2<\infty$ which ends the proof of Assumption $(B)(3)$.

The Doeblin Fortet inequality coming from Lemma~\ref{DFHP} combined with the compact inclusion property
stated in Lemma~\ref{compactinclusion} ensures, by Proposition~\ref{DF} that the spectral radius of $\cL_{is}$ is strictly smaller than 1 and that the spectral radius of $\cL_{is}$ is smaller than or equal to 1. Hence Assumption $(B)(2)$ follows from Lemma~\ref{periphericalspect}. 
\end{proof}

\subsection{Proofs for hyperbolic Young towers and unbounded observables}
Assume $\mathfrak p=\text{Id}$, i.e., $(f,\mathcal M,\mu)=(F,\Delta,\nu)$.
For any $\beta\in (0,1)$ and $\eps\ge 0$, we recall that $\mathcal V^{(0)}_{\beta,\eps}$ is the space of functions $h:\Delta\rightarrow\mathbb C$ such that $h e^{-\eps\ell}$ belongs to the space $\widetilde\cB_\beta$ defined in \eqref{tildeB},
where $\ell(x)$ is the level of the tower $\Delta$ to which $x$ belongs.
\begin{lem}
Let $\beta\in(0,1)$ and $\eps>0$.
If  $\phi\in\mathcal V_{\beta,\eps}^{(0)}$, then there exist $\bar\phi\in\mathcal B_{\sqrt{\beta},\eps}$ and $\chi\in\mathcal V^{(0)}_{\sqrt{\beta},\eps}$
such that
\[
\phi =\bar\phi\circ\mathfrak p+\chi-\chi\circ F\, .
\]
\end{lem}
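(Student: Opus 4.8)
The plan is to reproduce, verbatim, the cohomological construction used to prove the first part of Lemma~\ref{LEM0}, now carrying the exponential weight $e^{\eps\ell}$ along. Since $\mathfrak p=\mathrm{Id}$ in this subsection, I would set
\[
\chi:=\sum_{k\ge 0}\bigl(\phi\circ F^k-\phi\circ F^k\circ\bar{\mathfrak p}\bigr),\qquad
\bar\phi:=\Bigl(\phi+\sum_{k\ge 1}\bigl(\phi\circ F^k-\phi\circ F^{k-1}\circ\bar F\bigr)\Bigr)\Big|_{\bar\Delta},
\]
exactly as there. Once the two series are shown to converge in the appropriate weighted norms, the telescoping identity $\phi-\chi+\chi\circ F=\bar\phi\circ\bar{\mathfrak p}$ is purely algebraic and follows, as in Lemma~\ref{LEM0}, from $\bar{\mathfrak p}\circ F=\bar F\circ\bar{\mathfrak p}$; so the entire content is the two membership assertions $\chi\in\mathcal V^{(0)}_{\sqrt{\beta},\eps}$ and $\bar\phi\in\mathcal B_{\sqrt{\beta},\eps}$.

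Write $\phi=e^{\eps\ell}g$ with $g:=e^{-\eps\ell}\phi\in\mathcal B^{(0)}_\beta$, which is the defining property of $\mathcal V^{(0)}_{\beta,\eps}$. The one new ingredient compared with the bounded setting of Lemma~\ref{LEM0} is the behaviour of the level function: $\ell$ is constant along stable leaves (hence $\bar{\mathfrak p}$-invariant, and $\ell\circ F^k(x)=\ell\circ F^k(\bar{\mathfrak p}(x))$), and $\ell\circ F^k\le\ell+k$. Since $x$ and $\bar{\mathfrak p}(x)$ lie on a common stable leaf, so do $F^k(x)$ and $F^k(\bar{\mathfrak p}(x))$, and the third term of the norm \eqref{tildeB} gives
\[
\bigl|\phi\circ F^k(x)-\phi\circ F^k(\bar{\mathfrak p}(x))\bigr|
=e^{\eps\ell(F^k(x))}\,\bigl|g(F^k(x))-g(F^k(\bar{\mathfrak p}(x)))\bigr|
\le e^{\eps\ell(x)}\,\Vert g\Vert^{(0)}_\beta\,(\beta e^{\eps})^{k},
\]
and likewise $|\phi\circ F^k-\phi\circ F^{k-1}\circ\bar F|\le e^{\eps\ell}\,\Vert g\Vert^{(0)}_\beta\,(\beta e^{\eps})^{k-1}$ using $\gamma^s(F(x))=\gamma^s(\bar F(x))$. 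Under the running smallness of $\beta$ relative to $\eps$ — the hypothesis $\sqrt{\beta}\,e^{\eps}<1$ carried in the applications (e.g.\ Theorem~\ref{EdgeExpforTowers}), which in particular forces $\beta e^{\eps}<1$ — these geometric series converge, yielding $|\chi|\le Ce^{\eps\ell}$ and (a finite sum) $|\bar\phi|\le Ce^{\eps\ell}$: this is the $\Vert\cdot\Vert_\infty$-component of both weighted norms.

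For the two dynamical-Lipschitz seminorms I would copy the two estimates in the proof of Lemma~\ref{LEM0}: for the $\gamma^u$-seminorm, split each cohomological sum at $k\simeq\hat s(x,y)/2$, bounding the small-$k$ block via Lipschitz continuity of $g$ along unstable leaves (decay $\beta^{\hat s(x,y)-k}$) and the large-$k$ block via the uniform bound (decay $\beta^{k}$); for the $\gamma^s$-seminorm, estimate term by term using $\bar{\mathfrak p}(F^n(x))=\bar{\mathfrak p}(F^n(y))$. The only change is that each term now also carries the factor $e^{\eps k}$ coming from $\ell\circ F^k\le\ell+k$, on top of the weight $e^{\eps\ell}$; the running hypothesis on $(\beta,\eps)$ is precisely what keeps the resulting geometric series in $k$ summable, while the balancing at $\hat s(x,y)/2$ is what downgrades $\beta$ to $\sqrt{\beta}$ in the conclusion, just as in Lemma~\ref{LEM0}. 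I also need to check, as in Lemma~\ref{LEM0}, that $\bar\phi$ genuinely defines a function on $\bar\Delta$ with weight governed by the level function of $\bar\Delta$. I expect the main — indeed essentially the only — obstacle to be this bookkeeping of the weight $e^{\eps\ell}$ through the telescoping sums, since $\ell$ is only sub-additive under $F$ rather than $F$-invariant; no idea beyond those already present in Lemma~\ref{LEM0} should be needed.
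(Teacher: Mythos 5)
Your proof takes a genuinely different route from the paper's. The paper's own argument is a two-liner: it applies Lemma~\ref{LEM0} to the \emph{bounded} function $\phi e^{-\eps\ell}\in\mathcal B^{(0)}_\beta$, obtaining $\bar\phi_0\in\mathcal B_{\sqrt\beta,0}$ and $\chi_0\in\mathcal B^{(0)}_{\sqrt\beta}$ with $\phi e^{-\eps\ell}=\bar\phi_0\circ\bar{\mathfrak p}+\chi_0-\chi_0\circ F$, and then simply sets $\bar\phi:=\bar\phi_0 e^{\eps\ell}$ and $\chi:=\chi_0 e^{\eps\ell}$. That rescaling is attractive because the membership claims $\bar\phi\in\mathcal B_{\sqrt\beta,\eps}$ and $\chi\in\mathcal V^{(0)}_{\sqrt\beta,\eps}$ become tautological and no smallness of $\beta$ relative to $\eps$ appears to be needed. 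You instead rerun the Sinai-type telescoping series directly for $\phi$, carrying the weight $e^{\eps\ell}$ through the estimates, at the cost of the auxiliary convergence condition $\sqrt\beta\,e^{\eps}<1$ that the applications of the lemma (e.g.\ Theorem~\ref{EdgeExpforTowers}) do impose.

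In fact your route closes a gap that the paper's shortcut leaves open, and you essentially put your finger on it when you observed that $\ell$ is only sub-additive, not $F$-invariant. Multiplying $\phi e^{-\eps\ell}=\bar\phi_0\circ\bar{\mathfrak p}+\chi_0-\chi_0\circ F$ by $e^{\eps\ell}$ yields $\phi=\bar\phi\circ\bar{\mathfrak p}+\chi-e^{\eps\ell}(\chi_0\circ F)$, and $e^{\eps\ell}(\chi_0\circ F)$ equals $\chi\circ F=e^{\eps\ell\circ F}(\chi_0\circ F)$ only where $\ell\circ F=\ell$; but $\ell\circ F=\ell+1$ off the roof of the tower. (The term $\bar\phi_0\circ\bar{\mathfrak p}$ does rescale cleanly precisely because $\ell$ is constant on stable leaves, so $\ell\circ\bar{\mathfrak p}=\ell$; the coboundary term does not.) So the paper's assignment of $\bar\phi,\chi$ does not literally satisfy the stated cohomological identity, and an argument of your explicit-series type is what is actually needed. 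One small caveat on your side: once the factor $e^{\eps k}$ coming from $\ell\circ F^k\le\ell+k$ is carried through the split at $\lceil\hat s(x,y)/2\rceil$, the balancing gives decay rate $(\beta e^{\eps})^{1/2}$, not $\sqrt\beta$, so strictly speaking you land in $\mathcal B_{(\beta e^{\eps})^{1/2},\eps}$ and $\mathcal V^{(0)}_{(\beta e^{\eps})^{1/2},\eps}$ rather than the spaces indexed by $\sqrt\beta$. This is harmless for the downstream use (only some $\beta'<1$ is needed), but your phrase ``downgrades $\beta$ to $\sqrt\beta$ just as in Lemma~\ref{LEM0}'' should be adjusted.
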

\begin{proof}
Applying Lemma~\ref{LEM0}, with the notations of that section, we know that, for any $\phi\in\mathcal V_{\beta,\eps}^{(0)}$, there exists $\bar\phi_0\in\mathcal B_{\sqrt{\beta}}=\mathcal B_{\sqrt{\beta},0}$ and $\chi_0\in\widetilde{\mathcal B}_{\sqrt{\beta}}$
such that $\phi e^{-\ell\eps}=\bar\phi_0\circ\mathfrak p+\chi_0-\chi_0\circ F$. We end the proof of the lemma by setting
$\bar\phi:=\bar\phi_0 e^{\ell\eps}$ and $\chi:=\chi_0e^{\ell\eps}$.
\end{proof}

The next lemma will be the key step to prove Assumption $(A)(2)$.
Recall $\bar S_n=\sum_{k=0}^{n-1}\bar\phi\circ F^k$. Given $H:\Delta\rightarrow\mathbb C$, as in Assumption $(A)(2)$ and as in Lemma~\ref{LEM0}, we set
\[
h_{k,s}^{(j)}:=H\circ F^k\left(i(\chi \circ  F^k-\bar S_k\circ\mathfrak p)\right)^je^{is\chi \circ F^k }  e^{-is\bar{S}_k\circ \bar{\mathfrak p} }\, ,
\]
and
\[
\bar{h}^{(j)}_{k,s}(x)= e^{-is\bar S_k(x)}\EXP_{\nu}\big[H\circ F^k\, \big(i(\chi \circ  F^k-\bar S_k(x))\big)^je^{is\chi \circ F^k }\, \big|\,\hat s(\cdot , x) >2k\big]\, .
\]
Recall that in Proposition~\ref{propCond3'} and in its proof, we have set $\beta_1=\sqrt{\beta}$ and taken
$\eps,\eps''>0$ and $\gamma\in(0,1)$ such that $\eps+(r+2+2\gamma)\eps_1+\eps''\le\eps_0$ and $\beta_0\leq \beta_1^\gamma< e^{-\eps_1}$. 
Moreover, we have set
$p_0:=\frac{\eps_0}{\eps_0-\gamma\eps_1}$ and considered
the family of Banach spaces
$(\mathcal V^{(\eps)}_\theta:=\mathcal B_{\beta_1^\gamma,\eps+\theta\eps_1,\eps+(\theta+\gamma)\eps_1})_{\theta\in[0,r+2+\frac{\eps''}{\eps_1}]}$
with $$a_k=k\left(1+\frac{\eps''}{(r+3)\eps_1}\right)\,\,\,\text{and}\,\,\,b_k=a_k+\frac{\eps''}{2(r+3)\eps_1}.$$
Furthermore, in view of Assumption $(A)[r](2)$, we have set $\cX_j=\cV^{(\eps)}_{a_j}$ and
$\cX_j^{(+)}=\cV^{(\eps)}_{b_j}$.
Observe that
 $\mathcal V^{(\eps)}_\theta\hookrightarrow L^{\frac{\eps_0}{\eps+\theta\eps_1}}(\bar\nu)$ and so that
$L^{\frac{\eps_0}{\eps_0-\eps-\theta\eps_1}}(\bar\nu)\hookrightarrow \left(\mathcal V^{(\eps)}_\theta\right)'$.

\begin{lem}\label{CondA2Young}
Let $\eps>0, \eps''>0, \gamma \in(0,1), \beta \in(0,1)$ such that $\eps+(r+2+2\gamma)\eps_1+\eps''<\eps_0$.  
Let $\phi\in\mathcal V_{\beta,\eps_1}^{(0)}$ and $H\in
\mathcal V^{(0)}_{\beta_1^\gamma,\eps_2}$.
\begin{itemize}[leftmargin=*]
\item If $\eps_2\le \eps_0-(r+2+\gamma)\eps_1$ and if $\beta_1^\gamma e^{\eps_2+(j+\gamma)\eps_1}\le 1$, then
\begin{equation}\label{hk1young}
\Vert h^{(j)}_{k,s}\circ\mathfrak p-\bar h^{(j)}_{k,s}\circ\bar{\mathfrak p}\Vert_
{L^{\frac{r_0+2}{j
+\gamma+\frac{\eps_2}{\eps_1}}}(\nu)}
\le C_0 \vartheta^k (1+|s|^\gamma)k^j\quad\mbox{with}\quad r_0=\frac{\eps_0}{\eps_1}-2\, .
\end{equation}
\item If 
$\eps_2\leq \eps_0-\eps-(b_{r+2-j}+j)\eps_1
$
$($this holds true if $\eps_2\leq \eps_0-\eps-(r+2)\eps_1-\eps''$ and so if $\eps_2\leq
2\gamma\eps_1),$ then for all $j=0,...,r+2,$
\begin{equation}\label{hk2young2}
\left\Vert {\bar h^{(j)}_{k,s}}\right\Vert_{(\cV^{(\eps)}_{b_{r+2-j}})'}\leq\left\Vert {\bar h^{(j)}_{k,s}}\right\Vert_{L^{\frac{\eps_0}{\eps_0-\eps-b_{r+2-j}\eps_1}}(\bar\nu)}\leq \left\Vert {\bar h^{(j)}_{k,s}}\right\Vert_{L^{\frac{\eps_0}{\eps_2+j\eps_1}}(\bar\nu)}
\leq C_0 (1+k)^j\, .
\end{equation}
\item If $\eps_2+j\eps_1\le \eps_0$, then
\begin{equation}\label{hk2young1}
\left\Vert (\mathcal L_{is}^{2k}{\bar h_{k,s}})^{(j)}\right\Vert_{\mathcal B_{\beta_1^\gamma,\eps_2+j\eps_1,\eps_2+j\eps_1}}\le C_0(1+|s|) k^j\, .
\end{equation}
In particular$,$ if $\eps_2\le\eps+\eps''$, then $\left\Vert (\mathcal L_{is}^{2k}{\bar h_{k,s}})^{(j)}\right\Vert_{\cV_{a_j}^{(\eps)}}
\le C_0(1+|s|) k^j$.

\end{itemize}
\end{lem}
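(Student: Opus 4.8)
\textbf{Proof strategy for Lemma~\ref{CondA2Young}.}

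The plan is to reduce all three displayed estimates to the product Lemma~\ref{lem:product}, the continuity and smoothness Lemmas~\ref{continuousaction} and~\ref{contlinYoung}, and the explicit formulas for $h_{k,s}^{(j)}$ and $\bar h_{k,s}^{(j)}$ recorded just above, exactly in the spirit of the bounded case treated in Lemma~\ref{LEM0}. The three bullets are essentially independent, so I would handle them in the stated order.

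First, for \eqref{hk1young}, I would repeat the conditional-expectation argument from the proof of Lemma~\ref{LEM0}: since $\hat s(x,y)>2k$ forces $\hat s_0(F^k(x),F^k(y))>k$, the difference $h^{(j)}_{k,s}\circ\mathfrak p-\bar h^{(j)}_{k,s}\circ\bar{\mathfrak p}$ is controlled by the Young-Lipschitz oscillation of $(H\,\chi^{m}e^{is\chi})\circ F^k$ on atoms of separation time exceeding $2k$, times $\Vert\bar S_k\Vert^{j-m}$-type factors. The new feature is that $H,\chi$ grow like $e^{\eps_2\ell},e^{\eps_1\ell}$ and $\bar S_k$ is only in $L^{r_0+2}$; so I would bound the sup-oscillation by $(1+|s|^\gamma)k^j\sqrt\beta^{\,k}e^{(\eps_2+(j+\gamma)\eps_1)\ell}$ using $|e^{is\chi(x)}-e^{is\chi(y)}|\le 2(|s|\,|\chi(x)-\chi(y)|)^\gamma$ and the hypothesis $\beta_1^\gamma e^{\eps_2+(j+\gamma)\eps_1}\le1$, and then take the $L^{(r_0+2)/(j+\gamma+\eps_2/\eps_1)}(\nu)$-norm, which is finite precisely because $\eps_2+(j+\gamma)\eps_1\le\eps_0=(r_0+2)\eps_1$ and $e^{\eps_0\ell}\in L^1$. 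The factor $\vartheta^k=\sqrt\beta^{\,k}$ and the polynomial-in-$k$ loss then come out as in Lemma~\ref{LEM0}.

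Second, for \eqref{hk2young2}, I would use that $\bar h^{(j)}_{k,s}(x)=e^{-is\bar S_k(x)}\EXP_\nu[\,\cdots\,|\hat s(\cdot,x)>2k]$ has modulus bounded pointwise by $\Vert H\Vert_{\mathcal V^{(0)}_{\cdot,\eps_2}}(\|\chi\|+k\|\bar\phi\|)^je^{\eps_2\ell}$-type quantities, hence by a constant times $(1+k)^je^{(\eps_2+j\eps_1)\ell}$; integrating and invoking $L^{\eps_0/(\eps_2+j\eps_1)}(\bar\nu)\hookrightarrow(\mathcal V^{(\eps)}_{b_{r+2-j}})'$ under $\eps_2+j\eps_1\le\eps_0-\eps-b_{r+2-j}\eps_1+j\eps_1$, i.e.\ $\eps_2\le\eps_0-\eps-b_{r+2-j}\eps_1$, gives the claim; the sufficient conditions $\eps_2\le\eps_0-\eps-(r+2)\eps_1-\eps''$ and $\eps_2\le2\gamma\eps_1$ follow by monotonicity of $b_k$ and the standing inequality $\eps+(r+2+2\gamma)\eps_1+\eps''\le\eps_0$.

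Third, for \eqref{hk2young1}, I would proceed as in Lemma~\ref{LEM0}: write $(\mathcal L_{is}^{2k}\bar h_{k,s})^{(j)}=\mathcal L^{2k}(\tilde h^{(j)}_{k,s})$ with $\tilde h_{k,s}$ as there, so that the only operator applied is the untwisted $\mathcal L^{2k}$, to which Lemma~\ref{continuousaction} applies on $\mathcal B_{\beta_1^\gamma,\eps_2+j\eps_1,\eps_2+j\eps_1}$ (valid since $\eps_2+j\eps_1\le\eps_0$), and then estimate $\|\tilde h^{(j)}_{k,s}\|_{\beta_1^\gamma,\eps_2+j\eps_1,\eps_2+j\eps_1}$ via the oscillation computation already done for \eqref{diffLh}--\eqref{Lipshitzphi} in the proof of Lemma~\ref{LEM0}, now carrying the weights $e^{\eps_2\ell}$ and $e^{\eps_1\ell}$ through; the factor $(1+|s|)$ enters from differentiating $e^{is\bar S_{2k}}$ once and $e^{is\chi\circ F^k}$, and the $(1+|s|)^\gamma$ from the remaining oscillation terms is absorbed into $(1+|s|)$. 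The last sentence of the bullet is immediate from $\mathcal B_{\beta_1^\gamma,\eps_2+j\eps_1,\eps_2+j\eps_1}\hookrightarrow\mathcal B_{\beta_1^\gamma,\eps+a_j\eps_1,\eps+a_j\eps_1+\gamma\eps_1}=\mathcal V^{(\eps)}_{a_j}$ once $\eps_2\le\eps+\eps''$ and $a_j\ge j$.

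The main obstacle will be bookkeeping the exponential weights $e^{\theta\ell}$ against the integrability threshold $\eps_0$ so that every $L^p$-norm invoked is finite and every space inclusion used is legitimate; the Lipschitz/oscillation estimates themselves are word-for-word those of Lemma~\ref{LEM0} with $e^{\eps_1\ell}$, $e^{\eps_2\ell}$ inserted, and the only genuinely new inputs are the continuity of $\mathcal L$ on the three-parameter Young spaces (Lemma~\ref{continuousaction}) and the substitution $(\mathcal L^{2k}_{is}\bar h_{k,s})^{(j)}=\mathcal L^{2k}(\tilde h^{(j)}_{k,s})$, which decouples the twist from the smoothing.
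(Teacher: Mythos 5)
Your plan for \eqref{hk1young} is in line with the paper's proof and would work with care. However, the second and third bullets contain genuine gaps, and both stem from the same source: you are treating $\bar S_k$ and $\bar S_k\circ\bar F^k$ as if they satisfied pointwise bounds of the form $(\|\chi\|_\infty+k\|\bar\phi\|_\infty)$, which is the bounded-observable computation of Lemma~\ref{LEM0}, but here $\bar\phi$ is only in $\mathcal B_{\beta_1,\eps_1,\eps'_1}$ and such sup norms are infinite.

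For \eqref{hk2young2}, your claim that $\bar h^{(j)}_{k,s}$ is \emph{pointwise} $\mathcal O\bigl((1+k)^j e^{(\eps_2+j\eps_1)\ell}\bigr)$ is false: the $m$th power of $\bar S_k$ is only controlled pointwise by $\mathcal O\bigl(e^{m\eps_1(\ell+k)}\bigr)$, since $\ell\circ F^r\le\ell+r$, so one picks up an extra exponential $e^{mk\eps_1}$ in $k$. The paper circumvents this by not taking a pointwise bound at all; instead it applies H\"older's inequality to the product $H\,|\chi|^{j-m}\,|\bar S_k|^m$ with carefully chosen exponents $r_m,s_m$, and then exploits the $L^p$-bound $\|\bar S_k\|_{L^{p}(\bar\nu)}\le k\|\bar\phi\|_{L^{p}(\bar\nu)}$ (by stationarity), yielding the polynomial factor $k^m$ instead of $e^{\eps_1mk}$. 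Without this H\"older step you cannot reach the stated conclusion.

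For \eqref{hk2young1}, the same issue is fatal if you try to first estimate $\|\tilde h^{(j)}_{k,s}\|_{\mathcal B_{\beta_1^\gamma,\eps_2+j\eps_1,\eps_2+j\eps_1}}$ and then apply boundedness of $\cL^{2k}$: the weighted sup seminorm of $\tilde h^{(j)}_{k,s}$ contains $(\bar S_k\circ\bar F^k)^m$, whose pointwise size on level $\ell$ is of order $e^{m\eps_1(\ell+2k)}$, so the norm is $\mathcal O(e^{2jk\eps_1})$, blowing up exponentially in $k$. The paper's actual proof avoids this by expanding $\tilde h^{(j)}_{k,s}$ into monomials $\prod_u\bar\phi\circ\bar F^{k_u+k}$ and then applying $\cL^{2k}$ in a \emph{nested} way,
\[
\mathcal L^{k-k_m}\Bigl(e^{\eps_1\ell}\,\mathcal L^{k_m-k_{m-1}}\bigl(e^{\eps_1\ell}\cdots\mathcal L^{k_1}\bigl(e^{(\eps_2+(j-m)\eps_1)\ell}\bigr)\bigr)\Bigr),
\]
so that each weight $e^{\eps_1\ell}$ enters only once after pulling the corresponding composition through the transfer operator, rather than being composed with $\bar F^{k_u+k}$. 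This way one only accumulates a total shift $j\eps_1$ in the weight, not $\mathcal O(k)\eps_1$, and the $k^j$ comes from the number of choices of $(k_1,\dots,k_m)$. This commutation/interleaving is the genuinely new input beyond Lemma~\ref{LEM0}, and it is missing from your proposal.
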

\begin{proof}
Observe first that, for any $p\ge 1$, $\Vert  \bar{h}^{(j)}_{k,s}\Vert_{L^p(\bar\nu)}\le\Vert  h^{(j)}_{k,s}\Vert_{L^p(\nu)}$.
For $x\in\Delta_{l,j}$,
\begin{align*}
\vert\bar{h}^{(j)}_{k,s}\circ\bar{\mathfrak p}(x)-&h^{(j)}_{k,s}(x)\vert \\
&\leq  
\sum_{m=0}^{j}{j\choose m}\vert \bar S_k\circ\mathfrak p\vert^{j-m}
 \sup_{y\in\Delta_{l,j},\ \hat s(x,y)>2k}\left|(H\chi^{m}e^{is\chi})(F^k(x))-(H\chi^{m}e^{is\chi})(F^k(y))\right| \, .
\end{align*}
But, for all $x,y\in\gamma^u\subset\Delta_{l,j}$ or $x,y\in\gamma^u\subset\Delta_{l,j}$ such that $\hat s(x,y)\ge 2k$,
using the fact that $\chi\in\cV^{(0)}_{\beta_1,\eps_1}$, $|\bar S_k(x)|=\cO\left(\sum_{r=0}^{k-1}e^{\eps_1 r}\right)=\cO\left(e^{\eps_1 k}\right)$ and
\[
\left|(H\chi^{m}e^{is\chi})(F^k(x))-(H\chi^{m}e^{is\chi})(F^k(y))\right| 
=\mathcal O\left((1+|s|^\gamma)e^{(\eps_2+(m+\gamma)\eps_1)(k+l)}\beta_1^{\gamma k}\right)
\]
and so, in view of \eqref{hk1young},
\[
\left\Vert h^{(j)}_{k,s}-\bar h^{(j)}_{k,s}\circ\bar{\mathfrak p}\right\Vert^q_{L^q(\nu)} \leq \mathcal O\left((1+|s|^\gamma )^q(\beta_1^\gamma e^{\eps_2+(j+\gamma)\eps_1})^{kq}\mathbb E_{\nu}\left( e^{q(\eps_2+(j+\gamma)\eps_1)\ell}\right)\right)\, ,
\]
is in $\mathcal O(1+|s|^\gamma)^q$  since $\beta_1^\gamma e^{\eps_2+(j+\gamma)\eps_1}\le 1$ and $q(\eps_2+(j+\gamma)\eps_1)\le\eps_0$, so that
\[
q\le\frac{\eps_0}{\eps_2+(j+\gamma)\eps_1}
=\frac{r_0+2}{j+\gamma+\frac{\eps_2}{\eps_1}}
\]

In view of \eqref{hk2young2}, set 
$q=\frac{\eps_0}{\eps_2+j\eps_1}$, $r_m=\frac{\eps_2+j\eps_1}{m\eps_1}\ge 1$ and $s_m=\frac{\eps_2+j\eps_1}{\eps_2+(j-m)\eps_1}\ge $ such that $\frac 1{r_j}+\frac 1{s_j}=1$, we have
\begin{align*}\label{h_Boundbis}
\Vert\bar h^{(j)}_{k,s}\Vert_{L^q(\bar\nu)}
&\leq  j!\sum_{m=0}^j \Vert H |\chi|^{j-m}\Vert_{L^{qs_m}(\bar\nu)}\Vert |\bar S_k|^{m}\Vert_{L^{qr_m}(\bar\nu)}\\
&\leq  j!\sum_{m=0}^j \Vert H\Vert^{(0)}_{\beta_1^\gamma,\eps_2}(\Vert\chi\Vert^{(0)}_{\beta_1,\eps_1})^{j-m}\Vert e^{\ell(\eps_2+(j-m)\eps_1)}\Vert_{L^{qs_m}(\bar\nu)}\Vert \bar S_k\Vert_{L^{qmr_m}(\bar\nu)}^{m} \\
&\leq  j!\sum_{m=0}^j \Vert H\Vert^{(0)}_{\beta_1^\gamma,\eps_2}(\Vert\chi\Vert^{(0)}_{\beta_1,\eps_1})^{j-m} \mathbb E_{\bar\nu}\left( e^{\eps_0\ell}\right)^{\frac 1{s_mq}} k^{m}\Vert\bar\phi\Vert_{L^{mqr_m}(\bar\nu)}^{m} \\
&\leq  j!\sum_{m=0}^j \Vert H\Vert^{(0)}_{\beta_1^\gamma,\eps_2}(\Vert\chi\Vert^{(0)}_{\beta_1,\eps_1})^{j-m}k^{m}\Vert e^{\eps_1\ell}\Vert_{L^{mqr_m}(\bar\nu)}^{m}  \mathbb E_{\bar\nu}\left( e^{\eps_0\ell}\right)^{\frac 1{s_mq}}\, ,
\end{align*}
which is in $\mathcal O(1+|k|^j)$ since $\eps_1mqr_m=\eps_0$.

To investigate \eqref{hk2young1}, we observe that, for every $x\in\bar\Delta$,
\begin{align*}
(\cL^{2k}_{is}&\bar{h}_{k,s})^{(j)}(x)=\cL^{2k}(\tilde{H}^{(j)}_{k,s})(x),\quad\mbox{with}\quad
\tilde{H}_{k,s}(x):= e^{is\bar S_k\circ \bar F^k(x)}E_{2k}[H\circ F^k e^{is\chi \circ F^k }](x)\, .
\end{align*} 
where we used the notation $E_{2k}[G](x):=\EXP_{\nu}[G |\,\hat s( \cdot , x) >2k]$.
Observe that
\begin{align*}
&\cL^{2k}(\tilde{H}^{(j)}_{k,s})\\ &=\sum_{m=1}^j\frac{j!\, i^j}{m!(j-m)!}\sum_{k_1,...,k_m=0}^{k-1}\mathcal L^{2k}\left(e^{is\bar S_k\circ \bar F_k}
    \prod_{u=1}^m(\bar\phi\circ\bar F^{k_u+k})E_{2k}[H\circ F^k(\chi\circ F^k)^{j-m}e^{is\chi\circ F^k}]\right)\\
&=\mathcal O\left(\sum_{m=1}^j\sum_{k_1,...,k_m=0}^{k-1}\mathcal L^{2k}\left(e^{(\eps_2+(j-m)\eps_1)\ell\circ\bar F^k+\eps_1\sum_{u=1}^m \ell\circ\bar F^{k+k_u})}\right)\right)\\
&=\mathcal O\left(\sum_{m=1}^j\sum_{0\le k_1\le ...\le k_m\le k-1}\mathcal L^{k-k_m}\left(e^{\eps_1\ell}\mathcal L^{k_m-k_{m-1}}\left(e^{\eps_1\ell}\cdots\mathcal L^{k_2-k_1}\left(e^{\eps_1\ell}\mathcal L^{k_1}\left(e^{(\eps_2+(j-m)\eps_1)\ell}\mathcal L^{k}(\mathbf 1)\right)\right)\right)\right)\right)\\
&=\mathcal O\left(\sum_{m=1}^j\sum_{0\le k_1\le ...\le k_m\le k-1}\mathcal L^{k-k_m}\left(e^{\eps_1\ell}\mathcal L^{k_m-k_{m-1}}\left(e^{\eps_1\ell}\cdots\mathcal L^{k_2-k_1}\left(e^{\eps_1\ell}\mathcal L^{k_1}\left(e^{(\eps_2+(j-m)\eps_1)\ell}\right)\right)\right)\right)\right)\, .
\end{align*}
Set $\mathcal W_\theta:=\mathcal B_{\beta_1^\gamma,\theta,\theta+\gamma\eps_1}$. Recall the following facts:
\begin{itemize}
\item the function $e^{(\eps_2+(j-m)\eps_1)\ell}$ is in $\mathcal W _{\eps_2+(j-m)\eps_1}$,
\item Lemma~\ref{lem:product} ensures that the multiplication by $e^{\eps_1\ell}$ is a continuous linear map from $\mathcal W_{\eps_2+(j-m+u)\eps_1}$ to $\mathcal W_{\eps_2+(j-m+u+1)\eps_1}$ for every $u=0,...,j-1$ since $\eps_2+j\eps_1\le\eps_0$,
\item Lemma~\ref{continuousaction} ensures that $\mathcal L$ is a continuous linear operator on $\mathcal W_\theta$ for any $\theta\ge0$ such that $\theta+\gamma\eps_1\le\eps_0$.
\end{itemize}
From which we conclude that
$|\cL^{2k}(\tilde{H}^{(j)}_{k,s})|_{\eps_2+j\eps_1,\infty}=\mathcal O (k^j)$.


Recall that \eqref{bijpreimages} holds true and so, for any $x,y\in\bar\Delta$ such that
$\hat s(x,y)>1$, there exists a bijection $L_{x,y,r} : \bar F^{-r}(\{x\}) \to  \bar F^{-r}(\{y\}) $ such that for all $z\in\bar F^{-
r}(\{x\})$, $\hat s(z,L_{x,y,r}(z))> r$,
with $L_{x,y,r}$ being defined inductively on $r\ge 1$ by $L_{x,y,1}=W_{x,y}$ and $L_{x,y,r+1}(z)= W_{\bar F(z),L_{x,y,r}(\bar F(z))}(z)$.

Recall also that, for all $z \in  \bar F^{-r}(\{x\})$,
$\gamma_{k,s}(z):=E_{2k}[H\circ F^k \cdot e^{is\chi \circ F^k }]$ 
is invariant by composition by $L_{x,y,2k}$. Let us write
$\psi_{k,s}(z)=e^{is\bar{S}_{k}(z)},$
we have 
\begin{align*}
&(\cL^{2k}_{is}\bar{h}_{k,s})^{(j)}(x)-(\cL^{2k}_{is}\bar{h}_{k,s})^{(j)}(y)= \cL^{2k}(\tilde{H}^{(j)}_{k,s})(x)-\cL^{2k}(\tilde{H}^{(j)}_{k,s})(y) \\
&\le j! \sum_{m=0}^j \sum_{0\le k_1\le\cdots\le k_m\le k}
\sum_{
z\in \bar F^{-2k}(\{x\})} \left\vert A_{k,k_1,...,k_m}(z)-A_{k,k_1,...,k_m}(L_{x,y,2k}(z))
\right\vert \, \left\vert\gamma_{k,s}^{(j-m)}(z)\right\vert\, .
\end{align*}
with $A_{k,k_1,...,k_m}:=e^{S_{2k}^{(\bar{g})}(z)}\left(\psi_{k,s}(z)\prod_{u=1}^{m}\bar\phi\circ \bar F^{k_u}\right)\circ \bar F^k$.
Observe that
$$|\gamma_{k,s}^{(j-m)}|\le \mathcal O\left(e^{(\eps_2+(j-m)\eps_1)\ell\circ\bar F^k}\right) .$$
Also, as seen in \eqref{LEM0} and using \eqref{regulsumg}
\begin{align*}
&\Big|A_{k,k_1,...,k_m}-A_{k,k_1,...,k_m} (\bar F^k(L_{x,y,2k}(z)))\Big|  \\ 
&\leq e^{S_{2k}^{(\bar{g})}(z)}  e^{\bar C_{\bar g}\frac{\beta_0^{\hat s(x,y)}}{1-\beta_0}}\frac{\beta_0^{\hat s(x,y)}}{1-\beta_0}
\left|\left(\psi_{k,s}\prod_{u=0}^{m}\bar\phi\circ \bar F^{k_u}\right)\circ \bar F^k(z) \right|
\\
&\,\,\,\,\,\,\,+e^{S_{2k}^{(\bar{g})}(L_{x,y,2k}(z))}\left|\left(\psi_{k,s}\prod_{u=1}^{m}\bar\phi\circ \bar F^{k_u}\right)\circ \bar F^k(z)-\left(\psi_{k,s}\prod_{u=1}^{m}\bar\phi\circ \bar F^{k_u}\right)\circ \bar F^k(L_{x,y,2k}(z))\right|\, .
\end{align*}
Moreover
\begin{align*}
&\left|\left(\psi_{k,s}\prod_{u=1}^{m}\bar\phi\circ \bar F^{k_u}\right)\circ \bar F^k(z)-\left(\psi_{k,s}\prod_{u=1}^{m}\bar\phi\circ \bar F^{k_u}\right)\circ \bar F^k(L_{x,y,2k}(z))\right|\\
&\le \left|\prod_{u=1}^{m}\bar\phi\circ \bar F^{k_u+k}(z)-\prod_{u=1}^{m}\bar\phi\circ \bar F^{k_u+k}(L_{x,y,2k}(z))\right|\\ 
& \phantom{aaaaaaaaaa}+\left|\psi_{k,s}(\bar F^k(z))-\psi_{k,s}(\bar F^k(L_{x,y,2k}(z)))\right|\, \prod_{u=1}^{m}\left|\bar\phi\circ \bar F^{k_u+k}(z)\right|\\
&\le \Vert\bar\phi\Vert^m_{\mathcal B_{\beta_1,\eps_1,\eps_1}} e^{\eps_1\sum_{v=1}^m\ell\circ\bar F^{k_v+k}}\left(\sum_{u=1}^m \beta_1^{\hat s(x,y)+k-k_u}+
 \sum_{w=0}^{k-1}\min\left(2, s\Vert \bar\phi\Vert_{\mathcal B_{\beta_1,\eps_1,\eps_1}} \beta_1^{\hat s(x,y)+k-w} \right)\right)\\
&\le(1+ \Vert\bar\phi\Vert)^{m+1}_{\mathcal B_{\beta_1,\eps_1,\eps_1}}(1+|s|) e^{\eps_1\sum_{v=1}^m\ell\circ\bar F^{k_v+k}} \frac{\beta_1^{\hat s(x,y)}}{1-\beta_1}\, .
\end{align*}
Hence, for every $x,y\in\bar\Delta_{l,j}$
\begin{align*}
&(\cL^{2k}_{is}\bar{h}_{k,s})^{(j)}(x)-(\cL^{2k}_{is}\bar{h}_{k,s})^{(j)}(y)\\
&=\mathcal O\left(\beta_1^{\gamma\hat s(x,y)}(1+|s|)\max_{z\in\{x,y\}}\mathcal L^{2k}\left(e^{(\eps_2+(j-m)\eps_1)\ell\circ\bar F^k+\eps_1\sum_{v=1}^m\ell\circ\bar F^{k_v+k}}\right)(z)\right)\, .
\end{align*}
To conclude, we prove that
\[
\left| \mathcal L^{2k}\left(e^{(\eps_2+(j-m)\eps_1)\ell\circ\bar F^k+\eps_1\sum_{v=1}^m\ell\circ\bar F^{k_v+k}}\right)\right|_{\eps_2+j\eps_1,\infty}=\mathcal O(1)\, ,
\]
as we did in the proof of $|\cL^{2k}(\tilde{H}^{(j)}_{k,s})|_{\eps_2+j\eps_1,\infty}=\mathcal O (k^j)$.
\end{proof}
\end{appendix}

\end{document}